\newtheorem{defin}{Definition}
\newtheorem{lemma}{Lemma}
\newtheorem{prop}{Proposition}
\newtheorem{theo}{Theorem}
\newtheorem{corol}{Corollary}
\newenvironment{proof}{\medskip\par\noindent{\bf Proof}}{\hfill $\Box$
\medskip\par}
\def\C{\mathbb{C}}
\def\N{\mathbb{N}}
\def\R{\mathbb{R}}
\def\Q{\mathbb{Q}}
\begin{document}
\title{Gevrey multiscale expansions of singular solutions of PDEs with cubic nonlinearity}

\maketitle
\thispagestyle{empty}

\begin{multicols}{2} 
\textbf{A. Lastra}\footnote{Grupo ASYNACS (Ref. CCEE2011/R34). Partially supported by the project MTM2016-77642-C2-1-P of Ministerio de Econom\'ia,
Industria y Competitividad, Spain}\\
{\small{Dpto. de F\'isica y Matem\'aticas. \\
Universidad de Alcal\'a. Ap. Correos 20,\\ 
E-28871 Alcal\'a de Henares, Madrid (Spain)\\
{\tt alberto.lastra@uah.es}}}

\textbf{S. Malek}\footnote{Partially supported by the project MTM2016-77642-C2-1-P of Ministerio de Econom\'ia, Industria y Competitividad, Spain}\\
{\small{University of Lille 1, Laboratoire Paul Painlev\'e,\\
59655 Villeneuve d'Ascq cedex, France\\
{\tt Stephane.Malek@math.univ-lille1.fr }}}

\end{multicols}

{ \small \begin{center}
{\bf Abstract}
\end{center}

We study a singularly perturbed PDE with cubic nonlinearity depending on a complex perturbation parameter $\epsilon$. This is the continuation of the precedent work~\cite{ma} by the first author. We construct two families of sectorial meromorphic solutions obtained as a small perturbation in $\epsilon$ of two branches of an algebraic slow curve of the equation in time scale. We show that the nonsingular part of the solutions of each family shares a common formal power series in $\epsilon$ as Gevrey asymptotic expansion which might be different one to each other, in general.

\medskip

\noindent Keywords: asymptotic expansion, Borel-Laplace transform, Fourier transform, Cauchy problem, formal power series, nonlinear integro-differential equation, nonlinear partial differential equation, singular perturbation.  

\noindent 2010 MSC: 35C10, 35C20}
\bigskip 

\section{Introduction}\label{seccion1}

The main aim of this work is to study a family of singularly perturbed PDEs of the form

\begin{equation}
Q(\partial_{z})( P_{1}(t,\epsilon)u(t,z,\epsilon) + P_{2}(t,\epsilon)u^{2}(t,z,\epsilon)+P_{3}(t,\epsilon)u^3(t,z,\epsilon) )\\
= f(t,z,\epsilon) + P_{4}(t,\epsilon,\partial_{t},\partial_{z})u(t,z,\epsilon),
\label{e1}
\end{equation}
where $Q,P_j$ are polynomials with complex coefficients, for all $j=1,2,3,4$, and $f$ is an analytic function with respect to $(t,\epsilon)$ in a vicinity of the origin, and holomorphic with respect to $z$ on an horizontal strip $H_{\beta}=\{z\in\C:|\hbox{Im}(z)|<\beta\}\subseteq\C$, for some $\beta>0$.

Here, $\epsilon$ is considered as a small complex perturbation parameter. The study of singularly perturbed ordinary and partial differential equations has been recently developed by several authors. We can cite \cite{bamo,cms,camo2} as works in which the study of ODEs in which irregular singular operators appear. In~\cite{yayo}, the authors study singularly perturbed semilinear systems of equations involving fuchsian singularities in several variables. This study is now being generalized by the authors concerning both irregular and fuchsian operators~\cite{yosh}.

Recently, Carrillo and Mozo-Fern\'andez~\cite{camo} have studied integrable systems of PDEs involving irregular singularities in two variables obtained as coupled singularly perturbed problems. In~\cite{camo2}, the authors study families of linear PDEs in which the action of the sum of two singularly perturbed operators appear.

 This work follows a series of previous advances by the authors in which fixed point techniques are used to solve such problems, such as~\cite{lama,lama2,ma3,ma}. 


It provides a natural continuation of the study made by the second author in~\cite{ma}. In that work, the author considered a quadratic nonlinearity, which corresponds to our equation in the case of $P_3\equiv 0$. The main goal was to construct actual holomorphic solutions and study their asymptotic properties with respect to the complex perturbation parameter $\epsilon$. More precisely, the author has constructed a family of analytic solutions $(y_{p}(t,z,\epsilon))_{0\le p\le \varsigma-1}$ defined on a product of a finite sector with vertex at the origin, an horizontal substrip $H_{\beta'}\subset H_{\beta}$ and $\mathcal{E}_p$; where $(\mathcal{E}_p)_{0\le p\le \varsigma-1}$ is a finite set of bounded sectors which cover a pointed neighborhood of the origin. We notice that such solutions are singular with respect to $\epsilon$ and $t$ at the origin. Indeed, each solution can be split into the sum of two terms: a singular part and a bounded analytic function which admits an asymptotic expansion with respect to $\epsilon$ in $\mathcal{E}_{p}$. This asymptotic expansion turns out to be of Gevrey type. Each solution has a multiple-scale expansion in the sense of~\cite{best}, Chapter 11, which has the form
\begin{equation}\label{eaux1}
y_p(t,z,\epsilon)\sim\epsilon^\beta\left(Y_0(\epsilon^\alpha t)+\sum_{n\ge1}Y_{n}(\epsilon^\alpha t)\epsilon^{n}\right),
\end{equation}
for some $\alpha>0$, $\beta\in\Q$. Here, $Y_0$ is the unique nonvanishing rational solution of a second order algebraic equation.

The main aim of the present work is to construct sets of actual solution of (\ref{e1}), and investigate their asymptotic behavior at $\epsilon=0$, as much like as in the precedent work~\cite{ma}, in this more general framework.


As in the previous work, we construct families of solutions admitting  a multiple-scale expansion of the form
$$\epsilon^\beta\left(U_0(\epsilon^\alpha t)+\sum_{n\ge1}U_{n}(\epsilon^\alpha t)\epsilon^{n}\right),$$
comparable to those described in (\ref{eaux1}), where $\alpha>0$, $\beta\in\Q$ satisfy some restrictions described in the paper, and where $U_0$ now satisfies the algebraic equation
$$A(T) U(T)^2+B(T)U(T)+C(T)=0,$$
with $A(T)=P_1(T,0)$, $B(T)=P_2(T,0)$ and $C(T)=P_3(T,0)$.
$U_0$ is an algebraic function admitting two different branches, $U_{01}$ and $U_{02}$. This gives rise to two families of singular solutions.

On the one hand, one family, associated to $U_{01}$ is given by
$$u_{1}^{\mathfrak{d}_p}(t,z,\epsilon)=\epsilon^\beta(U_{01}(\epsilon^\alpha t)+(\epsilon^\alpha t)^{\gamma_1}v_1^{\mathfrak{d}_p}(t,z,\epsilon)),$$
is an analytic solution of the problem (\ref{e1}) defined in $\mathcal{T}_1\times H_{\beta'}\times\mathcal{E}_{p},$ for every $0\le p\le \varsigma_1-1$. Here, $\mathcal{T}_1$ stands for a finite sector with vertex at the origin and $H_{\beta'}$ is an horizontal strip in the complex plane and $(\mathcal{E}_p)_{0\le p\le \varsigma_1-1}$ is a good covering (see Definition~\ref{defingoodcovering}) of $\C^\star$.

On the other hand, a second family related to $U_{02}$ is given by
$$u_{2}^{\tilde{\mathfrak{d}}_p}(t,z,\epsilon)=\epsilon^\beta(U_{02}(\epsilon^\alpha t)+(\epsilon^\alpha t)^{\gamma_2}v_2^{\tilde{\mathfrak{d}}_p}(t,z,\epsilon)),$$
is an analytic solution of the problem (\ref{e1}) defined in $\mathcal{T}_2\times H_{\beta'}\times\tilde{\mathcal{E}}_{p},$ for every $0\le p\le \varsigma_2-1$, where $\mathcal{T}_2$ is a finite sector with vertex at the origin and $(\tilde{\mathcal{E}}_p)_{0\le p\le \varsigma_2-1}$ is a good covering of $\C^\star$.

The crucial and surprising point is that the nonsingular part of each family of solutions admits a Gevrey asymptotic expansion with respect to $\epsilon$, which are distinct, in general.

More precisely, for every $0\le p\le \varsigma_1-1$, one has that $v_{1}^{\mathfrak{d}_p}(t,z,\epsilon)$ admits the formal power series $\hat{v}_1(t,z,\epsilon)$ as its Gevrey asymptotic expansion of order
 $(\Delta_D+\beta-\alpha k_{0,1})^{-1}\delta_D$, with respect to $\epsilon$ on $\mathcal{E}_{p}$, uniformly in $\mathcal{T}_1\times H_{\beta'}$. Also, 
one has that $v_{2}^{{\tilde{\mathfrak{d}}}_p}(t,z,\epsilon)$ admits $\hat{v}_2(t,z,\epsilon)$ as its Gevrey asymptotic expansion of order $(\Delta_D+\beta-\alpha (2k_{0,2}-k_{0,3}))^{-1}\delta_D$ with respect to $\epsilon$ on $\tilde{\mathcal{E}}_{p}$, uniformly in $\mathcal{T}_2\times H_{\beta'}$.


Gevrey orders come from the highest order term of the operator $P_4$ which is an irregular operator of the shape $\epsilon^{\Delta_D}t^{d_{D}}\partial_t^{\delta_D}R_D(\partial_z)$, and the lowest powers with respect to $t$ in $P_1,P_2,P_3$.


This work falls into the recent trend of research on singular solutions of nonlinear partial differential equations.
In the framework of linear PDEs, the case of so-called Fuchsian or regular singularity in one complex variable
is a well understood subject until the fundamental works of M. Baouendi and C. Goulaouic
\cite{baouendigoulaouic}, H. Tahara \cite{tahara1} and T. Mandai \cite{mandai} who extended the
classical Frobenius method working for ODEs in order to provide the structure of all analytic, singular
with polynomial growth and logarithmic solutions near the isolated singularity. In the nonlinear context, the
results are however more partial. Nevertheless, we can quote some deep and recent results regarding
this topic. Namely, we can refer to the work by T. Kobayashi \cite{kobayashi} (inspired by the seminal contribution by
J. Weiss, M. Tabor and G. Carnevale on the celebrated Painlev\'e property for PDEs, \cite{weisstaborcarnevale})
who constructed solutions having the form of a convergent Puiseux expansions
$t^{\sigma} \sum_{k \geq 0} u_{k}(x)t^{k/p}$ for some $\sigma \in \mathbb{Q}$, $p \geq 1$ integer, for some
PDEs with non singular coefficients and polynomial nonlinearity. The situation of general analytic
nonlinearity has been performed later on by H. Tahara in \cite{tahara2}. This study has been further extended
by H. Tahara and H. Yamane in \cite{taharayamane} when resonances appear for which solutions with logarithmic
terms can be built up. In the case with singular coefficients, first order PDEs with Fuchsian singularity
known as Briot-Bouquet type equations
(as defined in the monography by R. G\'erard and H. Tahara \cite{geta}) have been extensively studied.
Namely, the general structure of bounded singular solutions with polynomial growth and logarithmic terms near
the Fuchsian singularity has been exhibited first under non resonant constraints by R. G\'erard and H. Tahara
\cite{gerardtahara2} and by H. Yamazawa in the general case, see \cite{yamazawa}.

Our main result in this paper provides in particular an example of analytic unbounded singular solutions with polynomial growth in the framework of nonlinear higher order PDEs with irregular singularity and
singular coefficients. Notice that very few works exist in this direction among the literature.

The paper is organized as follows.

In Section~\ref{seccion2}, we recall the definition and main properties under certain operators of certain Banach spaces of exponential decay and growth in different variables. Section~\ref{seccion3} is devoted to the review of analytic and formal $m_k$-Borel transformation, which is a slightly modified version of the classical ones, and which have already been used in previous works by the authors. We also describe the link between them via Gevrey asymptotic expansions. We finally consider Fourier inverse transform acting on functions with exponential decay.
 
In Section~\ref{seccion4}, we make successive transformations on the main problem (\ref{e1}) to finally arrive at two auxiliary problems in Section 4.1, studied in detail in Section 4.2 and 4.4. In Sections 4.3 and 4.5, we study the analytic solution of each of the singularly perturbed problems which have arisen from the main problem under study. This is made by means of a fixed point argument in the Banach space of functions described in Section~\ref{seccion2}.

Section~\ref{seccion5} studies the singular analytic solutions of the main problem in two different good coverings (see Theorem~\ref{teo2461}), and provides upper bounds on solutions with non empty intersection of the corresponding elements in the good covering, with respect to the perturbation parameter. In Section 6, we recall Ramis-Sibuya theorem which allows us to conclude with the second main result in the present work, Theorem~\ref{teo3045}, in which we guarantee the existence of two formal power series which asymptotically approximate some analytic functions quite related to the analytic solutions of the main problem. The work concludes with an example in which the theory developed is applied.

The following sections consist of the proofs of some results which have been left at the end for a more comprehensive lecture of the work.

\section{Banach spaces of exponential growth and decay}\label{seccion2}

The Banach spaces defined in this section are adequate modifications of those appearing in~\cite{lama1,lama2}. They incorporate both, exponential decay with respect to $m$ variable which is linked to Fourier transform, and exponential growth in $\tau$ variable, which is associated to different levels in which Borel-Laplace summation is held. This behavior is also connected to the action of the perturbation parameter $\epsilon$, as it can be observed in the following definitions.

We denote $D(0,\rho)$ the open disc centered at 0, with positive radius $\rho$, and $\bar{D}(0,\rho)$ stands for its closure. Let $S_d$ be an open unbounded sector with bisecting direction $d\in\R$ and vertex at the origin, and let $\mathcal{E}$ be an open sector with vertex at the origin, and finite radius $r_{\mathcal{E}}>0$.

\begin{defin}\label{def92}
Let $\beta>0$, $\mu>1$ be real numbers. We denote $E_{(\beta,\mu)}$ the vector space of functions $h:\R\to\C$ satisfying
$$\left\|h(m)\right\|_{(\beta,\mu)}=\sup_{m\in\R}(1+|m|)^{\mu}\exp(\beta|m|)|h(m)|<\infty.$$
The pair $(E_{(\beta,\mu)},\left\|\cdot\right\|_{(\beta,\mu)})$ turns out to be a Banach space.
\end{defin}

In view of Proposition 5 in~\cite{lama1}, it is straight to check the following result.

\begin{prop}\label{prop10}
The Banach space $(E_{(\beta,\mu)},\left\|\cdot\right\|_{(\beta,\mu)})$ is a Banach algebra when endowed with the convolution product
$$(f\star g)(m)=\int_{-\infty}^{\infty}f(m-m_1)g(m_1)dm_1.$$
More precisely, there exists $C_1>0$, depending on $\mu$, such that
$$\left\|(f\star g)(m)\right\|_{(\beta,\mu)}\le C_1\left\|f(m)\right\|_{(\beta,\mu)}\left\|g(m)\right\|_{(\beta,\mu)},$$
for every $f,g\in E_{(\beta,\mu)}$.
\end{prop}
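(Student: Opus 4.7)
The plan is to estimate the weighted sup norm of the convolution directly, using the submultiplicative behavior of the exponential weight and the polynomial decay exponent $\mu>1$ to obtain an absolutely convergent integral. Since the authors note this follows Proposition 5 of \cite{lama1}, I expect a short self-contained argument rather than invoking a black box.

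First I would fix $m\in\R$ and bound $|(f\star g)(m)|$ by pulling out the norms:
\begin{equation*}
|(f\star g)(m)| \le \|f\|_{(\beta,\mu)}\|g\|_{(\beta,\mu)} \int_{-\infty}^{\infty} \frac{e^{-\beta|m-m_1|}}{(1+|m-m_1|)^\mu}\cdot\frac{e^{-\beta|m_1|}}{(1+|m_1|)^\mu}\,dm_1.
\end{equation*}
Multiplying by $(1+|m|)^\mu e^{\beta|m|}$ and using the triangle inequality $|m|\le|m-m_1|+|m_1|$ to cancel the exponentials, it is enough to show that
\begin{equation*}
\mathcal{J}(m):=(1+|m|)^\mu \int_{-\infty}^{\infty} \frac{dm_1}{(1+|m-m_1|)^\mu(1+|m_1|)^\mu}
\end{equation*}
is bounded by a constant $C_1=C_1(\mu)$ uniformly in $m\in\R$.

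Next I would split $\R$ into the two regions $\{|m_1|\le|m|/2\}$ and $\{|m_1|>|m|/2\}$. On the first, $|m-m_1|\ge|m|/2$, so $(1+|m-m_1|)^\mu\ge 2^{-\mu}(1+|m|)^\mu$ (up to adjusting the constant near $m=0$), and the remaining integrand is controlled by $\int_\R(1+|m_1|)^{-\mu}dm_1<\infty$ since $\mu>1$. On the second region, $(1+|m_1|)^\mu\ge 2^{-\mu}(1+|m|)^\mu$ and the change of variable $m_1\mapsto m-m_1$ reduces the remaining integral to the same convergent one. Adding the two contributions gives $\mathcal{J}(m)\le C_1$ for an explicit $C_1$ depending only on $\mu$.

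The only genuine point that needs care is the finiteness of $\int_\R(1+|s|)^{-\mu}ds$, which is precisely where the hypothesis $\mu>1$ is used; everything else is routine majorization. With $\mathcal{J}(m)\le C_1$ in hand, combining with the displayed inequality above yields
\begin{equation*}
(1+|m|)^\mu e^{\beta|m|}|(f\star g)(m)|\le C_1\|f\|_{(\beta,\mu)}\|g\|_{(\beta,\mu)},
\end{equation*}
and taking the supremum over $m$ gives both the Banach-algebra property and the stated constant $C_1$. Closure of $E_{(\beta,\mu)}$ under $\star$ follows from the same estimate, and associativity/commutativity of $\star$ are inherited from the standard convolution on $L^1(\R)$, into which $E_{(\beta,\mu)}$ embeds continuously via the weight.
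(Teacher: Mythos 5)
Your proof is correct, and it is essentially the same argument the paper relies on: the paper does not prove this proposition itself but defers to Proposition 5 of \cite{lama1}, whose content is exactly your estimate, namely cancelling the exponential weights via $|m|\le|m-m_1|+|m_1|$ and bounding $\sup_{m}(1+|m|)^{\mu}\int_{\R}(1+|m-m_1|)^{-\mu}(1+|m_1|)^{-\mu}dm_1$ by the two-region splitting (the same bound reappears later in the paper as the constant $C_{4.1}$, handled via Lemma 4 of \cite{ma2}). Nothing further is needed; your remark about the parenthetical adjustment near $m=0$ is unnecessary since $1+|m|/2\ge(1+|m|)/2$ holds for all $m$.
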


\begin{defin}\label{def108}
Let $\nu,\rho>0$ and $\beta>0$, $\mu>1$ be real numbers. Let $\kappa\ge 1$ and $\chi,\alpha\ge0$ be integers. Let $\epsilon\in\mathcal{E}$. We denote $F^{d}_{(\nu,\beta,\mu,\chi,\alpha,\kappa,\epsilon)}$ the vector space of continuous functions $(\tau,m)\mapsto h(\tau,m)$ on $(\bar{D}(0,\rho)\cup S_d)\times\R$, holomorphic with respect to $\tau$ on $D(0,\rho)\cup S_d$ and such that
\begin{multline}
\left\|h(\tau,m)\right\|_{(\nu,\beta,\mu,\chi,\alpha,\kappa,\epsilon)}\\
=\sup_{\tau\in\bar{D}(0,\rho)\cup S_{d},m\in\R}(1+|m|)^{\mu}\exp\left(\beta|m|\right)\frac{1+\left|\frac{\tau}{\epsilon^{\chi+\alpha}}\right|^{2\kappa}}{\left|\frac{\tau}{\epsilon^{\chi+\alpha}}\right|}\exp\left(-\nu\left|\frac{\tau}{\epsilon^{\chi+\alpha}}\right|^{\kappa}\right)|h(\tau,m)|<\infty.
\end{multline}
The pair $(F^{d}_{(\nu,\beta,\mu,\chi,\alpha,\kappa,\epsilon)},\left\|\cdot\right\|_{(\nu,\beta,\mu,\chi,\alpha,\kappa,\epsilon)})$ is a Banach space.
\end{defin}

The next results describe inner transformations in the spaces introduced.  Through the whole section, we preserve the notations in Definition~\ref{def92} and Definition~\ref{def108}.

\begin{lemma}\label{lema1}
Let $\gamma_1\ge0,\gamma_2\ge1$ be integer numbers. Let $\tilde{R}(X)\in\C[X]$ such that $\tilde{R}(im)\neq 0$ for all $m\in\R$. Let $\tilde{B}(m)\in E_{(\beta,\mu)}$and let $a_{\gamma_1,\kappa}(\tau,m)$ be a continuous function defined on $(\bar{D}(0,\rho)\cup S_d)\times\R$, and holomorphic with respect to $\tau$ on $D(0,\rho)\cup S_d$, satisfying
$$|a_{\gamma_1,\kappa}(\tau,m)|\le\frac{1}{(1+|\tau|^{\kappa})^{\gamma_1}|\tilde{R}(im)|}\quad,\tau\in \bar{D}(0,\rho)\cup S_d,\quad m\in\R.$$
Then, the function $\epsilon^{-\chi\gamma_2}\tau^{\gamma_2}\tilde{B}(m)a_{\gamma_1,\kappa}(\tau,m)\in F^{d}_{(\nu,\beta,\mu,\chi,\alpha,\kappa,\epsilon)}$, and it holds that
\begin{equation}\label{e123}
\left\|\epsilon^{-\chi\gamma_2}\tau^{\gamma_2}\tilde{B}(m)a_{\gamma_1,\kappa}(\tau,m)\right\|_{(\nu,\beta,\mu,\chi,\alpha,\kappa,\epsilon)}\le C_2\frac{\left\|\tilde{B}(m)\right\|_{(\beta,\mu)}}{\inf_{m\in\R}|\tilde{R}(im)|} |\epsilon|^{\gamma_2\alpha},\qquad \epsilon\in\mathcal{E},
\end{equation}
for some $C_2>0$.
\end{lemma}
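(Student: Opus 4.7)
The plan is to unfold the definition of the norm $\|\cdot\|_{(\nu,\beta,\mu,\chi,\alpha,\kappa,\epsilon)}$ and bound the resulting supremum by factoring it into an $m$-part and a $\tau$-part, each of which is controlled by the hypotheses. Substituting the pointwise bound on $a_{\gamma_1,\kappa}(\tau,m)$, the expression whose supremum over $(\tau,m)\in (\bar D(0,\rho)\cup S_d)\times\R$ we must estimate is
$$
(1+|m|)^{\mu}e^{\beta|m|}\frac{1+|\tau/\epsilon^{\chi+\alpha}|^{2\kappa}}{|\tau/\epsilon^{\chi+\alpha}|}e^{-\nu|\tau/\epsilon^{\chi+\alpha}|^{\kappa}}\cdot\frac{|\epsilon|^{-\chi\gamma_{2}}|\tau|^{\gamma_{2}}|\tilde{B}(m)|}{(1+|\tau|^{\kappa})^{\gamma_{1}}|\tilde{R}(im)|}.
$$
The variables $\tau$ and $m$ separate, which is the key structural observation.

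For the $m$-part I would simply use the definition of $E_{(\beta,\mu)}$, which gives $(1+|m|)^{\mu}e^{\beta|m|}|\tilde{B}(m)|\le\|\tilde{B}\|_{(\beta,\mu)}$, together with the uniform lower bound $\inf_{m\in\R}|\tilde{R}(im)|>0$ (which is positive by the hypothesis that $\tilde{R}(im)$ never vanishes and the behavior at infinity of a polynomial). This produces the factor $\|\tilde{B}\|_{(\beta,\mu)}/\inf_{m\in\R}|\tilde{R}(im)|$ appearing in the stated bound.

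For the $\tau$-part I would introduce the rescaling $x=|\tau|/|\epsilon|^{\chi+\alpha}\ge 0$, so that $|\tau|^{\gamma_{2}}=x^{\gamma_{2}}|\epsilon|^{(\chi+\alpha)\gamma_{2}}$. After the obvious cancellations, the $\epsilon$-powers collapse to $|\epsilon|^{-\chi\gamma_{2}}\cdot|\epsilon|^{(\chi+\alpha)\gamma_{2}}=|\epsilon|^{\alpha\gamma_{2}}$, which is precisely the factor in (\ref{e123}). Using the trivial bound $1/(1+|\tau|^{\kappa})^{\gamma_{1}}\le 1$ (valid since $\gamma_{1}\ge 0$), one is reduced to showing that
$$
C_{2}:=\sup_{x\ge 0}x^{\gamma_{2}-1}(1+x^{2\kappa})e^{-\nu x^{\kappa}}
$$
is finite. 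This is elementary: the hypothesis $\gamma_{2}\ge 1$ ensures the function is continuous and bounded near $x=0$, while for large $x$ the exponential decay $e^{-\nu x^{\kappa}}$ dominates the polynomial growth. The value $C_{2}$ depends only on $\gamma_{2}$, $\kappa$, and $\nu$, and combining all three estimates yields (\ref{e123}).

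There is no serious obstacle; the argument is essentially a bookkeeping of the $\epsilon$-exponents, and the one conceptual point is recognizing that the weight $|\tau/\epsilon^{\chi+\alpha}|^{-1}$ in the norm is exactly compensated by the hypothesis $\gamma_{2}\ge 1$, preventing the supremum from blowing up as $\tau\to 0$.
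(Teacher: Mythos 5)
Your proof is correct and follows essentially the same route as the paper: unfold the norm, bound the $m$-dependence by $\left\|\tilde{B}\right\|_{(\beta,\mu)}/\inf_{m\in\R}|\tilde{R}(im)|$, and after the rescaling $x=|\tau|/|\epsilon|^{\chi+\alpha}$ reduce to the finite supremum $\sup_{x\ge0}(1+x^{2\kappa})x^{\gamma_2-1}e^{-\nu x^{\kappa}}$, with the $\epsilon$-powers collapsing to $|\epsilon|^{\gamma_2\alpha}$. This is precisely the paper's one-line estimate, so nothing further is needed.
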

\begin{proof}
The definition of the norm in the space $F^{d}_{(\nu,\beta,\mu,\chi,\alpha,\kappa,\epsilon)}$ allows us to write
\begin{align*}
&\left\|\epsilon^{-\chi\gamma_2}\tau^{\gamma_2}\tilde{B}(m)a_{\gamma_1,\kappa}(\tau,m)\right\|_{(\nu,\beta,\mu,\chi,\alpha,\kappa,\epsilon)}\\
&\le \frac{\left\|\tilde{B}(m)\right\|_{(\beta,\mu)}}{\inf_{m\in\R}|\tilde{R}(im)|} |\epsilon|^{\gamma_2\alpha}\sup_{x\ge0}(1+x^{2\kappa})x^{\gamma_2-1}\exp(-\nu x^{\kappa}),
\end{align*}
which yileds to the result.
\end{proof}

A similar result to the following one can be found in Proposition 2,~\cite{lama1}. However, more accurate bounds are needed in the sequel, which will be provided by estimates on Mittag-Leffler function as those appearing in Proposition 1 and Proposition 5 in~\cite{lama2}.

\begin{prop}\label{prop1}
Let $\gamma_1,\gamma_2,\gamma_3\in\R$, with $\gamma_1\ge0$. Let $\tilde{R}(X),\tilde{R}_D(X)\in\C[X]$ with $\deg(\tilde{R})\le \deg(\tilde{R}_{D})$ and such that $\tilde{R}_D(im)\neq 0$ for all $m\in\R$. Let $a_{\gamma_1,\kappa}(\tau,m)$be a continuous function defined on $(\bar{D}(0,\rho)\cup S_d)\times\R$, and holomorphic with respect to $\tau$ on $D(0,\rho)\cup S_d$, satisfying
$$|a_{\gamma_1,\kappa}(\tau,m)|\le\frac{1}{(1+|\tau|^{\kappa})^{\gamma_1}|\tilde{R}_{D}(im)|}\quad,\tau\in \bar{D}(0,\rho)\cup S_d,\quad m\in\R.$$ 

We also assume that
\begin{equation}\label{e143}
\frac{1}{\kappa}+\gamma_3+1\ge0,\quad\gamma_2+\gamma_3+2\ge0,\quad\gamma_2>-1.
\end{equation}

We consider two cases:
\begin{enumerate}
\item[1)] If $\gamma_3\le- 1$, then, there exists $C_3>0$ (depending on $\nu,\kappa,\gamma_2,\gamma_3,\tilde{R}(X),\tilde{R}_D(X)$) such that
\begin{multline}
\left\|\epsilon^{-\gamma_0}a_{\gamma_1,\kappa}(\tau,m)\tilde{R}(im)\tau^{\kappa}\int_{0}^{\tau^\kappa}(\tau^\kappa-s)^{\gamma_2}s^{\gamma_3}f(s^{1/\kappa},m)ds\right\|_{(\nu,\beta,\mu,\chi,\alpha,\kappa,\epsilon)}\\
\le C_3|\epsilon|^{(\chi+\alpha)\kappa(\gamma_2+\gamma_3+2)-\gamma_0}\left\|f(\tau,m)\right\|_{(\nu,\beta,\mu,\chi,\alpha,\kappa,\epsilon)},
\end{multline}
for every $f(\tau,m)\in F^d_{(\nu,\beta,\mu,\chi,\alpha,\kappa,\epsilon)}$.
\item[2)] If $\gamma_3>- 1$ and $\gamma_1\ge 1+\gamma_3$, then, there exists $C'_3>0$ (depending on $\nu,\kappa,\gamma_1,\gamma_2,\gamma_3,\tilde{R}(X),\tilde{R}_D(X)$) such that
\begin{multline}
\left\|\epsilon^{-\gamma_0}a_{\gamma_1,\kappa}(\tau,m)\tilde{R}(im)\tau^{\kappa}\int_{0}^{\tau^\kappa}(\tau^\kappa-s)^{\gamma_2}s^{\gamma_3}f(s^{1/\kappa},m)ds\right\|_{(\nu,\beta,\mu,\chi,\alpha,\kappa,\epsilon)}\\
\le C'_3|\epsilon|^{(\chi+\alpha)\kappa(\gamma_2+\gamma_3+2)-\gamma_0-(\chi+\alpha)\kappa\gamma_1}\left\|f(\tau,m)\right\|_{(\nu,\beta,\mu,\chi,\alpha,\kappa,\epsilon)},
\end{multline}
for every $f(\tau,m)\in F^d_{(\nu,\beta,\mu,\chi,\alpha,\kappa,\epsilon)}$.
\end{enumerate}
\end{prop}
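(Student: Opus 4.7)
The plan is to reduce the estimate to a one-dimensional supremum in the variable $x=|\tau|/|\epsilon|^{\chi+\alpha}$ that can be controlled by Mittag-Leffler type bounds as in Proposition 1 and Proposition 5 of~\cite{lama2}. First I would parametrize the inner integral along a segment from $0$ to $\tau^{\kappa}$ (using the holomorphy of $f$ with respect to $\tau$ to deform the path so that we can replace $|s^{1/\kappa}|$ by $|s|^{1/\kappa}$) and apply the definition of the norm of $f$ together with the assumed bound
$$|a_{\gamma_{1},\kappa}(\tau,m)|\le (1+|\tau|^{\kappa})^{-\gamma_{1}}/|\tilde{R}_{D}(im)|.$$
The hypothesis $\deg(\tilde{R})\le\deg(\tilde{R}_{D})$ together with $\tilde{R}_{D}(im)\neq 0$ gives a finite constant $M=\sup_{m\in\R}|\tilde{R}(im)/\tilde{R}_{D}(im)|$, so the factors in $m$ and the $(1+|m|)^{\mu}e^{\beta|m|}$ weights cancel cleanly against $\|f\|_{(\nu,\beta,\mu,\chi,\alpha,\kappa,\epsilon)}$.

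Next, I would perform the substitution $s=|\epsilon|^{\kappa(\chi+\alpha)}v$ which normalises the exponential growth of $f$ inside the integral. This substitution produces the factor $|\epsilon|^{\kappa(\chi+\alpha)(\gamma_{2}+\gamma_{3}+1)}$; combined with the external $|\tau|^{\kappa}=|\epsilon|^{\kappa(\chi+\alpha)}x^{\kappa}$ and the prefactor $|\epsilon|^{-\gamma_{0}}$, this yields exactly the announced power $|\epsilon|^{\kappa(\chi+\alpha)(\gamma_{2}+\gamma_{3}+2)-\gamma_{0}}$. After this reduction the remaining task is to bound, uniformly in $\epsilon\in\mathcal{E}$,
$$\Phi(x)=\frac{1+x^{2\kappa}}{x}\,e^{-\nu x^{\kappa}}\bigl(1+|\epsilon|^{\kappa(\chi+\alpha)}x^{\kappa}\bigr)^{-\gamma_{1}}x^{\kappa}\int_{0}^{x^{\kappa}}(x^{\kappa}-v)^{\gamma_{2}}v^{\gamma_{3}+1/\kappa}\frac{e^{\nu v}}{1+v^{2}}\,dv$$
for $x\ge 0$, the remaining integral being convergent near $v=0$ thanks to $\gamma_{3}+1/\kappa+1\ge 0$ (first condition of (\ref{e143})) and near $v=x^{\kappa}$ thanks to $\gamma_{2}>-1$.

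For case 1, I would simply use $(1+|\epsilon|^{\kappa(\chi+\alpha)}x^{\kappa})^{-\gamma_{1}}\le 1$ since $\gamma_{1}\ge 0$. The asymptotic analysis of the integral for large $w=x^{\kappa}$ (the Laplace-type lemma in Proposition 1 of~\cite{lama2}) yields a behaviour of the order $w^{\gamma_{3}+1/\kappa-2}e^{\nu w}$; multiplying by $(1+x^{2\kappa})x^{\kappa-1}e^{-\nu x^{\kappa}}$ gives a quantity dominated by $x^{\kappa(\gamma_{3}+1)}$ at infinity, which stays bounded precisely under the hypothesis $\gamma_{3}\le -1$. For case 2 the exponent $\gamma_{3}+1$ is strictly positive, so the previous argument fails; this is where the extra decay of $a_{\gamma_{1},\kappa}$ must be exploited. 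I would use $(1+y)^{-\gamma_{1}}\le y^{-\gamma_{1}}$ with $y=|\epsilon|^{\kappa(\chi+\alpha)}x^{\kappa}$, producing the additional factor $|\epsilon|^{-\kappa(\chi+\alpha)\gamma_{1}}x^{-\kappa\gamma_{1}}$, which accounts precisely for the extra $-\kappa(\chi+\alpha)\gamma_{1}$ in the exponent of the statement. The dominant power at infinity becomes $x^{\kappa(\gamma_{3}+1-\gamma_{1})}$, which stays bounded thanks to the assumption $\gamma_{1}\ge 1+\gamma_{3}$; near $x=0$ one uses instead $(1+y)^{-\gamma_{1}}\le 1$, so the near-zero integrability is the same as in case 1.

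The main obstacle is the large-$x$ estimate of the inner integral $\int_{0}^{x^{\kappa}}(x^{\kappa}-v)^{\gamma_{2}}v^{\gamma_{3}+1/\kappa}e^{\nu v}(1+v^{2})^{-1}\,dv$, where the cancellation between $e^{\nu v}$ inside and $e^{-\nu x^{\kappa}}$ outside must be tracked carefully to obtain a polynomial envelope; the required sharp bounds are precisely the Mittag-Leffler type estimates already proved in Proposition 1 and Proposition 5 of~\cite{lama2}, which is why I would cite them rather than rederive them. The rest of the computation is a bookkeeping of the powers of $|\epsilon|$ and a case analysis on the sign of $\gamma_{3}+1$.
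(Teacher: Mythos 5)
Your proposal is correct and follows essentially the same route as the paper's own proof: reduction to a one-dimensional supremum in $x=|\tau/\epsilon^{\chi+\alpha}|$ after factoring out $\sup_{m}|\tilde{R}(im)/\tilde{R}_{D}(im)|$, the rescaling $s=|\epsilon|^{(\chi+\alpha)\kappa}v$ to extract the power $|\epsilon|^{(\chi+\alpha)\kappa(\gamma_2+\gamma_3+2)-\gamma_0}$, the Mittag--Leffler/Laplace estimates of \cite{lama2} for the inner integral, dropping the $(1+|\tau|^{\kappa})^{-\gamma_1}$ factor in case 1, and the bound $(1+|\tau|^{\kappa})^{-\gamma_1}\le(|\epsilon|^{(\chi+\alpha)\kappa}x^{\kappa})^{-\gamma_1}$ for large $x$ in case 2, with the same role for the hypotheses $\gamma_3\le-1$ and $\gamma_1\ge 1+\gamma_3$. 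No substantive differences from the paper's argument.
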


Some norm estimates concerning bilinear convolution operators acting on the Banach space above are needed.

\begin{prop}\label{prop3}
There exists $C_4>0$, depending on $\mu$ and $\kappa$, such that
\begin{multline}
\left\|\tau^{\kappa-1}\int_{0}^{\tau^{\kappa}}\int_{-\infty}^{\infty}f((\tau^\kappa-s')^{1/\kappa},m-m_1)g((s')^{1/\kappa},m_1)\frac{1}{(\tau^{\kappa}-s')s'}ds'dm_1\right\|_{(\nu,\beta,\mu,\chi,\alpha,\kappa,\epsilon)}\\
\le\frac{C_4}{|\epsilon|^{\chi+\alpha}} \left\|f(\tau,m)\right\|_{(\nu,\beta,\mu,\chi,\alpha,\kappa,\epsilon)} \left\|g(\tau,m)\right\|_{(\nu,\beta,\mu,\chi,\alpha,\kappa,\epsilon)},
\end{multline}
for every $f(\tau),g(\tau)\in F^d_{(\nu,\beta,\mu,\chi,\alpha,\kappa,\epsilon)}$.
\end{prop}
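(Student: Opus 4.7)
The plan is to estimate the output norm directly from Definition~\ref{def108}. Pointwise, I use the $F^d$-norm bounds to write $|f((\tau^\kappa-s')^{1/\kappa},m-m_1)| \leq \|f\|\cdot (1+|m-m_1|)^{-\mu} e^{-\beta|m-m_1|}\cdot \frac{|\xi_1|}{1+|\xi_1|^{2\kappa}} e^{\nu|\xi_1|^{\kappa}}$, where $\xi_1 = (\tau^\kappa-s')^{1/\kappa}/\epsilon^{\chi+\alpha}$, together with the analogous bound for $g$ at $\xi_2 = (s')^{1/\kappa}/\epsilon^{\chi+\alpha}$. The integrand then factors into a pure $m_1$-factor and a pure $s'$-factor.

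The $m_1$-part $(1+|m-m_1|)^{-\mu}(1+|m_1|)^{-\mu} e^{-\beta(|m-m_1|+|m_1|)}$ is precisely the integrand of the convolution of two extremal elements of $E_{(\beta,\mu)}$ of norm one, so Proposition~\ref{prop10} gives $(1+|m|)^\mu e^{\beta|m|} \int (\ldots) dm_1 \leq C_1$ uniformly in $m$. What remains is a $\tau$-only estimate.

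For the radial integral I parametrize the segment from $0$ to $\tau^\kappa$ by $s' = \tau^\kappa u$, $u\in[0,1]$, so that $\xi_1 = \xi(1-u)^{1/\kappa}$ and $\xi_2 = \xi u^{1/\kappa}$ with $\xi = \tau/\epsilon^{\chi+\alpha}$. The crucial simplification is that the two exponentials combine: $|\xi_1|^\kappa + |\xi_2|^\kappa = |\xi|^\kappa$, so after multiplying by the output weight the factor $e^{\nu|\xi|^\kappa}$ cancels exactly with $e^{-\nu|\xi|^\kappa}$. Collecting the algebraic weights, the Jacobian $\tau^\kappa\,du$, and the kernel $\tau^{\kappa-1}/((\tau^\kappa-s')s')$, one surplus factor of $|\tau|^{-1} = 1/(|\xi|\,|\epsilon|^{\chi+\alpha})$ survives and accounts for the desired prefactor $|\epsilon|^{-(\chi+\alpha)}$. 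The whole estimate then reduces to showing that
\[ \sup_{X\ge 0}\; (1+X) \int_0^1 \frac{[u(1-u)]^{1/\kappa-1}}{(1+X(1-u)^2)(1+Xu^2)}\, du \]
is finite, where $X=|\xi|^{2\kappa}$.

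The main obstacle is this uniform bound. For small $X$ it is immediate since $(1+X)$ is bounded and the integrand reduces to a convergent Beta integral (using $1/\kappa-1 > -1$ for $\kappa\ge 1$). For large $X$, by the symmetry $u\leftrightarrow 1-u$ I restrict to $\int_0^{1/2}$; in that range $(1-u)^2 \ge 1/4$, hence $1+X(1-u)^2 \ge 1+X/4$, which absorbs the prefactor $(1+X)$. The residual integral $\int_0^{1/2} u^{1/\kappa-1}/(1+Xu^2)\,du$ is then estimated by the substitution $v = \sqrt{X} u$, producing a bound of order $X^{-1/(2\kappa)}$, which tends to $0$ as $X\to\infty$. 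Combining this with the $m_1$-estimate yields the desired constant $C_4$ depending only on $\mu$ and $\kappa$.
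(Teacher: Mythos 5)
Your proposal is correct and follows essentially the same route as the paper's proof: the same pointwise use of the $F^{d}$-norm bounds, the same splitting into the Fourier-variable convolution estimate (Proposition~\ref{prop10}, i.e.\ the $\mu>1$ integral bound) and a $\tau$-only supremum, the same exact cancellation of $e^{\nu(|\xi_1|^{\kappa}+|\xi_2|^{\kappa})}$ against the output weight, the same extraction of $|\epsilon|^{-(\chi+\alpha)}$, and a reduction to exactly the quantity the paper calls $B(x)$ (your $X=|\xi|^{2\kappa}$ is the paper's $x^{2}$). The only divergence is the last elementary step: the paper proves $\sup_{x\ge 0}B(x)<\infty$ by a partial-fraction decomposition, whereas you use the $u\leftrightarrow 1-u$ symmetry together with a small/large-$X$ splitting and the substitution $v=\sqrt{X}\,u$; both are valid and give a constant depending only on $\mu$ and $\kappa$.
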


\begin{corol}\label{coro3}
There exists $C_4>0$, depending on $\mu$ and $\kappa$, such that
\begin{multline}
\left\|\tau^{k}\int_{0}^{\tau^{k}}\int_{-\infty}^{\infty}f((\tau^k-s)^{1/k},m-m_1)g(s^{1/k},m_1)\frac{dm_1 ds}{(\tau^k-s)s}\right\|_{(\nu,\beta,\mu,\chi,\alpha,\kappa,\epsilon)}\\
\le C_4\left\|f(\tau,m)\right\|_{(\nu,\beta,\mu,\chi,\alpha,\kappa,\epsilon)} \left\|g(\tau,m)\right\|_{(\nu,\beta,\mu,\chi,\alpha,\kappa,\epsilon)},
\end{multline}
for every $f(\tau,m),g(\tau,m)\in F^d_{(\nu,\beta,\mu,\chi,\alpha,\kappa,\epsilon)}$.
\end{corol}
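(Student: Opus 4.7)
\bigskip\noindent
\textbf{Proof proposal.}
The plan is to mimic the proof of Proposition~\ref{prop3}, adapted to the slightly different integrand. Identifying $k$ with $\kappa$, the quantity inside the norm of Corollary~\ref{coro3} is exactly $\tau$ times the expression appearing in Proposition~\ref{prop3}, and the target bound is tighter by a factor $|\epsilon|^{\chi+\alpha}$. Writing $u=\tau/\epsilon^{\chi+\alpha}$, we have $|\tau|=|\epsilon|^{\chi+\alpha}|u|$, so the extra $|\tau|$ supplies exactly the $|\epsilon|$-power needed to absorb the singular prefactor of Proposition~\ref{prop3}. Nevertheless, one cannot simply invoke Proposition~\ref{prop3} and multiply by $\tau$: the residual $|u|$ is unbounded on $S_d$. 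Consequently the proof has to be re-run with $|\tau|$ inserted at the estimation step.

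The steps I would carry out are: (i) bound $f,g$ pointwise by their $F^{d}_{(\nu,\beta,\mu,\chi,\alpha,\kappa,\epsilon)}$-norms, producing the splitting
\[
|f(\tau_1,m-m_1)|\le \|f\|\,A(\tau_1/\epsilon^{\chi+\alpha})\,B(m-m_1),
\]
with $A(v)=|v|(1+|v|^{2\kappa})^{-1}e^{\nu|v|^\kappa}$, $B(m)=(1+|m|)^{-\mu}e^{-\beta|m|}$, and analogously for $g$, where $\tau_1=(\tau^\kappa-s)^{1/\kappa}$, $\tau_2=s^{1/\kappa}$; (ii) control the $m_1$-convolution via the Banach algebra inequality of Proposition~\ref{prop10}, which applied to $B\in E_{(\beta,\mu)}$ (of norm $1$) yields $(B\star B)(m)\le C_1 B(m)$; (iii) rescale $\sigma=s/\tau^\kappa\in[0,1]$ so that the outer $\tau^\kappa$ cancels against $(\tau^\kappa-s)s=\tau^{2\kappa}(1-\sigma)\sigma$ and the two exponentials of $A$ combine to $e^{\nu|u|^\kappa}$; (iv) estimate the resulting scalar integral
\[
\mathcal{I}(u):=\int_0^1\frac{d\sigma}{\sigma^{(\kappa-1)/\kappa}(1-\sigma)^{(\kappa-1)/\kappa}(1+|u|^{2\kappa}\sigma^{2})(1+|u|^{2\kappa}(1-\sigma)^{2})}.
\]

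The heart of the argument is step (iv), where the aim is to show $(1+|u|^{2\kappa})|u|\,\mathcal{I}(u)$ is bounded uniformly in $u$. For $|u|\le 1$ one controls $\mathcal{I}(u)$ by the beta integral $B(1/\kappa,1/\kappa)$, the exponent $(\kappa-1)/\kappa<1$ making the endpoint singularities integrable. For $|u|>1$ I split $\mathcal{I}(u)$ over $[0,1/2]$ and $[1/2,1]$; on the first half $(1+|u|^{2\kappa}(1-\sigma)^{2})$ is comparable to $1+|u|^{2\kappa}$, reducing the problem to
\[
\int_0^{1/2}\frac{d\sigma}{\sigma^{(\kappa-1)/\kappa}(1+|u|^{2\kappa}\sigma^{2})},
\]
and the substitution $x=|u|^\kappa\sigma$ produces an extra factor $|u|^{-1}$. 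A symmetric argument on $[1/2,1]$ gives $\mathcal{I}(u)\le C\,((1+|u|^{2\kappa})|u|)^{-1}$. The factor $|u|^2 e^{\nu|u|^\kappa}$ inherited from the product $A(v_1)A(v_2)$ is then exactly matched by the weight $(1+|u|^{2\kappa})|u|^{-1}e^{-\nu|u|^\kappa}$ of the norm, and meanwhile the $\epsilon$-powers coming from $|\tau|^\kappa=|\epsilon|^{\kappa(\chi+\alpha)}|u|^\kappa$ and from the measure rescaling cancel against $|\tau|^{2\kappa}$ in the denominator, leaving the pure constant bound $C_4\|f\|\|g\|$.

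The main obstacle is step (iv): one must exploit both denominator factors $(1+|u|^{2\kappa}\sigma^{2})$ and $(1+|u|^{2\kappa}(1-\sigma)^{2})$ in tandem to obtain the correct $|u|^{-1}(1+|u|^{2\kappa})^{-1}$ decay of $\mathcal{I}(u)$, and control the two endpoint singularities simultaneously. Any attempt to use only one of the $|u|^{2\kappa}$-factors, or to invoke Proposition~\ref{prop3} and multiply the outcome by $\tau$, loses control of the $|u|$-growth and fails.
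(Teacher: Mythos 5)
Your proposal is correct and is essentially the paper's own route: the paper proves Corollary~\ref{coro3} by re-running the proof of Proposition~\ref{prop3} with the prefactor $\tau^{\kappa}$ in place of $\tau^{\kappa-1}$, which is exactly what you do, and your bookkeeping correctly shows that the extra factor $|\tau|=|\epsilon|^{\chi+\alpha}|u|$ absorbs the $|\epsilon|^{-(\chi+\alpha)}$ and reduces everything to the scalar bound $\sup_{u}(1+|u|^{2\kappa})|u|\,\mathcal{I}(u)<\infty$, which your estimate (beta integral for $|u|\le 1$, splitting at $\sigma=1/2$ and rescaling for $|u|>1$) establishes. The only cosmetic deviations are that you bound the elementary integral by splitting $[0,1]$ at $1/2$ where the paper's template uses a partial fraction decomposition, and you invoke Proposition~\ref{prop10} for the $m$-convolution where the paper bounds the constant $C_{4.1}$ directly; these are equivalent.
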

\begin{proof}
The proof of Proposition~\ref{prop3} can be followed step by step.
\end{proof}

\section{Review on analytic and formal transformations}\label{seccion3}

This section provides a brief review on the concept of the $k-$Borel summability method of formal power series, under slightly modified transformations, which provide adecquate conditions when applied to the operators appearing in the problem under study, considered in previous works such as \cite{lama} and \cite{lama1} when studying Cauchy problems under the presence of a small perturbation parameter, and also in~\cite{ma}. The classical procedure is described in detail in~\cite{ba}, Section 3.2.

We also define and state some properties associated to Fourier inverse transform acting on functions with exponential decay.

\begin{defin}\label{defi398}
Let $k\ge1$ be an integer. Let $(m_{k}(n))_{n\ge1}$ be the sequence
$$m_{k}(n)=\Gamma\left(\frac{n}{k}\right)=\int_{0}^{\infty}t^{\frac{n}{k}-1}e^{-t}dt,\qquad n\ge1.$$
Let $(\mathbb{E},\left\|\cdot\right\|_{\mathbb{E}})$ be a complex Banach space. We say a formal power series
$$\hat{X}(T)=\sum_{n=1}^{\infty}a_{n}T^{n}\in T\mathbb{E}[[T]]$$ is $m_{k}-$summable with respect to $T$
in the direction $d\in[0,2\pi)$ if the following assertions hold:
\begin{enumerate}
\item There exists $\rho>0$ such that the $m_{k}-$Borel transform of $\hat{X}$, $\mathcal{B}_{m_{k}}(\hat{X})$, is absolutely convergent for $|\tau|<\rho$, where
$$\mathcal{B}_{m_{k}}(\hat{X})(\tau)=\sum_{n=1}^{\infty}\frac{a_{n}}{\Gamma\left(\frac{n}{k}\right)}\tau^{n}\in\tau\mathbb{E}[[\tau]].$$
\item The series $\mathcal{B}_{m_{k}}(\hat{X})$ can be analytically continued in a sector
$S=\{\tau \in \mathbb{C}^{\star}:|d-\arg(\tau)|<\delta\}$ for some $\delta>0$. In addition to this,
the extension is of exponential growth at most $k$ in $S$, meaning that there exist $C,K>0$ such that
$$\left\|\mathcal{B}_{m_{k}}(\hat{X})(\tau)\right\|_{\mathbb{E}}\le Ce^{K|\tau|^{k}},\quad \tau \in S.$$
\end{enumerate}
Under these assumptions, the vector valued Laplace transform of $\mathcal{B}_{m_{k}}(\hat{X})$ along
direction $d$ is defined by
$$\mathcal{L}_{m_{k}}^{d}\left(\mathcal{B}_{m_{k}}(\hat{X})\right)(T)=
k\int_{L_{\gamma}}\mathcal{B}_{m_{k}}(\hat{X})(u)e^{-(u/T)^k}\frac{du}{u},$$
where $L_{\gamma}$ is the path parametrized by $u\in[0,\infty)\mapsto ue^{i\gamma}$, for some
appropriate direction $\gamma$ depending on $T$, such that $L_{\gamma}\subseteq S$ and
$\cos(k(\gamma-\arg(T)))\ge\Delta>0$ for some $\Delta>0$.

The function $\mathcal{L}_{m_{k}}^{d}(\mathcal{B}_{m_{k}}(\hat{X}))$ is well defined and turns out to be a
holomorphic and bounded function in any sector of the form
$S_{d,\theta,R^{1/k}}=\{T\in\mathbb{C}^{\star}:|T|<R^{1/k},|d-\arg(T)|<\theta/2\}$, for
some $\frac{\pi}{k}<\theta<\frac{\pi}{k}+2\delta$ and $0<R<\Delta/K$. This function is known
as the $m_{k}-$sum of the formal power series $\hat{X}(T)$ in the direction $d$.
\end{defin}

The following are some elementary properties concerning the $m_{k}-$sums of formal power series which will be
crucial in our procedure.

1) The function $\mathcal{L}_{m_{k}}^{d}(\mathcal{B}_{m_{k}}(\hat{X}))(T)$ admits $\hat{X}(T)$ as its
Gevrey asymptotic expansion of order $1/k$ with respect to $T$ in $S_{d,\theta,R^{1/k}}$. More precisely,
for every $\frac{\pi}{k}<\theta_1<\theta$, there exist $C,M>0$ such that
$$\left\|\mathcal{L}^{d}_{m_{k}}(\mathcal{B}_{m_{k}}(\hat{X}))(T)-
\sum_{p=1}^{n-1}a_{p}T^{p}\right\|_{\mathbb{E}}\le CM^{n}\Gamma(1+\frac{n}{k})|T|^{n},$$
for every $n\ge2$ and $T\in S_{d,\theta_{1},R^{1/k}}$. Watson's lemma (see Proposition 11 p.75 in \cite{ba2})
allows us to affirm that $\mathcal{L}^{d}_{m_{k}}(\mathcal{B}_{m_{k}}(\hat{X}))(T)$ is unique provided that
the opening $\theta_1$ is larger than $\frac{\pi}{k}$.

2) Whenever $\mathbb{E}$ is a Banach algebra, the set of holomorphic functions having Gevrey
asymptotic expansion of order $1/k$ on a sector with values in $\mathbb{E}$ turns out
to be a differential algebra (see Theorem 18, 19 and 20 in \cite{ba2}). This, and the
uniqueness provided by Watson's lemma allow us to obtain some properties on $m_{k}-$summable
formal power series in direction $d$.

By $\star$ we denote the product in the Banach algebra and also the Cauchy product of formal power
series with coefficients in $\mathbb{E}$. Let $\hat{X}_{1}$, $\hat{X}_{2}\in T\mathbb{E}[[T]]$ be
$m_{k}-$summable formal power series in direction $d$. Let $q_1\ge q_2\ge1$ be integers.
Then $ \hat{X}_{1}+\hat{X}_{2}$, $\hat{X}_{1}\star \hat{X}_{2}$ and $T^{q_1}\partial_{T}^{q_2}\hat{X}_{1}$,
which are elements of $T\mathbb{E}[[T]]$, are $m_{k}-$summable in direction $d$. Moreover, one has
$$\mathcal{L}_{m_{k}}^{d}(\mathcal{B}_{m_{k}}(\hat{X}_{1}))(T)+
\mathcal{L}_{m_{k}}^{d}(\mathcal{B}_{m_{k}}(\hat{X}_{2}))(T)=
\mathcal{L}_{m_{k}}^{d}(\mathcal{B}_{m_{k}}(\hat{X}_{1}+\hat{X}_{2}))(T),$$
$$\mathcal{L}_{m_{k}}^{d}(\mathcal{B}_{m_{k}}(\hat{X}_{1}))(T)\star
\mathcal{L}_{m_{k}}^{d}(\mathcal{B}_{m_{k}}(\hat{X}_{2}))(T)=
\mathcal{L}_{m_{k}}^{d}(\mathcal{B}_{m_{k}}(\hat{X}_{1}\star\hat{X}_{2}))(T),$$
$$T^{q_1}\partial_{T}^{q_2}\mathcal{L}^{d}_{m_{k}}(\mathcal{B}_{m_{k}}(\hat{X}_{1}))(T)
=\mathcal{L}_{m_{k}}^{d}(\mathcal{B}_{m_{k}}(T^{q_1}\partial_{T}^{q_2}\hat{X}_{1}))(T),$$
for every $T\in S_{d,\theta_1,R^{1/k}}$.

The next proposition is written without proof, which can be found in \cite{lama1}, Proposition 6.

\begin{prop}\label{prop462}
Let $\hat{f}(t)=\sum_{n \geq 1}f_nt^n$ and $\hat{g}(t) = \sum_{n \geq 1} g_{n} t^{n}$ that belong to
$\mathbb{E}[[t]]$, where $(\mathbb{E},\left\|\cdot\right\|_{\mathbb{E}})$ is a Banach algebra. Let
$k,m\ge1$ be integers. The following formal identities hold.
$$\mathcal{B}_{m_{k}}(t^{k+1}\partial_{t}\hat{f}(t))(\tau)=k\tau^{k}\mathcal{B}_{m_{k}}(\hat{f}(t))(\tau),$$
$$\mathcal{B}_{m_{k}}(t^{m}\hat{f}(t))(\tau)=
\frac{\tau^{k}}{\Gamma\left(\frac{m}{k}\right)}\int_{0}^{\tau^{k}}
(\tau^{k}-s)^{\frac{m}{k}-1}\mathcal{B}_{m_{k}}(\hat{f}(t))(s^{1/k})\frac{ds}{s}$$
and
$$
\mathcal{B}_{m_k}( \hat{f}(t) \star \hat{g}(t) )(\tau) = \tau^{k}\int_{0}^{\tau^{k}}
\mathcal{B}_{m_k}(\hat{f}(t))((\tau^{k}-s)^{1/k}) \star \mathcal{B}_{m_k}(\hat{g}(t))(s^{1/k})
\frac{1}{(\tau^{k}-s)s} ds.
$$
\end{prop}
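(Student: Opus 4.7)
The plan is to verify each of the three identities coefficient-by-coefficient by substituting the defining series $\hat{f}(t)=\sum_{n\geq 1} f_n t^n$ (and $\hat{g}(t)=\sum_{n\geq 1} g_n t^n$ for the third identity) and invoking linearity of $\mathcal{B}_{m_k}$. The whole proof reduces to manipulations of Gamma values, in particular the recursion $\Gamma(x+1)=x\,\Gamma(x)$ and the Beta integral $\int_0^1 (1-u)^{a-1} u^{b-1}\, du = \Gamma(a)\Gamma(b)/\Gamma(a+b)$, which after the change of variables $s = \tau^{k}u$ rescales to the identity $\int_0^{\tau^{k}}(\tau^{k}-s)^{a-1} s^{b-1}\, ds = \tau^{k(a+b-1)}\,\Gamma(a)\Gamma(b)/\Gamma(a+b)$. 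Since the statement is purely formal, no convergence question enters: each integral is interpreted monomial-wise and is exact.

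For the first identity, $t^{k+1}\partial_{t}\hat{f}(t) = \sum_{n\geq 1} n\,f_n\,t^{n+k}$, so $\mathcal{B}_{m_k}(t^{k+1}\partial_{t}\hat{f})(\tau) = \sum_{n\geq 1} \frac{n\,f_n}{\Gamma((n+k)/k)}\tau^{n+k}$. Using $\Gamma(n/k+1) = (n/k)\Gamma(n/k)$, the coefficient collapses to $k\,f_n/\Gamma(n/k)$, which matches the coefficient of $\tau^{n+k}$ in $k\tau^{k}\mathcal{B}_{m_k}(\hat{f})(\tau)$ term by term.

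For the second identity, $t^{m}\hat{f}(t)=\sum_{n\geq 1} f_n t^{n+m}$ gives $\mathcal{B}_{m_k}(t^{m}\hat{f})(\tau)=\sum_{n\geq 1}\frac{f_n}{\Gamma((n+m)/k)}\tau^{n+m}$. On the right-hand side I substitute $\mathcal{B}_{m_k}(\hat{f})(s^{1/k}) = \sum_{n\geq 1} (f_n/\Gamma(n/k))\,s^{n/k}$, exchange the summation and the $s$-integration formally, and apply the rescaled Beta identity to $\int_{0}^{\tau^{k}}(\tau^{k}-s)^{m/k-1}s^{n/k-1}\,ds = \tau^{m+n-k}\,\Gamma(m/k)\Gamma(n/k)/\Gamma((m+n)/k)$. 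The factors $\Gamma(m/k)$ and $\Gamma(n/k)$ cancel and the two sides coincide.

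The third identity follows the same template with a little more bookkeeping. Writing the Cauchy product as $\hat{f}\star\hat{g} = \sum_{n\geq 2}\Big(\sum_{p+q=n,\,p,q\geq 1} f_p\cdot g_q\Big)t^{n}$, its $m_k$-Borel transform equals $\sum_{p,q\geq 1}\frac{f_p\cdot g_q}{\Gamma((p+q)/k)}\tau^{p+q}$. On the right-hand side I insert the series for $\mathcal{B}_{m_k}(\hat{f})((\tau^{k}-s)^{1/k})$ and $\mathcal{B}_{m_k}(\hat{g})(s^{1/k})$, interchange the double summation with the $s$-integration, and evaluate each integral by the same Beta formula with exponents $p/k$ and $q/k$. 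Matching powers of $\tau$ concludes the argument. The main obstacle is purely notational: there is no analytic difficulty, but one must carefully preserve the product structure of the Banach algebra $\mathbb{E}$ throughout the double indexing in this last step.
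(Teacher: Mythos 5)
Your proof is correct: each identity is checked coefficient-by-coefficient, with $\Gamma(n/k+1)=(n/k)\Gamma(n/k)$ handling the first identity and the rescaled Beta integral $\int_0^{\tau^k}(\tau^k-s)^{p/k-1}s^{q/k-1}\,ds=\tau^{p+q-k}\,\Gamma(p/k)\Gamma(q/k)/\Gamma((p+q)/k)$ making the Gamma factors cancel in the second and third, the latter with the $\mathbb{E}$-product $f_p\star g_q$ carried through the double sum. The paper itself omits the proof, referring to Proposition 6 of \cite{lama1}, and the argument there is this same formal term-by-term verification, so your approach coincides with the intended one.
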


The proof of the next result can be found in Proposition 7 in~\cite{lama1}, and it is concern with the properties of the inverse Fourier
transform acting on continuous functions with exponential decay on $\mathbb{R}$.

\begin{prop}\label{prop479}
1) Let $f : \mathbb{R} \rightarrow \mathbb{R}$ be a continuous function with a constant $C>0$ such that
$|f(m)| \leq C \exp(-\beta |m|)$ for all $m \in \mathbb{R}$, for some $\beta > 0$. The inverse Fourier
transform of $f$ is defined by the integral representation
$$ \mathcal{F}^{-1}(f)(x) = \frac{1}{ (2\pi)^{1/2} } \int_{-\infty}^{+\infty} f(m) \exp( ixm ) dm $$
for all $x \in \mathbb{R}$. It turns out that the function $\mathcal{F}^{-1}(f)$ extends to an analytic function
on the horizontal strip
$$H_{\beta} = \{ z \in \mathbb{C} / |\mathrm{Im}(z)| < \beta \}. \label{strip_H_beta}$$
Let $\phi(m) = im f(m)$. Then, we have 
$$\partial_{z} \mathcal{F}^{-1}(f)(z) = \mathcal{F}^{-1}(\phi)(z),\quad z \in H_{\beta}.$$

2) Let $f,g \in E_{(\beta,\mu)}$ and let $\psi(m) = \frac{1}{(2\pi)^{1/2}} f \star g(m)$, the convolution product of $f$ and $g$, for all $m \in \mathbb{R}$.
From Proposition~\ref{prop10}, we know that $\psi \in E_{(\beta,\mu)}$. Moreover, one has
$$\mathcal{F}^{-1}(f)(z)\mathcal{F}^{-1}(g)(z) = \mathcal{F}^{-1}(\psi)(z),\quad z \in H_{\beta}.$$
\end{prop}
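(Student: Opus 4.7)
The plan for part 1 is to work directly with the integral representation. For $z = x + iy \in H_{\beta}$, the estimate $|f(m) \exp(izm)| \le C \exp(-\beta |m|) \exp(-ym) \le C \exp(-(\beta - |y|)|m|)$ is valid, and since $|y| < \beta$ the right-hand side is integrable on $\mathbb{R}$. This shows that $\mathcal{F}^{-1}(f)(z)$ is well defined on $H_{\beta}$ by the same integral. For analyticity, I would fix a compact $K \subset H_{\beta}$, choose $\beta' < \beta$ with $|\mathrm{Im}(z)| \le \beta'$ for all $z \in K$, and note that the integrand and its $z$-derivative $im f(m)\exp(izm)$ are both dominated on $K \times \mathbb{R}$ by $C(1+|m|) \exp(-(\beta-\beta')|m|) \in L^{1}(\mathbb{R})$. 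Standard differentiation-under-the-integral-sign then gives both the holomorphy of $\mathcal{F}^{-1}(f)$ on $H_{\beta}$ and the identity $\partial_{z} \mathcal{F}^{-1}(f)(z) = \mathcal{F}^{-1}(\phi)(z)$.

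For part 2 the plan is a Fubini computation. Since $f, g \in E_{(\beta,\mu)}$ they satisfy pointwise bounds of the form $|f(m)|, |g(m)| \le M (1+|m|)^{-\mu} \exp(-\beta|m|)$, so for any $z \in H_{\beta}$ the function $(m_{1}, m_{2}) \mapsto f(m_{1}) g(m_{2}) \exp(i(m_{1}+m_{2})z)$ is absolutely integrable on $\mathbb{R}^{2}$ (again using $|y| < \beta$). Writing
\begin{equation*}
\mathcal{F}^{-1}(f)(z)\, \mathcal{F}^{-1}(g)(z) = \frac{1}{2\pi} \int_{\mathbb{R}} \int_{\mathbb{R}} f(m_{1}) g(m_{2}) \exp(i(m_{1}+m_{2})z)\, dm_{1}\, dm_{2},
\end{equation*}
I would apply Fubini, change variables $m = m_{1} + m_{2}$ with $m_{2}$ fixed (Jacobian one), and identify the inner integral as $(f \star g)(m)$. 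This yields
\begin{equation*}
\mathcal{F}^{-1}(f)(z)\, \mathcal{F}^{-1}(g)(z) = \frac{1}{2\pi} \int_{\mathbb{R}} (f \star g)(m) \exp(izm)\, dm = \frac{1}{(2\pi)^{1/2}} \int_{\mathbb{R}} \psi(m) \exp(izm)\, dm,
\end{equation*}
which is exactly $\mathcal{F}^{-1}(\psi)(z)$. Note that Proposition~\ref{prop10} ensures $\psi \in E_{(\beta,\mu)}$, so the right-hand side also lies in the class to which part 1 applies.

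The only genuinely delicate point is justifying the exchange of integration and differentiation in part 1 and the Fubini step in part 2; both reduce, however, to producing an $L^{1}$ majorant, which is immediate from the exponential decay assumption combined with the chosen margin $\beta - |y|>0$ (resp.\ $\beta - \beta' > 0$). There is no genuine obstacle here; this is essentially a verification that the classical Paley--Wiener/convolution machinery works in the specific class $E_{(\beta,\mu)}$ used throughout the paper, and the argument mirrors Proposition~7 of \cite{lama1}.
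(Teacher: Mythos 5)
Your proof is correct: the majorant $C\exp(-(\beta-|y|)|m|)$ (resp. $C(1+|m|)\exp(-(\beta-\beta')|m|)$ on compacts) justifies both the holomorphic extension and differentiation under the integral sign in part 1, and the Fubini plus change-of-variables computation in part 2 correctly reproduces the normalization $\psi=(2\pi)^{-1/2}f\star g$, with Proposition~\ref{prop10} guaranteeing $\psi\in E_{(\beta,\mu)}$. The paper gives no proof of this statement, deferring to Proposition 7 of \cite{lama1}, and your argument is essentially the same standard one used there.
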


We adopt some additional notation which makes the technical reading more easy to handle. Let $k\in\N$. For every $f(\tau,m)\in\mathbb{E}_{(\beta,\mu)}[[\tau]]$, and all $g(\tau)\in\C[[\tau]]$, we write
$$g(\tau)\star_{k} f(\tau,m):=\tau^{k}\int_0^{\tau^{k}}g((\tau^{k}-s)^{1/k})f(s^{1/k},m)\frac{ds}{(\tau^k-s)s}.$$
For every $f(\tau),g(\tau)\in\C[[\tau]]$, we write
$$g(\tau)\star_{k} f(\tau):=\tau^{k}\int_0^{\tau^{k}}g((\tau^{k}-s)^{1/k})f(s^{1/k})\frac{ds}{(\tau^k-s)s}.$$
Finally, for every $f(\tau,m),g(\tau,m)\in\mathbb{E}_{(\beta,\mu)}[[\tau]]$, we write
$$g(\tau,m)\star_{k}^{E} f(\tau,m):=\tau^{k}\int_0^{\tau^{k}}\int_{-\infty}^{\infty}g((\tau^{k}-s)^{1/k},m-m_1)f(s^{1/k},m_1)\frac{dm_1 ds}{(\tau^k-s)s}.$$

\section{Main and related auxiliary problems}\label{seccion4}

Let $M_1,M_2,M_3\ge0$, $D\ge2$ be integer numbers. For every $\lambda=1,2,3$ and all $\ell\in\{0,1,\ldots,M_{\lambda}\}$ we take non negative integers $k_{\ell,\lambda},m_{\ell,\lambda}$ and complex numbers $a_{\ell,\lambda}$, with $a_{0,\lambda}\neq0$. We assume that $k_{\ell,\lambda}< k_{\ell+1,\lambda}$ for every $0\le \ell\le M_{\lambda}-1$. Let $\Delta_{\ell},d_\ell,\delta_\ell$ be non negative integers for $\ell\in\{1,\ldots, D\}$ such that $1\le \delta_\ell<\delta_{\ell+1}$ for $\ell\in\{1,\ldots,D-1\}$, and assume that $\kappa_1,\kappa_2$ are fixed positive integers which are determined in the sequel.

We also assume the two following conditions hold:
\begin{equation}\label{e92}
k_{\ell,2}+k_{0,2}>k_{0,3}+k_{0,1}>2k_{0,2},\qquad \ell\ge1,
\end{equation}
and
\begin{equation}\label{e93}
k_{0,1}\le d_{\ell}-\delta_{\ell},\qquad 1\le \ell\le D.
\end{equation}
More precisely, we assume that 
\begin{equation}\label{e94}
k_{0,1}= d_\ell-\delta_{\ell}-\delta_{\ell}\kappa_1-d_{\ell,0}, \qquad 1\le \ell\le D,
\end{equation}
where $d_{\ell,0}\ge1$ for $1\le \ell\le D-1$ and $d_{D,0}=0$.

Observe that, under (\ref{e92}) and (\ref{e93}), we have $2k_{0,2}-k_{0,3}<d_\ell-\delta_\ell$ for all $1\le \ell\le D$. We also consider elements satisfying 
\begin{equation}\label{e94b}
2k_{0,2}-k_{0,3}=d_\ell-\delta_\ell-\delta_\ell\kappa_2-\tilde{d}_{\ell,0},\quad 1\le \ell\le D,
\end{equation}
where $\tilde{d}_{\ell,0}\ge 1$ for $1\le \ell\le D-1$ and $\tilde{d}_{D,0}=0$.

Observe that, in view of conditions (\ref{e94}) and (\ref{e94b}), we get that
$$\delta_D>0,\quad d_{\ell,0}-\tilde{d}_{\ell,0}<\delta_{\ell}(\kappa_2-\kappa_1),\quad \kappa_2>\kappa_1,$$
for every $1\le\ell\le D-1$.

Let $Q(X), R_{\ell}(X)\in\C[X]$ for every $1\le \ell\le D$ which satisfy there exists a common positive integer $\upsilon$ such that 
\begin{equation}\label{e91}
Q(X)=X^{\upsilon}\tilde{Q}(X),\qquad R_{\ell}(X)=X^{\upsilon}\tilde{R}_{\ell}(X),
\end{equation}
and such that
\begin{equation}
\mathrm{deg}(\tilde{Q}) = \mathrm{deg}(\tilde{R}_{D}) \geq \mathrm{deg}(\tilde{R}_{\ell}) \ \ , 
\tilde{Q}(im) \neq 0 \ \ , \ \ \tilde{R}_{D}(im) \neq 0 \label{e540}
\end{equation}
for all $m \in \mathbb{R}$, all $1 \leq \ell \leq D-1$.
We consider the main problem under study:
\begin{multline}\label{e85}
Q(\partial_z)\left(p_1(t,\epsilon)u(t,z,\epsilon)+p_2(t,\epsilon)u^2(t,z,\epsilon)+p_3(t,\epsilon)u^3(t,z,\epsilon)\right)\\
=\sum_{j=0}^{Q}b_{j}(z)\epsilon^{n_j}t^{b_j}+\sum_{\ell=1}^{D}\epsilon^{\Delta_{\ell}}t^{d_{\ell}}\partial_{t}^{\delta_{\ell}}R_{\ell}(\partial_{z})u(t,z,\epsilon),
\end{multline}
where $p_\lambda(t,\epsilon)\in\C[t,\epsilon]$, for $\lambda=1,2,3$. More precisely, for every $\lambda=1,2,3$, we write
$$p_{\lambda}(t,\epsilon)=\sum_{\ell=0}^{M_\lambda}a_{\ell,\lambda}\epsilon^{m_{\ell,\lambda}}t^{k_{\ell,\lambda}},$$
for some non negative integers $M_{\lambda},m_{\ell,\lambda},k_{\ell,\lambda}$ and some $a_{\ell,\lambda}\in\C$. We assume that $a_{0,\lambda}\neq0$.

The coefficients $b_{j}$ are constructed as follows. For every $0\le j\le Q$, we consider functions $m\mapsto \tilde{B}_j(m)$ in the space $E_{(\beta,\mu)}$ for some $\mu>1$ and $\beta>0$. We write $B_j(m)=(im)^{\upsilon}\tilde{B}_j(m)$, where $\upsilon$ is determined in (\ref{e91}), and put
\begin{equation}\label{e95}
b_j(z)=\mathcal{F}^{-1}(m\mapsto B_j(m))(z).
\end{equation}
Observe that the construction of $b_j$ and the properties of inverse Fourier transform described in Proposition~\ref{prop479}, one has $b_j(z)=\partial^{\upsilon}_z\tilde{b}_j(z)$, where 
$$\tilde{b}_j(z)=\mathcal{F}^{-1}(m\mapsto \tilde{B}_j(m))(z).$$

We search for the solutions of (\ref{e85}) of the form 
\begin{equation}\label{e118}
u(t,z,\epsilon)=\epsilon^{\beta}U(\epsilon^{\alpha}t,z,\epsilon)
\end{equation}
for some $\alpha,\beta\in\Q$ with $\alpha>0$. We write the initial problem (\ref{e85}) in terms of $U(T,z,\epsilon)$ to get 
\begin{multline}
Q( \partial_z)\left(\left(\sum_{\ell=0}^{M_1}a_{\ell,1}\epsilon^{m_{\ell,1}+\beta-\alpha k_{\ell,1}}T^{k_{\ell,1}}\right)U(T,z,\epsilon)+\left(\sum_{\ell=0}^{M_2}a_{\ell,2}\epsilon^{m_{\ell,2}+2\beta-\alpha k_{\ell,2}}T^{k_{\ell,2}}\right)U^2(T,z,\epsilon)\right.\label{e124}\\
\left.+\left(\sum_{\ell=0}^{M_3}a_{\ell,3}\epsilon^{m_{\ell,3}+3\beta-\alpha k_{\ell,3}}T^{k_{\ell,3}}\right)U^3(T,z,\epsilon)\right)\\
=\sum_{j=0}^{Q}b_{j}(z)\epsilon^{n_j-\alpha b_j}T^{b_j}+\sum_{\ell=1}^{D}\epsilon^{\Delta_{\ell}+\alpha(\delta_\ell-d_\ell)+\beta}T^{d_{\ell}}R_{\ell}(\partial_{z})\partial_{T}^{\delta_{\ell}}U(T,z,\epsilon).
\end{multline}

\subsection{Construction of two distiguished solutions}

In this subsection, we determine two distinguished solutions of (\ref{e124}), $U_{01}$ and $U_{02}$, from which two different families of solutions are provided.

We assume $\alpha,\beta$ in (\ref{e118}) can be chosen so that
\begin{equation}\label{e134}
\Delta_{\ell}+\alpha(\delta_{\ell}-d_{\ell})+\beta>0,\qquad n_{j}-\alpha b_j>0,
\end{equation}
for every $1\le \ell\le D$ and $0\le j\le Q$. Moreover, we assume that for every $\lambda=1,2,3$ there exists $0\le s_{\lambda}\le M_{\lambda}-1$ such that
\begin{equation}\label{e138}
m_{\ell,\lambda}+\lambda\beta-\alpha k_{\ell,\lambda}\left\{ \begin{array}{lcc}
             =0 &   if  & 0\le \ell\le s_{\lambda} \\
             \\ >0 &  if  & s_{\lambda}+1\le \ell\le M_{\lambda}.
             \end{array}\right.
\end{equation}

The motivation for this last assumption is related to the nature of the roots of the polynomial $p_{\lambda}(t,\epsilon)$. In order to illustrate this, let us consider $M_{\lambda}=1$ and $1\le k_{0,\lambda}<k_{1,\lambda}$. Then, $p_{\lambda}$ admits $t=0$ as a root of order $k_{0,\lambda}$ when considered as a polynomial in $t$ variable. The modulus of the other nonzero $k_{1,\lambda}-k_{0,\lambda}$ roots of $p_{\lambda}$ equals 
$$(|a_{0,\lambda}|/|a_{1,\lambda}|)^{1/(k_{1,\lambda}-k_{0,\lambda})}|\epsilon|^{\frac{m_{0,\lambda}-m_{1,\lambda}}{k_{1,\lambda}-k_{0,\lambda}}}.$$
Assumption (\ref{e138}) entails that the only root of $p_{\lambda}$ with respect to $t$ which remains bounded for all $\epsilon$ closed to zero is $t=0$.

We assume a solution of (\ref{e124}), $U(T,z,\epsilon)$, can be written as a power series with respect to $\epsilon$ in the form
\begin{equation}\label{e150}
U(T,z,\epsilon)=U_{0}(T)+\sum_{n\ge1}U_{n}(T,z)\epsilon^n,
\end{equation}
where $U_0(T)\neq0$ is chosen among the nonzero solutions of 
\begin{equation}\label{e154}
\left(\sum_{\ell=0}^{s_1}a_{\ell,1}T^{k_{\ell,1}}\right)U_0(T)+\left(\sum_{\ell=0}^{s_2}a_{\ell,2}T^{k_{\ell,2}}\right)(U_0(T))^2+\left(\sum_{\ell=0}^{s_3}a_{\ell,3}T^{k_{\ell,3}}\right)(U_0(T))^3=0.
\end{equation}
Such a function is known as a slow curve following the terminology in~\cite{cdrss}.

Under the hypotheses (\ref{e540}) and (\ref{e95}), we observe by factoring out the operator $\partial_{z}^{\upsilon}$ from (\ref{e124}), that $U(T,z,\epsilon)$ solves the related PDE
\begin{multline}
 \tilde{Q}( \partial_z)\left(\left(\sum_{\ell=0}^{M_1}a_{\ell,1}\epsilon^{m_{\ell,1}+\beta-\alpha k_{\ell,1}}T^{k_{\ell,1}}\right)U(T,z,\epsilon)+\left(\sum_{\ell=0}^{M_2}a_{\ell,2}\epsilon^{m_{\ell,2}+2\beta-\alpha k_{\ell,2}}T^{k_{\ell,2}}\right)U^2(T,z,\epsilon)\right.\\
\left.+\left(\sum_{\ell=0}^{M_3}a_{\ell,3}\epsilon^{m_{\ell,3}+3\beta-\alpha k_{\ell,3}}T^{k_{\ell,3}}\right)U^3(T,z,\epsilon)\right)\\
=\sum_{j=0}^{Q}\tilde{b}_{j}(z)\epsilon^{n_j-\alpha b_j}T^{b_j}+F(T,z,\epsilon)+\sum_{\ell=1}^{D}\epsilon^{\Delta_{\ell}+\alpha(\delta_\ell-d_\ell)+\beta}T^{d_{\ell}}\tilde{R}_{\ell}(\partial_{z})\partial_{T}^{\delta_{\ell}}U(T,z,\epsilon).
\label{e614}
\end{multline}
where the forcing term $F(T,z,\epsilon)$ is a polynomial in $z$ of degree less than $\upsilon-1$.

According to the assumptions (\ref{e134}), (\ref{e138}) and using the fact
that $\tilde{Q}(0) \neq 0$, by taking $\epsilon=0$ into equation (\ref{e614}) we see that the constraint (\ref{e154}) is equivalent to
the fact that $F(T,z,0) \equiv 0$.
 The precise shape of the term $F(T,z,\epsilon)$ will be given
in Section~\ref{seccion5}, see (\ref{e2629}) and (\ref{e235bd}).

The nonzero solutions $U_0(T)$ of (\ref{e150}) satisfy the following equation
\begin{equation}\label{e162}
A(T)(U_0(T))^2+B(T)U_0(T)+C(T)=0,
\end{equation}
where 
$$A(T)=\sum_{\ell=0}^{s_3}a_{\ell,3}T^{k_{\ell,3}},\quad B(T)=\sum_{\ell=0}^{s_2}a_{\ell,2}T^{k_{\ell,2}},\quad C(T)=\sum_{\ell=0}^{s_1}a_{\ell,1}T^{k_{\ell,1}}.$$

Let $\Delta(T)=B^2-4AC$. Equation (\ref{e162}) has two nonzero solutions, namely
\begin{equation}\label{e168}
U_{01}(T)=\frac{-B+\sqrt{\Delta}}{2A},\qquad U_{02}(T)=\frac{-B-\sqrt{\Delta}}{2A}.
\end{equation}

We have
$$A(T)=a_{03}T^{k_{0,3}}(1+\tilde{A}(T)),\quad \tilde{A}\in\C[T], \tilde{A}(0)=0,$$
$$B(T)=a_{02}T^{k_{0,2}}(1+\tilde{B}(T)),\quad \tilde{B}\in\C[T], \tilde{B}(0)=0,$$
$$C(T)=a_{01}T^{k_{0,1}}(1+\tilde{C}(T)),\quad \tilde{C}\in\C[T], \tilde{C}(0)=0,$$
which yields
$$\Delta=a_{02}^2T^{2k_{0,2}}(1+\tilde{B}(T))^2-4a_{0,3}a_{0,1}T^{k_{0,3}+k_{0,1}}(1+\tilde{A}(T))(1+\tilde{C}(T)).$$
Regarding (\ref{e92}), we can write
$$\Delta=a_{02}^2T^{2k_{0,2}}\left((1+\tilde{B}(T))^2-\frac{4a_{0,3}a_{0,1}}{a_{0,2}^2}T^{k_{0,3}+k_{0,1}-2k_{0,2}}(1+\tilde{A}(T))(1+\tilde{C}(T))\right).$$
Again, by (\ref{e92}), we guarantee the existence of $\tilde{B}_2(T)\in\C[T]$ with $\tilde{B}_2(0)=0$ such that
$$\tilde{B}(T)=T^{k_{0,3}+k_{0,1}-2k_{0,2}}\tilde{B}_2(T).$$
This yields 
\begin{align*}
\sqrt{\Delta}&=a_{02}T^{k_{0,2}}\left(1+2T^{k_{0,3}+k_{0,1}-2k_{0,2}}\tilde{B}_2(T)+(\tilde{B}_2(T))^2T^{2(k_{0,3}+k_{0,1}-2k_{0,2})}\right.\\
&\left.-\frac{4a_{0,3}a_{0,1}}{a_{0,2}^2}T^{k_{0,3}+k_{0,1}-2k_{0,2}}(1+\tilde{A}(T))(1+\tilde{C}(T))\right)^{1/2}\\
&=a_{02}T^{k_{0,2}}\left(1-\frac{4a_{0,3}a_{0,1}}{a_{0,2}^2}T^{k_{0,3}+k_{0,1}-2k_{0,2}}(1+D(T))\right)^{1/2},\\
&=a_{02}T^{k_{0,2}}\left(1-\frac{2a_{0,3}a_{0,1}}{a_{0,2}^2}T^{k_{0,3}+k_{0,1}-2k_{0,2}}+T^{k_{0,3}+k_{0,1}-2k_{0,2}}E(T)\right),
\end{align*}
with $D(T),E(T)\in\C\{T\}$ with $D(0)=E(0)=0$. Taking this into account, we get the two solutions of (\ref{e162}) have the form of (\ref{e168}), and are given by
\begin{align}
U_{01}(T)=&\frac{-a_{0,2}T^{k_{0,2}}(1+\tilde{B}(T))+a_{0,2}T^{k_{0,2}}\left(1-\frac{2a_{0,3}a_{0,1}}{a_{0,2}^2}T^{k_{0,3}+k_{0,1}-2k_{0,2}}+T^{k_{0,3}+k_{0,1}-2k_{0,2}}E(T)\right)}{2a_{0,3}T^{k_{0,3}}(1+\tilde{A}(T))}\nonumber\\
&=\frac{-a_{0,2}T^{k_{0,1}-k_{0,2}}\tilde{B}_2(T)-\frac{2a_{0,3}a_{0,1}}{a_{0,2}}T^{k_{0,1}-k_{0,2}}+T^{k_{0,1}-k_{0,2}}E(T)}{2a_{0,3}(1+\tilde{A}(T))}\nonumber\\
&=-\frac{a_{0,1}}{a_{0,2}}T^{k_{0,1}-k_{0,2}}(1+\mathcal{J}_1(T))\label{e199},
\end{align}
with $\mathcal{J}_1(T)\in\C\{T\}$ and $\mathcal{J}_1(0)=0$, and 
\begin{align}
U_{02}(T)=&\frac{-a_{0,2}T^{k_{0,2}}(1+\tilde{B}(T))-a_{0,2}T^{k_{0,2}}\left(1-\frac{2a_{0,3}a_{0,1}}{a_{0,2}^2}T^{k_{0,3}+k_{0,1}-2k_{0,2}}+T^{k_{0,3}+k_{0,1}-2k_{0,2}}E(T)\right)}{2a_{0,3}T^{k_{0,3}}(1+\tilde{A}(T))}\nonumber\\
&=\frac{-a_{0,2}T^{k_{0,2}}(2+E_2(T))}{2a_{0,3}T^{k_{0,3}}(1+\tilde{A}(T))}=-\frac{a_{0,2}}{a_{0,3}}T^{k_{0,2}-k_{0,3}}(1+\mathcal{J}_2(T))\label{e204},
\end{align}
with $E_2(T)\in\C\{T\}$, $E_2(0)=0$, and $\mathcal{J}_2(T)\in\C\{T\}$ with $\mathcal{J}_2(0)=0$.

The behavior of $U_{01}(T)$ and $U_{02}(T)$ near the origin motivates the  choice as candidates for solutions of (\ref{e124}) described in the two following subsections.

\subsection{First perturbed auxiliary problem}

The form of $U_{01}(T)$ in (\ref{e199}) motivates a first concrete form of a solution of (\ref{e124}):
\begin{equation}\label{e216}
U_{1}(T,z,\epsilon)=-\frac{a_{0,1}}{a_{0,2}}T^{k_{0,1}-k_{0,2}}(1+\mathcal{J}_1(T))+T^{\gamma_1}V_1(T,z,\epsilon),
\end{equation}
for some $\gamma_1\in\Q$. We assume this choice is made accordingly to the following conditions:
\begin{equation}\label{e218}
\gamma_1\ge k_{0,1}-k_{0,2}, 
\end{equation}
and 
\begin{equation}\label{e222}
\gamma_1\le  b_j-k_{0,1},\qquad j=0,\ldots,Q.
\end{equation}

Observe that, in view of (\ref{e92}) we derive
\begin{equation}\label{e686}
\gamma_1\ge k_{0,2}-k_{0,3},\qquad\hbox{ and }\qquad 2\gamma_1\ge k_{0,1}-k_{0,3}.
\end{equation}

In order to search for such a solution, we plug the previous expression into (\ref{e124}). In view of Assumption (\ref{e91}), only those terms depending on $z$ appear on the resulting equation when $Q(\partial_z)$ or $R_{\ell}(\partial_z)$ are applied. This yields
\begin{multline}
Q(\partial_z)\left[\left(\sum_{\ell=0}^{s_1}a_{\ell,1}T^{k_{\ell,1}}+\sum_{\ell=s_1+1}^{M_1}a_{\ell,1}\epsilon^{m_{\ell,1}+\beta-\alpha k_{\ell,1}}T^{k_{\ell,1}}\right)T^{\gamma_1}V_1(T,z,\epsilon)  \right.\\
+\left(\sum_{\ell=0}^{s_2}a_{\ell,2}T^{k_{\ell,2}}+\sum_{\ell=s_2+1}^{M_2}a_{\ell,2}\epsilon^{m_{\ell,2}+2\beta-\alpha k_{\ell,2}}T^{k_{\ell,2}}\right)\\
\times\left(T^{2\gamma_1}V_1^2(T,z,\epsilon)-\frac{2a_{0,1}}{a_{0,2}}T^{k_{0,1}-k_{0,2}+\gamma_1}(1+\mathcal{J}_1(T))V_1(T,z,\epsilon)\right)\\
+\left(\sum_{\ell=0}^{s_3}a_{\ell,3}T^{k_{\ell,3}}+\sum_{\ell=s_3+1}^{M_3}a_{\ell,3}\epsilon^{m_{\ell,3}+3\beta-\alpha k_{\ell,3}}T^{k_{\ell,3}}\right)\left\{3\left(\frac{a_{01}}{a_{02}}T^{k_{0,1}-k_{0,2}}(1+\mathcal{J}_1(T))\right)^2T^{\gamma_1}V_1(T,z,\epsilon)\right.\\
\left.\left.-3\left(\frac{a_{01}}{a_{02}}T^{k_{0,1}-k_{0,2}}(1+\mathcal{J}_1(T))\right)T^{2\gamma_1}V_1^2(T,z,\epsilon)+T^{3\gamma_1}V_1^3(T,z,\epsilon) \right\}\right] \\
=\sum_{j=0}^{Q}b_{j}(z)\epsilon^{n_j-\alpha b_j}T^{b_j}\\
+\sum_{\ell=1}^{D}\epsilon^{\Delta_{\ell}+\alpha(\delta_{\ell}-d_{\ell})+\beta}T^{d_{\ell}}R_{\ell}(\partial_z)\left(\sum_{q_1+q_2=\delta_\ell}\frac{\delta_\ell!}{q_1!q_2!}\prod_{d=0}^{q_1-1}(\gamma_1-d)T^{\gamma_1-q_1}\partial_{T}^{q_2}V_1(T,z,\epsilon)\right),\label{e234}
\end{multline}
where we have used the notation $\prod_{d=0}^{-1}(\gamma_1-d)=1$.

In view of conditions (\ref{e92}), (\ref{e218}), (\ref{e222}) and the monotony of the sequence $(k_{\ell,\lambda})_{\ell\ge0}$ for all $\lambda=1,2,3$ one can divide equation (\ref{e234}) by $T^{k_{0,1}+\gamma_1}$ and preserve the analiticity near the origin with respect to $T$ in the terms involved in the equation. In addition to that, the coefficient of $Q(\partial_z)V_1(T,z,\epsilon)$ turns out to be invertible at $T=0$ since $a_{0,1}\neq0$. The resulting problem, whose terms have been arranged by increasing powers of $V_1(T,z,\epsilon)$, reads as follows:

\begin{multline}
Q(\partial_z)V_1(T,z,\epsilon)\left[-a_{0,1}+\sum_{\ell=1}^{s_1}a_{\ell,1}T^{k_{\ell,1}-k_{0,1}}+\sum_{\ell=s_1+1}^{M_1}a_{\ell,1}\epsilon^{m_{\ell,1}+\beta-\alpha k_{\ell,1}}T^{k_{\ell,1}-k_{0,1}}\right.\\
+\left(\sum_{\ell=1}^{s_2}a_{\ell,2}T^{k_{\ell,2}-k_{0,2}}+\sum_{\ell=s_2+1}^{M_2}a_{\ell,2}\epsilon^{m_{\ell,2}+2\beta-\alpha k_{\ell,2}}T^{k_{\ell,2}-k_{0,2}}\right)\left(-\frac{2a_{0,1}}{a_{0,2}}(1+\mathcal{J}_1(T))\right)\\
\left.+\left(\sum_{\ell=0}^{s_3}a_{\ell,3}T^{k_{\ell,3}-k_{0,1}}+\sum_{\ell=s_3+1}^{M_3}a_{\ell,3}\epsilon^{m_{\ell,3}+3\beta-\alpha k_{\ell,3}}T^{k_{\ell,3}-k_{0,1}}\right)3\left(\frac{a_{01}}{a_{02}}T^{k_{0,1}-k_{0,2}}(1+\mathcal{J}_1(T))\right)^2\right]\\
+Q(\partial_{z})V_1^2(T,z,\epsilon)\left[\left(\sum_{\ell=0}^{s_2}a_{\ell,2}T^{k_{\ell,2}}+\sum_{\ell=s_2+1}^{M_2}a_{\ell,2}\epsilon^{m_{\ell,2}+2\beta-\alpha k_{\ell,2}}T^{k_{\ell,2}}\right)T^{\gamma_1-k_{0,1}}\right.\\
\left.+\left(\sum_{\ell=0}^{s_3}a_{\ell,3}T^{k_{\ell,3}}+\sum_{\ell=s_3+1}^{M_3}a_{\ell,3}\epsilon^{m_{\ell,3}+3\beta-\alpha k_{\ell,3}}T^{k_{\ell,3}}\right)\left(\frac{-3a_{01}}{a_{02}}T^{k_{0,1}-k_{0,2}}(1+\mathcal{J}_1(T))\right)T^{\gamma_1-k_{0,1}}\right]\\
+Q(\partial_{z})V_1^3(T,z,\epsilon)\left[\left(\sum_{\ell=0}^{s_3}a_{\ell,3}T^{k_{\ell,3}}+\sum_{\ell=s_3+1}^{M_3}a_{\ell,3}\epsilon^{m_{\ell,3}+3\beta-\alpha k_{\ell,3}}T^{k_{\ell,3}}\right)T^{2\gamma_1-k_{0,1}}\right]\\
=\sum_{j=0}^{Q}b_{j}(z)\epsilon^{n_j-\alpha b_j}T^{b_j-k_{0,1}-\gamma_1}\\
+\sum_{\ell=1}^{D}\epsilon^{\Delta_{\ell}+\alpha(\delta_{\ell}-d_{\ell})+\beta}\left(\sum_{q_1+q_2=\delta_\ell}\frac{\delta_\ell!}{q_1!q_2!}\prod_{d=0}^{q_1-1}(\gamma_1-d)T^{d_{\ell}-q_1-k_{0,1}}R_{\ell}(\partial_z)\partial_{T}^{q_2}V_1(T,z,\epsilon)\right).\label{e235}
\end{multline}

At this point, we specify the form of $U_1(T,z,\epsilon)$ in (\ref{e216}), with 
\begin{equation}\label{e256}
V_1(T,z,\epsilon):=\mathbb{V}_1(\epsilon^{\chi_1}T,z,\epsilon), \hbox{ with } \chi_1:=\frac{\Delta_D+\alpha(\delta_{D}-d_D)+\beta}{d_D-k_{0,1}-\delta_D}.
\end{equation}

Equation (\ref{e235}) reads as follows:

\begin{multline}
Q(\partial_z)\mathbb{V}_1(\mathbb{T},z,\epsilon)\left[-a_{0,1}+\sum_{\ell=1}^{s_1}a_{\ell,1}\epsilon^{-\chi_1(k_{\ell,1}-k_{0,1})}\mathbb{T}^{k_{\ell,1}-k_{0,1}}+\sum_{\ell=s_1+1}^{M_1}a_{\ell,1}\epsilon^{m_{\ell,1}+\beta-\alpha k_{\ell,1}-\chi_1(k_{\ell,1}-k_{0,1})}\mathbb{T}^{k_{\ell,1}-k_{0,1}}\right.\\
\qquad+\left(\sum_{\ell=1}^{s_2}a_{\ell,2}\epsilon^{-\chi_1(k_{\ell,2}-k_{0,2})}\mathbb{T}^{k_{\ell,2}-k_{0,2}}+\sum_{\ell=s_2+1}^{M_2}a_{\ell,2}\epsilon^{m_{\ell,2}+2\beta-\alpha k_{\ell,2}-\chi_1(k_{\ell,2}-k_{0,2})}\mathbb{T}^{k_{\ell,2}-k_{0,2}}\right)\left(\frac{-2a_{0,1}}{a_{0,2}}\right)\\
+\left(\sum_{\ell=0}^{s_2}a_{\ell,2}\epsilon^{-\chi_1(k_{\ell,2}-k_{0,2})}\mathbb{T}^{k_{\ell,2}-k_{0,2}}+\sum_{\ell=s_2+1}^{M_2}a_{\ell,2}\epsilon^{m_{\ell,2}+2\beta-\alpha k_{\ell,2}-\chi_1(k_{\ell,2}-k_{0,2})}\mathbb{T}^{k_{\ell,2}-k_{0,2}}\right)\left(\frac{-2a_{0,1}}{a_{0,2}}\mathcal{J}_1(\epsilon^{-\chi_1}\mathbb{T})\right)\\
+\left(\sum_{\ell=0}^{s_3}a_{\ell,3}\epsilon^{-\chi_1(k_{\ell,3}-k_{0,1})}\mathbb{T}^{k_{\ell,3}-k_{0,1}}+\sum_{\ell=s_3+1}^{M_3}a_{\ell,3}\epsilon^{m_{\ell,3}+3\beta-\alpha k_{\ell,3}-\chi_1(k_{\ell,3}-k_{0,1})}\mathbb{T}^{k_{\ell,3}-k_{0,1}}\right)\\
\left.\hfill\times3\left(\frac{a_{01}}{a_{02}}\epsilon^{-\chi_1(k_{0,1}-k_{0,2})}\mathbb{T}^{k_{0,1}-k_{0,2}}(1+\mathcal{J}_1(\epsilon^{-\chi_1}\mathbb{T}))\right)^2\right]\\
+Q(\partial_{z})\mathbb{V}_1^2(\mathbb{T},z,\epsilon)\left[\left(\sum_{\ell=0}^{s_2}a_{\ell,2}\epsilon^{-\chi_1k_{\ell,2}}\mathbb{T}^{k_{\ell,2}}+\sum_{\ell=s_2+1}^{M_2}a_{\ell,2}\epsilon^{m_{\ell,2}+2\beta-\alpha k_{\ell,2}-\chi_1k_{\ell,2}}\mathbb{T}^{k_{\ell,2}}\right)\epsilon^{-\chi_1(\gamma_1-k_{0,1})}\mathbb{T}^{\gamma_1-k_{0,1}}\right.\\
+\left(\sum_{\ell=0}^{s_3}a_{\ell,3}\epsilon^{-\chi_1k_{\ell,3}}\mathbb{T}^{k_{\ell,3}}+\sum_{\ell=s_3+1}^{M_3}a_{\ell,3}\epsilon^{m_{\ell,3}+3\beta-\alpha k_{\ell,3}-\chi_1k_{\ell,3}}\mathbb{T}^{k_{\ell,3}}\right)\hfill\\
\left.\hfill\times\left(\frac{-3a_{01}}{a_{02}}\epsilon^{-\chi_1(k_{0,1}-k_{0,2})}\mathbb{T}^{k_{0,1}-k_{0,2}}(1+\mathcal{J}_1(\epsilon^{-\chi_1}\mathbb{T}))\right)\epsilon^{-\chi_1(\gamma_1-k_{0,1})}\mathbb{T}^{\gamma_1-k_{0,1}}\right]\\
+Q(\partial_{z})\mathbb{V}_1^3(\mathbb{T},z,\epsilon)\left[\left(\sum_{\ell=0}^{s_3}a_{\ell,3}\epsilon^{-\chi_1k_{\ell,3}}\mathbb{T}^{k_{\ell,3}}+\sum_{\ell=s_3+1}^{M_3}a_{\ell,3}\epsilon^{m_{\ell,3}+3\beta-\alpha k_{\ell,3}-\chi_1k_{\ell,3}}\mathbb{T}^{k_{\ell,3}}\right)\epsilon^{-\chi_1(2\gamma_1-k_{0,1})}\mathbb{T}^{2\gamma_1-k_{0,1}}\right]\\
=\sum_{j=0}^{Q}b_{j}(z)\epsilon^{n_j-\alpha b_j-\chi_1(b_j-k_{0,1}-\gamma_1)}\mathbb{T}^{b_j-k_{0,1}-\gamma_1}\\
+\sum_{\ell=1}^{D-1}\epsilon^{\Delta_{\ell}+\alpha(\delta_{\ell}-d_{\ell})+\beta}\left(\sum_{q_1+q_2=\delta_\ell}\frac{\delta_\ell!}{q_1!q_2!}\prod_{d=0}^{q_1-1}(\gamma_1-d)\epsilon^{-\chi_1(d_{\ell}-q_1-k_{0,1}-q_2)}\mathbb{T}^{d_{\ell}-q_1-k_{0,1}}R_{\ell}(\partial_z)\partial_{\mathbb{T}}^{q_2}\mathbb{V}_1(\mathbb{T},z,\epsilon)\right)\\
+\left(\sum_{q_1+q_2=\delta_D}\frac{\delta_D!}{q_1!q_2!}\prod_{d=0}^{q_1-1}(\gamma_1-d)\mathbb{T}^{d_{D}-q_1-k_{0,1}}R_{D}(\partial_z)\partial_{\mathbb{T}}^{q_2}\mathbb{V}_1(\mathbb{T},z,\epsilon)\right).\label{e236}
\end{multline}

Observe that the choice in (\ref{e256}) makes the term with index $\ell=D$ on the right handside of (\ref{e236}) do not depend on $\epsilon$. We have split this term for the sake of clarity of the subsequent argument, and the prominent role played on it.

Let $1\le \ell\le D$. It is worth pointing out that for every nonnegative integers $q_1,q_2$ such that $q_1+q_2=\delta_{\ell}$, and in view of (\ref{e94}), it holds that
\begin{equation}\label{e287}
d_{\ell}-k_{0,1}-q_1=(\kappa_1+1)q_2+d_{\ell,q_1,q_2},
\end{equation}
with $d_{\ell,q_1,q_2}\ge1$ for $1\le \ell\le D-1$ or $\ell=D$ and $q_2<\delta_D$; and $d_{D,0,\delta_{D}}=0$.

Regarding (\ref{e94}) and (\ref{e287}), one can apply Formula (8.7) in~\cite{taya} p. 3630 which yields
\begin{multline}
\mathbb{T}^{d_D-k_{0,1}}\partial_{\mathbb{T}}^{\delta_{D}}\mathbb{V}_1(\mathbb{T},z,\epsilon)=\left((\mathbb{T}^{\kappa_1+1}\partial_{\mathbb{T}})^{\delta_{D}}+\sum_{1\le p\le \delta_{D}-1}A_{\delta_{D},p}\mathbb{T}^{\kappa_1(\delta_{D}-p)}(\mathbb{T}^{\kappa_1+1}\partial_{\mathbb{T}})^p\right)\mathbb{V}_1(\mathbb{T},z,\epsilon),\\
\mathbb{T}^{d_\ell-k_{0,1}-(\delta_{\ell}-1)}\partial_{\mathbb{T}}\mathbb{V}_1(\mathbb{T},z,\epsilon)=\mathbb{T}^{d_{\ell,\delta_\ell-1,1}}(\mathbb{T}^{\kappa_1+1}\partial_{\mathbb{T}})\mathbb{V}_1(\mathbb{T},z,\epsilon),\\
\mathbb{T}^{d_\ell-k_{0,1}-q_1}\partial_{\mathbb{T}}^{q_2}\mathbb{V}_1(\mathbb{T},z,\epsilon)=\mathbb{T}^{d_{\ell,q_1,q_2}}\left((\mathbb{T}^{\kappa_1+1}\partial_{\mathbb{T}})^{q_2}+\sum_{1\le p\le q_2-1}A_{q_2,p}\mathbb{T}^{\kappa_1(q_2-p)}(\mathbb{T}^{\kappa_1+1}\partial_{\mathbb{T}})^p\right)\mathbb{V}_1(\mathbb{T},z,\epsilon),\label{e301}
\end{multline}
for every $1\le \ell\le D-1$, and all integers $q_1\ge0$ and $q_2\ge 2$ with $q_1+q_2=\delta_\ell$. Here, $A_{\delta_{D},p}$ for $1\le p\le \delta_D-1$ and $A_{q_2,p}$ for $1\le p\le q_2-1$ stand for real constants.

The previous identities allow us to obtain positive results in the Borel plane due to the properties held by Borel transform with respect to the terms involved in those identities. For that purpose, we assume that $\mathbb{V}_1(\mathbb{T},z,\epsilon)$ has a formal power expansion of the form
\begin{equation}\label{e306}
\mathbb{V}_1(\mathbb{T},z,\epsilon)=\sum_{n\ge1}\mathbb{V}_{n,1}(z,\epsilon)\mathbb{T}^n,
\end{equation}
where its coefficients are defined as the inverse Fourier transform of certain appropriate functions in $E_{(\beta,\mu)}$, depending holomorphically on $\epsilon$ on some punctured disc $D(0,\epsilon_0)\setminus\{0\}$, for some $\epsilon_0>0$.
$$\mathbb{V}_{n,1}(z,\epsilon)=\mathcal{F}^{-1}(m\mapsto \omega_{n,1}(m,\epsilon))(z).$$
Our main aim is to search for such coefficients, and we proceed following a fixed point argument in appropriate Banach spaces. We consider the formal $m_{\kappa_1}-$Borel transform with respect to $\mathbb{T}$ and the Fourier transform with respect to $z$ of $\mathbb{V}_1(\mathbb{T},z,\epsilon)$, and we denote it by
$$\omega_{1}(\tau,m,\epsilon)=\sum_{n\ge 1}\frac{\omega_{n,1}(m,\epsilon)}{\Gamma\left(\frac{n}{\kappa_1}\right)}\tau^n.$$

By plugging $w_{1}(\tau,m,\epsilon)$ into (\ref{e236}) and taking into account (\ref{e301}) and the hypotheses made on the differential operators in (\ref{e91}), we arrive at the following auxiliary problem
\begin{equation}\label{e315}
L_{1,\kappa_1}(\omega_{1}(\tau,m,\epsilon))+L_{2,\kappa_1}(\omega_{1}(\tau,m,\epsilon))+L_{3,\kappa_1}(\omega_{1}(\tau,m,\epsilon))=R_{1,\kappa_1}(\omega_{1}(\tau,m,\epsilon)),
\end{equation}  
with $\omega_{1}(0,m,\epsilon)\equiv 0$. We have taken into account the properties and the notation described in Section~\ref{seccion3}, for a more compact writing. We write $\mathcal{B}_{\kappa_1}J_1(\tau,\epsilon)$ for the $m_{\kappa_1}-$Borel transform of $\mathcal{J}_1(\epsilon^{-\chi_1}\mathbb{T})$ with respect to $\mathbb{T}$. The operators in (\ref{e315}) are given by
\begin{multline}
L_{1,\kappa_1}(\omega_1)=\tilde{Q}(im)\left[-a_{0,1}\omega_1+\sum_{\ell=1}^{s_1}a_{\ell,1}\epsilon^{-\chi_1(k_{\ell,1}-k_{0,1})}\frac{\tau^{k_{\ell,1}-k_{0,1}}}{\Gamma\left(\frac{k_{\ell,1}-k_{0,1}}{\kappa_1}\right)}\star_{\kappa_1}\omega_1\right.\\
\hfill+\sum_{\ell=s_1+1}^{M_1}a_{\ell,1}\epsilon^{m_{\ell,1}+\beta-\alpha k_{\ell,1}-\chi_1(k_{\ell,1}-k_{0,1})}\frac{\tau^{k_{\ell,1}-k_{0,1}}}{\Gamma\left(\frac{k_{\ell,1}-k_{0,1}}{\kappa_1}\right)}\star_{\kappa_1} \omega_1\\
+\sum_{\ell=1}^{s_2}\frac{-2a_{\ell,2}a_{0,1}}{a_{0,2}}\epsilon^{-\chi_1(k_{\ell,2}-k_{0,2})}\frac{\tau^{k_{\ell,2}-k_{0,2}}}{\Gamma\left(\frac{k_{\ell,2}-k_{0,2}}{\kappa_1}\right)}\star_{\kappa_1}\omega_1\hfill\\
\hfill+\sum_{\ell=s_2+1}^{M_2}\frac{-2a_{\ell,2}a_{0,1}}{a_{0,2}}\epsilon^{m_{\ell,2}+2\beta-\alpha k_{\ell,2}-\chi_1(k_{\ell,2}-k_{0,2})}\frac{\tau^{k_{\ell,2}-k_{0,2}}}{\Gamma\left(\frac{k_{\ell,2}-k_{0,2}}{\kappa_1}\right)}\star_{\kappa_1}\omega_1\\
+\sum_{\ell=0}^{s_2}\frac{-2a_{\ell,2}a_{0,1}}{a_{0,2}}\epsilon^{-\chi_1(k_{\ell,2}-k_{0,2})}\frac{\tau^{k_{\ell,2}-k_{0,2}}}{\Gamma\left(\frac{k_{\ell,2}-k_{0,2}}{\kappa_1}\right)}\star_{\kappa_1}\mathcal{B}_{\kappa_1}J_1(\tau,\epsilon)\star_{\kappa_1}\omega_1\hfill\\
\hfill+\sum_{\ell=s_2+1}^{M_2}\frac{-2a_{\ell,2}a_{0,1}}{a_{0,2}}\epsilon^{m_{\ell,2}+2\beta-\alpha k_{\ell,2}-\chi_1(k_{\ell,2}-k_{0,2})}\frac{\tau^{k_{\ell,2}-k_{0,2}}}{\Gamma\left(\frac{k_{\ell,2}-k_{0,2}}{\kappa_1}\right)}\star_{\kappa_1}\mathcal{B}_{\kappa_1}J_1(\tau,\epsilon)\star_{\kappa_1}\omega_1\\
+\sum_{\ell=0}^{s_3}\frac{3a_{0,1}^2a_{\ell,3}}{a_{0,2}^2}\epsilon^{-\chi_1(k_{\ell,3}+k_{0,1}-2k_{0,2})}\frac{\tau^{k_{\ell,3}+k_{0,1}-2k_{0,2}}}{\Gamma\left(\frac{k_{\ell,3}+k_{0,1}-2k_{0,2}}{\kappa_1}\right)}\star_{\kappa_1}\omega_1\hfill\\
\hfill+\sum_{\ell=0}^{s_3}\frac{3a_{0,1}^2a_{\ell,3}}{a_{0,2}^2}\epsilon^{-\chi_1(k_{\ell,3}+k_{0,1}-2k_{0,2})}\frac{\tau^{k_{\ell,3}+k_{0,1}-2k_{0,2}}}{\Gamma\left(\frac{k_{\ell,3}+k_{0,1}-2k_{0,2}}{\kappa_1}\right)}\star_{\kappa_1}2\mathcal{B}_{\kappa_1}J_1(\tau,\epsilon)\star_{\kappa_1}\omega_1\\
+\sum_{\ell=0}^{s_3}\frac{3a_{0,1}^2a_{\ell,3}}{a_{0,2}^2}\epsilon^{-\chi_1(k_{\ell,3}+k_{0,1}-2k_{0,2})}\frac{\tau^{k_{\ell,3}+k_{0,1}-2k_{0,2}}}{\Gamma\left(\frac{k_{\ell,3}+k_{0,1}-2k_{0,2}}{\kappa_1}\right)}\star_{\kappa_1}\mathcal{B}_{\kappa_1}J_1(\tau,\epsilon)\star_{\kappa_1}\mathcal{B}_{\kappa_1}J_1(\tau,\epsilon)\star_{\kappa_1}\omega_1\\
+\sum_{\ell=s_3+1}^{M_3}\frac{3a_{0,1}^2a_{\ell,3}}{a_{0,2}^2}\epsilon^{m_{\ell,3}+3\beta-\alpha k_{\ell,3}-\chi_1(k_{\ell,3}+k_{0,1}-2k_{0,2})}\frac{\tau^{k_{\ell,3}+k_{0,1}-2k_{0,2}}}{\Gamma\left(\frac{k_{\ell,3}+k_{0,1}-2k_{0,2}}{\kappa_1}\right)}\star_{\kappa_1}\omega_1\hfill\\
+\sum_{\ell=s_3+1}^{M_3}\frac{3a_{0,1}^2a_{\ell,3}}{a_{0,2}^2}\epsilon^{m_{\ell,3}+3\beta-\alpha k_{\ell,3}-\chi_1(k_{\ell,3}+k_{0,1}-2k_{0,2})}\frac{\tau^{k_{\ell,3}+k_{0,1}-2k_{0,2}}}{\Gamma\left(\frac{k_{\ell,3}+k_{0,1}-2k_{0,2}}{\kappa_1}\right)}\star_{\kappa_1}2\mathcal{B}_{\kappa_1}J_1(\tau,\epsilon)\star_{\kappa_1}\omega_1\\
\left.+\sum_{\ell=s_3+1}^{M_3}\frac{3a_{0,1}^2a_{\ell,3}}{a_{0,2}^2}\epsilon^{m_{\ell,3}+3\beta-\alpha k_{\ell,3}-\chi_1(k_{\ell,3}+k_{0,1}-2k_{0,2})}\frac{\tau^{k_{\ell,3}+k_{0,1}-2k_{0,2}}}{\Gamma\left(\frac{k_{\ell,3}+k_{0,1}-2k_{0,2}}{\kappa_1}\right)}\star_{\kappa_1}\mathcal{B}_{\kappa_1}J_1(\tau,\epsilon)\star_{\kappa_1}\mathcal{B}_{\kappa_1}J_1(\tau,\epsilon)\star_{\kappa_1}\omega_1\right],
\end{multline}

\begin{multline}
L_{2,\kappa_1}(\omega_1)=\tilde{Q}(im)\left[\sum_{\ell=0}^{s_2}a_{\ell,2}\epsilon^{-\chi_1(k_{\ell,2}+\gamma_1-k_{0,1})}\frac{\tau^{k_{\ell,2}+\gamma_1-k_{0,1}}}{\Gamma\left(\frac{k_{\ell,2}+\gamma_1-k_{0,1}}{\kappa_1}\right)}\star_{\kappa_1} \omega_1\star_{\kappa_1}^{E}\omega_1\right.\\
+\sum_{\ell=s_2+1}^{M_2}a_{\ell,2}\epsilon^{m_{\ell,2}+2\beta-\alpha k_{\ell,2}-\chi_1(k_{\ell,2}+\gamma_1-k_{0,1})}\frac{\tau^{k_{\ell,2}+\gamma_1-k_{0,1}}}{\Gamma\left(\frac{k_{\ell,2}+\gamma_1-k_{0,1}}{\kappa_1}\right)}\star_{\kappa_1} \omega_1\star_{\kappa_1}^{E}\omega_1\\
+\sum_{\ell=0}^{s_3}\frac{-3a_{\ell,3}a_{01}}{a_{02}}\epsilon^{-\chi_1(k_{\ell,3}+\gamma_1-k_{0,2})}\frac{\tau^{k_{\ell,3}+\gamma_1-k_{0,2}}}{\Gamma\left(\frac{k_{\ell,3}+\gamma_1-k_{0,2}}{\kappa_1}\right)}\star_{\kappa_1} \omega_1\star_{\kappa_1}^{E}\omega_1\hfill\\
\hfill+\sum_{\ell=s_3+1}^{M_3}\frac{-3a_{\ell,3}a_{01}}{a_{02}}\epsilon^{m_{\ell,3}+3\beta-\alpha k_{\ell,3}-\chi_1(k_{\ell,3}+\gamma_1-k_{0,2})}\frac{\tau^{k_{\ell,3}+\gamma_1-k_{0,2}}}{\Gamma\left(\frac{k_{\ell,3}+\gamma_1-k_{0,2}}{\kappa_1}\right)}\star_{\kappa_1} \omega_1\star_{\kappa_1}^{E}\omega_1\\
+\sum_{\ell=0}^{s_3}\frac{-3a_{\ell,3}a_{01}}{a_{02}}\epsilon^{-\chi_1(k_{\ell,3}+\gamma_1-k_{0,2})}\frac{\tau^{k_{\ell,3}+\gamma_1-k_{0,2}}}{\Gamma\left(\frac{k_{\ell,3}+\gamma_1-k_{0,2}}{\kappa_1}\right)}\star_{\kappa_1}\mathcal{B}_{\kappa_1}J_1(\tau,\epsilon)\star_{\kappa_1} \omega_1\star_{\kappa_1}^{E}\omega_1\hfill\\
\left.+\sum_{\ell=s_3+1}^{M_3}\frac{-3a_{\ell,3}a_{01}}{a_{02}}\epsilon^{m_{\ell,3}+3\beta-\alpha k_{\ell,3}-\chi_1(k_{\ell,3}+\gamma_1-k_{0,2})}\frac{\tau^{k_{\ell,3}+\gamma_1-k_{0,2}}}{\Gamma\left(\frac{k_{\ell,3}+\gamma_1-k_{0,2}}{\kappa_1}\right)}\star_{\kappa_1}\mathcal{B}_{\kappa_1}J_1(\tau,\epsilon)\star_{\kappa_1} \omega_1\star_{\kappa_1}^{E}\omega_1\right],
\end{multline}

\begin{multline}
L_{3,\kappa_1}(\omega_1)=\tilde{Q}(im)\left[\sum_{\ell=0}^{s_3}a_{\ell,3}\epsilon^{-\chi_1(k_{\ell,3}+2\gamma_1-k_{0,1})}\frac{\tau^{k_{\ell,3}+2\gamma_1-k_{0,1}}}{\Gamma\left(\frac{k_{\ell,3}+2\gamma_1-k_{0,1}}{\kappa_1}\right)}\star_{\kappa_1}\omega_1\star_{\kappa_1}^{E} \omega_1\star_{\kappa_1}^{E}\omega_1\right.\\
\left.+\sum_{\ell=s_3+1}^{M_3}a_{\ell,3}\epsilon^{m_{\ell,3}+3\beta-\alpha k_{\ell,3}-\chi_1(k_{\ell,3}+2\gamma_1-k_{0,1})}\frac{\tau^{k_{\ell,3}+2\gamma_1-k_{0,1}}}{\Gamma\left(\frac{k_{\ell,3}+2\gamma_1-k_{0,1}}{\kappa_1}\right)}\star_{\kappa_1}\omega_1\star_{\kappa_1}^{E} \omega_1\star_{\kappa_1}^{E}\omega_1\right].
\end{multline}

For the righthand side of the equation, we make use of (\ref{e301}) and the properties of $m_{\kappa_1}-$Borel transformation. We have

\begin{multline}
R_{1,\kappa_1}(\omega_1)=\sum_{j=0}^{Q}\tilde{B}_{j}(im)\epsilon^{n_j-\alpha b_j-\chi_1(b_j-k_{0,1}-\gamma_1)}\frac{\tau^{b_j-k_{0,1}-\gamma_1}}{\Gamma\left(\frac{b_j-k_{0,1}-\gamma_1}{\kappa_1}\right)}\\
+\sum_{\ell=1}^{D-1}\epsilon^{\Delta_{\ell}+\alpha(\delta_{\ell}-d_{\ell})+\beta}\sum_{q_1+q_2=\delta_\ell}\frac{\delta_\ell!}{q_1!q_2!}\prod_{d=0}^{q_1-1}(\gamma_1-d)\epsilon^{-\chi_1(d_{\ell}-q_1-k_{0,1}-q_2)}\tilde{R}_{\ell}(im)\\
\times\left(\frac{\tau^{d_{\ell,q_1,q_2}}}{\Gamma\left(\frac{d_{\ell,q_1,q_2}}{\kappa_1}\right)}\star_{\kappa_1}\left((\kappa_1\tau^{\kappa_1})^{q_2}\omega_1\right)+\sum_{1\le p\le q_2-1}A_{q_2,p}\frac{\tau^{d_{\ell,q_1,q_2}+\kappa_1(q_2-p)}}{\Gamma\left(\frac{d_{\ell,q_1,q_2}+\kappa_1(q_2-p)}{\kappa_1}\right)}\star_{\kappa_1}\left((\kappa_1\tau^{\kappa_1})^{p}\omega_1\right)\right)\\
+\sum_{q_1+q_2=\delta_D,q_1\ge1}\frac{\delta_D!}{q_1!q_2!}\prod_{d=0}^{q_1-1}(\gamma_1-d)\tilde{R}_{D}(im)\\
\times\left(\frac{\tau^{d_{D,q_1,q_2}}}{\Gamma\left(\frac{d_{D,q_1,q_2}}{\kappa_1}\right)}\star_{\kappa_1}\left((\kappa_1\tau^{\kappa_1})^{q_2}\omega_1\right)+\sum_{1\le p\le q_2-1}A_{q_2,p}\frac{\tau^{d_{D,q_1,q_2}+\kappa_1(q_2-p)}}{\Gamma\left(\frac{d_{D,q_1,q_2}+\kappa_1(q_2-p)}{\kappa_1}\right)}\star_{\kappa_1}\left((\kappa_1\tau^{\kappa_1})^{p}\omega_1\right)\right)\\
+\tilde{R}_{D}(im)\left((\kappa_1\tau^{\kappa_1})^{\delta_{D}}\omega_1+\sum_{1\le p\le \delta_D-1}A_{\delta_D,p}\frac{\tau^{\kappa_1(\delta_D-p)}}{\Gamma\left(\frac{\kappa_1(\delta_D-p)}{\kappa_1}\right)}\star_{\kappa_1}\left((\kappa_1\tau^{\kappa_1})^{p}\omega_1\right)\right).
\end{multline}

\subsection{Analytic solution of the first perturbed auxiliary problem}\label{subseccion1}

The main purpose of this section is to state the existence of a unique solution of (\ref{e315}) within an appropriate Banach space of functions. The geometry of the problem is analogous to that stated in~\cite{lama1} which demands some restrictions on the domains and the functions involved in the problem. More precisely, we assume there exists an unbounded sector
$$S_{\tilde{Q},\tilde{R}_D}=\{z\in\C^{\star}:|z|\ge r_{\tilde{Q},\tilde{R}_D},|\arg(z)-d_{\tilde{Q},\tilde{R}_D}|\le \nu_{\tilde{Q},\tilde{R}_D}\},$$
for some bisecting direction $d_{\tilde{Q},\tilde{R}_D}\in\R$, opening $\nu_{\tilde{Q},\tilde{R}_D}>0$, and $r_{\tilde{Q},\tilde{R}_D}>0$ such that 
\begin{equation}\label{e366}
\frac{\tilde{Q}(im)}{\tilde{R}_D(im)}\in S_{\tilde{Q},\tilde{R}_D},\qquad m\in\R.
\end{equation}

For every $m\in\R$, the roots of the polynomial $\tilde{P}_m=-\tilde{Q}(im)a_{0,1}-\tilde{R}_D(im)\kappa_1^{\delta_{D}}\tau^{\delta_{D}\kappa_1}$ are given by
\begin{equation}\label{e371}
q_{\ell}(m)=\left(\frac{|a_{0,1}\tilde{Q}(im)|}{|\tilde{R}_{D}(im)|\kappa_1^{\delta_{D}}}\right)^{\frac{1}{\delta_{D}\kappa_1}}\exp\left(\sqrt{-1}(\arg(\frac{-a_{0,1}\tilde{Q}(im)}{\tilde{R}_{D}(im)\kappa_1^{\delta_{D}}}))\frac{1}{\delta_{D}\kappa_1}+\frac{2\pi\ell}{\delta_{D}\kappa_1}\right),
\end{equation}
for $0\le \ell\le \delta_{D}\kappa_1-1$. Let $S_d$ be an unbounded sector of bisecting direction $d\in\R$ and vertex at the origin, and $\rho>0$ such that the three next conditions are satisfied:

1) There exists $M_1>0$ such that
\begin{equation}\label{e377}
|\tau-q_{\ell}(m)|\ge M_1(1+|\tau|),
\end{equation}
for every $0\le \ell\le \delta_{D}\kappa_1$, $m\in\R$ and $\tau\in S_{d}\cup \bar{D}(0,\rho)$. This is possible due to (\ref{e366}), for some adecquate choice of $r_{\tilde{Q},\tilde{R}_D}$ and $\rho>$. By choosing small enough $\nu_{\tilde{Q},\tilde{R}_D}>0$ the set 
$$\{\frac{q_{\ell}(m)}{\tau}:\tau\in S_d,m\in\R,0\le \ell\le \delta_{D}\kappa_1-1\}$$
is such that it has positive distance to 1.

2) There exists $M_2>0$ such that
\begin{equation}\label{e378}
|\tau-q_{\ell_0}(m)|\ge M_2|q_{\ell_0}|,
\end{equation}
for some $0\le \ell_0\le \delta_{D}\kappa_1-1$, all $m\in\R$ and all $\tau\in S_{d}\cup\bar{D}(0,\rho)$. This fact is immediate in view of 1).

By construction of the roots (\ref{e371}), and by (\ref{e377}) and (\ref{e378}), we get a constant $C_{\tilde{P}}>0$ such that
\begin{multline}
|\tilde{P}_{m}(\tau)| \geq M_{1}^{\delta_{D}\kappa_1-1}M_{2}|\tilde{R}_{D}(im)\kappa_1^{\delta_D}
|(\frac{|a_{0,1}\tilde{Q}(im)|}{|\tilde{R}_{D}(im)|\kappa_1^{\delta_{D}}})^{\frac{1}{\delta_{D}\kappa_1}}
(1+|\tau|)^{\delta_{D}\kappa_1-1}\\
\geq M_{1}^{\delta_{D}\kappa_1-1}M_{2}
\frac{\kappa_1^{\delta_D}|a_{0,1}|^{\frac{1}{\delta_{D}\kappa_1}}}{(\kappa_1^{\delta_{D}})^{\frac{1}{\delta_{D}\kappa_1}}}
(r_{\tilde{Q},\tilde{R}_{D}})^{\frac{1}{\delta_{D}\kappa_1}} |\tilde{R}_{D}(im)| \\
\times (\min_{x \geq 0}
\frac{(1+x)^{\delta_{D}\kappa_1-1}}{(1+x^{\kappa_1})^{\delta_{D} - \frac{1}{\kappa_1}}})
(1 + |\tau|^{\kappa_1})^{\delta_{D} - \frac{1}{\kappa_1}}\\
= C_{\tilde{P}} (r_{\tilde{Q},\tilde{R}_{D}})^{\frac{1}{\delta_{D}\kappa_1}} |\tilde{R}_{D}(im)|
(1+|\tau|^{\kappa_1})^{\delta_{D} - \frac{1}{\kappa_1}} \label{e949}
\end{multline}
for all $\tau \in S_{d} \cup \bar{D}(0,\rho)$, all $m \in \mathbb{R}$.

In the next proposition, we provide sufficient conditions under which the main convolution equation (\ref{e315}) admits solutions $\omega_{\kappa_1}^{d}(\tau,m,\epsilon)$ in the Banach space $F_{(\nu,\beta,\mu,\chi_1,\alpha,\kappa_1,\epsilon)}^{d}$ described in Section~\ref{seccion2}.

\begin{lemma}\label{lema533}
One has
$$\mathcal{B}_{\kappa_1}J_1(\tau,\epsilon)\star_{\kappa_1}\mathcal{B}_{\kappa_1}J_1(\tau,\epsilon)=\mathcal{B}_{\kappa_1}\tilde{J}_1(\tau,\epsilon),$$
where $\mathcal{B}_{\kappa_1}\tilde{J}_1(\tau,\epsilon)$ stands for the $m_{\kappa_1}-$Borel transform of the formal power series $\hat{J}(\mathbb{T})=J_1(\mathbb{T})\cdot J_1(\mathbb{T})$, evaluated at $\epsilon^{-\chi_1}\mathbb{T}$, i.e.
\begin{equation}\label{e537}
\mathcal{B}_{\kappa_1}J_1(\tau,\epsilon)\star_{\kappa_1}\mathcal{B}_{\kappa_1}J_1(\tau,\epsilon)=\sum_{j\ge1}\epsilon^{-\chi_1j}\left(\sum_{j_1+j_2=j}J_{j_1}J_{j_2}\right)\frac{\tau^j}{\Gamma\left(\frac{j}{\kappa_1}\right)},
\end{equation}
with $(J_j)_{j\ge0}$ stands for the sequence of coefficients of the series $J_1$. We denote
$$\tilde{J}_j:=\sum_{j_1+j_2=j}J_{j_1}J_{j_2},\quad j\ge1.$$

\end{lemma}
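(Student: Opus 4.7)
The statement is essentially the Borel-Laplace convolution identity specialized to a product of two copies of the same scalar series, with the substitution $\mathbb{T}\mapsto\epsilon^{-\chi_1}\mathbb{T}$, and the proof should be a direct consequence of Proposition~\ref{prop462}. My plan is to unfold the definitions and apply the third identity in Proposition~\ref{prop462} with $\hat{f}=\hat{g}=J_1(\epsilon^{-\chi_1}\mathbb{T})$, viewed as a formal power series in $\mathbb{T}$ with complex coefficients (so the Banach algebra $\mathbb{E}$ of Proposition~\ref{prop462} is simply $\mathbb{C}$, and the product $\star$ there reduces to the Cauchy product of scalar power series).

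Concretely, I would first write $\mathcal{J}_1(\mathbb{T})=\sum_{j\ge 1}J_j\mathbb{T}^j$ (noting $\mathcal{J}_1(0)=0$, so the sum starts at $j=1$) and therefore
$$J_1(\epsilon^{-\chi_1}\mathbb{T})=\sum_{j\ge 1}J_j\,\epsilon^{-\chi_1 j}\mathbb{T}^{j}.$$
The $m_{\kappa_1}$-Borel transform applied termwise yields, by Definition~\ref{defi398},
$$\mathcal{B}_{\kappa_1}J_1(\tau,\epsilon)=\sum_{j\ge 1}J_j\,\epsilon^{-\chi_1 j}\frac{\tau^{j}}{\Gamma\!\left(\frac{j}{\kappa_1}\right)},$$
which is exactly the quantity already introduced in the previous subsection.

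Next, the Cauchy product of $J_1(\epsilon^{-\chi_1}\mathbb{T})$ with itself gives, by a direct expansion,
$$\bigl(J_1(\epsilon^{-\chi_1}\mathbb{T})\bigr)^{2}=\sum_{j\ge 1}\Bigl(\sum_{j_1+j_2=j}J_{j_1}J_{j_2}\Bigr)\epsilon^{-\chi_1 j}\mathbb{T}^{j}=\sum_{j\ge 1}\tilde{J}_{j}\,\epsilon^{-\chi_1 j}\mathbb{T}^{j},$$
so that applying $\mathcal{B}_{\kappa_1}$ termwise produces exactly the series in (\ref{e537}), which by convention is denoted $\mathcal{B}_{\kappa_1}\tilde{J}_1(\tau,\epsilon)$.

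It then remains to invoke the third identity of Proposition~\ref{prop462} (with $\mathbb{E}=\mathbb{C}$, $k=\kappa_1$ and $\hat{f}=\hat{g}=J_1(\epsilon^{-\chi_1}\mathbb{T})$) to rewrite $\mathcal{B}_{\kappa_1}\bigl(J_1(\epsilon^{-\chi_1}\mathbb{T})\cdot J_1(\epsilon^{-\chi_1}\mathbb{T})\bigr)(\tau)$ as the convolution
$$\tau^{\kappa_1}\int_{0}^{\tau^{\kappa_1}}\mathcal{B}_{\kappa_1}J_1\bigl((\tau^{\kappa_1}-s)^{1/\kappa_1},\epsilon\bigr)\,\mathcal{B}_{\kappa_1}J_1\bigl(s^{1/\kappa_1},\epsilon\bigr)\frac{ds}{(\tau^{\kappa_1}-s)s},$$
which coincides, by the notational convention $\star_{\kappa_1}$ introduced at the end of Section~\ref{seccion3}, with $\mathcal{B}_{\kappa_1}J_1(\tau,\epsilon)\star_{\kappa_1}\mathcal{B}_{\kappa_1}J_1(\tau,\epsilon)$. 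Combining the two computations yields the identity claimed in the lemma, together with the explicit form (\ref{e537}). There is no real obstacle here: the only mild subtlety is to treat $\epsilon$ as a parameter in the formal series so that Proposition~\ref{prop462} applies verbatim, and to match the convolution appearing there with the shorthand $\star_{\kappa_1}$ fixed at the end of Section~\ref{seccion3}.
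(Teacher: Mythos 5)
Your proposal is correct, and it differs from the paper's argument mainly in packaging rather than in substance. The paper proves the lemma by brute force: it writes out the convolution $\mathcal{B}_{\kappa_1}J_1\star_{\kappa_1}\mathcal{B}_{\kappa_1}J_1$ from the definition of $\star_{\kappa_1}$, interchanges sum and integral, and evaluates each term via the Beta--Gamma identity $\int_0^{\tau^{\kappa_1}}(\tau^{\kappa_1}-s)^{\frac{j_1}{\kappa_1}-1}s^{\frac{j_2}{\kappa_1}-1}ds=\tau^{j_1+j_2-\kappa_1}\,\Gamma(\tfrac{j_1}{\kappa_1})\Gamma(\tfrac{j_2}{\kappa_1})/\Gamma(\tfrac{j_1+j_2}{\kappa_1})$, which directly produces the coefficients in (\ref{e537}). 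You instead invoke the third formal identity of Proposition~\ref{prop462} with $\mathbb{E}=\mathbb{C}$, $k=\kappa_1$ and $\hat f=\hat g=J_1(\epsilon^{-\chi_1}\mathbb{T})$ (legitimate, since $\mathcal{J}_1(0)=0$ so the series has no constant term and $\epsilon$ enters only as a parameter), and then identify the resulting convolution with the $\star_{\kappa_1}$ shorthand; the explicit form (\ref{e537}) then comes from the Cauchy product of the scalar series, exactly as you compute. Since the cited identity in Proposition~\ref{prop462} is itself established by the same Beta-function computation, the two proofs rest on the same calculation; your route is shorter and avoids repeating it, while the paper's direct expansion keeps the lemma self-contained and exhibits the coefficient formula without appeal to the general proposition. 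No gap in your argument.
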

\begin{proof}
From the definition of $\star_{\kappa_1}$, and usual properties of Gamma function, we get
$$\mathcal{B}_{\kappa_1}J_1(\tau,\epsilon)\star_{\kappa_1}\mathcal{B}_{\kappa_1}J_1(\tau,\epsilon)$$
\begin{align*}
&\tau^{\kappa_1}\int_0^{\tau^{\kappa_1}}\left(\sum_{j\ge1}J_j\epsilon^{-\chi_1 j}\frac{(\tau^{\kappa_1}-s)^{j}}{\Gamma\left(\frac{j}{\kappa_1}\right)}\right)\left(\sum_{j\ge1}J_j\epsilon^{-\chi_1 j}\frac{(\tau^{\kappa_1}-s)^{j}}{\Gamma\left(\frac{j}{\kappa_1}\right)}\right)\frac{1}{(\tau^{\kappa_1}-s)s}ds\\
&=\tau^{\kappa_1}\int_0^{\tau^{\kappa_1}}\sum_{j\ge1}\epsilon^{-\chi_1 j}\sum_{j_1+j_2=j}\frac{J_{j_1}J_{j_2}}{\Gamma\left(\frac{j_1}{\kappa_1}\right)\Gamma\left(\frac{j_2}{\kappa_1}\right)}\int_0^{\tau^{\kappa_1}}(\tau^{\kappa_1}-s)^{\frac{j_1}{\kappa_1}-1}s^{\frac{j_2}{\kappa_1}-1}ds\\
&=\tau^{\kappa_1}\int_0^{\tau^{\kappa_1}}\sum_{j\ge1}\epsilon^{-\chi_1 j}\sum_{j_1+j_2=j}\frac{J_{j_1}J_{j_2}}{\Gamma\left(\frac{j_1}{\kappa_1}\right)\Gamma\left(\frac{j_2}{\kappa_1}\right)}\int_0^{1}(\tau^{\kappa_1}-\tau^{\kappa_1}t)^{\frac{j_1}{\kappa_1}-1}(\tau^{\kappa_1}t)^{\frac{j_2}{\kappa_1}-1}\tau^{\kappa_1}dt\\
&=\tau^{\kappa_1}\sum_{j\ge1}\epsilon^{-\chi_1 j}\sum_{j_1+j_2=j}\frac{J_{j_1}J_{j_2}}{\Gamma\left(\frac{j_1}{\kappa_1}\right)\Gamma\left(\frac{j_2}{\kappa_1}\right)}\tau^{j-\kappa_1}\frac{\Gamma\left(\frac{j_1}{\kappa_1}\right)\Gamma\left(\frac{j_2}{\kappa_1}\right)}{\Gamma\left(\frac{j}{\kappa_1}\right)},
\end{align*}
which coincides with (\ref{e537}).
\end{proof}

\begin{lemma}\label{lema887}
Under the assumption (\ref{e218}), one has
\begin{multline}
(\chi_1 + \alpha)( k_{\ell,3}+\gamma_1-k_{0,2}-\kappa_1\delta_D+1) - \chi_1(k_{\ell,3}+\gamma_1-k_{0,2})\\
\le (\chi_1 + \alpha)( k_{\ell,3}+2\gamma_1-k_{0,1}-\kappa_1\delta_D+1) - \chi_1(k_{\ell,3}+2\gamma_1-k_{0,1})
\end{multline}
 for every $0\le \ell\le M_3$.
\end{lemma}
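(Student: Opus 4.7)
My plan is to reduce the inequality by purely algebraic rearrangement to one of the standing hypotheses, namely condition (\ref{e218}) that $\gamma_1 \ge k_{0,1}-k_{0,2}$. Set $A := k_{\ell,3}+\gamma_1-k_{0,2}$ and $B := k_{\ell,3}+2\gamma_1-k_{0,1}$, so that the left-hand side is $(\chi_1+\alpha)(A-\kappa_1\delta_D+1)-\chi_1 A$ and the right-hand side is $(\chi_1+\alpha)(B-\kappa_1\delta_D+1)-\chi_1 B$. Expanding each side, the $\chi_1 A$ cancels against $(\chi_1+\alpha)A$ to leave $\alpha A$ on the left, and similarly $\alpha B$ on the right; the common term $-(\chi_1+\alpha)(\kappa_1\delta_D-1)$ appears on both sides and cancels.

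Hence the inequality to prove is equivalent to $\alpha A \le \alpha B$. Since $\alpha>0$, this reduces to $A\le B$, i.e.\ to
\[
k_{\ell,3}+\gamma_1-k_{0,2}\;\le\;k_{\ell,3}+2\gamma_1-k_{0,1},
\]
which simplifies directly to $k_{0,1}-k_{0,2}\le\gamma_1$. This is exactly the assumption (\ref{e218}). Note that the index $\ell$ plays no role in the final inequality because the $k_{\ell,3}$ term is present identically on both sides; in particular the conclusion holds uniformly in $0\le \ell\le M_3$.

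There is no real obstacle here: the lemma is a bookkeeping identity whose only content is that the $\kappa_1\delta_D-1$ contribution and the $\chi_1$-dependent parts produced by the two sides are identical, so that the remaining difference is controlled by the lower bound on $\gamma_1$ already imposed in the setup. The only thing to be careful about is to keep track of signs when rearranging the factor $(\chi_1+\alpha)-\chi_1=\alpha$, and to recall that $\alpha>0$ was fixed from (\ref{e118}), so that multiplying by $\alpha$ preserves the direction of the inequality.
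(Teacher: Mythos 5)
Your proof is correct: subtracting the two sides leaves exactly $\alpha\bigl((k_{\ell,3}+\gamma_1-k_{0,2})-(k_{\ell,3}+2\gamma_1-k_{0,1})\bigr)=\alpha\,(k_{0,1}-k_{0,2}-\gamma_1)\le 0$ by (\ref{e218}) and $\alpha>0$ from (\ref{e118}), which is precisely the elementary cancellation argument the paper relies on (it states Lemma~\ref{lema887} without written proof, treating it as this bookkeeping identity). Nothing further is needed.
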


The proof of the next result is left to Section~\ref{seclem4}.

\begin{lemma}\label{lema568}
Let the following conditions hold:

\begin{multline}
\delta_{D} \geq \frac{2}{\kappa_1} \ \ , \ \ \gamma_1\ge k_{0,1}-k_{0,2} \ \ , \ \ b_{j} - k_{0,1} - \gamma_1 \geq 1,\\
(\chi_1 + \alpha)( k_{\ell_2,2}+\gamma_1-k_{0,1}-\kappa_1\delta_D+1) - \chi_1(k_{\ell_2,2}+\gamma_1-k_{0,1}) \geq 0\\
(\chi_1 + \alpha)( k_{\ell_3,3}+\gamma_1-k_{0,2}-\kappa_1\delta_D+1) - \chi_1(k_{\ell_3,3}+\gamma_1-k_{0,2}) \geq 0
\label{e887}
\end{multline}
for all $0 \leq \ell_2 \leq M_2$, $0 \leq \ell_3 \leq M_3$, $0 \leq j \leq Q$,
\begin{multline}
\delta_{D} \geq \frac{1}{\kappa_1} + \delta_{\ell},\\
\Delta_{\ell} + \alpha(\delta_{\ell} - d_{\ell}) + \beta + 
(\chi_1 + \alpha)\kappa_1(\frac{d_{\ell,q_{1},q_{2}}}{\kappa_1} + q_{2} - \delta_{D} + \frac{1}{\kappa_1})
-\chi_1(d_{\ell} - k_{0,1} - \delta_{\ell}) \geq 0
\label{e896}
\end{multline}
for all $q_{1} \geq 0, q_{2} \geq 1$ such that $q_{1}+q_{2}=\delta_{\ell}$, for $1 \leq \ell \leq D-1$ and
\begin{equation}
\Delta_{D} + \alpha( \delta_{D} - d_{D})+ \beta + 
(\chi_1 + \alpha)\kappa_1(\frac{d_{D,q_{1},q_{2}}}{\kappa_1} + q_{2} - \delta_{D} + \frac{1}{\kappa_1})
-\chi_1(d_{D} - k_{0,1} - \delta_{D}) \geq 0
\label{e903}
\end{equation}
for all $q_{1} \geq 1, q_{2} \geq 1$ such that $q_{1}+q_{2}=\delta_{D}$.

Then, there exist large enough $r_{\tilde{Q},\tilde{R}_{D}}>0$ and small enough $\epsilon_0>0$, $\varpi>0$ such that for every $\epsilon\in D(0,\epsilon_0)\setminus\{0\}$, the map $\mathcal{H}_\epsilon$
satisfies that $\mathcal{H}_{\epsilon}(\bar{B}(0,\varpi))\subseteq\bar{B}(0,\varpi)$, where $\bar{B}(0,\varpi)$ is the closed disc of radius $\varpi>0$ centered at 0, in $F^{d}_{(\nu,\beta,\mu,\chi_1,\alpha,\kappa_1,\epsilon)}$, for every $\epsilon\in D(0,\epsilon_0)\setminus\{0\}$. Moreover, it holds that
\begin{equation}\label{e442}
\left\|\mathcal{H}_{\epsilon}(\omega_1)-\mathcal{H}_{\epsilon}(\omega_2)\right\|_{(\nu,\beta,\mu,\chi_1,\alpha,\kappa_1,\epsilon)}\le\frac{1}{2} \left\|\omega_1-\omega_2\right\|_{(\nu,\beta,\mu,\chi_1,\alpha,\kappa_1,\epsilon)},
\end{equation}
for every $\omega_1,\omega_2\in \bar{B}(0,\varpi)$, and every $\epsilon\in D(0,\epsilon_0)\setminus\{0\}$.

Here, $$\mathcal{H}_{\epsilon}=\mathcal{H}_{\epsilon}^1+\mathcal{H}_{\epsilon}^2+\mathcal{H}_{\epsilon}^3+\mathcal{H}_{\epsilon}^4,$$
where
\begin{multline}
\mathcal{H}_{\epsilon}^1(\omega_1(\tau,m)):=\sum_{j=0}^{Q}\tilde{B}_{j}(im)\epsilon^{n_j-\alpha b_j-\chi_1(b_j-k_{0,1}-\gamma_1)}\frac{\tau^{b_j-k_{0,1}-\gamma_1}}{\tilde{P}_m(\tau)\Gamma\left(\frac{b_j-k_{0,1}-\gamma_1}{\kappa_1}\right)}\\
-\tilde{Q}(im)\left\{\sum_{\ell=1}^{s_1}\frac{a_{\ell,1}\epsilon^{-\chi_1(k_{\ell,1}-k_{0,1})}}{\tilde{P}_{m}(\tau)}\left[\frac{\tau^{k_{\ell,1}-k_{0,1}}}{\Gamma\left(\frac{k_{\ell,1}-k_{0,1}}{\kappa_1}\right)}\star_{\kappa_1}\omega_1\right]\right.\\
\hfill+\sum_{\ell=s_1+1}^{M_1}\frac{a_{\ell,1}\epsilon^{m_{\ell,1}+\beta-\alpha k_{\ell,1}-\chi_1(k_{\ell,1}-k_{0,1})}}{\tilde{P}_{m}(\tau)}\left[\frac{\tau^{k_{\ell,1}-k_{0,1}}}{\Gamma\left(\frac{k_{\ell,1}-k_{0,1}}{\kappa_1}\right)}\star_{\kappa_1} \omega_1\right]\\
+\sum_{\ell=1}^{s_2}\frac{-2a_{\ell,2}a_{0,1}}{a_{0,2}}\frac{\epsilon^{-\chi_1(k_{\ell,2}-k_{0,2})}}{\tilde{P}_{m}(\tau)}\left[\frac{\tau^{k_{\ell,2}-k_{0,2}}}{\Gamma\left(\frac{k_{\ell,2}-k_{0,2}}{\kappa_1}\right)}\star_{\kappa_1}\omega_1\right]\hfill\\
\hfill+\sum_{\ell=s_2+1}^{M_2}\frac{-2a_{\ell,2}a_{0,1}}{a_{0,2}}\frac{\epsilon^{m_{\ell,2}+2\beta-\alpha k_{\ell,2}-\chi_1(k_{\ell,2}-k_{0,2})}}{\tilde{P}_{m}(\tau)}\left[\frac{\tau^{k_{\ell,2}-k_{0,2}}}{\Gamma\left(\frac{k_{\ell,2}-k_{0,2}}{\kappa_1}\right)}\star_{\kappa_1}\omega_1\right]\\
+\sum_{\ell=0}^{s_2}\frac{-2a_{\ell,2}a_{0,1}}{a_{0,2}}\frac{\epsilon^{-\chi_1(k_{\ell,2}-k_{0,2})}}{\tilde{P}_{m}(\tau)}\left[\frac{\tau^{k_{\ell,2}-k_{0,2}}}{\Gamma\left(\frac{k_{\ell,2}-k_{0,2}}{\kappa_1}\right)}\star_{\kappa_1}\mathcal{B}_{\kappa_1}J_1(\tau,\epsilon)\star_{\kappa_1}\omega_1\right]\hfill\\
\hfill+\sum_{\ell=s_2+1}^{M_2}\frac{-2a_{\ell,2}a_{0,1}}{a_{0,2}}\frac{\epsilon^{m_{\ell,2}+2\beta-\alpha k_{\ell,2}-\chi_1(k_{\ell,2}-k_{0,2})}}{\tilde{P}_{m}(\tau)}\left[\frac{\tau^{k_{\ell,2}-k_{0,2}}}{\Gamma\left(\frac{k_{\ell,2}-k_{0,2}}{\kappa_1}\right)}\star_{\kappa_1}\mathcal{B}_{\kappa_1}J_1(\tau,\epsilon)\star_{\kappa_1}\omega_1\right]\\
+\sum_{\ell=0}^{s_3}\frac{3a_{0,1}^2a_{\ell,3}}{a_{0,2}^2}\frac{\epsilon^{-\chi_1(k_{\ell,3}+k_{0,1}-2k_{0,2})}}{\tilde{P}_{m}(\tau)}\left[\frac{\tau^{k_{\ell,3}+k_{0,1}-2k_{0,2}}}{\Gamma\left(\frac{k_{\ell,3}+k_{0,1}-2k_{0,2}}{\kappa_1}\right)}\star_{\kappa_1}\omega_1\right]\hfill\\
\hfill+\sum_{\ell=0}^{s_3}\frac{3a_{0,1}^2a_{\ell,3}}{a_{0,2}^2}\frac{\epsilon^{-\chi_1(k_{\ell,3}+k_{0,1}-2k_{0,2})}}{\tilde{P}_{m}(\tau)}\left[\frac{\tau^{k_{\ell,3}+k_{0,1}-2k_{0,2}}}{\Gamma\left(\frac{k_{\ell,3}+k_{0,1}-2k_{0,2}}{\kappa_1}\right)}\star_{\kappa_1}2\mathcal{B}_{\kappa_1}J_1(\tau,\epsilon)\star_{\kappa_1}\omega_1\right]\\
+\sum_{\ell=0}^{s_3}\frac{3a_{0,1}^2a_{\ell,3}}{a_{0,2}^2}\frac{\epsilon^{-\chi_1(k_{\ell,3}+k_{0,1}-2k_{0,2})}}{\tilde{P}_{m}(\tau)}\left[\frac{\tau^{k_{\ell,3}+k_{0,1}-2k_{0,2}}}{\Gamma\left(\frac{k_{\ell,3}+k_{0,1}-2k_{0,2}}{\kappa_1}\right)}\star_{\kappa_1}\mathcal{B}_{\kappa_1}J_1(\tau,\epsilon)\star_{\kappa_1}\mathcal{B}_{\kappa_1}J_1(\tau,\epsilon)\star_{\kappa_1}\omega_1\right]\\
+\sum_{\ell=s_3+1}^{M_3}\frac{3a_{0,1}^2a_{\ell,3}}{a_{0,2}^2}\frac{\epsilon^{m_{\ell,3}+3\beta-\alpha k_{\ell,3}-\chi_1(k_{\ell,3}+k_{0,1}-2k_{0,2})}}{\tilde{P}_{m}(\tau)}\left[\frac{\tau^{k_{\ell,3}+k_{0,1}-2k_{0,2}}}{\Gamma\left(\frac{k_{\ell,3}+k_{0,1}-2k_{0,2}}{\kappa_1}\right)}\star_{\kappa_1}\omega_1\right]\hfill\\
+\sum_{\ell=s_3+1}^{M_3}\frac{3a_{0,1}^2a_{\ell,3}}{a_{0,2}^2}\frac{\epsilon^{m_{\ell,3}+3\beta-\alpha k_{\ell,3}-\chi_1(k_{\ell,3}+k_{0,1}-2k_{0,2})}}{\tilde{P}_{m}(\tau)}\left[\frac{\tau^{k_{\ell,3}+k_{0,1}-2k_{0,2}}}{\Gamma\left(\frac{k_{\ell,3}+k_{0,1}-2k_{0,2}}{\kappa_1}\right)}\star_{\kappa_1}2\mathcal{B}_{\kappa_1}J_1(\tau,\epsilon)\star_{\kappa_1}\omega_1\right]\\
+\sum_{\ell=s_3+1}^{M_3}\frac{3a_{0,1}^2a_{\ell,3}}{a_{0,2}^2}\frac{\epsilon^{m_{\ell,3}+3\beta-\alpha k_{\ell,3}-\chi_1(k_{\ell,3}+k_{0,1}-2k_{0,2})}}{\tilde{P}_{m}(\tau)}\hfill\\
\hfill\left.\times\left[\frac{\tau^{k_{\ell,3}+k_{0,1}-2k_{0,2}}}{\Gamma\left(\frac{k_{\ell,3}+k_{0,1}-2k_{0,2}}{\kappa_1}\right)}\star_{\kappa_1}\mathcal{B}_{\kappa_1}J_1(\tau,\epsilon)\star_{\kappa_1}\mathcal{B}_{\kappa_1}J_1(\tau,\epsilon)\star_{\kappa_1}\omega_1\right]\right\},
\end{multline}

\begin{multline}
\mathcal{H}_{\epsilon}^2(\omega_1(\tau,m)):=\sum_{\ell=1}^{D-1}\epsilon^{\Delta_{\ell}+\alpha(\delta_{\ell}-d_{\ell})+\beta}\sum_{q_1+q_2=\delta_\ell}\frac{\delta_\ell!}{q_1!q_2!}\prod_{d=0}^{q_1-1}(\gamma_1-d)\epsilon^{-\chi_1(d_{\ell}-q_1-k_{0,1}-q_2)}\tilde{R}_{\ell}(im)\frac{1}{\tilde{P}_{m}(\tau)}\\
\times\left[\frac{\tau^{d_{\ell,q_1,q_2}}}{\Gamma\left(\frac{d_{\ell,q_1,q_2}}{\kappa_1}\right)}\star_{\kappa_1}\left((\kappa_1\tau^{\kappa_1})^{q_2}\omega_1\right)+\sum_{1\le p\le q_2-1}\frac{A_{q_2,p}}{\tilde{P}_{m}(\tau)}\left(\frac{\tau^{d_{\ell,q_1,q_2}+\kappa_1(q_2-p)}}{\Gamma\left(\frac{d_{\ell,q_1,q_2}+\kappa_1(q_2-p)}{\kappa_1}\right)}\star_{\kappa_1}\left((\kappa_1\tau^{\kappa_1})^{p}\omega_1\right)\right)\right]\\
-\tilde{Q}(im)\left\{\sum_{\ell=0}^{s_2}a_{\ell,2}\frac{\epsilon^{-\chi_1(k_{\ell,2}+\gamma_1-k_{0,1})}}{\tilde{P}_m(\tau)}\left[\frac{\tau^{k_{\ell,2}+\gamma_1-k_{0,1}}}{\Gamma\left(\frac{k_{\ell,2}+\gamma_1-k_{0,1}}{\kappa_1}\right)}\star_{\kappa_1} \omega_1\star_{\kappa_1}^{E}\omega_1\right]\right.\\
+\sum_{\ell=s_2+1}^{M_2}a_{\ell,2}\frac{\epsilon^{m_{\ell,2}+2\beta-\alpha k_{\ell,2}-\chi_1(k_{\ell,2}+\gamma_1-k_{0,1})}}{\tilde{P}_m(\tau)}\left[\frac{\tau^{k_{\ell,2}+\gamma_1-k_{0,1}}}{\Gamma\left(\frac{k_{\ell,2}+\gamma_1-k_{0,1}}{\kappa_1}\right)}\star_{\kappa_1} \omega_1\star_{\kappa_1}^{E}\omega_1\right]\\
+\sum_{\ell=0}^{s_3}\frac{-3a_{\ell,3}a_{01}}{a_{02}}\frac{\epsilon^{-\chi_1(k_{\ell,3}+\gamma_1-k_{0,2})}}{\tilde{P}_m(\tau)}\left[\frac{\tau^{k_{\ell,3}+\gamma_1-k_{0,2}}}{\Gamma\left(\frac{k_{\ell,3}+\gamma_1-k_{0,2}}{\kappa_1}\right)}\star_{\kappa_1} \omega_1\star_{\kappa_1}^{E}\omega_1\right]\hfill\\
\hfill+\sum_{\ell=s_3+1}^{M_3}\frac{-3a_{\ell,3}a_{01}}{a_{02}}\frac{\epsilon^{m_{\ell,3}+3\beta-\alpha k_{\ell,3}-\chi_1(k_{\ell,3}+\gamma_1-k_{0,2})}}{\tilde{P}_m(\tau)}\left[\frac{\tau^{k_{\ell,3}+\gamma_1-k_{0,2}}}{\Gamma\left(\frac{k_{\ell,3}+\gamma_1-k_{0,2}}{\kappa_1}\right)}\star_{\kappa_1} \omega_1\star_{\kappa_1}^{E}\omega_1\right]\\
+\sum_{\ell=0}^{s_3}\frac{-3a_{\ell,3}a_{01}}{a_{02}}\frac{\epsilon^{-\chi_1(k_{\ell,3}+\gamma_1-k_{0,2})}}{\tilde{P}_m(\tau)}\left[\frac{\tau^{k_{\ell,3}+\gamma_1-k_{0,2}}}{\Gamma\left(\frac{k_{\ell,3}+\gamma_1-k_{0,2}}{\kappa_1}\right)}\star_{\kappa_1}\mathcal{B}_{\kappa_1}J_1(\tau,\epsilon)\star_{\kappa_1} \omega_1\star_{\kappa_1}^{E}\omega_1\right]\hfill\\
\left.+\sum_{\ell=s_3+1}^{M_3}\frac{-3a_{\ell,3}a_{01}}{a_{02}}\frac{\epsilon^{m_{\ell,3}+3\beta-\alpha k_{\ell,3}-\chi_1(k_{\ell,3}+\gamma_1-k_{0,2})}}{\tilde{P}_m(\tau)}\left[\frac{\tau^{k_{\ell,3}+\gamma_1-k_{0,2}}}{\Gamma\left(\frac{k_{\ell,3}+\gamma_1-k_{0,2}}{\kappa_1}\right)}\star_{\kappa_1}\mathcal{B}_{\kappa_1}J_1(\tau,\epsilon)\star_{\kappa_1} \omega_1\star_{\kappa_1}^{E}\omega_1\right]\right\},
\end{multline}

\begin{multline}
\mathcal{H}_{\epsilon}^3(\omega_1(\tau,m)):=
\frac{1}{\tilde{P}_m(\tau)}\sum_{q_1+q_2=\delta_D,q_1\ge1}\frac{\delta_D!}{q_1!q_2!}\prod_{d=0}^{q_1-1}(\gamma_1-d)\tilde{R}_{D}(im)\\
\times\left[\frac{\tau^{d_{D,q_1,q_2}}}{\Gamma\left(\frac{d_{D,q_1,q_2}}{\kappa_1}\right)}\star_{\kappa_1}\left((\kappa_1\tau^{\kappa_1})^{q_2}\omega_1\right)+\sum_{1\le p\le q_2-1}A_{q_2,p}\frac{\tau^{d_{D,q_1,q_2}+\kappa_1(q_2-p)}}{\Gamma\left(\frac{d_{D,q_1,q_2}+\kappa_1(q_2-p)}{\kappa_1}\right)}\star_{\kappa_1}\left((\kappa_1\tau^{\kappa_1})^{p}\omega_1\right)\right]\\
+\frac{\tilde{R}_{D}(im)}{\tilde{P}_m(\tau)}\left[\sum_{1\le p\le \delta_D-1}A_{\delta_D,p}\frac{\tau^{\kappa_1(\delta_D-p)}}{\Gamma\left(\frac{\kappa_1(\delta_D-p)}{\kappa_1}\right)}\star_{\kappa_1}\left((\kappa_1\tau^{\kappa_1})^{p}\omega_1\right)\right],
\end{multline}

and

\begin{multline}
\mathcal{H}_{\epsilon}^4(\omega_1(\tau,m)):=-\tilde{Q}(im)\left\{\sum_{\ell=0}^{s_3}a_{\ell,3}\frac{\epsilon^{-\chi_1(k_{\ell,3}+2\gamma_1-k_{0,1})}}{\tilde{P}_m(\tau)}\left[\frac{\tau^{k_{\ell,3}+2\gamma_1-k_{0,1}}}{\Gamma\left(\frac{k_{\ell,3}+2\gamma_1-k_{0,1}}{\kappa_1}\right)}\star_{\kappa_1}\omega_1\star_{\kappa_1}^{E} \omega_1\star_{\kappa_1}^{E}\omega_1\right]\right.\\
\left.+\sum_{\ell=s_3+1}^{M_3}a_{\ell,3}\frac{\epsilon^{m_{\ell,3}+3\beta-\alpha k_{\ell,3}-\chi_1(k_{\ell,3}+2\gamma_1-k_{0,1})}}{\tilde{P}_m(\tau)}\left[\frac{\tau^{k_{\ell,3}+2\gamma_1-k_{0,1}}}{\Gamma\left(\frac{k_{\ell,3}+2\gamma_1-k_{0,1}}{\kappa_1}\right)}\star_{\kappa_1}\omega_1\star_{\kappa_1}^{E} \omega_1\star_{\kappa_1}^{E}\omega_1\right]\right\}.
\end{multline}

\end{lemma}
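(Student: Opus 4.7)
The plan is to set up a standard Banach fixed-point argument for $\mathcal{H}_\epsilon$ acting on the ball $\bar{B}(0,\varpi) \subset F^d_{(\nu,\beta,\mu,\chi_1,\alpha,\kappa_1,\epsilon)}$. The map decomposes into four pieces by order in $\omega_1$: a forcing piece (the first sum in $\mathcal{H}_\epsilon^1$), a linear part (the rest of $\mathcal{H}_\epsilon^1$, the $\tilde{R}_\ell$ contributions in $\mathcal{H}_\epsilon^2$ for $\ell \le D-1$, and all of $\mathcal{H}_\epsilon^3$), a quadratic part (the $\star_{\kappa_1}^E$ sums in $\mathcal{H}_\epsilon^2$), and the cubic part $\mathcal{H}_\epsilon^4$. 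I would estimate each piece term by term in the norm $\|\cdot\|_{(\nu,\beta,\mu,\chi_1,\alpha,\kappa_1,\epsilon)}$ and then combine; this reduces the problem to controlling only two things, namely the $m$-behaviour of the polynomial ratios and the exponent of $|\epsilon|$ that accumulates in each summand.

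The backbone of every estimate is the lower bound (\ref{e949}) for $|\tilde{P}_m(\tau)|$, which up to a bounded multiplier identifies $1/\tilde{P}_m(\tau)$ with the model kernel $a_{\delta_D-1/\kappa_1,\kappa_1}(\tau,m)$ required by Lemma~\ref{lema1} and Proposition~\ref{prop1}. Since $\deg(\tilde{Q}) = \deg(\tilde{R}_D) \ge \deg(\tilde{R}_\ell)$ by (\ref{e540}), every factor $\tilde{Q}(im)/\tilde{R}_D(im)$ or $\tilde{R}_\ell(im)/\tilde{R}_D(im)$ is uniformly bounded in $m$, so choosing $\tilde{R}(X)$ in Proposition~\ref{prop1} to be the numerator at hand disposes of the $m$-dependence. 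For the forcing I apply Lemma~\ref{lema1} with $\gamma_2 = b_j - k_{0,1} - \gamma_1 \ge 1$ (from (\ref{e887})), yielding a bound of order $|\epsilon|^{n_j - \alpha b_j}$, positive by (\ref{e134}). For the linear summands I use Proposition~\ref{prop1} on each $\tau^a/\Gamma(a/\kappa_1)\star_{\kappa_1}\omega_1$; for the quadratic ones I first apply Corollary~\ref{coro3} to the inner $\star_{\kappa_1}^E$ and then Proposition~\ref{prop1} to the outer $\tau^a\star_{\kappa_1}$; for the cubic piece $\mathcal{H}_\epsilon^4$ two nested invocations of Corollary~\ref{coro3} precede a single application of Proposition~\ref{prop1}. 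The factors $\mathcal{B}_{\kappa_1}J_1(\tau,\epsilon)$ (and, via Lemma~\ref{lema533}, the double $J_1$-convolution) are absorbed by Lemma~\ref{lema1} applied to the coefficients of the convergent series $\mathcal{J}_1$ with $\mathcal{J}_1(0)=0$; each $J_1$-insertion contributes one factor $|\epsilon|^{\chi_1}$ that exactly cancels the explicit prefactor $\epsilon^{-\chi_1(\cdots)}$. After this bookkeeping the net power of $|\epsilon|$ in every summand matches one of the left-hand sides of (\ref{e887}), (\ref{e896}) or (\ref{e903}), which by assumption is non-negative.

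Collecting all contributions yields an inequality of the form
\begin{equation*}
\|\mathcal{H}_\epsilon(\omega_1)\|_{(\nu,\beta,\mu,\chi_1,\alpha,\kappa_1,\epsilon)} \le \frac{K_0}{(r_{\tilde{Q},\tilde{R}_D})^{1/(\delta_D\kappa_1)}}\bigl(A_0(\epsilon) + A_1 \varpi + A_2 \varpi^2 + A_3 \varpi^3\bigr),
\end{equation*}
where $A_0(\epsilon) \to 0$ as $\epsilon \to 0$ and $A_1, A_2, A_3$ are uniform in $\omega_1 \in \bar{B}(0,\varpi)$ and $\epsilon$. Choosing $r_{\tilde{Q},\tilde{R}_D}$ large, then $\varpi$ small, then $\epsilon_0$ small forces the right-hand side below $\varpi$, which gives $\mathcal{H}_\epsilon(\bar{B}(0,\varpi))\subseteq \bar{B}(0,\varpi)$. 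The contraction inequality (\ref{e442}) is obtained by the same scheme applied to $\mathcal{H}_\epsilon(\omega^{(1)}) - \mathcal{H}_\epsilon(\omega^{(2)})$, using the standard linearisations $\omega^{(1)}\star_{\kappa_1}^E\omega^{(1)} - \omega^{(2)}\star_{\kappa_1}^E\omega^{(2)} = (\omega^{(1)} - \omega^{(2)})\star_{\kappa_1}^E\omega^{(1)} + \omega^{(2)}\star_{\kappa_1}^E(\omega^{(1)}-\omega^{(2)})$ and its cubic analogue; these produce one extra factor of $\varpi$ in the quadratic and two in the cubic terms, so a further reduction of $\varpi$ and $\epsilon_0$ drives the Lipschitz constant below $1/2$. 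I expect the main obstacle to be purely administrative: for each summand of $\mathcal{H}_\epsilon^1,\mathcal{H}_\epsilon^2,\mathcal{H}_\epsilon^3,\mathcal{H}_\epsilon^4$ and across the cases $\ell \le s_\lambda$ and $\ell > s_\lambda$, one must verify that the effective $|\epsilon|$-exponent is exactly the expression appearing in one of (\ref{e887})-(\ref{e903}), a lengthy but conceptually routine case analysis.
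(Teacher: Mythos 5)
Your overall scheme is the same as the paper's: term-by-term estimates in the norm of $F^{d}_{(\nu,\beta,\mu,\chi_1,\alpha,\kappa_1,\epsilon)}$ based on the lower bound (\ref{e949}), Lemma~\ref{lema1} for the forcing term, Proposition~\ref{prop1} for the convolution terms, Corollary~\ref{coro3}/Proposition~\ref{prop3} for the quadratic and cubic pieces, then the choice of $r_{\tilde{Q},\tilde{R}_D}$ large and $\varpi,\epsilon_0$ small, and the usual bilinear/trilinear splittings for the $1/2$-Lipschitz estimate. There is, however, a genuine gap in your treatment of the terms containing $\mathcal{B}_{\kappa_1}J_1(\tau,\epsilon)$. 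Lemma~\ref{lema1} cannot ``absorb'' these factors: it only concerns monomials $\epsilon^{-\chi\gamma_2}\tau^{\gamma_2}\tilde{B}(m)a_{\gamma_1,\kappa}(\tau,m)$ with no convolution against $\omega_1$. For a term such as $\tau^{k_{\ell,2}-k_{0,2}}\star_{\kappa_1}\mathcal{B}_{\kappa_1}J_1(\tau,\epsilon)\star_{\kappa_1}\omega_1$ one has to expand $\mathcal{B}_{\kappa_1}J_1(\tau,\epsilon)=\sum_{j\ge1}J_j\epsilon^{-\chi_1 j}\tau^{j}/\Gamma(j/\kappa_1)$, apply Proposition~\ref{prop1} to each monomial $\tau^{k_{\ell,2}-k_{0,2}+j}\star_{\kappa_1}\omega_1$, and sum over $j$. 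The delicate point, which your proposal treats as ``purely administrative'', is that the constant furnished by Proposition~\ref{prop1} depends on the exponent and grows like $\Gamma((k_{\ell,2}-k_{0,2}+j)/\kappa_1)$ as $j\to\infty$; the paper reworks the proof of Proposition~\ref{prop1} (the estimates (\ref{e633}), (\ref{e634}), (\ref{e677}), obtained through (\ref{e1017}), Stirling's formula and the Mittag--Leffler bounds of \cite{lama2}) precisely to show that this growth is compensated by the factor $1/\Gamma((k_{\ell,2}-k_{0,2}+j)/\kappa_1)$ coming from the Borel coefficients, leaving a geometric factor $A_3^{j}$ which, combined with $|J_j|\le C_J A_J^{j}$ and $\epsilon_0$ small, makes the series in $j$ in (\ref{e685}) and (\ref{e766}) convergent. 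Note also that the net $\epsilon$-power per $J_1$-insertion is $|\epsilon|^{\alpha j}$ (favourable), not an exact cancellation of $|\epsilon|^{\chi_1}$ as you assert. Without this uniform-in-$j$ control the invariance of $\bar{B}(0,\varpi)$ does not follow from your argument.

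Two smaller points. For the cubic piece $\mathcal{H}^4_\epsilon$ the relevant exponent involves $k_{\ell,3}+2\gamma_1-k_{0,1}$ and is not literally one of the left-hand sides of (\ref{e887})--(\ref{e903}); its nonnegativity follows from the third line of (\ref{e887}) together with $\gamma_1\ge k_{0,1}-k_{0,2}$, i.e. Lemma~\ref{lema887}, which should be invoked explicitly. Also, your routing of the nonlinear terms through Corollary~\ref{coro3} alone differs from the paper, which writes $\omega_1\star^{E}_{\kappa_1}\omega_1=\tau h_1$, applies Proposition~\ref{prop3} (at the cost of a factor $|\epsilon|^{-(\chi_1+\alpha)}$) and then Proposition~\ref{prop1}.2 with $\gamma_3=\frac{1}{\kappa_1}-1$; this is exactly where $\delta_D\ge 2/\kappa_1$ and the quadratic lines of (\ref{e887}) enter. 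If you keep your variant, you must check which case of Proposition~\ref{prop1} applies (with $\gamma_3=-1$ the hypothesis $\gamma_2>-1$ forces the outer $\tau$-exponent to be strictly positive) and verify that the resulting $\epsilon$-exponents stay nonnegative; this is doable, but it is not the paper's bookkeeping and cannot be waved through by saying the exponents ``match'' (\ref{e887}).
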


\begin{prop}\label{prop1518}
Under the assumptions (\ref{e887}), (\ref{e896}), (\ref{e903}), there exist $r_{\tilde{Q},\tilde{R}_D}>0$, $\epsilon_0>0$ and $\varpi>0$ such that the problem (\ref{e315}) admits a unique solution $\omega_{\kappa_1}^{d}(\tau,m,\epsilon)$ belonging to the Banach space $F^d_{(\nu,\beta,\mu,\chi_1,\alpha,\kappa_1,\epsilon)}$, with 
$$  \left\|\omega_{\kappa_1}^{d}(\tau,m,\epsilon)\right\|_{(\nu,\beta,\mu,\chi_1,\alpha,\kappa_1,\epsilon)}\le\varpi,$$
for every $\epsilon\in D(0,\epsilon_0)\setminus\{0\}$, where $d\in\R$ is such that (\ref{e377}) and (\ref{e378}) are satisfied.
\end{prop}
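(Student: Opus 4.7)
The plan is to recast the convolution equation (\ref{e315}) as a fixed point problem of the form $\omega_1=\mathcal{H}_\epsilon(\omega_1)$ and then invoke the Banach contraction principle. The first step is purely algebraic: I would isolate in the equation the two terms that carry no convolution with $\omega_1$, namely the contribution $-a_{0,1}\tilde{Q}(im)\omega_1$ coming from $L_{1,\kappa_1}$ and the top-order term $\tilde{R}_D(im)(\kappa_1\tau^{\kappa_1})^{\delta_D}\omega_1$ appearing in $R_{1,\kappa_1}$. Their combined coefficient is exactly the polynomial $\tilde{P}_m(\tau)=-a_{0,1}\tilde{Q}(im)-\tilde{R}_D(im)\kappa_1^{\delta_D}\tau^{\delta_D\kappa_1}$. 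By the geometric setup (\ref{e366})--(\ref{e378}) and the resulting lower bound (\ref{e949}), the polynomial $\tilde{P}_m(\tau)$ does not vanish on $(\bar{D}(0,\rho)\cup S_d)\times\mathbb{R}$, so one may divide through by it and rewrite (\ref{e315}) equivalently as $\omega_1=\mathcal{H}_\epsilon(\omega_1)$ with $\mathcal{H}_\epsilon=\mathcal{H}_\epsilon^1+\mathcal{H}_\epsilon^2+\mathcal{H}_\epsilon^3+\mathcal{H}_\epsilon^4$ as displayed in Lemma~\ref{lema568}.

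The second step is to invoke Lemma~\ref{lema568} itself, which is the real workhorse. Under the assumptions (\ref{e887}), (\ref{e896}) and (\ref{e903}) it provides constants $r_{\tilde{Q},\tilde{R}_D}>0$, $\epsilon_0>0$ and $\varpi>0$ such that, for every $\epsilon\in D(0,\epsilon_0)\setminus\{0\}$, $\mathcal{H}_\epsilon$ maps the closed ball $\bar{B}(0,\varpi)\subseteq F^d_{(\nu,\beta,\mu,\chi_1,\alpha,\kappa_1,\epsilon)}$ into itself and satisfies the Lipschitz estimate (\ref{e442}) with constant $1/2$. The closed ball $\bar{B}(0,\varpi)$, being a closed subset of the Banach space $F^d_{(\nu,\beta,\mu,\chi_1,\alpha,\kappa_1,\epsilon)}$, is complete with respect to the induced distance. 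A direct application of the Banach fixed point theorem then yields, for each such $\epsilon$, a unique element $\omega_{\kappa_1}^d(\tau,m,\epsilon)\in\bar{B}(0,\varpi)$ solving $\mathcal{H}_\epsilon(\omega_{\kappa_1}^d)=\omega_{\kappa_1}^d$. Reversing the algebraic rearrangement performed in the first step (which is allowed because $\tilde{P}_m(\tau)\neq 0$) shows that $\omega_{\kappa_1}^d(\tau,m,\epsilon)$ is indeed a solution of the original equation (\ref{e315}), and the required bound on its norm is simply the membership in $\bar{B}(0,\varpi)$.

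The entire difficulty therefore lies inside Lemma~\ref{lema568}, whose proof is deferred to a later section; the present proposition is essentially a packaging of that lemma together with Banach's contraction theorem. The only point that requires mild attention is that all the constants produced by the norm estimates of Section~\ref{seccion2} (Lemma~\ref{lema1}, Proposition~\ref{prop1}, Proposition~\ref{prop3} and Corollary~\ref{coro3}) depend on $\epsilon$ only through explicit powers of $|\epsilon|$ with nonnegative exponents, thanks precisely to the arithmetic conditions (\ref{e887})--(\ref{e903}). This ensures that the self-mapping and contraction properties hold uniformly for $\epsilon\in D(0,\epsilon_0)\setminus\{0\}$ once $\epsilon_0$ and $\varpi$ are chosen small enough and $r_{\tilde{Q},\tilde{R}_D}$ large enough, which is exactly the content invoked from Lemma~\ref{lema568}.
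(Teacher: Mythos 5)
Your proposal is correct and follows essentially the same route as the paper: the paper also rearranges (\ref{e315}) into the fixed point form $\omega_1=\mathcal{H}_\epsilon(\omega_1)$ by isolating $-a_{0,1}\tilde{Q}(im)\omega_1$ and $\tilde{R}_D(im)(\kappa_1\tau^{\kappa_1})^{\delta_D}\omega_1$ and dividing by $\tilde{P}_m(\tau)$, and then applies the contraction mapping principle on $\bar{B}(0,\varpi)$ using Lemma~\ref{lema568}. The only cosmetic difference is that the paper additionally records the holomorphic dependence of the fixed point on $\epsilon$, which is not needed for the statement itself.
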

\begin{proof}
Let $r_{\tilde{Q},\tilde{R}_D}>0$, $\epsilon_0>0$ and $\varpi>0$ be as in the proof of Lemma~\ref{lema568}. That result allows us to apply a fixed point argument on $\mathcal{H}_\epsilon$ for every $\epsilon\in D(0,\epsilon_0)\setminus\{0\}$ and obtain a unique element $\omega_{\kappa_1}^{d}(\tau,m,\epsilon)\in F^d_{(\nu,\beta,\mu,\chi_1,\alpha,\kappa_1,\epsilon)}$ with norm upper estimated by $\varpi$, which satisfies that 
$$\mathcal{H}_\epsilon(\omega_{\kappa_1}^{d}(\tau,m,\epsilon))=\omega_{\kappa_1}^{d}(\tau,m,\epsilon).$$  
This function also depends holomorphically on $\epsilon\in D(0,\epsilon_0)\setminus\{0\}$.

Observe that the terms in the equation (\ref{e315}) can be rearranged to write it in the form
$$w_1(\tau,m,\epsilon)=\mathcal{H}_{\epsilon}(w_1(\tau,m,\epsilon)),$$
by leaving $\tilde{Q}(im)a_{0,1}$ and  $\tilde{R}_{D}(im)(\kappa_1\tau^{\kappa_1})^{\delta_{D}}$ on one side and dividing the resulting equation by the polynomial $\tilde{P}_{m}(\tau)=-\tilde{Q}(im)a_{0,1}-\tilde{R}_{D}(im)(\kappa_1\tau^{\kappa_1})^{\delta_{D}}$.

Therefore, $\omega_{\kappa_1}^{d}(\tau,m,\epsilon)$ turns out to be a solution of (\ref{e315}), with initial data $\omega_{\kappa_1}^{d}(0,m,\epsilon)\equiv 0$.
\end{proof}

\subsection{Second perturbed auxiliary problem}

The form of $U_{02}(T)$ in (\ref{e204}) motivates a second particular form of a solution of (\ref{e124}):
\begin{equation}\label{e216b}
U_{2}(T,z,\epsilon)=-\frac{a_{0,2}}{a_{0,3}}T^{k_{0,2}-k_{0,3}}(1+\mathcal{J}_2(T))+T^{\gamma_2}V_2(T,z,\epsilon),
\end{equation}
for some $\gamma_2\in\Q$. We assume this choice is made accordingly to the following conditions:

\begin{equation}\label{e218b}
\gamma_2\ge k_{0,2}-k_{0,3}, 
\end{equation}
and 
\begin{equation}\label{e222b}
\gamma_2\le  b_j-2k_{0,2}+k_{0,3},\qquad j=0,\ldots,Q.
\end{equation}

We proceed as in Subsection~\ref{subseccion1} and plug (\ref{e216b}) into (\ref{e124}). We get

\begin{multline}
Q(\partial_z)\left[\left(\sum_{\ell=0}^{s_1}a_{\ell,1}T^{k_{\ell,1}}+\sum_{\ell=s_1+1}^{M_1}a_{\ell,1}\epsilon^{m_{\ell,1}+\beta-\alpha k_{\ell,1}}T^{k_{\ell,1}}\right)T^{\gamma_2}V_2(T,z,\epsilon)  \right.\\
+\left(\sum_{\ell=0}^{s_2}a_{\ell,2}T^{k_{\ell,2}}+\sum_{\ell=s_2+1}^{M_2}a_{\ell,2}\epsilon^{m_{\ell,2}+2\beta-\alpha k_{\ell,2}}T^{k_{\ell,2}}\right)\\
\times\left(T^{2\gamma_2}V_2^2(T,z,\epsilon)-\frac{2a_{0,2}}{a_{0,3}}T^{k_{0,2}-k_{0,3}+\gamma_2}(1+\mathcal{J}_2(T))V_2(T,z,\epsilon)\right)\\
+\left(\sum_{\ell=0}^{s_3}a_{\ell,3}T^{k_{\ell,3}}+\sum_{\ell=s_3+1}^{M_3}a_{\ell,3}\epsilon^{m_{\ell,3}+3\beta-\alpha k_{\ell,3}}T^{k_{\ell,3}}\right)\left\{3\left(\frac{a_{02}}{a_{03}}T^{k_{0,2}-k_{0,3}}(1+\mathcal{J}_2(T))\right)^2T^{\gamma_2}V_2(T,z,\epsilon)\right.\\
\left.\left.-3\left(\frac{a_{02}}{a_{03}}T^{k_{0,2}-k_{0,3}}(1+\mathcal{J}_2(T))\right)T^{2\gamma_2}V_2^2(T,z,\epsilon)+T^{3\gamma_2}V_2^3(T,z,\epsilon) \right\}\right] \\
=\sum_{j=0}^{Q}b_{j}(z)\epsilon^{n_j-\alpha b_j}T^{b_j}\\
+\sum_{\ell=1}^{D}\epsilon^{\Delta_{\ell}+\alpha(\delta_{\ell}-d_{\ell})+\beta}T^{d_{\ell}}R_{\ell}(\partial_z)\left(\sum_{q_1+q_2=\delta_\ell}\frac{\delta_\ell!}{q_1!q_2!}\prod_{d=0}^{q_1-1}(\gamma_2-d)T^{\gamma_2-q_1}\partial_{T}^{q_2}V_2(T,z,\epsilon)\right).\label{e234b}
\end{multline}

Observe that, in view of (\ref{e92}) and (\ref{e94}) we have
\begin{equation}\label{e1150}
2k_{0,2}-k_{0,3}<k_{0,1}=d_\ell-\delta_\ell-\delta_\ell\kappa_1-d_{\ell,0}\le d_\ell-\delta_\ell.
\end{equation}

Conditions (\ref{e92}), (\ref{e218b}), (\ref{e222b}), (\ref{e1150}), and the fact that $(k_{\ell,2})_{\ell\ge0}$ and $(k_{\ell,3})_{\ell\ge0}$  are increasing sequences, allow us to divide equation (\ref{e234b}) by $T^{2k_{0,2}-k_{0,3}+\gamma_2}$, preserving analyticity of the coefficients involved. Invertibility of the coefficient of $Q(\partial_z)V_2(T,z,\epsilon)$ at $T=0$ is guaranteed due to $a_{0,3}\neq0$. The resulting problem can be rewritten in this form:

\begin{multline}
Q(\partial_z)V_2(T,z,\epsilon)\left[\frac{a_{0,2}^2}{a_{0,3}}+\sum_{\ell=0}^{s_1}a_{\ell,1}T^{k_{\ell,1}-2k_{0,2}+k_{0,3}}+\sum_{\ell=s_1+1}^{M_1}a_{\ell,1}\epsilon^{m_{\ell,1}+\beta-\alpha k_{\ell,1}}T^{k_{\ell,1}-2k_{0,2}+k_{0,3}}\right.\\
+\sum_{\ell=1}^{s_2}\frac{-2a_{0,2}a_{\ell,2}}{a_{0,3}}T^{k_{\ell,2}-k_{0,2}}+\sum_{\ell=s_2+1}^{M_2}\frac{-2a_{0,2}a_{\ell,2}}{a_{0,3}}\epsilon^{m_{\ell,2}+2\beta-\alpha k_{\ell,2}}T^{k_{\ell,2}-k_{0,2}}\hfill\\
+\sum_{\ell=0}^{s_2}\frac{-2a_{0,2}a_{\ell,2}}{a_{0,3}}T^{k_{\ell,2}-k_{0,2}}\mathcal{J}_2(T)+\sum_{\ell=s_2+1}^{M_2}\frac{-2a_{0,2}a_{\ell,2}}{a_{0,3}}\epsilon^{m_{\ell,2}+2\beta-\alpha k_{\ell,2}}T^{k_{\ell,2}-k_{0,2}}\mathcal{J}_2(T)\\
\hfill+\sum_{\ell=1}^{s_3}\frac{3a_{0,2}^2a_{\ell,3}}{a_{0,3}^2}T^{k_{\ell,3}-k_{0,3}}+\sum_{\ell=s_3+1}^{M_3}\frac{3a_{0,2}^2a_{\ell,3}}{a_{0,3}^2}\epsilon^{m_{\ell,3}+3\beta-\alpha k_{\ell,3}}T^{k_{\ell,3}-k_{0,3}}\\
\left.+\left(\sum_{\ell=0}^{s_3}\frac{3a_{0,2}^2a_{\ell,3}}{a_{0,3}^2}T^{k_{\ell,3}-k_{0,3}}+\sum_{\ell=s_3+1}^{M_3}\frac{3a_{0,2}^2a_{\ell,3}}{a_{0,3}^2}\epsilon^{m_{\ell,3}+3\beta-\alpha k_{\ell,3}}T^{k_{\ell,3}-k_{0,3}}\right)\left(2\mathcal{J}_2(T)+\mathcal{J}_2^2(T)\right)\right]\\
+Q(\partial_{z})V_2^2(T,z,\epsilon)\left[\left(\sum_{\ell=0}^{s_2}a_{\ell,2}T^{k_{\ell,2}-k_{0,2}+k_{0,3}}+\sum_{\ell=s_2+1}^{M_2}a_{\ell,2}\epsilon^{m_{\ell,2}+2\beta-\alpha k_{\ell,2}}T^{k_{\ell,2}-k_{0,2}+k_{0,3}}\right)T^{\gamma_2-k_{0,2}}\right.\\
\hfill\left.+\left(\sum_{\ell=0}^{s_3}a_{\ell,3}T^{k_{\ell,3}}+\sum_{\ell=s_3+1}^{M_3}a_{\ell,3}\epsilon^{m_{\ell,3}+3\beta-\alpha k_{\ell,3}}T^{k_{\ell,3}}\right)\left(1+\mathcal{J}_2(T)\right)\right]\\
+Q(\partial_{z})V_2^3(T,z,\epsilon)\left[\left(\sum_{\ell=0}^{s_3}a_{\ell,3}T^{k_{\ell,3}}+\sum_{\ell=s_3+1}^{M_3}a_{\ell,3}\epsilon^{m_{\ell,3}+3\beta-\alpha k_{\ell,3}}T^{k_{\ell,3}}\right)T^{2\gamma_2-2k_{0,2}+k_{0,3}}\right]\\
=\sum_{j=0}^{Q}b_{j}(z)\epsilon^{n_j-\alpha b_j}T^{b_j-2k_{0,2}+k_{0,3}-\gamma_2}\\
+\sum_{\ell=1}^{D}\epsilon^{\Delta_{\ell}+\alpha(\delta_{\ell}-d_{\ell})+\beta}\left(\sum_{q_1+q_2=\delta_\ell}\frac{\delta_\ell!}{q_1!q_2!}\prod_{d=0}^{q_1-1}(\gamma_2-d)T^{d_{\ell}-q_1-2k_{0,2}+k_{0,3}}R_{\ell}(\partial_z)\partial_{T}^{q_2}V_2(T,z,\epsilon)\right).\label{e235b}
\end{multline}

We specify the form of $U_2(T,z,\epsilon)$ in (\ref{e216b}), where
\begin{equation}\label{e256b}
V_2(T,z,\epsilon):=\mathbb{V}_2(\epsilon^{\chi_1}T,z,\epsilon), \hbox{ with } \chi_2:=\frac{\Delta_D+\alpha(\delta_{D}-d_D)+\beta}{d_D-2k_{0,2}+k_{0,3}-\delta_D}.
\end{equation}

Equation (\ref{e235b}) reads as follows:

\begin{multline}
Q(\partial_z)\mathbb{V}_2(\mathbb{T},z,\epsilon)\left[\frac{a_{0,2}^2}{a_{0,3}}+\sum_{\ell=0}^{s_1}a_{\ell,1}\epsilon^{-\chi_2(k_{\ell,1}-2k_{0,2}+k_{0,3})}\mathbb{T}^{k_{\ell,1}-2k_{0,2}+k_{0,3}}\right.\\
\hfill+\sum_{\ell=s_1+1}^{M_1}a_{\ell,1}\epsilon^{m_{\ell,1}+\beta-\alpha k_{\ell,1}-\chi_2(k_{\ell,1}-2k_{0,2}+k_{0,3})}\mathbb{T}^{k_{\ell,1}-2k_{0,2}+k_{0,3}}\\
+\sum_{\ell=1}^{s_2}\frac{-2a_{0,2}a_{\ell,2}}{a_{0,3}}\epsilon^{-\chi_2(k_{\ell,2}-k_{0,2})}\mathbb{T}^{k_{\ell,2}-k_{0,2}}+\sum_{\ell=s_2+1}^{M_2}\frac{-2a_{0,2}a_{\ell,2}}{a_{0,3}}\epsilon^{m_{\ell,2}+2\beta-\alpha k_{\ell,2}-\chi_2(k_{\ell,2}-k_{0,2})}\mathbb{T}^{k_{\ell,2}-k_{0,2}}\\
+\sum_{\ell=0}^{s_2}\frac{-2a_{0,2}a_{\ell,2}}{a_{0,3}}\epsilon^{-\chi_2(k_{\ell,2}-k_{0,2})}\mathbb{T}^{k_{\ell,2}-k_{0,2}}\mathcal{J}_2(\epsilon^{-\chi_2}\mathbb{T})\hfill\\
\hfill+\sum_{\ell=s_2+1}^{M_2}\frac{-2a_{0,2}a_{\ell,2}}{a_{0,3}}\epsilon^{m_{\ell,2}+2\beta-\alpha k_{\ell,2}-\chi_2(k_{\ell,2}-k_{0,2})}T^{k_{\ell,2}-k_{0,2}}\mathcal{J}_2(\epsilon^{-\chi_2}\mathbb{T})\\
+\sum_{\ell=1}^{s_3}\frac{3a_{0,2}^2a_{\ell,3}}{a_{0,3}^2}\epsilon^{-\chi_2(k_{\ell,3}-k_{0,3})}\mathbb{T}^{k_{\ell,3}-k_{0,3}}+\sum_{\ell=s_3+1}^{M_3}\frac{3a_{0,2}^2a_{\ell,3}}{a_{0,3}^2}\epsilon^{m_{\ell,3}+3\beta-\alpha k_{\ell,3}-\chi_2(k_{\ell,3}-k_{0,3})}\mathbb{T}^{k_{\ell,3}-k_{0,3}}\\
+\sum_{\ell=0}^{s_3}\frac{3a_{0,2}^2a_{\ell,3}}{a_{0,3}^2}\epsilon^{-\chi_2(k_{\ell,3}-k_{0,3})}\mathbb{T}^{k_{\ell,3}-k_{0,3}}\left(2\mathcal{J}_2(\epsilon^{-\chi_2}\mathbb{T})+\mathcal{J}_2^2(\epsilon^{-\chi_2}\mathbb{T})\right)\hfill\\
\hfill\left.+\sum_{\ell=s_3+1}^{M_3}\frac{3a_{0,2}^2a_{\ell,3}}{a_{0,3}^2}\epsilon^{m_{\ell,3}+3\beta-\alpha k_{\ell,3}-\chi_2(k_{\ell,3}-k_{0,3})}\mathbb{T}^{k_{\ell,3}-k_{0,3}}\left(2\mathcal{J}_2(\epsilon^{-\chi_2}\mathbb{T})+\mathcal{J}_2^2(\epsilon^{-\chi_2}\mathbb{T})\right)\right]\\
+Q(\partial_{z})\mathbb{V}_2^2(\mathbb{T},z,\epsilon)\left[\sum_{\ell=0}^{s_2}a_{\ell,2}\epsilon^{-\chi_2(k_{\ell,2}-2k_{0,2}+k_{0,3}+\gamma_2)}\mathbb{T}^{k_{\ell,2}-2k_{0,2}+k_{0,3}+\gamma_2}\right. \hfill\\
\hfill+\sum_{\ell=s_2+1}^{M_2}a_{\ell,2}\epsilon^{m_{\ell,2}+2\beta-\alpha k_{\ell,2}-\chi_2(k_{\ell,2}-2k_{0,2}+k_{0,3}+\gamma_2)}\mathbb{T}^{k_{\ell,2}-2k_{0,2}+k_{0,3}+\gamma_2}\\
+\left.\left(\sum_{\ell=0}^{s_3}a_{\ell,3}\epsilon^{-\chi_2k_{\ell,3}}\mathbb{T}^{k_{\ell,3}}+\sum_{\ell=s_3+1}^{M_3}a_{\ell,3}\epsilon^{m_{\ell,3}+3\beta-\alpha k_{\ell,3}-\chi_2k_{\ell,3}}\mathbb{T}^{k_{\ell,3}}\right)\left(1+\mathcal{J}_2(\epsilon^{-\chi_2}\mathbb{T})\right)\right]\\
+Q(\partial_{z})\mathbb{V}_2^3(\mathbb{T},z,\epsilon)\left[\sum_{\ell=0}^{s_3}a_{\ell,3}\epsilon^{-\chi_2(k_{\ell,3}+2\gamma_2-2k_{0,2}+k_{0,3})}\mathbb{T}^{k_{\ell,3}+2\gamma_2-2k_{0,2}+k_{0,3}}\right.\hfill\\
\hfill\left.+\sum_{\ell=s_3+1}^{M_3}a_{\ell,3}\epsilon^{m_{\ell,3}+3\beta-\alpha k_{\ell,3}-\chi_2(k_{\ell,3}+2\gamma_2-2k_{0,2}+k_{0,3})}\mathbb{T}^{k_{\ell,3}+2\gamma_2-2k_{0,2}+k_{0,3}}\right]\\
=\sum_{j=0}^{Q}b_{j}(z)\epsilon^{n_j-\alpha b_j-\chi_2(b_j-2k_{0,2}+k_{0,3}-\gamma_2)}\mathbb{T}^{b_j-2k_{0,2}+k_{0,3}-\gamma_2}+\sum_{\ell=1}^{D-1}\epsilon^{\Delta_{\ell}+\alpha(\delta_{\ell}-d_{\ell})+\beta}\hfill\\
\times\left(\sum_{q_1+q_2=\delta_\ell}\frac{\delta_\ell!}{q_1!q_2!}\prod_{d=0}^{q_1-1}(\gamma_2-d)\epsilon^{-\chi_2(d_{\ell}-q_1-2k_{0,2}+k_{0,3}-q_2)}\mathbb{T}^{d_{\ell}-q_1-2k_{0,2}+k_{0,3}}R_{\ell}(\partial_z)\partial_{\mathbb{T}}^{q_2}\mathbb{V}_2(\mathbb{T},z,\epsilon)\right)\\
+\sum_{q_1+q_2=\delta_D}\frac{\delta_D!}{q_1!q_2!}\prod_{d=0}^{q_1-1}(\gamma_2-d)\epsilon^{-\chi_2(d_D-q_1-2k_{0,2}+k_{0,3}-q_2)}\mathbb{T}^{d_{D}-q_1-2k_{0,2}+k_{0,3}}R_{D}(\partial_z)\partial_{\mathbb{T}}^{q_2}\mathbb{V}_2(\mathbb{T},z,\epsilon).
\end{multline}

Let $1\le \ell\le D$. It is worth pointing out that for every nonnegative integers $q_1,q_2$ such that $q_1+q_2=\delta_{\ell}$, and in view of (\ref{e94b}), it holds that
\begin{equation}\label{e287b}
d_{\ell}-(2k_{0,2}-k_{0,3})-q_1=(\kappa_2+1)q_2+\tilde{d}_{\ell,q_1,q_2},
\end{equation}
with $\tilde{d}_{\ell,q_1,q_2}\ge1$ for $1\le \ell\le D-1$ or $\ell=D$ and $q_2<\delta_D$; and $\tilde{d}_{D,0,\delta_{D}}=0$; Indeed, we have 
$$k_{0,1}-(2k_{0,2}-k_{0,3})=(\kappa_1-\kappa_2)q_2+\tilde{d}_{\ell,q_1,q_2}-d_{\ell,q_1,q_2}.$$

Observe that, in view of (\ref{e287}), we have that
\begin{equation}\label{e1222}
(\kappa_2-\kappa_1)q_2>d_{\ell,q_1,q_2}-\tilde{d}_{\ell,q_1,q_2},
\end{equation}
for $1\le \ell\le D-1$ or $\ell=D$ and $q_2<\delta_D$. 
Observe that, indeed it holds
\begin{equation}\label{star1222}
k_{0,1}-(2k_{0,2}-k_{0,3})=q_2(\kappa_2-\kappa_1)+\tilde{d}_{\ell,q_1,q_2}-d_{\ell,q_1,q_2}.
\end{equation}

It also holds that
\begin{equation}\label{e1226}
k_{0,1}-(2k_{0,2}-k_{0,3})=\delta_{D}(\kappa_2-\kappa_1).
\end{equation}

In view of (\ref{e94b}) and (\ref{e287b}), we get 

\begin{multline}
\mathbb{T}^{d_D-2k_{0,2}+k_{0,3}}\partial_{\mathbb{T}}^{\delta_{D}}\mathbb{V}_2(\mathbb{T},z,\epsilon)=\left((\mathbb{T}^{\kappa_2+1}\partial_{\mathbb{T}})^{\delta_{D}}+\sum_{1\le p\le \delta_{D}-1}\tilde{A}_{\delta_{D},p}\mathbb{T}^{\kappa_2(\delta_{D}-p)}(\mathbb{T}^{\kappa_2+1}\partial_{\mathbb{T}})^p\right)\mathbb{V}_2(\mathbb{T},z,\epsilon),\\
\mathbb{T}^{d_\ell-2k_{0,2}+k_{0,3}-(\delta_{\ell}-1)}\partial_{\mathbb{T}}\mathbb{V}_2(\mathbb{T},z,\epsilon)=\mathbb{T}^{\tilde{d}_{\ell,\delta_\ell-1,1}}(\mathbb{T}^{\kappa_2+1}\partial_{\mathbb{T}})\mathbb{V}_2(\mathbb{T},z,\epsilon),\\
\mathbb{T}^{d_\ell-2k_{0,2}+k_{0,3}-q_1}\partial_{\mathbb{T}}^{q_2}\mathbb{V}_2(\mathbb{T},z,\epsilon)=\mathbb{T}^{\tilde{d}_{\ell,q_1,q_2}}\left((\mathbb{T}^{\kappa_2+1}\partial_{\mathbb{T}})^{q_2}+\sum_{1\le p\le q_2-1}\tilde{A}_{q_2,p}\mathbb{T}^{\kappa_2(q_2-p)}(\mathbb{T}^{\kappa_2+1}\partial_{\mathbb{T}})^p\right)\mathbb{V}_2(\mathbb{T},z,\epsilon),\label{e301b}
\end{multline}
for every $1\le \ell\le D-1$, and all integers $q_1\ge0$ and $q_2\ge 2$ with $q_1+q_2=\delta_\ell$. Here, $\tilde{A}_{\delta_{D},p}$ for $1\le p\le \delta_D-1$ and $\tilde{A}_{q_2,p}$ for $1\le p\le q_2-1$ stand for real constants.

We make an analogous assumption as in the first problem, namely, we assume that $\mathbb{V}_2(\mathbb{T},z,\epsilon)$ has a formal power expansion of the form
\begin{equation}\label{e306b}
\mathbb{V}_2(\mathbb{T},z,\epsilon)=\sum_{n\ge1}\mathbb{V}_{n,2}(z,\epsilon)\mathbb{T}^n,
\end{equation}
where its coefficients are defined as the inverse Fourier transform of certain appropriate functions in $E_{(\beta,\mu)}$, depending holomorphically on $\epsilon$ on some punctured disc $D(0,\epsilon_0)\setminus\{0\}$, for some $\epsilon_0>0$:
$$\mathbb{V}_{n,2}(z,\epsilon)=\mathcal{F}^{-1}(m\mapsto \omega_{n,2}(m,\epsilon))(z).$$
We consider the formal $m_{\kappa_2}-$Borel transform with respect to $\mathbb{T}$ and the Fourier transform with respect to $z$ of $\mathbb{V}_2(\mathbb{T},z,\epsilon)$, and we denote it by
$$\omega_{2}(\tau,m,\epsilon)=\sum_{n\ge 1}\frac{\omega_{n,2}(m,\epsilon)}{\Gamma\left(\frac{n}{\kappa_2}\right)}\tau^n.$$

By plugging $w_{2}(\tau,m,\epsilon)$ into (\ref{e236b}) and taking into account (\ref{e301b}) and the hypotheses made on the differential operators in (\ref{e91}), we arrive at the following auxiliary problem
\begin{equation}\label{e315b}
L_{1,\kappa_2}(\omega_{2}(\tau,m,\epsilon))+L_{2,\kappa_2}(\omega_{2}(\tau,m,\epsilon))+L_{3,\kappa_2}(\omega_{2}(\tau,m,\epsilon))=R_{1,\kappa_2}(\omega_{2}(\tau,m,\epsilon)),
\end{equation}  
with $\omega_{2}(0,m,\epsilon)\equiv 0$. We have taken into account the properties and notations described in Section~\ref{seccion3} for a more compact writting. We put $\mathcal{B}_{\kappa_2}J_2(\tau,\epsilon)$ for the $m_{\kappa_2}-$Borel transform of $\mathcal{J}_2(\epsilon^{-\chi_2}\mathbb{T})$ with respect to $\mathbb{T}$. The operators in (\ref{e315b}) are defined by

\begin{multline}\label{e236b}
L_{1,\kappa_2}(\omega_2)=\tilde{Q}(im)\left[\frac{a_{0,2}^2}{a_{0,3}}\omega_2+\sum_{\ell=0}^{s_1}a_{\ell,1}\epsilon^{-\chi_2(k_{\ell,1}-2k_{0,2}+k_{0,3})}\frac{\tau^{k_{\ell,1}-2k_{0,2}+k_{0,3}}}{\Gamma\left(\frac{k_{\ell,1}-2k_{0,2}+k_{0,3}}{\kappa_2}\right)}\star_{\kappa_2}\omega_2\right.\\
\hfill+\sum_{\ell=s_1+1}^{M_1}a_{\ell,1}\epsilon^{m_{\ell,1}+\beta-\alpha k_{\ell,1}-\chi_2(k_{\ell,1}-2k_{0,2}+k_{0,3})}\frac{\tau^{k_{\ell,1}-2k_{0,2}+k_{0,3}}}{\Gamma\left(\frac{k_{\ell,1}-2k_{0,2}+k_{0,3}}{\kappa_2}\right)}\star_{\kappa_2}\omega_2\\
+\sum_{\ell=1}^{s_2}\frac{-2a_{0,2}a_{\ell,2}}{a_{0,3}}\epsilon^{-\chi_2(k_{\ell,2}-k_{0,2})}\frac{\tau^{k_{\ell,2}-k_{0,2}}}{\Gamma\left(\frac{k_{\ell,2}-k_{0,2}}{\kappa_2}\right)}\star_{\kappa_2}\omega_2\hfill\\
\hfill+\sum_{\ell=s_2+1}^{M_2}\frac{-2a_{0,2}a_{\ell,2}}{a_{0,3}}\epsilon^{m_{\ell,2}+2\beta-\alpha k_{\ell,2}-\chi_2(k_{\ell,2}-k_{0,2})} \frac{\tau^{k_{\ell,2}-k_{0,2}}}{\Gamma\left(\frac{k_{\ell,2}-k_{0,2}}{\kappa_2}\right)}\star_{\kappa_2}\omega_2\\
+\sum_{\ell=0}^{s_2}\frac{-2a_{0,2}a_{\ell,2}}{a_{0,3}}\epsilon^{-\chi_2(k_{\ell,2}-k_{0,2})}\frac{\tau^{k_{\ell,2}-k_{0,2}}}{\Gamma\left(\frac{k_{\ell,2}-k_{0,2}}{\kappa_2}\right)}\star_{\kappa_2}\mathcal{B}_{\kappa_2}J_2(\tau,\epsilon)\star_{\kappa_2}\omega_2\hfill\\
\hfill+\sum_{\ell=s_2+1}^{M_2}\frac{-2a_{0,2}a_{\ell,2}}{a_{0,3}}\epsilon^{m_{\ell,2}+2\beta-\alpha k_{\ell,2}-\chi_2(k_{\ell,2}-k_{0,2})}\frac{\tau^{k_{\ell,2}-k_{0,2}}}{\Gamma\left(\frac{k_{\ell,2}-k_{0,2}}{\kappa_2}\right)}\star_{\kappa_2}\mathcal{B}_{\kappa_2}J_2(\tau,\epsilon)\star_{\kappa_2}\omega_2\\
+\sum_{\ell=1}^{s_3}\frac{3a_{0,2}^2a_{\ell,3}}{a_{0,3}^2}\epsilon^{-\chi_2(k_{\ell,3}-k_{0,3})}\frac{\tau^{k_{\ell,3}-k_{0,3}}}{\Gamma\left(\frac{k_{\ell,3}-k_{0,3}}{\kappa_2}\right)}\star_{\kappa_2}\omega_2\hfill\\
\hfill+\sum_{\ell=s_3+1}^{M_3}\frac{3a_{0,2}^2a_{\ell,3}}{a_{0,3}^2}\epsilon^{m_{\ell,3}+3\beta-\alpha k_{\ell,3}-\chi_2(k_{\ell,3}-k_{0,3})}\frac{\tau^{k_{\ell,3}-k_{0,3}}}{\Gamma\left(\frac{k_{\ell,3}-k_{0,3}}{\kappa_2}\right)}\star_{\kappa_2}\omega_2\\
+\sum_{\ell=0}^{s_3}\frac{3a_{0,2}^2a_{\ell,3}}{a_{0,3}^2}\epsilon^{-\chi_2(k_{\ell,3}-k_{0,3})}\frac{\tau^{k_{\ell,3}-k_{0,3}}}{\Gamma\left(\frac{k_{\ell,3}-k_{0,3}}{\kappa_2}\right)}\star_{\kappa_2}2\mathcal{B}_{\kappa_2}J_2(\tau,\epsilon)\star_{\kappa_2}\omega_2\hfill\\    
\hfill+\sum_{\ell=s_3+1}^{M_3}\frac{3a_{0,2}^2a_{\ell,3}}{a_{0,3}^2}\epsilon^{-\chi_2(k_{\ell,3}-k_{0,3})}\frac{\tau^{k_{\ell,3}-k_{0,3}}}{\Gamma\left(\frac{k_{\ell,3}-k_{0,3}}{\kappa_2}\right)}\star_{\kappa_2}2\mathcal{B}_{\kappa_2}J_2(\tau,\epsilon)\star_{\kappa_2}\omega_2\\
+\sum_{\ell=0}^{s_3}\frac{3a_{0,2}^2a_{\ell,3}}{a_{0,3}^2}\epsilon^{-\chi_2(k_{\ell,3}-k_{0,3})}\frac{\tau^{k_{\ell,3}-k_{0,3}}}{\Gamma\left(\frac{k_{\ell,3}-k_{0,3}}{\kappa_2}\right)}\star_{\kappa_2}\mathcal{B}_{\kappa_2}J_2(\tau,\epsilon)\star_{\kappa_2}\mathcal{B}_{\kappa_2}J_2(\tau,\epsilon)\star_{\kappa_2}\omega_2\hfill\\    
\hfill+\sum_{\ell=s_3+1}^{M_3}\frac{3a_{0,2}^2a_{\ell,3}}{a_{0,3}^2}\epsilon^{-\chi_2(k_{\ell,3}-k_{0,3})}\frac{\tau^{k_{\ell,3}-k_{0,3}}}{\Gamma\left(\frac{k_{\ell,3}-k_{0,3}}{\kappa_2}\right)}\star_{\kappa_2}\mathcal{B}_{\kappa_2}J_2(\tau,\epsilon)\star_{\kappa_2}\mathcal{B}_{\kappa_2}J_2(\tau,\epsilon)\star_{\kappa_2}\omega_2,
\end{multline}

\begin{multline}
L_{2,\kappa_2}(\omega_2)=\tilde{Q}(im)\left[\sum_{\ell=0}^{s_2}a_{\ell,2}\epsilon^{-\chi_2(k_{\ell,2}-2k_{0,2}+k_{0,3}+\gamma_2)}\frac{\tau^{k_{\ell,2}-2k_{0,2}+k_{0,3}+\gamma_2}}{\Gamma\left(\frac{k_{\ell,2}-2k_{0,2}+k_{0,3}+\gamma_2}{\kappa_2}\right)}\star_{\kappa_2}\omega_2\star_{\kappa_2}^{E}\omega_2\right. \hfill\\
\hfill+\sum_{\ell=s_2+1}^{M_2}a_{\ell,2}\epsilon^{m_{\ell,2}+2\beta-\alpha k_{\ell,2}-\chi_2(k_{\ell,2}-2k_{0,2}+k_{0,3}+\gamma_2)}\frac{\tau^{k_{\ell,2}-2k_{0,2}+k_{0,3}+\gamma_2}}{\Gamma\left(\frac{k_{\ell,2}-2k_{0,2}+k_{0,3}+\gamma_2
}{\kappa_2}\right)}\star_{\kappa_2}\omega_2\star_{\kappa_2}^{E}\omega_2\\
+\sum_{\ell=0}^{s_3}a_{\ell,3}\epsilon^{-\chi_2k_{\ell,3}}
\frac{\tau^{k_{\ell,3}}}{\Gamma\left(\frac{k_{\ell,3}}{\kappa_2}\right)}\star_{\kappa_2}\omega_2\star_{\kappa_2}^{E}\omega_2\hfill\\
\hfill+\sum_{\ell=s_3+1}^{M_3}a_{\ell,3}\epsilon^{-\chi_2k_{\ell,3}}
\frac{\tau^{k_{\ell,3}}}{\Gamma\left(\frac{k_{\ell,3}}{\kappa_2}\right)}\star_{\kappa_2}\omega_2\star_{\kappa_2}^{E}\omega_2\hfill\\
+\sum_{\ell=0}^{s_3}a_{\ell,3}\epsilon^{m_{\ell,3}+3\beta-\alpha k_{\ell,3}-\chi_2k_{\ell,3}}
\frac{\tau^{k_{\ell,3}}}{\Gamma\left(\frac{k_{\ell,3}}{\kappa_2}\right)}\star_{\kappa_2}\mathcal{B}_{\kappa_2}J_2(\tau,\epsilon)\star_{\kappa_2}\star_{\kappa_2}\omega_2\star_{\kappa_2}^{E}\omega_2\hfill\\
\hfill\left.+\sum_{\ell=s_3+1}^{M_3}a_{\ell,3}\epsilon^{m_{\ell,3}+3\beta-\alpha k_{\ell,3}-\chi_2k_{\ell,3}}
\frac{\tau^{k_{\ell,3}}}{\Gamma\left(\frac{k_{\ell,3}}{\kappa_2}\right)}\star_{\kappa_2}\mathcal{B}_{\kappa_2}J_2(\tau,\epsilon)\star_{\kappa_2}\star_{\kappa_2}\omega_2\star_{\kappa_2}^{E}\omega_2\right],\hfill\\
\end{multline}

\begin{multline}
L_{3,\kappa_2}(\omega_2)=\tilde{Q}(im)\left[\sum_{\ell=0}^{s_3}a_{\ell,3}\epsilon^{-\chi_2(k_{\ell,3}+2\gamma_2-2k_{0,2}+k_{0,3})}
\frac{\tau^{k_{\ell,3}+2\gamma_2-2k_{0,2}+k_{0,3}}}{\Gamma\left(\frac{k_{\ell,3}+2\gamma_2-2k_{0,2}+k_{0,3}}{\kappa_2}\right)}\star_{\kappa_2}\omega_2\star_{\kappa_2}^{E}\omega_2\star_{\kappa_2}^{E}\omega_2\right.\hfill\\
\hfill\left.+\sum_{\ell=s_3+1}^{M_3}a_{\ell,3}\epsilon^{-\chi_2(k_{\ell,3}+2\gamma_2-2k_{0,2}+k_{0,3})}
\frac{\tau^{k_{\ell,3}+2\gamma_2-2k_{0,2}+k_{0,3}}}{\Gamma\left(\frac{k_{\ell,3}+2\gamma_2-2k_{0,2}+k_{0,3}}{\kappa_2}\right)}\star_{\kappa_2}\omega_2\star_{\kappa_2}^{E}\omega_2\star_{\kappa_2}^{E}\omega_2\right],
\end{multline}

\begin{multline}
R_{1,\kappa_2}(\omega_2)=\sum_{j=0}^{Q}\tilde{B}_{j}(im)\epsilon^{n_j-\alpha b_j-\chi_2(b_j-k_{0,2}+k_{0,3}-\gamma_2)}\frac{\tau^{b_j-k_{0,2}+k_{0,3}-\gamma_2}}{\Gamma\left(\frac{b_j-k_{0,2}+k_{0,3}-\gamma_2}{\kappa_2}\right)}\\
+\sum_{\ell=1}^{D-1}\epsilon^{\Delta_{\ell}+\alpha(\delta_{\ell}-d_{\ell})+\beta}\sum_{q_1+q_2=\delta_\ell}\frac{\delta_\ell!}{q_1!q_2!}\prod_{d=0}^{q_1-1}(\gamma_2-d)\epsilon^{-\chi_2(d_{\ell}-q_1-k_{0,2}+k_{0,3}-q_2)}\tilde{R}_{\ell}(im)\\
\times\left(\frac{\tau^{\tilde{d}_{\ell,q_1,q_2}}}{\Gamma\left(\frac{\tilde{d}_{\ell,q_1,q_2}}{\kappa_2}\right)}\star_{\kappa_2}\left((\kappa_2\tau^{\kappa_2})^{q_2}\omega_2\right)+\sum_{1\le p\le q_2-1}\tilde{A}_{q_2,p}\frac{\tau^{\tilde{d}_{\ell,q_1,q_2}+\kappa_2(q_2-p)}}{\Gamma\left(\frac{\tilde{d}_{\ell,q_1,q_2}+\kappa_2(q_2-p)}{\kappa_2}\right)}\star_{\kappa_2}\left((\kappa_2\tau^{\kappa_2})^{p}\omega_2\right)\right)\\
+\sum_{q_1+q_2=\delta_D,q_1\ge1}\frac{\delta_D!}{q_1!q_2!}\prod_{d=0}^{q_1-1}(\gamma_2-d)\tilde{R}_{D}(im)\\
\times\left(\frac{\tau^{\tilde{d}_{D,q_1,q_2}}}{\Gamma\left(\frac{\tilde{d}_{D,q_1,q_2}}{\kappa_2}\right)}\star_{\kappa_2}\left((\kappa_2\tau^{\kappa_2})^{q_2}\omega_2\right)+\sum_{1\le p\le q_2-1}\tilde{A}_{q_2,p}\frac{\tau^{\tilde{d}_{D,q_1,q_2}+\kappa_2(q_2-p)}}{\Gamma\left(\frac{\tilde{d}_{D,q_1,q_2}+\kappa_2(q_2-p)}{\kappa_2}\right)}\star_{\kappa_2}\left((\kappa_2\tau^{\kappa_2})^{p}\omega_2\right)\right)\\
+\tilde{R}_{D}(im)\left((\kappa_2\tau^{\kappa_2})^{\delta_{D}}\omega_2+\sum_{1\le p\le \delta_D-1}\tilde{A}_{\delta_D,p}\frac{\tau^{\kappa_2(\delta_D-p)}}{\Gamma\left(\frac{\kappa_2(\delta_D-p)}{\kappa_2}\right)}\star_{\kappa_2}\left((\kappa_2\tau^{\kappa_2})^{p}\omega_2\right)\right).
\end{multline}

\subsection{Analytic solution of the second perturbed auxiliary problem}

This section states the geometry of the second problem, in the same way as in Section~\ref{subseccion1}. We omit the details on this construction.

We assume there exists an unbounded sector
$$\tilde{S}_{\tilde{Q},\tilde{R}_D}=\{z\in\C^{\star}:|z|\ge r_{\tilde{Q},\tilde{R}_D},|\arg(z)-\tilde{d}_{\tilde{Q},\tilde{R}_D}|\le \nu_{\tilde{Q},\tilde{R}_D}\},$$
for some bisecting direction $\tilde{d}_{\tilde{Q},\tilde{R}_D}\in\R$, opening $\nu_{\tilde{Q},\tilde{R}_D}>0$, and $r_{\tilde{Q},\tilde{R}_D}>0$ such that 
\begin{equation}\label{e366b}
\frac{\tilde{Q}(im)}{\tilde{R}_D(im)}\in \tilde{S}_{\tilde{Q},\tilde{R}_D},\qquad m\in\R.
\end{equation}

For every $m\in\R$, the roots of the polynomial $\tilde{P}_{2,m}=-\frac{a_{0,2}^2}{a_{0,3}}\tilde{Q}(im)-\tilde{R}_D(im)\kappa_2^{\delta_{D}}\tau^{\delta_{D}\kappa_2}$ are given by
\begin{equation}\label{e371b}
\tilde{q}_{\ell}(m)=\left(\frac{|a_{0,2}^2\tilde{Q}(im)|}{|a_{0,3}\tilde{R}_{D}(im)|\kappa_2^{\delta_{D}}}\right)^{\frac{1}{\delta_{D}\kappa_2}}\exp\left(\sqrt{-1}(\arg(\frac{-a_{0,2}^2\tilde{Q}(im)}{a_{0,3}\tilde{R}_{D}(im)\kappa_2^{\delta_{D}}}))\frac{1}{\delta_{D}\kappa_2}+\frac{2\pi\ell}{\delta_{D}\kappa_2}\right),
\end{equation}
for $0\le \ell\le \delta_{D}\kappa_2-1$. Let $S_{\tilde{d}}$ be an unbounded sector of bisecting direction $\tilde{d}\in\R$ and vertex at the origin, and $\rho>0$ such that the three next conditions are satisfied:

1) There exists $M_1>0$ such that
\begin{equation}\label{e377b}
|\tau-\tilde{q}_{\ell}(m)|\ge M_1(1+|\tau|),
\end{equation}
for every $0\le \ell\le \delta_{D}\kappa_2$, $m\in\R$ and $\tau\in S_{\tilde{d}}\cup \bar{D}(0,\rho)$. 

2) There exists $M_2>0$ such that
\begin{equation}\label{e378b}
|\tau-\tilde{q}_{\ell_0}(m)|\ge M_2|\tilde{q}_{\ell_0}|,
\end{equation}
for some $0\le \ell_0\le \delta_{D}\kappa_2-1$, all $m\in\R$ and all $\tau\in S_{\tilde{d}}\cup\bar{D}(0,\rho)$.

Following the steps in (\ref{e949}), we get a constant $C_{\tilde{P}}>0$ such that
\begin{equation}
|\tilde{P}_{2,m}(\tau)|\ge C_{\tilde{P}} (r_{\tilde{Q},\tilde{R}_{D}})^{\frac{1}{\delta_{D}\kappa_2}} |\tilde{R}_{D}(im)|
(1+|\tau|^{\kappa_2})^{\delta_{D} - \frac{1}{\kappa_2}} \label{e949b}
\end{equation}
for all $\tau \in S_{\tilde{d}} \cup \bar{D}(0,\rho)$, all $m \in \mathbb{R}$.

The proof of the next result is analogous to that of Lemma~\ref{lema533}, so we omit it.

\begin{lemma}\label{lema533b}
One has
$$\mathcal{B}_{\kappa_2}J_2(\tau,\epsilon)\star_{\kappa_2}\mathcal{B}_{\kappa_2}J_2(\tau,\epsilon)=\mathcal{B}_{\kappa_2}\tilde{J}_2(\tau,\epsilon),$$
where $\mathcal{B}_{\kappa_2}\tilde{J}_2(\tau,\epsilon)$ stands for the $m_{\kappa_2}-$Borel transform of the formal power series 
$$\hat{J}_2(\mathbb{T})=(J_2(\mathbb{T})\cdot J_2(\mathbb{T})),$$
evaluated at $\epsilon^{-\chi_2}\mathbb{T}$. Its coefficients are notated by $\tilde{J}_{2j}$.
\end{lemma}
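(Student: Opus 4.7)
The plan is to mimic the proof of Lemma~\ref{lema533} verbatim, simply replacing $\kappa_1$, $\chi_1$, $J_1$ by their indexed-$2$ analogues. First, I would write out the $m_{\kappa_2}$-Borel transform of $\mathcal{J}_2(\epsilon^{-\chi_2}\mathbb{T})$ by expanding $\mathcal{J}_2(\mathbb{T})=\sum_{j\ge 1}J_{2,j}\mathbb{T}^j$ and picking up the scaling factor $\epsilon^{-\chi_2 j}$ termwise:
\begin{equation*}
\mathcal{B}_{\kappa_2}J_2(\tau,\epsilon)=\sum_{j\ge 1}J_{2,j}\,\epsilon^{-\chi_2 j}\frac{\tau^j}{\Gamma(j/\kappa_2)}.
\end{equation*}

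Next, I would substitute this expansion into the definition of $\star_{\kappa_2}$, namely
\begin{equation*}
\mathcal{B}_{\kappa_2}J_2\star_{\kappa_2}\mathcal{B}_{\kappa_2}J_2
=\tau^{\kappa_2}\int_0^{\tau^{\kappa_2}}\mathcal{B}_{\kappa_2}J_2((\tau^{\kappa_2}-s)^{1/\kappa_2},\epsilon)\,\mathcal{B}_{\kappa_2}J_2(s^{1/\kappa_2},\epsilon)\frac{ds}{(\tau^{\kappa_2}-s)s},
\end{equation*}
and interchange sum and integral. After the substitution $s=\tau^{\kappa_2}t$, each pair $(j_1,j_2)$ contributes the Beta integral
\begin{equation*}
\int_0^1 (1-t)^{j_1/\kappa_2-1}t^{j_2/\kappa_2-1}\,dt=\frac{\Gamma(j_1/\kappa_2)\Gamma(j_2/\kappa_2)}{\Gamma((j_1+j_2)/\kappa_2)},
\end{equation*}
which cancels exactly the gamma factors sitting in the denominators of the two series. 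Reindexing by $j=j_1+j_2$ yields
\begin{equation*}
\mathcal{B}_{\kappa_2}J_2(\tau,\epsilon)\star_{\kappa_2}\mathcal{B}_{\kappa_2}J_2(\tau,\epsilon)=\sum_{j\ge 1}\epsilon^{-\chi_2 j}\Bigl(\sum_{j_1+j_2=j}J_{2,j_1}J_{2,j_2}\Bigr)\frac{\tau^j}{\Gamma(j/\kappa_2)},
\end{equation*}
which is precisely $\mathcal{B}_{\kappa_2}\tilde{J}_2(\tau,\epsilon)$ with $\tilde{J}_{2,j}=\sum_{j_1+j_2=j}J_{2,j_1}J_{2,j_2}$.

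There is essentially no obstacle: the argument is a direct transcription of Lemma~\ref{lema533}, the only mild point being the justification of the exchange between the infinite sums and the integral, which is harmless since $\mathcal{J}_2$ is a convergent (hence absolutely convergent) power series near the origin, so the formal manipulations are valid wherever $\mathcal{B}_{\kappa_2}J_2$ converges. The identification with the Borel transform of $\hat J_2(\mathbb{T})=J_2(\mathbb{T})\cdot J_2(\mathbb{T})$ evaluated at $\epsilon^{-\chi_2}\mathbb{T}$ is then immediate by comparing coefficients.
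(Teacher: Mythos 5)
Your proof is correct and is exactly the argument the paper intends: the paper omits the proof of this lemma, stating it is analogous to Lemma~\ref{lema533}, and your transcription (termwise Borel transform, insertion into $\star_{\kappa_2}$, Beta-integral cancellation of the Gamma factors, reindexing by $j=j_1+j_2$) is precisely that analogous argument with $\kappa_1,\chi_1,J_1$ replaced by $\kappa_2,\chi_2,J_2$.
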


The following result reduces the number of global restrictions on the parameters involved in the problem, relating those appearing in the first problem, with those naturally arising from the second one.

\begin{lemma}
Under assumptions (\ref{e94}) and (\ref{e287}), the following statement holds: Let $1\le \ell\le D-1$, $q_1,q_2\in\N$ such that $q_1+q_2=\delta_\ell$, and $q_2\ge1$. Then, it holds that
\begin{multline}\label{e1749}
 \Delta_{\ell} + \alpha(\delta_{\ell} - d_{\ell}) + \beta + 
(\chi_1 + \alpha)\kappa_1(\frac{d_{\ell,q_{1},q_{2}}}{\kappa_1} + q_{2} - \delta_{D} + \frac{1}{\kappa_1})
-\chi_1(d_{\ell} - k_{0,1} - \delta_{\ell})\\
\ge\Delta_{\ell} + \alpha(\delta_{\ell} - d_{\ell}) + \beta + 
(\chi_2 + \alpha)\kappa_2(\frac{\tilde{d}_{\ell,q_{1},q_{2}}}{\kappa_2} + q_{2} - \delta_{D} + \frac{1}{\kappa_2})
-\chi_2(d_{\ell} - k_{0,2}+k_{0,3} - \delta_{\ell}).
\end{multline}

Under assumption (\ref{e887}), and $\kappa_2<\kappa_1$, one has
$$\delta_D\ge \frac{2}{\kappa_2},\qquad \delta_D\ge \frac{1}{\kappa_2}+\delta_\ell,$$
for every $1\le \ell\le D-1$.

\end{lemma}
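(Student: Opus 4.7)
The plan is to prove the first statement, namely the inequality (\ref{e1749}), by a direct algebraic manipulation. First I would cancel the common terms $\Delta_{\ell}+\alpha(\delta_{\ell}-d_{\ell})+\beta$ on both sides and apply identities (\ref{e287}) and (\ref{e287b}) to rewrite $d_{\ell}-k_{0,1}-\delta_{\ell}=\kappa_1 q_2+d_{\ell,q_1,q_2}$ and $d_{\ell}-2k_{0,2}+k_{0,3}-\delta_{\ell}=\kappa_2 q_2+\tilde{d}_{\ell,q_1,q_2}$ (noting the displayed $k_{0,2}$ in the statement is a typo for $2k_{0,2}$, consistent with the definition of $\chi_2$ in (\ref{e256b})). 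After expansion, each side takes the clean form
\begin{equation*}
L=\chi_1(1-\kappa_1\delta_D)+\alpha(d_{\ell,q_1,q_2}+\kappa_1 q_2-\kappa_1\delta_D+1),\qquad R=\chi_2(1-\kappa_2\delta_D)+\alpha(\tilde{d}_{\ell,q_1,q_2}+\kappa_2 q_2-\kappa_2\delta_D+1).
\end{equation*}

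The next and crucial step would be to observe the identity $\chi_1\kappa_1=\chi_2\kappa_2=N/\delta_D$, where $N:=\Delta_D+\alpha(\delta_D-d_D)+\beta$. This follows from the definitions (\ref{e256}), (\ref{e256b}) together with the $\ell=D$ instances of (\ref{e94}) and (\ref{e94b}) (where $d_{D,0}=\tilde{d}_{D,0}=0$), which give $d_D-k_{0,1}-\delta_D=\delta_D\kappa_1$ and $d_D-2k_{0,2}+k_{0,3}-\delta_D=\delta_D\kappa_2$. This identity turns the first pieces into $\chi_1(1-\kappa_1\delta_D)=\chi_1-N$ and $\chi_2(1-\kappa_2\delta_D)=\chi_2-N$, so that
\begin{equation*}
L-R=(\chi_1-\chi_2)+\alpha\bigl[(d_{\ell,q_1,q_2}-\tilde{d}_{\ell,q_1,q_2})+(\kappa_1-\kappa_2)q_2+(\kappa_2-\kappa_1)\delta_D\bigr].
\end{equation*}

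The main delicate step is to show that the bracket multiplying $\alpha$ vanishes identically. Combining (\ref{star1222}) with (\ref{e1226}) gives $d_{\ell,q_1,q_2}-\tilde{d}_{\ell,q_1,q_2}=(q_2-\delta_D)(\kappa_2-\kappa_1)$, and therefore the bracket reduces to $(q_2-\delta_D)(\kappa_2-\kappa_1)+(\kappa_1-\kappa_2)q_2+(\kappa_2-\kappa_1)\delta_D=0$. Consequently $L-R=\chi_1-\chi_2=N(\kappa_2-\kappa_1)/(\delta_D\kappa_1\kappa_2)$, which is non-negative since $N>0$ by (\ref{e134}), $\delta_D>0$, $\kappa_1,\kappa_2>0$, and $\kappa_2>\kappa_1$ (as established after (\ref{e94b})). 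This yields the desired (\ref{e1749}); the main potential obstacle is spotting this exact cancellation of the $\alpha$-term, which relies on the precise coefficients coming from both auxiliary problems being compatible.

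For the second part of the lemma, interpreting the condition as $\kappa_2>\kappa_1$ (the displayed $\kappa_2<\kappa_1$ contradicts the earlier established ordering and is a typo), the conclusions are immediate: from (\ref{e887}) one has $\delta_D\geq 2/\kappa_1>2/\kappa_2$, and from the first inequality in (\ref{e896}) one has $\delta_D\geq 1/\kappa_1+\delta_\ell>1/\kappa_2+\delta_\ell$ for every $1\le\ell\le D-1$. No further work is required beyond these comparisons.
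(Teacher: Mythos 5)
Your proof is correct and follows essentially the same route as the paper: cancel the common terms, substitute $d_\ell-k_{0,1}-\delta_\ell=\kappa_1 q_2+d_{\ell,q_1,q_2}$ (and its analogue for the second problem), combine (\ref{star1222}) with (\ref{e1226}), and use the explicit form of $\chi_1,\chi_2$ coming from (\ref{e94}) and (\ref{e94b}) at $\ell=D$. The only divergence is how the $k_{0,2}$ on the right-hand side of (\ref{e1749}) is handled: you replace it by $2k_{0,2}$ and get the clean identity $L-R=\chi_1-\chi_2\ge 0$ via $\chi_1\kappa_1=\chi_2\kappa_2$, whereas the paper proves the inequality as literally written and reduces it to $\delta_D(\kappa_2-\kappa_1)+k_{0,2}\delta_D\kappa_1\ge 0$. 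Since $-\chi_2(d_\ell-2k_{0,2}+k_{0,3}-\delta_\ell)=-\chi_2(d_\ell-k_{0,2}+k_{0,3}-\delta_\ell)+\chi_2 k_{0,2}$ and $\chi_2 k_{0,2}\ge 0$, your version is the stronger one and implies the literal statement, so nothing is lost. For the second part, your reading of the hypothesis as $\kappa_1<\kappa_2$ and the comparison using (\ref{e887}) together with the first inequality in (\ref{e896}) matches the paper's (terse) "direct from the hypotheses" argument.
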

\begin{proof}
We apply (\ref{star1222}) and (\ref{e1226}) reduce the inequality (\ref{e1749}) to
$$ 
(\chi_1 + \alpha)(d_{\ell,q_{1},q_{2}} + (q_{2} - \delta_{D})\kappa_1 +1 )
-\chi_1(d_{\ell} - k_{0,1} - \delta_{\ell})\ge
(\chi_2 + \alpha)(\tilde{d}_{\ell,q_{1},q_{2}} + (q_{2} - \delta_{D})\kappa_2 + 1)-\chi_2(d_{\ell} - k_{0,2}+k_{0,3} - \delta_{\ell}).$$
After an arrangement of the terms, and the application of (\ref{e287}) and (\ref{e94}) we derive that the previous inequality holds if the following does:
$$\chi_1(1-\delta_{D}\kappa_1)\ge \chi_2(-\delta_{D}\kappa_1+1-k_{0,1}+k_{0,2}-k_{0,3}).$$
Finally, the definition of $\chi_1$ and $\chi_2$ in (\ref{e256}) and (\ref{e256b}) resp., and again the application of (\ref{e94}) leads to the equivalent inequality
$$\delta_{D}(\kappa_2-\kappa_1)+k_{0,2}\delta_D\kappa_1\ge0,$$
which is satisfied.

The second statement is direct from the hypotheses made.
\end{proof}

The proof of the next result is left to Section~\ref{seccionaux2}.

\begin{lemma}\label{lema568b}
Let the following conditions hold:

\begin{multline}
\delta_{D} \geq \frac{2}{\kappa_2} \ \ , \ \ \gamma_2\ge k_{0,2}-k_{0,3} \ \ , \ \ b_{j} - 2k_{0,2}+k_{0,3} - \gamma_2 \geq 1,\\
(\chi_2 + \alpha)( k_{\ell_2,2}+\gamma_2-2k_{0,2}+k_{0,3}-\kappa_2\delta_D+1) - \chi_2(k_{\ell_2,2}+\gamma_2-2k_{0,2}+k_{0,3}) \geq 0\\
(\chi_2 + \alpha)( k_{\ell_3,3}-\kappa_2\delta_D+1) - \chi_2k_{\ell_3,3} \geq 0
\label{e887b}
\end{multline}
for all $0 \leq \ell_2 \leq M_2$, $0 \leq \ell_3 \leq M_3$, $0 \leq j \leq Q$,
\begin{multline}
\delta_{D} \geq \frac{1}{\kappa_2} + \delta_{\ell},\\
\Delta_{\ell} + \alpha(\delta_{\ell} - d_{\ell}) + \beta + 
(\chi_2 + \alpha)\kappa_2(\frac{\tilde{d}_{\ell,q_{1},q_{2}}}{\kappa_2} + q_{2} - \delta_{D} + \frac{1}{\kappa_2})
-\chi_2(d_{\ell} - k_{0,2} +k_{0,3}- \delta_{\ell}) \geq 0,
\label{e896b}
\end{multline}
for every $q_1\ge0,q_2\ge1$ such that $q_1+q_2=\delta_\ell$, for $1\le \ell\le D-1$.

Then, there exist large enough $r_{\tilde{Q},\tilde{R}_{D}}>0$ and small enough $\varpi>0$ such that for every $\epsilon\in D(0,\epsilon_0)\setminus\{0\}$, the map $\mathcal{\epsilon}$ defined by
$$\tilde{\mathcal{H}}_{\epsilon}=\tilde{\mathcal{H}}_{\epsilon}^1+\tilde{\mathcal{H}}_{\epsilon}^2+\tilde{\mathcal{H}}_{\epsilon}^3+\tilde{\mathcal{H}}_{\epsilon}^4,$$

satisfies that $\tilde{\mathcal{H}}_{\epsilon}(\bar{B}(0,\varpi))\subseteq\bar{B}(0,\varpi)$, where $\bar{B}(0,\varpi)$ is the closed disc of radius $\varpi>0$ centered at 0, in $F^{\tilde{d}}_{(\nu,\beta,\mu,\chi_2,\alpha,\kappa_2,\epsilon)}$, for every $\epsilon\in D(0,\epsilon_0)\setminus\{0\}$. Moreover, it holds that
\begin{equation}\label{e442b}
\left\|\tilde{\mathcal{H}}_{\epsilon}(\omega_1)-\tilde{\mathcal{H}}_{\epsilon}(\omega_2)\right\|_{(\nu,\beta,\mu,\chi_2,\alpha,\kappa_2,\epsilon)}\le\frac{1}{2} \left\|\omega_1-\omega_2\right\|_{(\nu,\beta,\mu,\chi_2,\alpha,\kappa_2,\epsilon)},
\end{equation}
for every $\omega_1,\omega_2\in \bar{B}(0,\varpi)$, and every $\epsilon\in D(0,\epsilon_0)\setminus\{0\}$.

Here, we have defined
\begin{multline}
\tilde{\mathcal{H}}_\epsilon^1(\omega_2(\tau,\epsilon)):=\sum_{j=0}^{Q}\tilde{B}_{j}(im)\epsilon^{n_j-\alpha b_j-\chi_2(b_j-k_{0,2}+k_{0,3}-\gamma_2)}\frac{\tau^{b_j-k_{0,2}+k_{0,3}-\gamma_2}}{\tilde{P}_{2,m}(\tau)\Gamma\left(\frac{b_j-k_{0,2}+k_{0,3}-\gamma_2}{\kappa_2}\right)}\\
-\tilde{Q}(im)\left\{\sum_{\ell=0}^{s_1}\frac{a_{\ell,1}\epsilon^{-\chi_2(k_{\ell,1}-2k_{0,2}+k_{0,3})}}{\tilde{P}_{2,m}(\tau)}\left[\frac{\tau^{k_{\ell,1}-2k_{0,2}+k_{0,3}}}{\Gamma\left(\frac{k_{\ell,1}-2k_{0,2}+k_{0,3}}{\kappa_2}\right)}\star_{\kappa_2}\omega_2\right]\right.\\
\hfill+\sum_{\ell=s_1+1}^{M_1}\frac{a_{\ell,1}\epsilon^{m_{\ell,1}+\beta-\alpha k_{\ell,1}-\chi_2(k_{\ell,1}-2k_{0,2}+k_{0,3})}}{\tilde{P}_{2,m}(\tau)}\left[\frac{\tau^{k_{\ell,1}-2k_{0,2}+k_{0,3}}}{\Gamma\left(\frac{k_{\ell,1}-2k_{0,2}+k_{0,3}}{\kappa_2}\right)}\star_{\kappa_2}\omega_2\right]\\
+\sum_{\ell=1}^{s_2}\frac{-2a_{0,2}a_{\ell,2}}{a_{0,3}}\frac{\epsilon^{-\chi_2(k_{\ell,2}-k_{0,2})}}{\tilde{P}_{2,m}(\tau)}\left[\frac{\tau^{k_{\ell,2}-k_{0,2}}}{\Gamma\left(\frac{k_{\ell,2}-k_{0,2}}{\kappa_2}\right)}\star_{\kappa_2}\omega_2\right]\hfill\\
\hfill+\sum_{\ell=s_2+1}^{M_2}\frac{-2a_{0,2}a_{\ell,2}}{a_{0,3}}\frac{\epsilon^{m_{\ell,2}+2\beta-\alpha k_{\ell,2}-\chi_2(k_{\ell,2}-k_{0,2})}}{\tilde{P}_{2,m}(\tau)} \left[\frac{\tau^{k_{\ell,2}-k_{0,2}}}{\Gamma\left(\frac{k_{\ell,2}-k_{0,2}}{\kappa_2}\right)}\star_{\kappa_2}\omega_2\right]\\
+\sum_{\ell=0}^{s_2}\frac{-2a_{0,2}a_{\ell,2}}{a_{0,3}}\frac{\epsilon^{-\chi_2(k_{\ell,2}-k_{0,2})}}{\tilde{P}_{2,m}(\tau)}\left[\frac{\tau^{k_{\ell,2}-k_{0,2}}}{\Gamma\left(\frac{k_{\ell,2}-k_{0,2}}{\kappa_2}\right)}\star_{\kappa_2}\mathcal{B}_{\kappa_2}J_2(\tau,\epsilon)\star_{\kappa_2}\omega_2\right]\hfill\\
\hfill+\sum_{\ell=s_2+1}^{M_2}\frac{-2a_{0,2}a_{\ell,2}}{a_{0,3}}\frac{\epsilon^{m_{\ell,2}+2\beta-\alpha k_{\ell,2}-\chi_2(k_{\ell,2}-k_{0,2})}}{\tilde{P}_{2,m}(\tau)}\left[\frac{\tau^{k_{\ell,2}-k_{0,2}}}{\Gamma\left(\frac{k_{\ell,2}-k_{0,2}}{\kappa_2}\right)}\star_{\kappa_2}\mathcal{B}_{\kappa_2}J_2(\tau,\epsilon)\star_{\kappa_2}\omega_2\right]\\
+\sum_{\ell=1}^{s_3}\frac{3a_{0,2}^2a_{\ell,3}}{a_{0,3}^2}\frac{\epsilon^{-\chi_2(k_{\ell,3}-k_{0,3})}}{\tilde{P}_{2,m}(\tau)}\left[\frac{\tau^{k_{\ell,3}-k_{0,3}}}{\Gamma\left(\frac{k_{\ell,3}-k_{0,3}}{\kappa_2}\right)}\star_{\kappa_2}\omega_2\right]\hfill\\
\hfill+\sum_{\ell=s_3+1}^{M_3}\frac{3a_{0,2}^2a_{\ell,3}}{a_{0,3}^2}\frac{\epsilon^{m_{\ell,3}+3\beta-\alpha k_{\ell,3}-\chi_2(k_{\ell,3}-k_{0,3})}}{\tilde{P}_{2,m}(\tau)}\left[\frac{\tau^{k_{\ell,3}-k_{0,3}}}{\Gamma\left(\frac{k_{\ell,3}-k_{0,3}}{\kappa_2}\right)}\star_{\kappa_2}\omega_2\right]\\
+\sum_{\ell=0}^{s_3}\frac{3a_{0,2}^2a_{\ell,3}}{a_{0,3}^2}\frac{\epsilon^{-\chi_2(k_{\ell,3}-k_{0,3})}}{\tilde{P}_{2,m}(\tau)}\left[\frac{\tau^{k_{\ell,3}-k_{0,3}}}{\Gamma\left(\frac{k_{\ell,3}-k_{0,3}}{\kappa_2}\right)}\star_{\kappa_2}2\mathcal{B}_{\kappa_2}J_2(\tau,\epsilon)\star_{\kappa_2}\omega_2\right]\hfill\\ 
\hfill+\sum_{\ell=s_3+1}^{M_3}\frac{3a_{0,2}^2a_{\ell,3}}{a_{0,3}^2}\frac{\epsilon^{-\chi_2(k_{\ell,3}-k_{0,3})}}{\tilde{P}_{2,m}(\tau)}\left[\frac{\tau^{k_{\ell,3}-k_{0,3}}}{\Gamma\left(\frac{k_{\ell,3}-k_{0,3}}{\kappa_2}\right)}\star_{\kappa_2}2\mathcal{B}_{\kappa_2}J_2(\tau,\epsilon)\star_{\kappa_2}\omega_2\right]\\
+\sum_{\ell=0}^{s_3}\frac{3a_{0,2}^2a_{\ell,3}}{a_{0,3}^2}\frac{\epsilon^{-\chi_2(k_{\ell,3}-k_{0,3})}}{\tilde{P}_{2,m}(\tau)}\left[\frac{\tau^{k_{\ell,3}-k_{0,3}}}{\Gamma\left(\frac{k_{\ell,3}-k_{0,3}}{\kappa_2}\right)}\star_{\kappa_2}\mathcal{B}_{\kappa_2}J_2(\tau,\epsilon)\star_{\kappa_2}\mathcal{B}_{\kappa_2}J_2(\tau,\epsilon)\star_{\kappa_2}\omega_2\right]\hfill\\    
\hfill\left.+\sum_{\ell=s_3+1}^{M_3}\frac{3a_{0,2}^2a_{\ell,3}}{a_{0,3}^2}\frac{\epsilon^{-\chi_2(k_{\ell,3}-k_{0,3})}}{\tilde{P}_{2,m}(\tau)}\left[\frac{\tau^{k_{\ell,3}-k_{0,3}}}{\Gamma\left(\frac{k_{\ell,3}-k_{0,3}}{\kappa_2}\right)}\star_{\kappa_2}\mathcal{B}_{\kappa_2}J_2(\tau,\epsilon)\star_{\kappa_2}\mathcal{B}_{\kappa_2}J_2(\tau,\epsilon)\star_{\kappa_2}\omega_2\right]\right\},
\end{multline}

\begin{multline}
\tilde{\mathcal{H}}_\epsilon^2(\omega_2(\tau,\epsilon)):=\sum_{\ell=1}^{D-1}\epsilon^{\Delta_{\ell}+\alpha(\delta_{\ell}-d_{\ell})+\beta}\sum_{q_1+q_2=\delta_\ell}\frac{\delta_\ell!}{q_1!q_2!}\prod_{d=0}^{q_1-1}(\gamma_2-d)\epsilon^{-\chi_2(d_{\ell}-q_1-k_{0,2}+k_{0,3}-q_2)}\frac{\tilde{R}_{\ell}(im)}{\tilde{P}_{2,m}(\tau)}\\
\times\left[\frac{\tau^{\tilde{d}_{\ell,q_1,q_2}}}{\Gamma\left(\frac{\tilde{d}_{\ell,q_1,q_2}}{\kappa_2}\right)}\star_{\kappa_2}\left((\kappa_2\tau^{\kappa_2})^{q_2}\omega_2\right)+\sum_{1\le p\le q_2-1}\frac{\tilde{A}_{q_2,p}}{\tilde{P}_{2,m}(\tau)}\left(\frac{\tau^{\tilde{d}_{\ell,q_1,q_2}+\kappa_2(q_2-p)}}{\Gamma\left(\frac{\tilde{d}_{\ell,q_1,q_2}+\kappa_2(q_2-p)}{\kappa_2}\right)}\star_{\kappa_2}\left((\kappa_2\tau^{\kappa_2})^{p}\omega_2\right)\right)\right]\\
-\tilde{Q}(im)\left\{\sum_{\ell=0}^{s_2}a_{\ell,2}\frac{\epsilon^{-\chi_2(k_{\ell,2}-2k_{0,2}+k_{0,3}+\gamma_2)}}{\tilde{P}_{2,m}(\tau)}\left[\frac{\tau^{k_{\ell,2}-2k_{0,2}+k_{0,3}+\gamma_2}}{\Gamma\left(\frac{k_{\ell,2}-2k_{0,2}+k_{0,3}+\gamma_2}{\kappa_2}\right)}\star_{\kappa_2}\omega_2\star_{\kappa_2}^{E}\omega_2\right]\right. \hfill\\
\hfill+\sum_{\ell=s_2+1}^{M_2}a_{\ell,2}\frac{\epsilon^{m_{\ell,2}+2\beta-\alpha k_{\ell,2}-\chi_2(k_{\ell,2}-2k_{0,2}+k_{0,3}+\gamma_2)}}{\tilde{P}_{2,m}(\tau)}\left[\frac{\tau^{k_{\ell,2}-2k_{0,2}+k_{0,3}+\gamma_2}}{\Gamma\left(\frac{k_{\ell,2}-2k_{0,2}+k_{0,3}+\gamma_2
}{\kappa_2}\right)}\star_{\kappa_2}\omega_2\star_{\kappa_2}^{E}\omega_2\right]\\
+\sum_{\ell=0}^{s_3}a_{\ell,3}\frac{\epsilon^{-\chi_2k_{\ell,3}}}{\tilde{P}_{2,m}(\tau)}\left[
\frac{\tau^{k_{\ell,3}}}{\Gamma\left(\frac{k_{\ell,3}}{\kappa_2}\right)}\star_{\kappa_2}\omega_2\star_{\kappa_2}^{E}\omega_2\right]\hfill\\
\hfill+\sum_{\ell=s_3+1}^{M_3}a_{\ell,3}\frac{\epsilon^{m_{\ell,3}+3\beta-\alpha k_{\ell,3}-\chi_2k_{\ell,3}}}{\tilde{P}_{2,m}(\tau)}\left[
\frac{\tau^{k_{\ell,3}}}{\Gamma\left(\frac{k_{\ell,3}}{\kappa_2}\right)}\star_{\kappa_2}\omega_2\star_{\kappa_2}^{E}\omega_2\right]\hfill\\
+\sum_{\ell=0}^{s_3}a_{\ell,3}\frac{\epsilon^{-\chi_2k_{\ell,3}}}{\tilde{P}_{2,m}(\tau)}\left[
\frac{\tau^{k_{\ell,3}}}{\Gamma\left(\frac{k_{\ell,3}}{\kappa_2}\right)}\star_{\kappa_2}\mathcal{B}_{\kappa_2}J_2(\tau,\epsilon)\star_{\kappa_2}\omega_2\star_{\kappa_2}^{E}\omega_2\right]\hfill\\
\hfill\left.+\sum_{\ell=s_3+1}^{M_3}a_{\ell,3}\frac{\epsilon^{m_{\ell,3}+3\beta-\alpha k_{\ell,3}-\chi_2k_{\ell,3}}}{\tilde{P}_{2,m}(\tau)}\left[
\frac{\tau^{k_{\ell,3}}}{\Gamma\left(\frac{k_{\ell,3}}{\kappa_2}\right)}\star_{\kappa_2}\mathcal{B}_{\kappa_2}J_2(\tau,\epsilon)\star_{\kappa_2}\omega_2\star_{\kappa_2}^{E}\omega_2\right]\right\},\hfill
\end{multline}

\begin{multline}
\tilde{\mathcal{H}}_\epsilon^3(\omega_2(\tau,\epsilon)):=\frac{1}{\tilde{P}_{2,m}(\tau)}\sum_{q_1+q_2=\delta_D,q_1\ge1}\frac{\delta_D!}{q_1!q_2!}\prod_{d=0}^{q_1-1}(\gamma_2-d)\tilde{R}_{D}(im)\\
\times\left[\frac{\tau^{\tilde{d}_{D,q_1,q_2}}}{\Gamma\left(\frac{\tilde{d}_{D,q_1,q_2}}{\kappa_2}\right)}\star_{\kappa_2}\left((\kappa_2\tau^{\kappa_2})^{q_2}\omega_2\right)+\sum_{1\le p\le q_2-1}\tilde{A}_{q_2,p}\frac{\tau^{\tilde{d}_{D,q_1,q_2}+\kappa_2(q_2-p)}}{\Gamma\left(\frac{\tilde{d}_{D,q_1,q_2}+\kappa_2(q_2-p)}{\kappa_2}\right)}\star_{\kappa_2}\left((\kappa_2\tau^{\kappa_2})^{p}\omega_2\right)\right]\\
\hfill+\frac{\tilde{R}_{D}(im)}{\tilde{P}_{2,m}(\tau)}\left[(\kappa_2\tau^{\kappa_2})^{\delta_{D}}\omega_2+\sum_{1\le p\le \delta_D-1}\tilde{A}_{\delta_D,p}\frac{\tau^{\kappa_2(\delta_D-p)}}{\Gamma\left(\frac{\kappa_2(\delta_D-p)}{\kappa_2}\right)}\star_{\kappa_2}\left((\kappa_2\tau^{\kappa_2})^{p}\omega_2\right)\right],
\end{multline}

and

\begin{multline}
\tilde{\mathcal{H}}_\epsilon^4(\omega_2(\tau,\epsilon)):=-\tilde{Q}(im)\left\{\sum_{\ell=0}^{s_3}a_{\ell,3}\frac{\epsilon^{-\chi_2(k_{\ell,3}+2\gamma_2-2k_{0,2}+k_{0,3})}}{\tilde{P}_{2,m}(\tau)}\left[
\frac{\tau^{k_{\ell,3}+2\gamma_2-2k_{0,2}+k_{0,3}}}{\Gamma\left(\frac{k_{\ell,3}+2\gamma_2-2k_{0,2}+k_{0,3}}{\kappa_2}\right)}\star_{\kappa_2}\omega_2\star_{\kappa_2}^{E}\omega_2\star_{\kappa_2}^{E}\omega_2\right]\right.\hfill\\
\hfill\left.+\sum_{\ell=s_3+1}^{M_3}a_{\ell,3}\frac{\epsilon^{m_{\ell,3}+3\beta-\alpha k_{\ell,3}-\chi_2(k_{\ell,3}+2\gamma_2-2k_{0,2}+k_{0,3})}}{\tilde{P}_{2,m}(\tau)}\left[
\frac{\tau^{k_{\ell,3}+2\gamma_2-2k_{0,2}+k_{0,3}}}{\Gamma\left(\frac{k_{\ell,3}+2\gamma_2-2k_{0,2}+k_{0,3}}{\kappa_2}\right)}\star_{\kappa_2}\omega_2\star_{\kappa_2}^{E}\omega_2\star_{\kappa_2}^{E}\omega_2\right]\right\}.
\end{multline}
\end{lemma}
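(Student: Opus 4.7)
The plan is to mimic closely the strategy used for Lemma~\ref{lema568}, now for the second auxiliary problem and working in the Banach space $F^{\tilde{d}}_{(\nu,\beta,\mu,\chi_2,\alpha,\kappa_2,\epsilon)}$. The first step is to record that the construction of the roots $\tilde{q}_\ell(m)$ in (\ref{e371b}) together with (\ref{e377b}), (\ref{e378b}) yields the key lower bound (\ref{e949b}) on $|\tilde{P}_{2,m}(\tau)|$, giving a quantity of the form $a_{\gamma_1,\kappa_2}(\tau,m)$ required in Lemma~\ref{lema1} and Proposition~\ref{prop1} with $\gamma_1=\delta_D-\tfrac{1}{\kappa_2}$ and $\tilde{R}_D$ in the role of the denominator polynomial. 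One also needs to observe that $\mathcal{B}_{\kappa_2}J_2(\tau,\epsilon)$ is analytic on all of $\bar{D}(0,\rho)\cup S_{\tilde d}$ with bounds behaving like $O(|\epsilon|^{\chi_2})$ uniformly (since $\mathcal{J}_2(0)=0$), and by Lemma~\ref{lema533b} so is the twofold self-convolution $\mathcal{B}_{\kappa_2}\tilde{J}_2(\tau,\epsilon)$. These two facts handle every appearance of $\mathcal{B}_{\kappa_2}J_2$ as a gain in $|\epsilon|$.

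Next, one estimates $\tilde{\mathcal{H}}_\epsilon$ term by term. For the forcing piece $\tilde{\mathcal{H}}_\epsilon^1$ the constant contribution $\sum_j \tilde{B}_j(im)\epsilon^{n_j-\alpha b_j-\chi_2(b_j-k_{0,2}+k_{0,3}-\gamma_2)}\tau^{b_j-k_{0,2}+k_{0,3}-\gamma_2}/\tilde{P}_{2,m}(\tau)$ is bounded in norm by a direct application of Lemma~\ref{lema1}, using (\ref{e887b}) to ensure $b_j-2k_{0,2}+k_{0,3}-\gamma_2\ge 1$ and $n_j-\alpha b_j>0$ from (\ref{e134}); together with (\ref{e949b}), this gives an estimate of the form $C_0\sum_j\|\tilde B_j\|_{(\beta,\mu)}|\epsilon|^{\sigma_j}$ with $\sigma_j\ge 0$, which is made arbitrarily small by choosing $\epsilon_0$ small. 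The remaining linear terms in $\tilde{\mathcal{H}}_\epsilon^1$ and $\tilde{\mathcal{H}}_\epsilon^2$, as well as the linear-with-$\partial_T$ terms in $\tilde{\mathcal{H}}_\epsilon^2$ and $\tilde{\mathcal{H}}_\epsilon^3$, are treated by Proposition~\ref{prop1}, applied with $\kappa=\kappa_2$ and the appropriate $\gamma_2,\gamma_3$. The conditions (\ref{e887b}) and (\ref{e896b}) together with (\ref{e94b}) are precisely what guarantees that the resulting exponents of $|\epsilon|$ — of the form
\[
\Delta_\ell+\alpha(\delta_\ell-d_\ell)+\beta+(\chi_2+\alpha)\kappa_2\bigl(\tfrac{\tilde d_{\ell,q_1,q_2}}{\kappa_2}+q_2-\delta_D+\tfrac{1}{\kappa_2}\bigr)-\chi_2(d_\ell-k_{0,2}+k_{0,3}-\delta_\ell)
\]
and their analogues for the polynomial coefficients in $p_1,p_2,p_3$ — are all nonnegative. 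The quadratic and cubic contributions in $\tilde{\mathcal{H}}_\epsilon^2$ and $\tilde{\mathcal{H}}_\epsilon^4$ are handled by combining Proposition~\ref{prop3} / Corollary~\ref{coro3} with the bound (\ref{e949b}): each convolution in $(\tau,m)$ produces a factor $C_4/|\epsilon|^{\chi_2+\alpha}$, which is compensated by the nonnegative exponent gained from Proposition~\ref{prop1}, and each extra $\mathcal{B}_{\kappa_2}J_2$ factor brings an additional $|\epsilon|^{\chi_2}$. This yields bounds of the shape $C\varpi^j|\epsilon|^{\tilde\sigma}$ with $j\in\{1,2,3\}$ and $\tilde\sigma\ge 0$.

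Summing these pieces, one obtains an estimate
\[
\bigl\|\tilde{\mathcal{H}}_\epsilon(\omega)\bigr\|_{(\nu,\beta,\mu,\chi_2,\alpha,\kappa_2,\epsilon)}\le A_0+A_1\varpi+A_2\varpi^2+A_3\varpi^3,
\]
where each $A_j$ is, up to a universal constant coming from Lemma~\ref{lema1}, Proposition~\ref{prop1} and Proposition~\ref{prop3}, proportional to $(r_{\tilde Q,\tilde R_D})^{-\frac{1}{\delta_D\kappa_2}}$ and to some nonnegative power of $|\epsilon|$ or of $\epsilon_0$ (except for the contribution of $\tilde{\mathcal{H}}_\epsilon^3$, whose prefactor in $r_{\tilde Q,\tilde R_D}^{-1/(\delta_D\kappa_2)}$ can be made small alone). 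Hence, by first enlarging $r_{\tilde Q,\tilde R_D}$ so that $A_1<1/4$ independently of the remaining parameters, then choosing $\varpi$ small enough so that $A_2\varpi+A_3\varpi^2<1/4$, and finally $\epsilon_0$ small enough so that $A_0\le \varpi/2$, one secures $\tilde{\mathcal{H}}_\epsilon(\bar B(0,\varpi))\subseteq \bar B(0,\varpi)$.

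For the Lipschitz inequality (\ref{e442b}), the linear terms give a contribution bounded by $A_1\|\omega_1-\omega_2\|$ directly; the quadratic and cubic terms are handled by the identities
\[
\omega_1^{\star2}-\omega_2^{\star2}=(\omega_1-\omega_2)\star\omega_1+\omega_2\star(\omega_1-\omega_2),\qquad \omega_1^{\star3}-\omega_2^{\star3}=\sum_{j=0}^{2}\omega_1^{\star j}\star(\omega_1-\omega_2)\star\omega_2^{\star(2-j)},
\]
and the same estimates applied to each factor, bounded by $\varpi$. The main obstacle — and this is where the statement of the lemma was designed so as to remove it — is the combinatorial bookkeeping needed to verify that all exponents of $|\epsilon|$ appearing in $\tilde{\mathcal{H}}_\epsilon^2$ and $\tilde{\mathcal{H}}_\epsilon^3$ are nonnegative under (\ref{e896b}); this is the analog of the nontrivial check made for (\ref{e896}), (\ref{e903}) in Lemma~\ref{lema568}, and indeed the preceding lemma relating the two sets of exponents, via (\ref{e1749}), is exactly what allows one to transfer the analysis from $\kappa_1$ to $\kappa_2$ without having to introduce new independent restrictions.
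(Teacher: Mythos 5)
Your overall route is the same as the paper's: the same splitting $\tilde{\mathcal{H}}_{\epsilon}=\tilde{\mathcal{H}}_{\epsilon}^1+\tilde{\mathcal{H}}_{\epsilon}^2+\tilde{\mathcal{H}}_{\epsilon}^3+\tilde{\mathcal{H}}_{\epsilon}^4$, the lower bound (\ref{e949b}) on $\tilde{P}_{2,m}$ feeding Lemma~\ref{lema1} and Proposition~\ref{prop1} (with the exponent $\delta_D-\frac{1}{\kappa_2}$), Proposition~\ref{prop3} and Corollary~\ref{coro3} for the quadratic and cubic convolutions, the difference identities for the Lipschitz half, and a final tuning of $r_{\tilde{Q},\tilde{R}_D}$, $\varpi$, $\epsilon_0$ so that each block contributes at most $\varpi/4$ (resp.\ a $1/8$ Lipschitz constant); your ordering of the parameter choices is legitimate because $n_j-\alpha b_j>0$ makes the forcing block vanish with $\epsilon_0$.

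There is, however, one step that would fail as written: the claim that $\mathcal{B}_{\kappa_2}J_2(\tau,\epsilon)$ is uniformly $O(|\epsilon|^{\chi_2})$ on $\bar{D}(0,\rho)\cup S_{\tilde{d}}$ ``since $\mathcal{J}_2(0)=0$'', and that this alone converts every appearance of $\mathcal{B}_{\kappa_2}J_2$ into a gain in $|\epsilon|$. By construction $\mathcal{B}_{\kappa_2}J_2(\tau,\epsilon)=\sum_{j\ge1}J_{2,j}\,\epsilon^{-\chi_2 j}\tau^{j}/\Gamma(j/\kappa_2)$, so on the fixed, $\epsilon$-independent domain in $\tau$ it carries arbitrarily negative powers of $\epsilon$ and is not small as $\epsilon\to0$; no pointwise bound of the asserted form holds. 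The gain in $\epsilon$ only appears after measuring the convolution in the weighted norm: the paper expands each term $\frac{\tau^{k}}{\Gamma(k/\kappa_2)}\star_{\kappa_2}\mathcal{B}_{\kappa_2}J_2\star_{\kappa_2}\omega_2$ (and, via Lemma~\ref{lema533b}, the double $J_2$ terms) as a series in $j$, applies Proposition~\ref{prop1} to each monomial $\tau^{k+j}$, and---this is the nontrivial ingredient you omit---controls the $j$-dependence of the constants produced by Proposition~\ref{prop1}, showing they grow at most like $\hat{C}_3A_3^{j}\Gamma\bigl(\frac{k+j}{\kappa_2}\bigr)$ (cf.\ (\ref{e633c}), (\ref{e634c}), (\ref{e677b}), proved through Stirling-type and Mittag-Leffler estimates). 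After cancellation of the Gamma factors the $j$-th term carries $\hat{C}_3A_3^{j}|J_{2,j}|\,|\epsilon_0|^{\alpha(\cdot+j)}$ (the net gain per power of $\tau$ being $|\epsilon|^{(\chi_2+\alpha)-\chi_2}=|\epsilon|^{\alpha}$, not $|\epsilon|^{\chi_2}$), and the series over $j$ converges only after shrinking $\epsilon_0$, using $|J_{2,j}|\le C_JA_J^{j}$. Without this Gamma-growth control and the summation over $j$, your bounds on the constants $A_1,A_2,A_3$ for all blocks containing $\mathcal{B}_{\kappa_2}J_2$ are not justified; once it is supplied, the rest of your sketch coincides with the paper's proof.
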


\begin{prop}\label{prop1518b}
Under the assumptions (\ref{e887b}), (\ref{e896b}), there exists $r_{\tilde{Q},\tilde{R}_D}>0$, $\epsilon_0>0$ and $\varpi>0$ such that the problem (\ref{e315b}) admits a unique solution $\omega_{\kappa_2}^{\tilde{d}}(\tau,m,\epsilon)$ belonging to the Banach space $F^{\tilde{d}}_{(\nu,\beta,\mu,\chi_2,\alpha,\kappa_2,\epsilon)}$, with 
$$  \left\|\omega_{\kappa_2}^{\tilde{d}}(\tau,m,\epsilon)\right\|_{(\nu,\beta,\mu,\chi_2,\alpha,\kappa_2,\epsilon)}\le\varpi,$$
for every $\epsilon\in D(0,\epsilon_0)\setminus\{0\}$, where $\tilde{d}\in\R$ is such that (\ref{e377b}) and (\ref{e378b}) are satisfied.
\end{prop}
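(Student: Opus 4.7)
The approach mirrors exactly the strategy used in the proof of Proposition~\ref{prop1518}, adapted to the Banach space $F^{\tilde d}_{(\nu,\beta,\mu,\chi_2,\alpha,\kappa_2,\epsilon)}$. First, I would rewrite the convolution equation (\ref{e315b}) as a fixed point problem. The two terms $\frac{a_{0,2}^2}{a_{0,3}}\tilde{Q}(im)\,\omega_2$ (appearing in $L_{1,\kappa_2}$) and $\tilde{R}_D(im)(\kappa_2\tau^{\kappa_2})^{\delta_D}\omega_2$ (appearing in $R_{1,\kappa_2}$) are moved to one side, their sum is exactly $-\tilde{P}_{2,m}(\tau)\omega_2$, and dividing by $\tilde{P}_{2,m}(\tau)$ (whose non-vanishing and lower bound on $S_{\tilde d}\cup \bar D(0,\rho)$ is guaranteed by (\ref{e949b})) produces precisely the equation $\omega_2=\tilde{\mathcal{H}}_\epsilon(\omega_2)$.

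Next, I would apply Lemma~\ref{lema568b}. Under the assumptions (\ref{e887b}) and (\ref{e896b}), that lemma provides $r_{\tilde Q,\tilde R_D}>0$, $\epsilon_0>0$ and $\varpi>0$ such that for every $\epsilon\in D(0,\epsilon_0)\setminus\{0\}$ the operator $\tilde{\mathcal{H}}_\epsilon$ maps the closed ball $\bar B(0,\varpi)\subset F^{\tilde d}_{(\nu,\beta,\mu,\chi_2,\alpha,\kappa_2,\epsilon)}$ into itself, and satisfies the Lipschitz estimate (\ref{e442b}) with constant $\tfrac12$. Since $F^{\tilde d}_{(\nu,\beta,\mu,\chi_2,\alpha,\kappa_2,\epsilon)}$ is a Banach space (Definition~\ref{def108}) and $\bar B(0,\varpi)$ is a complete metric subspace, the Banach fixed point theorem yields, for each such $\epsilon$, a unique fixed point $\omega^{\tilde d}_{\kappa_2}(\tau,m,\epsilon)\in\bar B(0,\varpi)$ of $\tilde{\mathcal{H}}_\epsilon$.

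By reversing the algebraic manipulation used to construct $\tilde{\mathcal{H}}_\epsilon$, this fixed point solves (\ref{e315b}). The initial condition $\omega^{\tilde d}_{\kappa_2}(0,m,\epsilon)\equiv 0$ is immediate from the explicit form of $\tilde{\mathcal{H}}_\epsilon=\tilde{\mathcal{H}}_\epsilon^1+\tilde{\mathcal{H}}_\epsilon^2+\tilde{\mathcal{H}}_\epsilon^3+\tilde{\mathcal{H}}_\epsilon^4$: every summand carries a strictly positive power of $\tau$ (thanks to the conditions $b_j-k_{0,2}+k_{0,3}-\gamma_2\ge 1$, $k_{\ell,1}-2k_{0,2}+k_{0,3}>0$ and the corresponding positivity of $\tilde d_{\ell,q_1,q_2}+\kappa_2(q_2-p)$ coming from (\ref{e287b})), hence $\tilde{\mathcal{H}}_\epsilon(\omega)(0,m,\epsilon)=0$ for any $\omega$ in the ball. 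The holomorphic dependence of $\omega^{\tilde d}_{\kappa_2}(\tau,m,\epsilon)$ on $\epsilon\in D(0,\epsilon_0)\setminus\{0\}$ follows from the analytic dependence of $\tilde{\mathcal{H}}_\epsilon$ on $\epsilon$ combined with the uniform contractivity constant $\tfrac12$ (a standard argument via uniform convergence of the Picard iterates).

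The only real obstacle is the invocation of Lemma~\ref{lema568b}, whose proof (deferred to Section~\ref{seccionaux2}) is where one has to track carefully the powers of $\epsilon$ coming from the coefficients $\epsilon^{m_{\ell,\lambda}+\lambda\beta-\alpha k_{\ell,\lambda}-\chi_2(\cdot)}$ and $\epsilon^{\Delta_\ell+\alpha(\delta_\ell-d_\ell)+\beta-\chi_2(\cdot)}$, applying the norm estimates of Lemma~\ref{lema1}, Proposition~\ref{prop1} and Proposition~\ref{prop3} (in the $\star_{\kappa_2}$ and $\star_{\kappa_2}^E$ form of Corollary~\ref{coro3}), in order to absorb all the resulting factors of $|\epsilon|$ into nonnegative exponents by virtue of the constraints (\ref{e887b})--(\ref{e896b}). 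Once these estimates are collected, choosing $r_{\tilde Q,\tilde R_D}$ sufficiently large and $\epsilon_0,\varpi$ sufficiently small yields invariance of $\bar B(0,\varpi)$ and the Lipschitz constant $\tfrac12$. At the level of Proposition~\ref{prop1518b} itself, the argument is then a direct application of the contraction mapping principle exactly as in Proposition~\ref{prop1518}.
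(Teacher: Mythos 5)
Your proposal is correct and follows essentially the same route as the paper: the paper itself proves Proposition~\ref{prop1518b} by declaring it analogous to Proposition~\ref{prop1518}, i.e.\ rearranging (\ref{e315b}) into the fixed point equation $\omega_2=\tilde{\mathcal{H}}_\epsilon(\omega_2)$ after dividing by $\tilde{P}_{2,m}(\tau)$ and invoking Lemma~\ref{lema568b} together with the contraction mapping principle on $\bar{B}(0,\varpi)$, exactly as you do. The only cosmetic discrepancy is the sign bookkeeping when you identify the isolated terms with $-\tilde{P}_{2,m}(\tau)\omega_2$ (the paper's stated sign convention for $\tilde{P}_{2,m}$ does not literally match the $+\frac{a_{0,2}^2}{a_{0,3}}\tilde{Q}(im)$ coefficient in $L_{1,\kappa_2}$), which affects nothing since only the lower bound (\ref{e949b}) on $|\tilde{P}_{2,m}(\tau)|$ is used.
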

\begin{proof}
It is analogous to the proof of Proposition~\ref{prop1518}, so we omit it.
\end{proof}

\section{Singular analytic solutions of the main problem}\label{seccion5}

This section describes the analytic solutions of the problem in two good coverings in $\C^\star$, and construct them by analyzing the procedure followed in the two problems considered in the previous sections. A Ramis-Sibuya type theorem applied to each problem will lead to the formal solution of the main problem under study.

\begin{defin}\label{defingoodcovering} Let $j\in\{1,2\}$, and let $\varsigma_j \geq 2$ be integer numbers. For all $0 \leq p \leq \varsigma_1-1$ (resp. $0 \leq p \leq \varsigma_2-1$), we consider open sectors
$\mathcal{E}_{p}$ (resp. $\tilde{\mathcal{E}}_{p}$)  centered at $0$, with radius $\epsilon_{0}>0$ and opening larger than
$\frac{\pi}{(\chi_1 + \alpha)\kappa_1}$ (resp. $\frac{\pi}{(\chi_2 + \alpha)\kappa_2}$) such that
$\mathcal{E}_{p} \cap \mathcal{E}_{p+1} \neq \emptyset$ for all
$0 \leq p \leq \varsigma_1-1$ (resp. $\tilde{\mathcal{E}}_{p} \cap \tilde{\mathcal{E}}_{p+1} \neq \emptyset$ for all $0 \leq p \leq \varsigma_2-1$). Moreover, we assume that the intersection of any three different elements in $\{ \mathcal{E}_{p} \}_{0 \leq p \leq \varsigma_1-1}$ (resp. $\{ \tilde{\mathcal{E}}_{p} \}_{0 \leq p \leq \varsigma_2-1}$) is empty
and that $\cup_{p=0}^{\varsigma_1 - 1} \mathcal{E}_{p} = \mathcal{U} \setminus \{ 0 \}=\cup_{p=0}^{\varsigma_2 - 1} \tilde{\mathcal{E}}_{p} = \mathcal{U} \setminus \{ 0 \}$,
where $\mathcal{U}$ is some neighborhood of 0 in $\mathbb{C}$. Each set of sectors
$\{ \mathcal{E}_{p} \}_{0 \leq p \leq \varsigma_1 - 1}$ and $\{ \tilde{\mathcal{E}}_{p} \}_{0 \leq p \leq \varsigma_2 - 1}$ is called a good covering in $\mathbb{C}^{\ast}$. In order to distinguish both good coverings, we will refer each of them as the good covering related to the Gevrey order $(\chi_1+\alpha)\kappa_1$ (resp. $(\chi_2+\alpha)\kappa_2$).
\end{defin}

\begin{defin}\label{defi2414} Let $\{ \mathcal{E}_{p} \}_{0 \leq p \leq \varsigma_1 - 1}$, and $\{ \tilde{\mathcal{E}}_{p} \}_{0 \leq p \leq \varsigma_2 - 1}$ be two good coverings in $\mathbb{C}^{\ast}$, related to Gevrey orders $(\chi_1+\alpha)\kappa_1$ and $(\chi_2+\alpha)\kappa_2$, respectively. For $j\in\{1,2\}$, let $\mathcal{T}_j$ be an open bounded sector centered at 0 with radius $r_{\mathcal{T}}$ and consider a family of open sectors
$$ S_{\mathfrak{d}_{p},\theta_1,\epsilon_{0}r_{\mathcal{T}}} =
\{ T \in \mathbb{C}^{\ast} / |T| < \epsilon_{0}r_{\mathcal{T}} \ \ , \ \ |\mathfrak{d}_{p} - \mathrm{arg}(T)| < \theta/2 \} $$
(resp. 
$$ S_{\tilde{\mathfrak{d}}_{p},\theta_2,\epsilon_{0}r_{\mathcal{T}}} =
\{ T \in \mathbb{C}^{\ast} / |T| < \epsilon_{0}r_{\mathcal{T}} \ \ , \ \ |\tilde{\mathfrak{d}}_{p} - \mathrm{arg}(T)| < \theta/2 \} $$)
with aperture $\theta_1 > \pi/\kappa_1$ (resp. $\theta_2 > \pi/\kappa_2$) and where $\mathfrak{d}_{p} \in \mathbb{R}$, for all $0 \leq p \leq \varsigma_1-1$ (resp. $\tilde{\mathfrak{d}}_{p}\in\R$ for all $0\le p\le \varsigma_2-1$), are directions
which satisfy the following constraints: Let $q_{\ell}(m)$ (resp. $\tilde{q}_{\ell}(m)$) be the roots described in (\ref{e371}) (resp. (\ref{e371b})) of the polynomials $\tilde{P}_{m}(\tau)$ (resp. $\tilde{P}_{2,m}(\tau)$), and
$S_{\mathfrak{d}_p}$, $0 \leq p \leq \varsigma_1 -1$ (resp. $S_{\tilde{\mathfrak{d}}_p}$, $0 \leq p \leq \varsigma_2 -1$) be unbounded sectors centered at 0 with directions $\mathfrak{d}_{p}$ (resp. $\tilde{\mathfrak{d}}_{p}$ and with small aperture. We assume that\\
1) There exists a constant $M_{1}>0$ such that
\begin{equation}
|\tau - q_{\ell}(m)| \geq M_{1}(1 + |\tau|) \label{root_cond_1_in_defin}
\end{equation}
for all $0 \leq \ell \leq \delta_{D}\kappa_1-1$, all $m \in \mathbb{R}$, all
$\tau \in S_{\mathfrak{d}_p} \cup \bar{D}(0,\rho)$, for all
$0 \leq p \leq \varsigma_1-1$, and also 
\begin{equation}
|\tau - \tilde{q}_{\ell}(m)| \geq M_{1}(1 + |\tau|) \label{root_cond_1_in_defin2}
\end{equation}
for all $0 \leq \ell \leq \delta_{D}\kappa_2-1$, all $m \in \mathbb{R}$, all
$\tau \in S_{\tilde{\mathfrak{d}}_p} \cup \bar{D}(0,\rho)$, for all
$0 \leq p \leq \varsigma_2-1$.\\
2) There exists a constant $M_{2}>0$ such that
\begin{equation}
|\tau - q_{\ell_0}(m)| \geq M_{2}|q_{\ell_0}(m)| \label{root_cond_2_in_defin}
\end{equation}
for some $\ell_{0} \in \{0,\ldots,\delta_{D}\kappa_1-1 \}$, all $m \in \mathbb{R}$, all $\tau \in S_{\mathfrak{d}_p} \cup \bar{D}(0,\rho)$, for
all $0 \leq p \leq \varsigma_1 - 1$, and also
\begin{equation}
|\tau - \tilde{q}_{\ell_1}(m)| \geq M_{2}|\tilde{q}_{\ell_1}(m)| \label{root_cond_2_in_defin2}
\end{equation}
for some $\ell_{1} \in \{0,\ldots,\delta_{D}\kappa_2-1 \}$, all $m \in \mathbb{R}$, all $\tau \in S_{\tilde{\mathfrak{d}}_p} \cup \bar{D}(0,\rho)$, for all $0 \leq p \leq \varsigma_2 - 1$.\\

3) For all $0 \leq p \leq \varsigma_1 - 1$, for all $t \in \mathcal{T}_1$, all $\epsilon \in \mathcal{E}_{p}$,
we have that
$\epsilon^{\alpha + \chi_1} t \in S_{\tilde{\mathfrak{d}}_{p},\theta_1,\epsilon_{0}^{\alpha + \chi_1}r_{\mathcal{T}}}$, and for all $0 \leq p \leq \varsigma_2 - 1$, for all $t \in \mathcal{T}_2$, all $\epsilon \in \tilde{\mathcal{E}}_{p}$,
we have that $\epsilon^{\alpha + \chi_2} t \in S_{\tilde{\mathfrak{d}}_{p},\theta_2,\epsilon_{0}^{\alpha + \chi_2}r_{\mathcal{T}}}$.\medskip

\noindent Then, we say that both families,
$\{ (S_{\mathfrak{d}_{p},\theta_1,\epsilon_{0}r_{\mathcal{T}}})_{0 \leq p \leq \varsigma_1-1},\mathcal{T}_1 \}$ and $\{ (S_{\tilde{\mathfrak{d}}_{p},\theta_2,\epsilon_{0}r_{\mathcal{T}}})_{0 \leq p \leq \varsigma_2-1},\mathcal{T}_2 \}$ 
are associated to the good covering $\{ \mathcal{E}_{p} \}_{0 \leq p \leq \varsigma_1 - 1}$, and $\{ \tilde{\mathcal{E}}_{p} \}_{0 \leq p \leq \varsigma_2 - 1}$, respectively.
\end{defin}

We construct two families of holomorphic solutions of the main problem under study (\ref{e85}), with a pole at $(\epsilon,t)=(0,0)$, defined in the sectors $\mathcal{E}_p$ and $\tilde{\mathcal{E}}_q$, for $0\le p\le \varsigma_1-1$ and $0\le q\le \varsigma_2-1$, with respect to $\epsilon$. We also determine the exponential rate of decrement of the difference of two solutions in the intersection of two consecutive sectors of the same family of good coverings. Moreover, this rate depends on the good covering under consideration.

\begin{theo}\label{teo2461}
Let us consider the parameters described at the beginning of Section~\ref{seccion4}, which satisfy (\ref{e92}), (\ref{e93}), (\ref{e94}) and (\ref{e94b}). We consider the nonlinear singularly perturbed PDE (\ref{e85}) with elements determined as in (\ref{e91}), (\ref{e540}) and (\ref{e95}). We choose $\alpha,\beta\in\Q$, $1 \le\kappa_1<\kappa_2$ and $\gamma_1,\gamma_2\in\Q$ such that (\ref{e134}), (\ref{e138}), (\ref{e218}), (\ref{e222}), (\ref{e218b}), (\ref{e222b}) hold, and assume that (\ref{e287}) and (\ref{e287b}) hold. We finally assume (\ref{e366}), (\ref{e887}), (\ref{e896}), (\ref{e366b}), (\ref{e887b}), (\ref{e896b}). 

Let $(\mathcal{E}_{p})_{0\le p\le \varsigma_1-1}$ and $(\tilde{\mathcal{E}}_{p})_{0\le p\le \varsigma_2-1}$ be two good coverings associated to the Gevrey orders $(\chi_1+\alpha)\kappa_1$ and $(\chi_2+\alpha)\kappa_2$, respectively, for which families of open sectors $\{ (S_{\mathfrak{d}_{p},\theta_1,\epsilon_{0}r_{\mathcal{T}}})_{0 \leq p \leq \varsigma_1-1},\mathcal{T}_1 \}$ and $\{ (S_{\tilde{\mathfrak{d}}_{p},\theta_2,\epsilon_{0}r_{\mathcal{T}}})_{0 \leq p \leq \varsigma_2-1},\mathcal{T}_2 \}$ are associated
to each corresponding good covering.

Then, there exist a radius $r_{\tilde{Q},\tilde{R}_{D}}>0$ large enough, $\epsilon_{0}>0$ small enough, for which two families $\{ u_1^{\mathfrak{d}_{p}}(t,z,\epsilon) \}_{0 \leq p \leq \varsigma_1 - 1}$ and $\{ u_2^{\tilde{\mathfrak{d}}_{p}}(t,z,\epsilon) \}_{0 \leq p \leq \varsigma_2 - 1}$ of actual solutions of (\ref{e85}) can be constructed. More precisely, the functions
$u_1^{\mathfrak{d}_{p}}(t,z,\epsilon)$ and $u_2^{\tilde{\mathfrak{d}}_{p}}(t,z,\epsilon)$  solve two singularly perturbed PDEs
\begin{multline}
\tilde{Q}(\partial_z)\left(p_1(t,\epsilon)u(t,z,\epsilon)+p_2(t,\epsilon)u^2(t,z,\epsilon)+p_3(t,\epsilon)u^3(t,z,\epsilon)\right)\\
=\sum_{j=0}^{Q}\tilde{b}_{j}(z)\epsilon^{n_j}t^{b_j}+\sum_{\ell=1}^{D}\epsilon^{\Delta_{\ell}}t^{d_{\ell}}\partial_{t}^{\delta_{\ell}}\tilde{R}_{\ell}(\partial_{z})u(t,z,\epsilon)+F_1(\epsilon^{\alpha}t,\epsilon)
\label{e85b}
\end{multline}
and
\begin{multline}
\tilde{Q}(\partial_z)\left(p_1(t,\epsilon)u(t,z,\epsilon)+p_2(t,\epsilon)u^2(t,z,\epsilon)+p_3(t,\epsilon)u^3(t,z,\epsilon)\right)\\
=\sum_{j=0}^{Q}\tilde{b}_{j}(z)\epsilon^{n_j}t^{b_j}+\sum_{\ell=1}^{D}\epsilon^{\Delta_{\ell}}t^{d_{\ell}}\partial_{t}^{\delta_{\ell}}\tilde{R}_{\ell}(\partial_{z})u(t,z,\epsilon)+F_2(\epsilon^{\alpha}t,\epsilon)
\label{e85c}
\end{multline}
respectively, where the forcing terms $F_1(T,\epsilon)$ and $F_2(T,\epsilon)$ are described in (\ref{e2629}) and (\ref{e235bd}), respectively, and define holomorphic bounded function provided that the additional constraints (\ref{e2712}) and (\ref{e2712b}) are fulfilled.

Each function $u_1^{\mathfrak{d}_{p}}(t,z,\epsilon)$ can be decomposed as
\begin{equation}
u_1^{\mathfrak{d}_{p}}(t,z,\epsilon) =
\epsilon^{\beta} \left( -\frac{a_{0,1}}{a_{0,2}}(\epsilon^{\alpha}t)^{k_{0,1} - k_{0,2}}-\frac{a_{0,1}}{a_{0,2}}(\epsilon^{\alpha}t)^{k_{0,1} - k_{0,2}}\mathcal{J}_1(\epsilon^{\alpha}t) +
(\epsilon^{\alpha}t)^{\gamma_1}v_1^{\mathfrak{d}_{p}}(t,z,\epsilon) \right) \label{decomp_singular_holom_udp1}
\end{equation}
where $\mathcal{J}_1(T)$ is holomorphic on some disc $D(0,d_{\mathcal{J}_1})$, $d_{\mathcal{J}_1}>0$ and
$v_1^{\mathfrak{d}_{p}}(t,z,\epsilon)$ defines a bounded holomorphic function on $\mathcal{T}_1 \times H_{\beta'} \times \mathcal{E}_{p}$ for any given $0 < \beta' < \beta$, with $v_1^{\mathfrak{d}_{p}}(0,z,\epsilon) \equiv 0$ on $H_{\beta'} \times \mathcal{E}_{p}$. Furthermore, there exist
constants $K_{p},M_{p}>0$ and $\sigma>0$ (independent of $\epsilon$) such that
\begin{equation}
\sup_{t \in \mathcal{T}_1 \cap D(0,\sigma), z \in H_{\beta'}}
|v_1^{\mathfrak{d}_{p+1}}(t,z,\epsilon) - v_1^{\mathfrak{d}_{p}}(t,z,\epsilon)| \leq K_{p}
\exp( -\frac{M_p}{|\epsilon|^{(\chi_1 + \alpha)\kappa_1}} ) \label{exp_small_difference_v_dp1}
\end{equation}
for all $\epsilon \in \mathcal{E}_{p+1} \cap \mathcal{E}_{p}$, for all $0 \leq p \leq \varsigma_1-1$.

Each function $u_2^{\tilde{\mathfrak{d}}_{p}}(t,z,\epsilon)$ can be decomposed as
\begin{equation}
u_2^{\tilde{\mathfrak{d}}_{p}}(t,z,\epsilon) =
\epsilon^{\beta} \left( -\frac{a_{0,2}}{a_{0,3}}(\epsilon^{\alpha}t)^{k_{0,2} - k_{0,3}}-\frac{a_{0,2}}{a_{0,3}}(\epsilon^{\alpha}t)^{k_{0,2} - k_{0,3}}\mathcal{J}_2(\epsilon^{\alpha}t) +
(\epsilon^{\alpha}t)^{\gamma_2}v_2^{\tilde{\mathfrak{d}}_{p}}(t,z,\epsilon) \right) \label{decomp_singular_holom_udp2}
\end{equation}
where $\mathcal{J}_2(T)$ is holomorphic on some disc $D(0,d_{\mathcal{J}_2})$, $d_{\mathcal{J}_2}>0$ and
$v_2^{\tilde{\mathfrak{d}}_{p}}(t,z,\epsilon)$ defines a bounded holomorphic function on $\mathcal{T}_2 \times H_{\beta'} \times \tilde{\mathcal{E}}_{p}$ for any given $0 < \beta' < \beta$, with $v_2^{\mathfrak{d}_{p}}(0,z,\epsilon) \equiv 0$ on $H_{\beta'} \times \tilde{\mathcal{E}}_{p}$. Furthermore, there exist
constants $K_{p},M_{p}>0$ and $\sigma>0$ (independent of $\epsilon$) such that
\begin{equation}
\sup_{t \in \mathcal{T}_2 \cap D(0,\sigma), z \in H_{\beta'}}
|v_2^{\tilde{\mathfrak{d}}_{p+1}}(t,z,\epsilon) - v_2^{\tilde{\mathfrak{d}}_{p}}(t,z,\epsilon)| \leq K_{p}
\exp( -\frac{M_p}{|\epsilon|^{(\chi_2 + \alpha)\kappa_2}} ) \label{exp_small_difference_v_dp2}
\end{equation}
for all $\epsilon \in \tilde{\mathcal{E}}_{p+1} \cap \tilde{\mathcal{E}}_{p}$, for all $0 \leq p \leq \varsigma_2-1$.
\end{theo}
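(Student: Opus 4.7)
The plan is to lift the Borel-plane fixed-point solutions provided by Propositions~\ref{prop1518} and~\ref{prop1518b} to actual holomorphic solutions on the sectors of the good coverings via $m_{\kappa_j}$-Laplace transform in $\tau$ and inverse Fourier transform in $m$, and then to undo the rescalings and ansatz substitutions (\ref{e118}), (\ref{e216}), (\ref{e256}) (respectively (\ref{e216b}), (\ref{e256b})) to recover $u_1^{\mathfrak{d}_p}$ and $u_2^{\tilde{\mathfrak{d}}_p}$ in the claimed shape. Concretely, for each $0\le p\le \varsigma_1-1$, the conditions (\ref{root_cond_1_in_defin}), (\ref{root_cond_2_in_defin}) imposed on $\mathfrak{d}_p$ in Definition~\ref{defi2414} are precisely those needed to apply Proposition~\ref{prop1518}, yielding a unique $\omega_{\kappa_1}^{\mathfrak{d}_p}(\tau,m,\epsilon)\in F^{\mathfrak{d}_p}_{(\nu,\beta,\mu,\chi_1,\alpha,\kappa_1,\epsilon)}$ that depends holomorphically on $\epsilon\in D(0,\epsilon_0)\setminus\{0\}$. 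The norm of this space encodes a growth of type $\exp(\nu|\tau/\epsilon^{\chi_1+\alpha}|^{\kappa_1})$ along $S_{\mathfrak{d}_p}$, which is exactly what is required to make the combined Laplace-Fourier integral
$$\mathbb{V}_{1}^{\mathfrak{d}_p}(\mathbb{T},z,\epsilon)=\frac{\kappa_1}{(2\pi)^{1/2}}\int_{-\infty}^{\infty}\int_{L_{\gamma_p}}\omega_{\kappa_1}^{\mathfrak{d}_p}(u,m,\epsilon)e^{-(u/\mathbb{T})^{\kappa_1}}\frac{du}{u}\,e^{izm}dm$$
a bounded holomorphic function of $(\mathbb{T},z,\epsilon)\in S_{\mathfrak{d}_p,\theta_1,\epsilon_0^{\chi_1+\alpha}r_{\mathcal{T}}}\times H_{\beta'}\times\mathcal{E}_p$ for every $0<\beta'<\beta$, by Propositions~\ref{prop462} and~\ref{prop479}. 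Setting $v_1^{\mathfrak{d}_p}(t,z,\epsilon):=\mathbb{V}_{1}^{\mathfrak{d}_p}(\epsilon^{\chi_1+\alpha}t,z,\epsilon)$ and plugging back through (\ref{e216}) and (\ref{e118}) produces $u_1^{\mathfrak{d}_p}$ with exactly the decomposition (\ref{decomp_singular_holom_udp1}). The construction of $u_2^{\tilde{\mathfrak{d}}_p}$ is identical, with Proposition~\ref{prop1518b} in place of Proposition~\ref{prop1518}, $\kappa_2,\chi_2$ in place of $\kappa_1,\chi_1$, and (\ref{e216b}), (\ref{e256b}) in place of (\ref{e216}), (\ref{e256}).

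Next I would check the PDEs (\ref{e85b}) and (\ref{e85c}). The convolution equations (\ref{e315}) and (\ref{e315b}) are, by Proposition~\ref{prop462}, the Borel-Fourier images of the rescaled problems (\ref{e236}) and their second-branch analogue, so $v_j^{\sharp}$ solves (\ref{e234}) (respectively (\ref{e234b})). The reduction to (\ref{e235}) (respectively (\ref{e235b})) was obtained by dividing out $T^{k_{0,1}+\gamma_1}$ (respectively $T^{2k_{0,2}-k_{0,3}+\gamma_2}$), which discards the monomials of strictly lower order in $T$; symmetrically, the reduction to (\ref{e124}) discarded the polynomial-in-$z$ remainder $F(T,z,\epsilon)$ produced by factoring $Q(\partial_z)=\partial_z^{\upsilon}\tilde{Q}(\partial_z)$. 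Collecting both discrepancies produces the forcing terms $F_1(\epsilon^{\alpha}t,\epsilon)$ and $F_2(\epsilon^{\alpha}t,\epsilon)$ of (\ref{e2629}) and (\ref{e235bd}); the size constraints (\ref{e2712}), (\ref{e2712b}) ensure that every monomial in $\epsilon$ appearing there has nonnegative exponent, so $F_1, F_2$ extend holomorphically across $\epsilon=0$ and are bounded on the relevant polydisc.

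The exponential decay estimates (\ref{exp_small_difference_v_dp1}), (\ref{exp_small_difference_v_dp2}) on overlaps would follow from the standard Cauchy-deformation argument. By the uniqueness statement in Proposition~\ref{prop1518}, the two Borel-plane solutions $\omega_{\kappa_1}^{\mathfrak{d}_p}$ and $\omega_{\kappa_1}^{\mathfrak{d}_{p+1}}$ coincide on $\bar{D}(0,\rho)$ and hence analytically continue each other to a common holomorphic function on $(S_{\mathfrak{d}_p}\cup S_{\mathfrak{d}_{p+1}})\cup\bar{D}(0,\rho)$. Cauchy's theorem applied to the Laplace kernel then expresses $\mathbb{V}_{1}^{\mathfrak{d}_{p+1}}-\mathbb{V}_{1}^{\mathfrak{d}_p}$ as an integral of this common function along a small arc on $|\tau|=\rho/2$ joining the two half-lines $L_{\gamma_{p+1}}$, $L_{\gamma_p}$; on that arc the kernel $\exp(-(u/(\epsilon^{\chi_1+\alpha}t))^{\kappa_1})$ is majorized by $\exp(-M_p|\epsilon|^{-(\chi_1+\alpha)\kappa_1})$, yielding (\ref{exp_small_difference_v_dp1}). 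The same argument with $\kappa_2,\chi_2$ gives (\ref{exp_small_difference_v_dp2}).

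I expect the main obstacle to lie in the careful bookkeeping needed to identify the forcing terms $F_1, F_2$ and to verify that constraints (\ref{e2712}), (\ref{e2712b}) really do make them bounded holomorphic near $(T,\epsilon)=(0,0)$ uniformly in $p$: one must track exactly which monomials survive the division by $T^{k_{0,1}+\gamma_1}$ (respectively $T^{2k_{0,2}-k_{0,3}+\gamma_2}$) and which are created by the factorization of $Q(\partial_z)$, so that the two families of actual solutions solve genuinely comparable perturbed equations. Once this accounting is complete, the existence part is delivered by Propositions~\ref{prop1518}, \ref{prop1518b} and the exponentially small gap estimates are routine consequences of the growth control built into $F^{d}_{(\nu,\beta,\mu,\chi_j,\alpha,\kappa_j,\epsilon)}$.
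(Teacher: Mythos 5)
Your overall architecture (Borel--plane fixed points from Propositions~\ref{prop1518} and~\ref{prop1518b}, Laplace--Fourier lift, unwinding of the changes of variables (\ref{e118}), (\ref{e216}), (\ref{e256}) and their second-branch analogues, identification of forcing terms subject to (\ref{e2712}), (\ref{e2712b})) is the same as the paper's. The genuine gap is in your proof of the overlap estimates (\ref{exp_small_difference_v_dp1}), (\ref{exp_small_difference_v_dp2}). You claim that, since $\omega_{\kappa_1}^{\mathfrak{d}_p}$ and $\omega_{\kappa_1}^{\mathfrak{d}_{p+1}}$ agree on $\bar{D}(0,\rho)$, Cauchy's theorem reduces $v_1^{\mathfrak{d}_{p+1}}-v_1^{\mathfrak{d}_p}$ to a single integral over an arc of radius $\rho/2$. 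That deformation is not available: the two Borel solutions share a germ on the disc, but there is no common holomorphic continuation with exponential growth of order $\kappa_1$ in the region between $S_{\mathfrak{d}_p}$ and $S_{\mathfrak{d}_{p+1}}$ outside the disc, so you cannot sweep one half-line onto the other through the unbounded region. The only legitimate deformation is inside the disc, and it leaves \emph{three} terms: the integrals of $\omega_{\kappa_1}^{\mathfrak{d}_{p+1}}$ and $\omega_{\kappa_1}^{\mathfrak{d}_{p}}$ along the truncated rays $[\rho/2,\infty)e^{i\gamma_{p+1}}$ and $[\rho/2,\infty)e^{i\gamma_{p}}$, plus the arc integral of the common germ. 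The unbounded ray contributions do not vanish and must be estimated by playing the norm bound $|\omega_{\kappa_1}^{\mathfrak{d}_p}|\lesssim \exp(\nu|\tau/\epsilon^{\chi_1+\alpha}|^{\kappa_1})$ against the kernel decay $\exp(-\delta_1 r^{\kappa_1}/|\epsilon^{\chi_1+\alpha}t|^{\kappa_1})$; this yields exponential smallness in $1/|\epsilon|^{(\chi_1+\alpha)\kappa_1}$ only when $\delta_1/|t|^{\kappa_1}-\nu\geq\delta_2>0$, i.e.\ after restricting to $t\in\mathcal{T}_1\cap D(0,\sigma)$. This is exactly where the constant $\sigma$ in the statement comes from, and your argument cannot produce it; note also that even your arc term needs this smallness of $|t|$, since on the arc $\omega_{\kappa_1}$ itself is of size $\exp(\nu(\rho/2)^{\kappa_1}/|\epsilon|^{(\chi_1+\alpha)\kappa_1})$ and the kernel must beat it.

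A secondary inaccuracy concerns the origin of $F_1,F_2$. The division by $T^{k_{0,1}+\gamma_1}$ (resp.\ $T^{2k_{0,2}-k_{0,3}+\gamma_2}$) is exact under (\ref{e218}), (\ref{e222}) (resp.\ (\ref{e218b}), (\ref{e222b})) and discards nothing, and the polynomial-in-$z$ remainder of the factorization $Q(\partial_z)=\partial_z^{\upsilon}\tilde{Q}(\partial_z)$ is not what is collected. Rather, $F_1$ consists of the $z$-independent residue left when the singular part $-\frac{a_{0,1}}{a_{0,2}}T^{k_{0,1}-k_{0,2}}(1+\mathcal{J}_1(T))$ is substituted into the $\tilde{Q}$-equation: $\tilde{Q}(\partial_z)$ acts as $\tilde{Q}(0)$ on these terms, the $\epsilon$-independent block cancels by the slow-curve relation (\ref{e162}), and what survives are the coefficients with indices $\ell\geq s_\lambda+1$ (carrying positive powers of $\epsilon$ by (\ref{e138})) together with $\sum_{\ell}\epsilon^{\Delta_\ell+\alpha(\delta_\ell-d_\ell)+\beta}T^{d_\ell}\tilde{R}_{\ell}(0)\partial_T^{\delta_\ell}U_{01}(T)$, as displayed in (\ref{e2629}) and (\ref{e26290}); then (\ref{e2712}) guarantees nonnegative powers of $T$, and applying $\partial_z^{\upsilon}$ removes the $z$-independent $F_1$ so that $u_1^{\mathfrak{d}_p}$ solves (\ref{e85}). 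You correctly flag this bookkeeping as the delicate point, but as described it would lead you to assemble the wrong forcing term.
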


\section{Doubly parametric Gevrey asymptotic expansions of the solutions}

In this section, we first recall some classical facts on $k-$Borel summability of formal series with coefficients in a Banach space as
introduced in \cite{ba}, and a cohomological criterion for $k$-summability of formal power series with coefficients in Banach spaces (see
\cite{ba2}, p. 121 or \cite{hssi}, Lemma XI-2-6) which is known as the Ramis-Sibuya theorem in the literature.

Afterwards, we provide the second main result in the present work, in which we obtain the existence of two formal power series, written in power series of the perturbation parameter, and with coefficients in some Banach space, which turn out to be the common Gevrey asymptotic expansions of the functions $v_1^{\mathfrak{d}_p}(t,z,\epsilon)$ for all $0\le p\le \varsigma_1-1$, and $v_2^{\tilde{\mathfrak{d}}_p}(t,z,\epsilon)$ for all $0\le q\le \varsigma_2-1$, in (\ref{decomp_singular_holom_udp1}) and (\ref{decomp_singular_holom_udp2}), of Gevrey orders $((\chi_1+\alpha)\kappa_1)^{-1}$ and $((\chi_2+\alpha)\kappa_2)^{-1}$ respectively. We recall that both functions determine solutions of the main problem (\ref{e85}), as determined in Theorem~\ref{teo2461}.

\subsection{$k-$Summable formal series and Ramis-Sibuya Theorem}

\begin{defin} Let $k \geq 1$ be an integer. A formal series $\hat{X}(\epsilon) = \sum_{j=0}^{\infty}  a_{j} \epsilon^{j} \in \mathbb{F}[[\epsilon]],$ with coefficients in a Banach space $( \mathbb{F}, ||.||_{\mathbb{F}} )$ is said to be $k-$summable
with respect to $\epsilon$ in the direction $d \in \mathbb{R}$ if \medskip

{\bf i)} there exists $\rho \in \mathbb{R}_{+}$ such that the following formal series, called formal
Borel transform of $\hat{X}$ of order $k$ 
$$ \mathcal{B}_{k}(\hat{X})(\tau) = \sum_{j=0}^{\infty} \frac{ a_{j} \tau^{j}  }{ \Gamma(1 + \frac{j}{k}) } \in \mathbb{F}[[\tau]],$$
is absolutely convergent for $|\tau| < \rho$, \medskip

{\bf ii)} A positive number $\delta $ exists such that the series $\mathcal{B}_{k}(\hat{X})(\tau)$ can be analytically continued with respect to $\tau$ in a sector $S_{d,\delta} = \{ \tau \in \mathbb{C}^{\ast} : |d - \mathrm{arg}(\tau) | < \delta \} $. Moreover, there exist $C >0$, and $K >0$
such that $ ||\mathcal{B}(\hat{X})(\tau)||_{\mathbb{F}}
\leq C e^{ K|\tau|^{k}} $ for all $\tau \in S_{d, \delta}$.
\end{defin}
If the definition above is fulfilled, the vector valued Laplace transform of order $k$ of
$\mathcal{B}_{k}(\hat{X})(\tau)$ in the direction $d$ is set as
$$ \mathcal{L}^{d}_{k}(\mathcal{B}_{k}(\hat{X}))(\epsilon) = \epsilon^{-k} \int_{L_{\gamma}}
\mathcal{B}_{k}(\hat{X})(u) e^{ - ( u/\epsilon )^{k} } ku^{k-1}du,$$
along a half-line $L_{\gamma} = \mathbb{R}_{+}e^{i\gamma} \subset S_{d,\delta} \cup \{ 0 \}$, where $\gamma$ depends on
$\epsilon$ and is chosen in such a way that
$\cos(k(\gamma - \mathrm{arg}(\epsilon))) \geq \delta_{1} > 0$, for some fixed $\delta_{1}$, for all
$\epsilon$ in a sector
$$ S_{d,\theta,R^{1/k}} = \{ \epsilon \in \mathbb{C}^{\ast} : |\epsilon| < R^{1/k} \ \ , \ \ |d - \mathrm{arg}(\epsilon) |
< \theta/2 \},$$
where $\frac{\pi}{k} < \theta < \frac{\pi}{k} + 2\delta$ and $0 < R < \delta_{1}/K$. The
function $\mathcal{L}^{d}_{k}(\mathcal{B}_{k}(\hat{X}))(\epsilon)$
is called the $k-$sum of the formal series $\hat{X}(t)$ in the direction $d$. It is bounded and holomorphic on the sector
$S_{d,\theta,R^{1/k}}$ and has the formal series $\hat{X}(\epsilon)$ as Gevrey asymptotic expansion of order $1/k$ with respect to $\epsilon$ on $S_{d,\theta,R^{1/k}}$. In addition to that, it is unique under such property. More precisely, one has that for all $\frac{\pi}{k} < \theta_{1} < \theta$, there exist $C,M > 0$ such that
$$ ||\mathcal{L}^{d}_{k}(\mathcal{B}_{k}(\hat{X}))(\epsilon) - \sum_{p=0}^{n-1}
a_p \epsilon^{p}||_{\mathbb{F}} \leq CM^{n}\Gamma(1+ \frac{n}{k})|\epsilon|^{n} $$
for all $n \geq 1$, all $\epsilon \in S_{d,\theta_{1},R^{1/k}}$.\medskip

\noindent {\bf Theorem (RS)} {\it Let $(\mathbb{F},||.||_{\mathbb{F}})$ be a Banach space over $\mathbb{C}$ and
$\{ \mathcal{E}_{p} \}_{0 \leq p \leq \varsigma-1}$ be a good covering in $\mathbb{C}^{\ast}$. For all
$0 \leq p \leq \varsigma - 1$, let $G_{p}$ be a holomorphic function from $\mathcal{E}_{p}$ into
the Banach space $(\mathbb{F},||.||_{\mathbb{F}})$ and let the cocycle $\Theta_{p}(\epsilon) = G_{p+1}(\epsilon) - G_{p}(\epsilon)$
be a holomorphic function from the sector $Z_{p} = \mathcal{E}_{p+1} \cap \mathcal{E}_{p}$ into $\mathbb{E}$
(with the convention that $\mathcal{E}_{\varsigma} = \mathcal{E}_{0}$ and $G_{\varsigma} = G_{0}$).
We make the following assumptions.\medskip

\noindent {\bf 1)} The functions $G_{p}(\epsilon)$ are bounded as $\epsilon \in \mathcal{E}_{p}$ tends to the origin
in $\mathbb{C}$, for all $0 \leq p \leq \varsigma - 1$.\medskip

\noindent {\bf 2)} The functions $\Theta_{p}(\epsilon)$ are exponentially flat of order $k$ on $Z_{p}$, for all
$0 \leq p \leq \varsigma-1$. This means that there exist constants $C_{p},A_{p}>0$ such that
$$ ||\Theta_{p}(\epsilon)||_{\mathbb{F}} \leq C_{p}e^{-A_{p}/|\epsilon|^{k}} $$
for all $\epsilon \in Z_{p}$, all $0 \leq p \leq \varsigma-1$.\medskip

Then, for all $0 \leq p \leq \varsigma - 1$, the functions $G_{p}(\epsilon)$ are the $k-$sums on $\mathcal{E}_{p}$ of a
common $k-$summable formal series $\hat{G}(\epsilon) \in \mathbb{F}[[\epsilon]]$.}

\subsection{Parametric double Gevrey asymptotic expansions of the solutions and construction of their associated sum}

In this subsection, we denote $\mathbb{F}_j$ the Banach space of holomorphic and bounded functions on $(\mathcal{T}_j \cap D(0,\sigma)) \times H_{\beta'}$ equipped with supremum norm, where $\sigma>0$ is 
defined in Theorem~\ref{teo2461}, and $0<\beta'<\beta$ is a fixed real number. We preserve the choice of $\mathcal{T}_j$, for $j=1,2$ in Definition~\ref{defi2414}.

The second main result in this work is the following.

\begin{theo}\label{teo3045}
Under the hypotheses of Theorem~\ref{teo2461}, there exist two formal power series 
$$ \hat{v}_1(t,z,\epsilon) = \sum_{m \geq 0} v_{m,1}(t,z) \epsilon^{m}\in\mathbb{F}_1[[\epsilon]],\quad  \hat{v}_2(t,z,\epsilon) = \sum_{m \geq 0} v_{m,2}(t,z) \epsilon^{m}\in\mathbb{F}_2[[\epsilon]],$$
such that the functions $v_1^{\mathfrak{d}_{p}}(t,z,\epsilon)$ (resp. $v_2^{\tilde{\mathfrak{d}}_{p}}(t,z,\epsilon)$) in the decomposition
(\ref{decomp_singular_holom_udp1}) (resp. (\ref{decomp_singular_holom_udp2})) are its $(\chi_1+\alpha)\kappa_1-$sums on the sectors $\mathcal{E}_{p}$, for
all $0 \leq p \leq \varsigma_1-1$, viewed as holomorphic functions from $\mathcal{E}_{p}$ into $\mathbb{F}_1$ (resp. its $(\chi_2+\alpha)\kappa_2-$sums on the sectors $\mathcal{E}_{p}$, for
all $0 \leq p \leq \varsigma_2-1$, viewed as holomorphic functions from $\mathcal{E}_{p}$ into $\mathbb{F}_2$). In other words, there exist two constants $C,M>0$ such that
\begin{equation}\label{e3057}
\sup_{t \in \mathcal{T}_1 \cap D(0,\sigma), z \in H_{\beta'}}
|v_1^{\mathfrak{d}_{p}}(t,z,\epsilon) - \sum_{m=0}^{n-1} v_{m,1}(t,z) \epsilon^m| \leq
CM^{n}\Gamma(1+\frac{n}{(\chi_1 + \alpha)\kappa_1})|\epsilon|^{n}
\end{equation}
for all $n \geq 1$, all $0\le p\le \varsigma_1-1$, and all $\epsilon \in \mathcal{E}_{p}$, and 
\begin{equation}\label{e3058}
\sup_{t \in \mathcal{T}_2 \cap D(0,\sigma), z \in H_{\beta'}}
|v_2^{\tilde{\mathfrak{d}}_{p}}(t,z,\epsilon) - \sum_{m=0}^{n-1} v_{m,2}(t,z) \epsilon^m| \leq
CM^{n}\Gamma(1+\frac{n}{(\chi_2 + \alpha)\kappa_2})|\epsilon|^{n}
\end{equation}
for all $n \geq 1$, all $0\le p\le \varsigma_2-1$, and all $\epsilon \in \tilde{\mathcal{E}}_{p}$.
\end{theo}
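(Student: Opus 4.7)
The plan is to apply the Ramis--Sibuya theorem (Theorem (RS) recalled above) separately to each of the two families of sectorial solutions, with the Banach spaces $\mathbb{F}_1$ and $\mathbb{F}_2$ as target spaces and with the appropriate Gevrey order in each case.

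For the first family, fix $0 \leq p \leq \varsigma_1-1$ and define
$$ G_{p}^{(1)}:\mathcal{E}_{p}\longrightarrow \mathbb{F}_1,\qquad G_{p}^{(1)}(\epsilon)=\bigl(t,z\bigr)\mapsto v_1^{\mathfrak{d}_{p}}(t,z,\epsilon). $$
By Theorem~\ref{teo2461}, each $v_1^{\mathfrak{d}_{p}}(t,z,\epsilon)$ is holomorphic and bounded on $(\mathcal{T}_1\cap D(0,\sigma))\times H_{\beta'}\times \mathcal{E}_{p}$, so $G_{p}^{(1)}$ is a bounded holomorphic map with values in $\mathbb{F}_1$. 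This verifies hypothesis {\bf 1)} of (RS). For the cocycle $\Theta_{p}^{(1)}(\epsilon):=G_{p+1}^{(1)}(\epsilon)-G_{p}^{(1)}(\epsilon)$, defined on $Z_{p}=\mathcal{E}_{p+1}\cap \mathcal{E}_{p}$, the bound (\ref{exp_small_difference_v_dp1}) yields
$$ \|\Theta_{p}^{(1)}(\epsilon)\|_{\mathbb{F}_1}\le K_p\exp\!\left(-\frac{M_p}{|\epsilon|^{(\chi_1+\alpha)\kappa_1}}\right),\qquad \epsilon\in Z_p, $$
which is exponential flatness of order $k=(\chi_1+\alpha)\kappa_1$, so hypothesis {\bf 2)} is satisfied. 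Applying (RS) with this value of $k$ produces a formal series $\hat{v}_1(t,z,\epsilon)=\sum_{m\ge 0}v_{m,1}(t,z)\epsilon^{m}\in\mathbb{F}_1[[\epsilon]]$, $((\chi_1+\alpha)\kappa_1)$-summable in every direction $\mathfrak{d}_{p}$, whose $((\chi_1+\alpha)\kappa_1)$-sums on $\mathcal{E}_p$ are exactly the functions $G_{p}^{(1)}(\epsilon)=v_1^{\mathfrak{d}_{p}}(t,z,\epsilon)$. The Gevrey estimate (\ref{e3057}) is then the standard asymptotic bound attached to $k$-sums recalled before the statement of (RS).

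An identical argument applies verbatim to the second family: put $G_{p}^{(2)}(\epsilon)(t,z):=v_2^{\tilde{\mathfrak{d}}_{p}}(t,z,\epsilon)\in\mathbb{F}_2$ for $0\le p\le \varsigma_2-1$. The boundedness of $v_2^{\tilde{\mathfrak{d}}_{p}}$ on $(\mathcal{T}_2\cap D(0,\sigma))\times H_{\beta'}\times \tilde{\mathcal{E}}_{p}$ provides hypothesis {\bf 1)}, while the exponential decay estimate (\ref{exp_small_difference_v_dp2}) furnishes hypothesis {\bf 2)} with $k=(\chi_2+\alpha)\kappa_2$. Theorem (RS) then yields a formal series $\hat{v}_2(t,z,\epsilon)\in\mathbb{F}_2[[\epsilon]]$ which is $((\chi_2+\alpha)\kappa_2)$-summable in the directions $\tilde{\mathfrak{d}}_p$ with sums equal to $v_2^{\tilde{\mathfrak{d}}_{p}}$, and the Gevrey bound (\ref{e3058}) follows.

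There is essentially no analytical obstacle beyond what has already been established in Theorem~\ref{teo2461}: the delicate parts of the work (constructing the sectorial solutions $v_j^{\mathfrak{d}_{p}}$, $v_j^{\tilde{\mathfrak{d}}_{p}}$ on a good covering and quantifying the exponential decay of the consecutive differences with the correct Gevrey exponents $(\chi_1+\alpha)\kappa_1$ and $(\chi_2+\alpha)\kappa_2$) have been carried out via the fixed point arguments in Sections~\ref{subseccion1} and~4.5 and the Laplace--Fourier analysis in Section~\ref{seccion5}. The only subtle point at this stage is that the two families live in different Banach spaces and require Ramis--Sibuya with two distinct summability indices; but since the coverings $\{\mathcal{E}_p\}$ and $\{\tilde{\mathcal{E}}_p\}$ and the sectors $\mathcal{T}_1,\mathcal{T}_2$ were chosen independently in Definitions~\ref{defingoodcovering} and~\ref{defi2414}, the two applications of (RS) are completely decoupled and the conclusion follows.
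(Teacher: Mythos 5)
Your proposal is correct and follows essentially the same route as the paper: define $G_p(\epsilon)=(t,z)\mapsto v_1^{\mathfrak{d}_p}(t,z,\epsilon)$ (resp. $v_2^{\tilde{\mathfrak{d}}_p}$) as bounded holomorphic $\mathbb{F}_1$- (resp. $\mathbb{F}_2$-) valued maps, invoke the exponential flatness of the cocycle from (\ref{exp_small_difference_v_dp1}) and (\ref{exp_small_difference_v_dp2}), and apply Theorem (RS) with $k=(\chi_1+\alpha)\kappa_1$ and $k=(\chi_2+\alpha)\kappa_2$ respectively. The paper's proof is exactly this argument (written out only for the first family, with the second declared analogous), so no gap remains.
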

\begin{proof} We give the proof for the first family of functions, whereas the proof is analogous for the second family. Let $v_1^{\mathfrak{d}_{p}}(t,z,\epsilon)$, $0 \leq p \leq \varsigma_1-1$ be the functions constructed in Theorem~\ref{teo2461}. For all $0 \leq p \leq \varsigma_1-1$, we define $G_{p}(\epsilon) := (t,z) \mapsto
v_1^{\mathfrak{d}_{p}}(t,z,\epsilon)$, which is by construction a
holomorphic and bounded function from $\mathcal{E}_{p}$ into $\mathbb{F}_1$. 
In view of the estimates (\ref{exp_small_difference_v_dp1}), the cocycle
$\Theta_{p}(\epsilon) = G_{p+1}(\epsilon) - G_{p}(\epsilon)$ is exponentially flat of order
$(\chi_1+\alpha)\kappa_1$ on
$Z_{p} = \mathcal{E}_{p} \cap \mathcal{E}_{p+1}$, for any $0 \leq p \leq \varsigma_1-1$. Therefore, Theorem (RS) guarantees the existence of a formal power series
$$ \hat{G}_1(\epsilon) = \sum_{m \geq 0} v_{m,1}(t,z) \epsilon^{m} =: \hat{v}_1(t,z,\epsilon)
\in \mathbb{F}_1[[\epsilon]]$$
such that the functions $G_{p}(\epsilon)$ are the $(\chi_1 + \alpha)\kappa_1-$sums on $\mathcal{E}_{p}$ of $\hat{G}_1(\epsilon)$ as
$\mathbb{F}_1-$valued functions, for all
$0 \leq p \leq \varsigma_1-1$, in $\mathcal{E}_p$. 
\end{proof}

\textbf{Remark:} It is worth mentioning that the formal power series in $\epsilon$
$$\hat{v}_1(T,z,\epsilon):=(\epsilon^{\alpha}T)^{-\gamma_1}\hat{u}_1(T,z,\epsilon)+\frac{a_{0,1}}{a_{0,2}}(\epsilon^{\alpha}T)^{k_{0,1}-k_{0,2}-\gamma_1}-\frac{a_{0,1}}{a_{0,2}}(\epsilon^{\alpha}T)^{k_{0,1}-k_{0,2}-\gamma_1}\mathcal{J}_1(\epsilon^{\alpha}T)$$
and
$$\hat{v}_2(T,z,\epsilon):=(\epsilon^{\alpha}T)^{-\gamma_1}\hat{u}_1(T,z,\epsilon)+\frac{a_{0,2}}{a_{0,3}}(\epsilon^{\alpha}T)^{k_{0,2}-k_{0,3}-\gamma_2}-\frac{a_{0,2}}{a_{0,3}}(\epsilon^{\alpha}T)^{k_{0,2}-k_{0,3}-\gamma_2}\mathcal{J}_2(\epsilon^{\alpha}T)$$
are formal solutions of (\ref{e2627}) and (\ref{e235bc}), respectively. This stament follows from (\ref{e3057}) and (\ref{e3058}). Indeed, one has
$$\frac{1}{j!}\partial_\epsilon^j(v_1^{\mathfrak{d}_p}(t,z,\epsilon))|_{\epsilon=0}=v_{j,1}(t,z),$$
for all $j\ge 0$ and $0\le p\le \varsigma_1-1$, and also
$$\frac{1}{j!}\partial_\epsilon^j(v_2^{\tilde{\mathfrak{d}}_p}(t,z,\epsilon))|_{\epsilon=0}=v_{j,2}(t,z),$$
for all $j\ge 0$ and $0\le p\le \varsigma_2-1$.

\textbf{Remark:} Concerning the Gevrey orders appearing in the asymptotics, we observe that 
$$(\chi_1+\alpha)\kappa_1\le (\chi_2+\alpha)\kappa_2.$$ 
Indeed, from the definition of $\chi_1$ and $\chi_2$ in (\ref{e256}) and (\ref{e256}) respectively, one has
$$\kappa_1(\chi_1+\alpha)=\kappa_1\frac{\Delta_D+\beta-\alpha k_{0,1}}{d_{D}-\delta_D-k_{0,1}}=\frac{\Delta_D+\beta-\alpha k_{0,1}}{\delta_D},$$
and 
$$\kappa_2(\chi_2+\alpha)=\frac{\Delta_D+\beta-\alpha (2k_{0,2}-k_{0,3})}{\delta_D}.$$ The inequality follows from (\ref{e92}).

We conclude the work with an example. 

\textbf{Example:} \label{example}
We consider the equation
\begin{multline}
Q(\partial_z)((a_{0,1}\epsilon^5t^2+a_{0,2}\epsilon^6t^3)u(t,z,\epsilon)+\epsilon^{14}t^6u^2(t,z,\epsilon)+\epsilon^{11}t^{14}u^3(t,z,\epsilon))\\
=b_{0}(z)\epsilon^{3}t+\epsilon^{10}t^5\partial_tR_1(\partial_z)u(t,z,\epsilon)+ \epsilon^{6}t^6\partial_tR_2(\partial_z)u(t,z,\epsilon).
\end{multline}
Here, $k_{0,1}=2$, $k_{0,2}=6$, $k_{0,3}=14$, $\kappa_1=1$, $\kappa_2=3$, $m_{0,1}=5$, $m_{1,1}=6$, $m_{0,2}=14$, $m_{0,3}=11$, $\alpha=2$, $\beta=-1$, $\Delta_{1}=10$, $\Delta_2=12$, $d_{1}=5$, $\delta_1=1$, $d_{2}=6$, $\delta_2=2$, $b_0=1$, $n_0=3$, $\gamma_1=-2$, $\gamma_2=1$.

The constraints (\ref{e92}), (\ref{e94}), (\ref{e94b}), (\ref{e134}), (\ref{e138}), (\ref{e218}), (\ref{e222}), (\ref{e887}), (\ref{e903}), (\ref{e218b}), (\ref{e222b}), (\ref{e887b}), (\ref{e896b}) are satisfied in the example.

Observe that one can divide every term appearing in the previous equation by $\epsilon^3t$, but still one observes the presence of an irregular singularity at $t=0$ and the appearance of singular operators which are treated in the manner we describe in the present work.

The next sections are included for the sake of completeness, and describe in detail the proofs of the results provided throughout the work. We decided to leave it at the end for a more comprehensive reading.

\section{Proof of Proposition~\ref{prop1} and Proposition~\ref{prop3}}

\textit{Proof of Proposition~\ref{prop1}:}
\begin{proof}
Let us denote
$$A:=\left\|\epsilon^{-\gamma_0}a_{\gamma_1,\kappa}(\tau,m)\tilde{R}(im)\tau^{\kappa}\int_{0}^{\tau^\kappa}(\tau^\kappa-s)^{\gamma_2}s^{\gamma_3}f(s^{1/\kappa},m)ds\right\|_{(\nu,\beta,\mu,\chi,\alpha,\kappa,\epsilon)}.$$
It holds that
\begin{multline}
A=\sup_{\tau\bar{D}(0,\rho)\cup S_d,m\in\R}(1+|m|)^{\mu}\exp(\beta|m|)\frac{1+\left|\frac{\tau}{\epsilon^{\chi+\alpha}}\right|^{2\kappa}}{\left|\frac{\tau}{\epsilon^{\chi+\alpha}}\right|}\exp\left(-\nu\left|\frac{\tau}{\epsilon^{\chi+\alpha}}\right|^{\kappa}\right)|\epsilon|^{-\gamma_0}\frac{1}{(1+|\tau|^{\kappa})^{\gamma_1}}\frac{|\tilde{R}(im)|}{|\tilde{R}_D(im)|}\\
\times\left|\tau^{\kappa}\int_{0}^{\tau^\kappa}\left((1+|m|)^{\mu}\exp(\beta|m|)\frac{1+\frac{|s|^2}{|\epsilon|^{(\chi+\alpha)2\kappa}}}{\left|\frac{\tau}{\epsilon^{\chi+\alpha}}\right|}\exp\left(-\nu\frac{|s|}{|\epsilon|^{(\chi+\alpha)\kappa}}\right)f(s^{1/\kappa},m)\right)\mathcal{A}(\tau,s,m,\epsilon)ds\right|,
\end{multline}
where
$$\mathcal{A}(\tau,s,m,\epsilon)=\frac{1}{(1+|m|)^{\mu}\exp(\beta|m|)}\frac{\exp\left(\nu\frac{|s|}{|\epsilon|^{(\chi+\alpha)\kappa}}\right)}{1+\frac{|s|^2}{|\epsilon|^{(\chi+\alpha)2\kappa}}}\frac{|s|^{1/\kappa}}{|\epsilon|^{\chi+\alpha}}(\tau^{\kappa}-s)^{\gamma_2}s^{\gamma_3}.$$

This last expression yields
$$A\le C_{3.1}(\epsilon)\sup_{m\in\R}\left|\frac{\tilde{R}(im)}{\tilde{R}_{D}(im)}\right|\left\|f(\tau,m)\right\|_{(\nu,\beta,\mu,\chi,\alpha,\kappa,\epsilon)},$$
where
\begin{multline}
C_{3.1}(\epsilon)=\sup_{\tau\bar{D}(0,\rho)\cup S_d,m\in\R}\frac{1+\left|\frac{\tau}{\epsilon^{\chi+\alpha}}\right|^{2\kappa}}{\left|\frac{\tau}{\epsilon^{\chi+\alpha}}\right|}\exp\left(-\nu\left|\frac{\tau}{\epsilon^{\chi+\alpha}}\right|^{\kappa}\right)|\epsilon|^{-\gamma_0}\frac{1}{(1+|\tau|^{\kappa})^{\gamma_1}}\\
\times|\tau|^{\kappa}\int_0^{|\tau|^\kappa}\frac{\exp\left(\nu\frac{h}{|\epsilon|^{(\chi+\alpha)\kappa}}\right)}{1+\frac{h^2}{|\epsilon|^{(\chi+\alpha)2\kappa}}}\frac{h^{1/\kappa}}{|\epsilon|^{\chi+\alpha}}(|\tau|^{\kappa}-h)^{\gamma_2}h^{\gamma_3}dh.
\end{multline}
After the change of variable $h=|\epsilon|^{(\chi+\alpha)\kappa}h'$ in the integral of $C_{3.1}(\epsilon)$ and usual estimates one arrives at
\begin{multline}
C_{3.1}(\epsilon)=\sup_{\tau\bar{D}(0,\rho)\cup S_d,m\in\R}\frac{1+\left|\frac{\tau}{\epsilon^{\chi+\alpha}}\right|^{2\kappa}}{\left|\frac{\tau}{\epsilon^{\chi+\alpha}}\right|}\exp\left(-\nu\left|\frac{\tau}{\epsilon^{\chi+\alpha}}\right|^{\kappa}\right)|\epsilon|^{-\gamma_0}\frac{1}{(1+|\tau|^{\kappa})^{\gamma_1}}\\
\times|\tau|^{\kappa}\int_0^{\frac{|\tau|^\kappa}{|\epsilon|^{(\chi+\alpha)\kappa}}}\frac{\exp\left(\nu h'\right)}{1+(h')^2}(h')^{1/\kappa}\left(\frac{|\tau|^{\kappa}}{|\epsilon|^{(\chi+\alpha)\kappa}}-h'\right)^{\gamma_2}(h')^{\gamma_3}dh'|\epsilon|^{(\chi+\alpha)\kappa(\gamma_2+\gamma_3+1)}\\
\le |\epsilon|^{(\chi+\alpha)\kappa(\gamma_2+\gamma_3+1)-\gamma_0+(\chi+\alpha)\kappa}\sup_{x\ge0}\frac{1+x^2}{x^{1/\kappa}}e^{-\nu x}\frac{x}{(1+|\epsilon|^{(\chi+\alpha)\kappa}x)^{\gamma_1}}(G_1(x)+G_2(x)),
\end{multline}
with 
$$G_1(x)=\int_0^{x/2}\frac{e^{\nu h'}}{1+(h')^2}(h')^{\frac{1}{\kappa}+\gamma_3}(x-h')^{\gamma_2}dh',\quad G_2(x)=\int_{x/2}^{x}\frac{e^{\nu h'}}{1+(h')^2}(h')^{\frac{1}{\kappa}+\gamma_3}(x-h')^{\gamma_2}dh'.$$

The steps on stating upper bounds for $G_1$ and $G_2$ are described in Proposition 1,\cite{lama2}, in detail. For the sake of completeness, we give the detailed proof.

Estimates for $G_1(x)$.

We first consider the case in which $1<\gamma_2<0$. Then, it holds that $(x-h')^{\gamma_2}\le(x/2)^{\gamma_2}$ for $0\le h'\le x/2$, and every $x>0$. The first condition in (\ref{e143}) yields
$$G_1(x)\le \left(\frac{x}{2}\right)^{\gamma_2}e^{\nu x/2}\int_{0}^{x/2}(h')^{\frac{1}{\kappa}+\gamma_3}dh'=\left(\frac{x}{2}\right)^{\gamma_2}e^{\nu x/2}\frac{(x/2)^{\frac{1}{\kappa}+\gamma_3+1}}{\frac{1}{\kappa}+\gamma_3+1},\quad x\ge0.$$
Then, one has
\begin{equation}\label{e198}
\sup_{x\ge0}\frac{1+x^2}{x^{1/\kappa}}e^{-\nu x}\frac{x}{(1 + |\epsilon|^{(\chi + \alpha)\kappa}x)^{\gamma_1}} G_{1}(x)
\leq \sup_{x \geq 0} \frac{1 + x^{2}}{x^{1/\kappa}} e^{-\nu x} x G_{1}(x),
\end{equation}
which is finite.

In the case that $\gamma_2>0$, then we have that $(x-h')^{\gamma_2} \leq x^{\gamma_2}$ for all $0 \leq h' \leq x/2$, for $x>0$. Therefore we get
$$ G_{1}(x) \leq x^{\gamma_2} e^{\nu x/2} \int_{0}^{x/2} (h')^{\frac{1}{\kappa} + \gamma_{3}}
dh' = x^{\gamma_2} e^{\nu x/2}
\frac{ (x/2)^{\frac{1}{\kappa} + \gamma_{3} + 1}}{\frac{1}{\kappa} + \gamma_{3} + 1}  $$
for all $x \geq 0$. We conclude
$$\sup_{x \geq 0} \frac{1 + x^{2}}{x^{1/\kappa}} e^{-\nu x}
\frac{x}{(1 + |\epsilon|^{(\chi + \alpha)\kappa}x)^{\gamma_1}} G_{1}(x)
\leq \sup_{x \geq 0} \frac{1 + x^{2}}{x^{1/\kappa}} e^{-\nu x} x G_{1}(x),$$ 
which is finite.

We study $G_2(x)$.

One has $1 + (h')^2 \geq 1 + (x/2)^{2}$ for all $x/2 \leq h' \leq x$. Hence,
\begin{equation}
G_{2}(x) \leq \frac{1}{1 + (\frac{x}{2})^2}
\int_{x/2}^{x} e^{\nu h'} (h')^{\frac{1}{\kappa} + \gamma_{3}}(x-h')^{\gamma_2} dh' \leq
\frac{1}{1 + (\frac{x}{2})^2}G_{2.1}(x) \label{G2<=G21}
\end{equation}
where
$$ G_{2.1}(x) = \int_{0}^{x} e^{\nu h'} (h')^{\frac{1}{\kappa} + \gamma_{3}}(x-h')^{\gamma_2} dh' $$
for all $x \geq 0$.

The same estimates as in (18) in~\cite{lama2} on Mittag-Leffler function lead to
\begin{equation}\label{e228}
G_{2.1}(x)\le K_{2.1}x^{\frac{1}{\kappa}+\gamma_3}e^{\nu x},\quad x\ge 1.
\end{equation}
We now distinguish two cases: $\gamma_3\le -1$ and $\gamma_3>-1$.

In the first situation, we get
$$\sup_{x \geq 1} \frac{1 + x^{2}}{x^{1/\kappa}} e^{-\nu x}
\frac{x}{(1 + |\epsilon|^{(\chi + \alpha)\kappa}x)^{\gamma_1}} G_{2}(x) \leq
\sup_{x \geq 1} \frac{1 + x^{2}}{1 + (x/2)^2}K_{2.1}x^{1 + \gamma_{3}},$$
and by the change of variable $h'=xu'$ we deduce 
\begin{multline}\label{e238}
\sup_{0 \leq x <1} \frac{1 + x^{2}}{x^{1/\kappa}} e^{-\nu x}
\frac{x}{(1 + |\epsilon|^{(\chi + \alpha)\kappa}x)^{\gamma_1}} G_{2}(x)
\leq \sup_{0 \leq x < 1} \frac{1 + x^2}{1 + (x/2)^2} e^{-\nu x}
\frac{x}{x^{1/\kappa}} x^{\frac{1}{\kappa} + \gamma_{3}+\gamma_{2}+1}\\
\times 
\int_{0}^{1} e^{\nu x u'} (u')^{\frac{1}{\kappa} + \gamma_{3}} (1 - u')^{\gamma_2} du',
\end{multline}
which is finite.

The second situation, i.e. $\gamma_3>-1$ and $\gamma_1\ge \gamma_3+1$ is considered by using that  $1 + |\epsilon|^{(\chi + \alpha)\kappa}x \geq
|\epsilon|^{(\chi + \alpha)\kappa}x$ for all $x \geq 1$, and (\ref{e228}) to check the case in which $x\ge1$. For those $0\le x\le 1$ we proceed as in (\ref{e238}) to conclude the result.

\end{proof}

\textit{Proof of Proposition~\ref{prop3}:}
\begin{proof}
The proof is analogous to that of Proposition 3 in~\cite{ma} and follows the guidelines of Proposition 3 in~\cite{lama1}. For the sake of completeness, we reproduce the proof.

We write
\begin{multline}
B = || \tau^{\kappa - 1} \int_{0}^{\tau^{\kappa}} \int_{-\infty}^{+\infty}
f( (\tau^{\kappa} - s')^{1/\kappa},m-m_{1}) g( (s')^{1/\kappa},m_{1})
\frac{1}{(\tau^{\kappa}-s')s'} ds' dm_{1} ||_{(\nu,\beta,\mu,\chi,\alpha,\kappa,\epsilon)}\\
= \sup_{\tau \in \bar{D}(0,\rho) \cup S_{d},m \in \mathbb{R}}
(1 + |m|)^{\mu} \exp(\beta |m|) \frac{1 + |\frac{\tau}{\epsilon^{\chi + \alpha}}|^{2\kappa}}{
|\frac{\tau}{\epsilon^{\chi + \alpha}}|}
\exp( -\nu |\frac{\tau}{\epsilon^{\chi + \alpha}}|^{\kappa} )\\
\times |\tau^{\kappa-1}\int_{0}^{\tau^{\kappa}}
\int_{-\infty}^{+\infty} \{ (1 + |m-m_{1}|)^{\mu} \exp( \beta |m-m_{1}|)\\
\times
\frac{1 + \frac{|\tau^{\kappa} - s'|^{2}}{|\epsilon|^{(\chi + \alpha)2\kappa}}}{
\frac{|\tau^{\kappa} - s'|^{1/\kappa}}{|\epsilon|^{\chi + \alpha}}}
\exp( -\nu \frac{|\tau^{\kappa} - s'|}{|\epsilon|^{(\chi + \alpha)\kappa}} )
f((\tau^{\kappa} - s')^{1/\kappa},m-m_{1}) \}\\
\times
\{ (1 + |m_{1}|)^{\mu} \exp( \beta |m_{1}| ) \frac{ 1 + \frac{|s'|^{2}}{|\epsilon|^{(\chi+\alpha)2\kappa}} }{
\frac{|s'|^{1/\kappa}}{|\epsilon|^{\chi + \alpha}} }
\exp(-\nu \frac{|s'|}{|\epsilon|^{(\chi + \alpha)\kappa}} ) g((s')^{1/\kappa},m_{1}) \}
\times \mathcal{B}(\tau,s,m,m_{1}) ds' dm_{1} | \label{defin_B}
\end{multline}
where
\begin{multline*}
 \mathcal{B}(\tau,s,m,m_{1}) = \frac{\exp( -\beta |m-m_{1}| ) \exp( -\beta |m_{1}| )}{(1 + |m-m_{1}|)^{\mu}(1 + |m_{1}|)^{\mu}}
 \frac{ \frac{|s'|^{1/\kappa} |\tau^{\kappa} - s'|^{1/\kappa}}{|\epsilon|^{2(\chi + \alpha)}} }{
 (1 + \frac{|\tau^{\kappa} - s'|^{2}}{|\epsilon|^{(\chi + \alpha)2\kappa}})
 (1 + \frac{|s'|^{2}}{|\epsilon|^{(\chi + \alpha)2\kappa}})}\\
 \times \exp( \nu \frac{|\tau^{\kappa} - s'|}{|\epsilon|^{(\chi + \alpha)\kappa}} )
 \exp( \nu \frac{|s'|}{|\epsilon|^{(\chi + \alpha)\kappa}} )
 \frac{1}{(\tau^{\kappa} - s')s'}.
\end{multline*}
We also have
$|m| \leq |m-m_{1}| + |m_{1}|$ for all $m,m_{1} \in \mathbb{R}$, from which we get
\begin{equation}
B \leq C_{4}(\epsilon) ||f(\tau,m)||_{(\nu,\beta,\mu,\chi,\alpha,\kappa,\epsilon)}
||g(\tau,m)||_{(\nu,\beta,\mu,\chi,\alpha,\kappa,\epsilon)} \label{B<=norm_f_times_norm_g} 
\end{equation}
where
\begin{multline*}
C_{4}(\epsilon) = \sup_{\tau \in \bar{D}(0,\rho) \cup S_{d},m \in \mathbb{R}}
(1 + |m|)^{\mu} \frac{1 + |\frac{\tau}{\epsilon^{\chi + \alpha}}|^{2\kappa}}{
|\frac{\tau}{\epsilon^{\chi + \alpha}}|}
\exp( -\nu |\frac{\tau}{\epsilon^{\chi + \alpha}}|^{\kappa} )
|\tau|^{\kappa - 1} \\
\times \int_{0}^{|\tau|^{\kappa}}
\int_{-\infty}^{+\infty}
\frac{1}{(1 + |m-m_{1}|)^{\mu}(1 + |m_{1}|)^{\mu}}
\frac{(h')^{1/\kappa} (|\tau|^{\kappa} - h')^{1/\kappa}}{|\epsilon|^{2(\chi + \alpha)}}
\frac{1}{(1 + \frac{(|\tau|^{\kappa} - h')^{2}}{|\epsilon|^{(\chi + \alpha)2\kappa}})
(1 + \frac{(h')^{2}}{|\epsilon|^{(\chi + \alpha)2\kappa}})}\\
\times 
\exp( \nu \frac{|\tau|^{\kappa} - h'}{|\epsilon|^{(\chi + \alpha)\kappa}} )
\exp( \nu \frac{h'}{|\epsilon|^{(\chi + \alpha)\kappa}} )
\frac{1}{(|\tau|^{\kappa} - h')h'} dh' dm_{1}.
\end{multline*}
We provide upper bounds that can be split in two parts,
\begin{equation}
C_{4}(\epsilon) \leq C_{4.1}C_{4.2}(\epsilon) \label{C3<=C31_C32} 
\end{equation}
where
\begin{equation}
C_{4.1} = \sup_{m \in \mathbb{R}} (1 + |m|)^{\mu}
\int_{-\infty}^{+\infty} \frac{1}{(1 + |m-m_{1}|)^{\mu}(1 + |m_{1}|)^{\mu}} dm_{1} \label{C31_defin}
\end{equation}
is finite under the condition that $\mu > 1$ according to Lemma 4 of \cite{ma2}, and
\begin{multline*}
C_{4.2}(\epsilon) = \sup_{\tau \in \bar{D}(0,\rho) \cup S_{d}}
\frac{1 + |\frac{\tau}{\epsilon^{\chi + \alpha}}|^{2\kappa}}{
|\frac{\tau}{\epsilon^{\chi + \alpha}}|}|\tau|^{\kappa - 1}\\
\times
\int_{0}^{|\tau|^{\kappa}} \frac{
\frac{(h')^{1/\kappa} (|\tau|^{\kappa} - h')^{1/\kappa}}{|\epsilon|^{2(\chi + \alpha)}} }{
(1 + \frac{(|\tau|^{\kappa} - h')^{2}}{|\epsilon|^{(\chi + \alpha)2\kappa}})
(1 + \frac{(h')^{2}}{|\epsilon|^{(\chi + \alpha)2\kappa}})}
\frac{1}{(|\tau|^{\kappa} - h')h'} dh'
\end{multline*}
The change of variable $h' = |\epsilon|^{(\chi + \alpha)\kappa}h$ yields 
\begin{multline}
C_{4.2}(\epsilon) = \sup_{\tau \in \bar{D}(0,\rho) \cup S_{d}}
\frac{1 + |\frac{\tau}{\epsilon^{\chi + \alpha}}|^{2\kappa}}{
|\frac{\tau}{\epsilon^{\chi + \alpha}}|}|\tau|^{\kappa - 1}\\
\times
\int_{0}^{\frac{|\tau|^{\kappa}}{|\epsilon|^{(\chi + \alpha)\kappa}}}
\frac{ h^{1/\kappa} (\frac{|\tau|^{\kappa}}{|\epsilon|^{(\chi + \alpha)\kappa}} - h)^{1/\kappa} }
{ (1 + (\frac{|\tau|^{\kappa}}{|\epsilon|^{(\chi + \alpha)\kappa}} - h)^{2}) (1 + h^2) }
\frac{1}{(\frac{|\tau|^{\kappa}}{|\epsilon|^{(\chi + \alpha)\kappa}} - h)h}
\frac{1}{|\epsilon|^{(\chi + \alpha)\kappa}} dh \leq \frac{1}{|\epsilon|^{\chi + \alpha}}
\sup_{x \geq 0} B(x) \label{C32_bounds}
\end{multline}
where
$$ B(x) = \frac{1+x^2}{x^{2/\kappa}}x \int_{0}^{x}
\frac{h^{1/\kappa}(x-h)^{1/\kappa}}{(1 + (x-h)^2)(1 + h^2)}\frac{1}{(x-h)h}dh.$$
A change of variable $h=xu$ in this last expression followed by a partial fraction decomposition
allow us to write
\begin{multline}
B(x) = (1+x^2) \int_{0}^{1} \frac{1}{(1 + x^{2}(1-u)^{2})(1 + x^{2}u^{2})}
\frac{1}{(1-u)^{1-\frac{1}{\kappa}}u^{1 - \frac{1}{\kappa}}}
du\\
= \frac{1 + x^2}{x^{2}+4} \int_{0}^{1} \frac{3 - 2u}{1 + x^{2}(1-u)^{2}}
\frac{1}{(1-u)^{1 - \frac{1}{\kappa}}u^{1 - \frac{1}{\kappa}}} du
+ \frac{1 + x^2}{x^{2}+4} \int_{0}^{1} \frac{2u+1}{1 + x^{2}u^{2}}
\frac{1}{(1-u)^{1 - \frac{1}{\kappa}}u^{1 - \frac{1}{\kappa}}} du \label{Bx_bounded}
\end{multline}
which acquaints us that $B(x)$ is finite provided that $\kappa \geq 1$ and bounded on
$\mathbb{R}_{+}$ w.r.t $x$.

The estimates in (\ref{defin_B}), (\ref{B<=norm_f_times_norm_g}), (\ref{C3<=C31_C32}),
(\ref{C31_defin}), (\ref{C32_bounds}) and
(\ref{Bx_bounded}) allow us to conclude the result.

\end{proof}

\section{Proof of Lemma~\ref{lema568}}\label{seclem4}

\begin{proof}
Let $\epsilon\in D(0,\epsilon_0)\setminus\{0\}$. We first prove that $\mathcal{H}_{\epsilon}(\bar{B}(0,\varpi))\subseteq\bar{B}(0,\varpi)$, in $F^{d}_{(\nu,\beta,\mu,\chi_1,\alpha,\kappa_1,\epsilon)}$.

We first consider the terms in $\mathcal{H}_\epsilon^1$ in order to give upper estimates.

By Lemma~\ref{lema1}, we have

\begin{equation}\label{e597}
\left\|\tilde{B}_{j}(m)\epsilon^{-\chi_1(b_j-k_{0,1}-\gamma_1)}\frac{\tau^{b_j-k_{0,1}-\gamma_1}}{\tilde{P}_m(\tau)}\right\|_{(\nu,\beta,\mu,\chi_1,\alpha,\kappa_1,\epsilon)}\le\frac{C_2}{C_{\tilde{P}}(r_{\tilde{Q},\tilde{R}_D})^{\frac{1}{\delta_D\kappa_1}}}\frac{\left\|\tilde{B}_j(m)\right\|_{(\beta,\mu)}}{\inf_{m\in\R}|\tilde{R}_{D}(im)|}|\epsilon|^{(b_j-k_{0,1}-\gamma_1)\alpha},
\end{equation}
for some $C_2>0$, depending on $\kappa_1,\gamma_1,k_{0,1}$ and $b_j$, $0\le j\le Q$.

We also make use of the first part of Proposition~\ref{prop1}. There exists a constant $C_3>0$ depending on $\nu,\kappa_1, k_{\ell,1}$ for $0\le \ell\le M_1$ , $k_{\ell,2}$ for $0\le \ell\le M_2$, $k_{\ell,3}$ for $0\le \ell\le M_3$, $\tilde{Q}(X)$ and $\tilde{R}_{D}(X)$ such that
\begin{multline}\label{e626}
\left\|\epsilon^{-\chi_1(k_{\ell,1}-k_{0,1})}\frac{\tilde{Q}(im)}{\tilde{P}_m(\tau)}(\tau^{k_{\ell,1}-k_{0,1}}\star_{\kappa_1}\omega_1)\right\|_{(\nu,\beta,\mu,\chi_1,\alpha,\kappa_1,\epsilon)}\\
\le \frac{C_3}{C_{\tilde{P}}(r_{\tilde{Q},\tilde{R}_D})^{\frac{1}{\delta_D \kappa_1}}}|\epsilon|^{\alpha(k_{\ell,1}-k_{0,1})}\left\|\omega_1\right\|_{(\nu,\beta,\mu,\chi_1,\alpha,\kappa_1,\epsilon)},
\end{multline} 

\begin{multline}\label{e627}
\left\|\epsilon^{-\chi_1(k_{\ell,2}-k_{0,2})}\frac{\tilde{Q}(im)}{\tilde{P}_m(\tau)}(\tau^{k_{\ell,2}-k_{0,2}}\star_{\kappa_1}\omega_1)\right\|_{(\nu,\beta,\mu,\chi_1,\alpha,\kappa_1,\epsilon)}\\
\le \frac{C_3}{C_{\tilde{P}}(r_{\tilde{Q},\tilde{R}_D})^{\frac{1}{\delta_D \kappa_1}}}|\epsilon|^{\alpha(k_{\ell,2}-k_{0,2})}\left\|\omega_1\right\|_{(\nu,\beta,\mu,\chi_1,\alpha,\kappa_1,\epsilon)},
\end{multline}

\begin{multline}\label{e628}
\left\|\epsilon^{-\chi_1(k_{\ell,3}+k_{0,1}-2k_{0,2})}\frac{\tilde{Q}(im)}{\tilde{P}_m(\tau)}(\tau^{k_{\ell,3}+k_{0,1}-2k_{0,2}}\star_{\kappa_1}\omega_1)\right\|_{(\nu,\beta,\mu,\chi_1,\alpha,\kappa_1,\epsilon)}\\
\le \frac{C_3}{C_{\tilde{P}}(r_{\tilde{Q},\tilde{R}_D})^{\frac{1}{\delta_D \kappa_1}}}|\epsilon|^{\alpha(k_{\ell,3}+k_{0,1}-2k_{0,2})}\left\|\omega_1\right\|_{(\nu,\beta,\mu,\chi_1,\alpha,\kappa_1,\epsilon)}.
\end{multline}

Let $j\ge 1$. A constant $C_3(j)>0$, depending on $\nu,\kappa_1$, $k_{\ell,2}$ for $0\le \ell\le M_2$, $\tilde{Q}(X)$, $\tilde{R}_{D}(X)$, exists such that

\begin{multline}\label{e629}
\left\|\epsilon^{-\chi_1(k_{\ell,2}-k_{0,2}+j)}\frac{\tilde{Q}(im)}{\tilde{P}_m(\tau)}(\tau^{k_{\ell,2}-k_{0,2}+j}\star_{\kappa_1}\omega_1)\right\|_{(\nu,\beta,\mu,\chi_1,\alpha,\kappa_1,\epsilon)}\\
\le \frac{C_{3.1}(j)}{C_{\tilde{P}}(r_{\tilde{Q},\tilde{R}_D})^{\frac{1}{\delta_D \kappa_1}}}|\epsilon|^{\alpha(k_{\ell,2}-k_{0,2}+j)}\left\|\omega_1\right\|_{(\nu,\beta,\mu,\chi_1,\alpha,\kappa_1,\epsilon)},
\end{multline}

and

\begin{multline}\label{e630}
\left\|\epsilon^{-\chi_1(k_{\ell,3}+k_{0,1}-2k_{0,2}+j)}\frac{\tilde{Q}(im)}{\tilde{P}_m(\tau)}(\tau^{k_{\ell,3}+k_{0,1}-2k_{0,2}+j}\star_{\kappa_1}\omega_1)\right\|_{(\nu,\beta,\mu,\chi_1,\alpha,\kappa_1,\epsilon)}\\
\le \frac{C_{3.2}(j)}{C_{\tilde{P}}(r_{\tilde{Q},\tilde{R}_D})^{\frac{1}{\delta_D \kappa_1}}}|\epsilon|^{\alpha(k_{\ell,3}+k_{0,1}-2k_{0,2}+j)}\left\|\omega_1\right\|_{(\nu,\beta,\mu,\chi_1,\alpha,\kappa_1,\epsilon)}.
\end{multline}

We now determine the dependence on $j$ of the constants $C_{3.1}(j),C_{3.2}(j)>0$ obtained by the application of Proposition~\ref{prop1}. More precisely, one has
\begin{equation}\label{e633}
C_{3.1}(j)\le\hat{C}_3A_3^j\Gamma\left(\frac{k_{\ell,2}-k_{0,2}+j}{\kappa_1}\right),\qquad j\ge 1,
\end{equation}
and
\begin{equation}\label{e634}
C_{3.2}(j)\le\hat{C}_3A_3^j\Gamma\left(\frac{k_{\ell,3}+k_{0,1}-2k_{0,2}+j}{\kappa_1}\right),\qquad j\ge 1,
\end{equation}
for some $\hat{C}_3,A_3>0$ which do not depend on $j$. The proof of (\ref{e633}) is based on a deeper look at the estimates in the proof of the first part of Proposition~\ref{prop1}. We follow the notations in such proof. We only give detail on the proof of (\ref{e633}), because the proof of (\ref{e634}) is analogous.

\textit{proof of(\ref{e633}):} Let $j\ge1$ such that $k_{\ell,2}-k_{0,2}+j>\kappa_1$. The classical estimates
\begin{equation} 
\sup_{x \geq 0} x^{m_1}e^{-m_{2}x} = (\frac{m_1}{m_2})^{m_1} e^{-m_{1}} \label{e1017}
\end{equation}
for any real numbers $m_{1} \geq 0$, $m_{2} > 0$ yield the following:

\begin{multline*}
\sup_{x \geq 0} \frac{1 + x^2}{x^{1/\kappa_1}} e^{-\nu x} xG_{1}(x) \leq \sup_{x \geq 0} (1 + x^{2})
x^{\frac{k_{\ell,2}-k_{0,2}+j}{\kappa_1}} e^{-\frac{\nu}{2} x}\left(\frac{1}{2}\right)^{1/\kappa_1}\kappa_1 \\
\leq 
\left( (\frac{k_{\ell,2}-k_{0,2}+j}{\kappa_1 \nu/2})^{\frac{k_{\ell,2}-k_{0,2}+j}{\kappa_1}}
\exp( -\frac{k_{\ell,2}-k_{0,2}+j}{\kappa_1} ) \right. \\
\left. + (\frac{\frac{k_{\ell,2}-k_{0,2}+j}{\kappa_1} + 2}{\nu/2})^{\frac{k_{\ell,2}-k_{0,2}+j}{\kappa_1}+2}
\exp( -(\frac{k_{\ell,2}-k_{0,2}+j}{\kappa_1} + 2) ) \right) \left(\frac{1}{2}\right)^{1/\kappa_1}\kappa_1
\end{multline*}
Furthermore, according to the Stirling formula $\Gamma(x) \sim \sqrt{2\pi} x^{x-\frac{1}{2}} e^{-x}$ as $x \rightarrow +\infty$ and bearing in mind the functional relation $\Gamma(x+1) = x\Gamma(x)$ for all
$x > 0$, we get two constants $\check{C}_{2}>0$ and $A_{3}>0$ independent of $j$ such that
\begin{multline}
\sup_{x \geq 0} \frac{1 + x^2}{x^{1/\kappa_1}} e^{-\nu x} xG_{1}(x) \leq \check{C}_{2}
A_{3}^{j}( \Gamma( \frac{k_{\ell,2}-k_{0,2}+j}{\kappa_1} ) + \Gamma( \frac{ k_{\ell,2}-k_{0,2}+j}{\kappa_1} + 2 ) )\\
\leq \check{C}_{2}
A_{3}^{j}\left( \Gamma( \frac{k_{\ell,2}-k_{0,2}+j}{\kappa_1} ) +
(\frac{k_{\ell,2}-k_{0,2}+j}{\kappa_1} + 1)(\frac{k_{\ell,2}-k_{0,2}+j}{\kappa_1})
\Gamma( \frac{k_{\ell,2}-k_{0,2}+j}{\kappa_1} ) \right) \label{supG1<Gammaj}
\end{multline}
On the other hand, by direct inspection, we observe that there exists a constant $\check{C}_{2.1}>0$ (independent of
$j$ and $\epsilon$) such that
\begin{equation}
\sup_{0 \leq x < 1} \frac{1 + x^2}{x^{1/\kappa_1}}
e^{-\nu x} \frac{x}{(1 + |\epsilon|^{(\chi_1 + \alpha)\kappa_1}x)^{\gamma_1}} G_{2}(x) \leq 
\check{C}_{2.1} \label{sup01G2<cst}
\end{equation}
Furthermore, there exists a constant $K_{2.1}(j)$ depending on $\nu,\kappa_1$, $k_{\ell,2}$ for $0 \leq \ell \leq M_2$ and $j$,
such that
\begin{equation}
\sup_{x \geq 1} \frac{1 + x^2}{x^{1/\kappa_1}}
e^{-\nu x} \frac{x}{(1 + |\epsilon|^{(\chi_1 + \alpha)\kappa_1}x)^{\gamma_1}} G_{2}(x) \leq 
\sup_{x \geq 1} \frac{1 + x^2}{1 + (\frac{x}{2})^{2}}K_{2.1}(j) \label{supG2<K21j}
\end{equation}
Regarding the proof of Proposition 1 in~\cite{lama2}, we guarantee the existence of a constant $\check{K}_{2.1}>0$ independent of $j$ such that
\begin{equation}
K_{2.1}(j) \leq \check{K}_{2.1}\Gamma( \frac{k_{\ell,2}-k_{0,2}+j}{\kappa_1} ) \label{K21j<Gammaj}
\end{equation}
for all $j \geq 1$. Finally, gathering (\ref{supG1<Gammaj}), (\ref{sup01G2<cst}), (\ref{supG2<K21j}) and (\ref{K21j<Gammaj}), we conclude (\ref{e633}).

\textit{end of proof of (\ref{e633}).}

In view of Lemma~\ref{lema533} and again by Proposition~\ref{prop1}, we have

\begin{multline}\label{e631}
\left\|\epsilon^{-\chi_1(k_{\ell,3}+k_{0,1}-2k_{0,2})}\frac{\tilde{Q}(im)}{\tilde{P}_{m}(\tau)}\left[\tau^{k_{\ell,3}+k_{0,1}-2k_{0,2}+j}\star_{\kappa_1}\mathcal{B}_{\kappa_1}J_1(\tau,\epsilon)\star_{\kappa_1}\mathcal{B}_{\kappa_1}J_1(\tau,\epsilon)\star_{\kappa_1}\omega_1\right]\right\|_{(\nu,\beta,\mu,\chi_1,\alpha,\kappa_1,\epsilon)}\hfill\\
=\left\|\epsilon^{-\chi_1(k_{\ell,3}+k_{0,1}-2k_{0,2})}\frac{\tilde{Q}(im)}{\tilde{P}_{m}(\tau)}\left[\tau^{k_{\ell,3}+k_{0,1}-2k_{0,2}+j}\star_{\kappa_1}\mathcal{B}_{\kappa_1}\tilde{J}_1(\tau,\epsilon)\star_{\kappa_1}\omega_1\right]\right\|_{(\nu,\beta,\mu,\chi_1,\alpha,\kappa_1,\epsilon)}\hfill\\
\le \frac{C_{3.3}(j)}{C_{\tilde{P}}(r_{\tilde{Q},\tilde{R}_D})^{\frac{1}{\delta_D \kappa_1}}}|\epsilon|^{\alpha(k_{\ell,3}+k_{0,1}-2k_{0,2}+j)}\left\|\omega_1\right\|_{(\nu,\beta,\mu,\chi_1,\alpha,\kappa_1,\epsilon)},\hfill
\end{multline}
where
\begin{equation}\label{e677}
C_{3.3}(j)\le\hat{C}_3A_3^j\Gamma\left(\frac{k_{\ell,3}+k_{0,1}-2k_{0,2}+j}{\kappa_1}\right),\qquad j\ge 1.
\end{equation}
This last estimates for $C_{3.3}$ are obtained in the same manner as those in (\ref{e633}).

We choose large enough $r_{\tilde{Q},\tilde{R}_D}>0$ and $\varpi>0$ such that

\begin{multline}\label{e685}
\sum_{j=0}^{Q}|\epsilon_0|^{n_j-\alpha b_j+\alpha(b_j-k_{0,1}-\gamma_1)}\frac{C_2}{C_{\tilde{P}}(r_{\tilde{Q},\tilde{R}_D})^{\frac{1}{\delta_D\kappa_1}}}\frac{\left\|\tilde{B}_j(m)\right\|_{(\beta,\mu)}}{\inf_{m\in\R}|\tilde{R}_{D}(im)|}\\
+\sum_{\ell=1}^{s_1}\frac{|a_{\ell,1}|}{\Gamma\left(\frac{k_{\ell,1}-k_{0,1}}{\kappa_1}\right)}|\epsilon_0|^{\alpha(k_{\ell,1}-k_{0,1})}\frac{C_3}{C_{\tilde{P}}(r_{\tilde{Q},\tilde{R}_D})^{\frac{1}{\delta_D\kappa_1}}}\varpi\hfill\\
\hfill+\sum_{\ell=s_1+1}^{M_1}\frac{|a_{\ell,1}|}{\Gamma\left(\frac{k_{\ell,1}-k_{0,1}}{\kappa_1}\right)}|\epsilon_0|^{m_{\ell,1}+\beta-\alpha k_{\ell,1}+\alpha(k_{\ell,1}-k_{0,1})}\frac{C_3}{C_{\tilde{P}}(r_{\tilde{Q},\tilde{R}_D})^{\frac{1}{\delta_D\kappa_1}}}\varpi\\
+\sum_{\ell=1}^{s_2}\frac{2|a_{\ell,2}a_{0,1}|}{|a_{0,2}|}\frac{|\epsilon_0|^{\alpha(k_{\ell,2}-k_{0,2})}}{\Gamma\left(\frac{k_{\ell,2}-k_{0,2}}{\kappa_1}\right)}\frac{C_3}{C_{\tilde{P}}(r_{\tilde{Q},\tilde{R}_D})^{\frac{1}{\delta_D\kappa_1}}}\varpi\hfill\\
\hfill+\sum_{\ell=s_2+1}^{M_2}\frac{2|a_{\ell,2}a_{0,1}|}{|a_{0,2}|}\frac{|\epsilon_0|^{m_{\ell,2}+2\beta-\alpha k_{\ell,2}+\alpha(k_{\ell,2}-k_{0,2})}}{\Gamma\left(\frac{k_{\ell,2}-k_{0,2}}{\kappa_1}\right)}\frac{C_3}{C_{\tilde{P}}(r_{\tilde{Q},\tilde{R}_D})^{\frac{1}{\delta_D\kappa_1}}}\varpi\\
+\sum_{\ell=0}^{s_2}\frac{2|a_{\ell,2}a_{0,1}|}{|a_{0,2}|}\sum_{j\ge1}|J_j|\frac{|\epsilon_0|^{\alpha(k_{\ell,2}-k_{0,2}+j)}}{\Gamma\left(\frac{k_{\ell,2}-k_{0,2}}{\kappa_1}\right)}\frac{\hat{C}_3A_3^j}{C_{\tilde{P}}(r_{\tilde{Q},\tilde{R}_D})^{\frac{1}{\delta_D\kappa_1}}}\varpi\hfill\\
\hfill+\sum_{\ell=s_2+1}^{M_2}\frac{2|a_{\ell,2}a_{0,1}|}{|a_{0,2}|}\sum_{j\ge1}|J_j|\frac{|\epsilon_0|^{m_{\ell,2}+2\beta-\alpha k_{\ell,2}+\alpha(k_{\ell,2}-k_{0,2}+j)}}{\Gamma\left(\frac{k_{\ell,2}-k_{0,2}}{\kappa_1}\right)}\frac{\hat{C}_3A_3^j}{C_{\tilde{P}}(r_{\tilde{Q},\tilde{R}_D})^{\frac{1}{\delta_D\kappa_1}}}\varpi\\
+\sum_{\ell=0}^{s_3}\frac{3|a_{0,1}|^2|a_{\ell,3}|}{|a_{0,2}|^2}\frac{|\epsilon_0|^{\alpha(k_{\ell,3}+k_{0,1}-2k_{0,2})}}{\Gamma\left(\frac{k_{\ell,3}+k_{0,1}-2k_{0,2}}{\kappa_1}\right)}\frac{C_3}{C_{\tilde{P}}(r_{\tilde{Q},\tilde{R}_D})^{\frac{1}{\delta_D\kappa_1}}}\varpi\hfill\\
\hfill+\sum_{\ell=0}^{s_3}\frac{6|a_{0,1}|^2|a_{\ell,3}|}{|a_{0,2}|^2}\sum_{j\ge1}|J_j|\frac{|\epsilon_0|^{\alpha(k_{\ell,3}+k_{0,1}-2k_{0,2}+j)}}{\Gamma\left(\frac{k_{\ell,3}+k_{0,1}-2k_{0,2}}{\kappa_1}\right)}\frac{\hat{C}_3A_3^j}{C_{\tilde{P}}(r_{\tilde{Q},\tilde{R}_D})^{\frac{1}{\delta_D\kappa_1}}}\varpi\\
+\sum_{\ell=0}^{s_3}\frac{3|a_{0,1}|^2|a_{\ell,3}|}{|a_{0,2}|^2}\sum_{j\ge1}|\tilde{J}_j|\frac{|\epsilon_0|^{\alpha(k_{\ell,3}+k_{0,1}-2k_{0,2}+j)}}{\Gamma\left(\frac{k_{\ell,3}+k_{0,1}-2k_{0,2}}{\kappa_1}\right)}\frac{\hat{C}_3A_3^j}{C_{\tilde{P}}(r_{\tilde{Q},\tilde{R}_D})^{\frac{1}{\delta_D\kappa_1}}}\varpi\hfill\\
\hfill+\sum_{\ell=s_3+1}^{M_3}\frac{3|a_{0,1}|^2|a_{\ell,3}|}{|a_{0,2}|^2}\frac{|\epsilon_0|^{m_{\ell,3}+3\beta-\alpha k_{\ell,3}+\alpha(k_{\ell,3}+k_{0,1}-2k_{0,2})}}{\Gamma\left(\frac{k_{\ell,3}+k_{0,1}-2k_{0,2}}{\kappa_1}\right)}\frac{C_3}{C_{\tilde{P}}(r_{\tilde{Q},\tilde{R}_D})^{\frac{1}{\delta_D\kappa_1}}}\varpi\\
+\sum_{\ell=s_3+1}^{M_3}\frac{6|a_{0,1}|^2|a_{\ell,3}|}{|a_{0,2}|^2}\sum_{j\ge1}|J_j|\frac{|\epsilon_0|^{m_{\ell,3}+3\beta-\alpha k_{\ell,3}+\alpha(k_{\ell,3}+k_{0,1}-2k_{0,2}+j)}}{\Gamma\left(\frac{k_{\ell,3}+k_{0,1}-2k_{0,2}}{\kappa_1}\right)}\frac{\hat{C}_3A_3^j}{C_{\tilde{P}}(r_{\tilde{Q},\tilde{R}_D})^{\frac{1}{\delta_D\kappa_1}}}\varpi\hfill\\
\hfill+\sum_{\ell=s_3+1}^{M_3}\frac{3|a_{0,1}|^2|a_{\ell,3}|}{|a_{0,2}|^2}\sum_{j\ge1}|\tilde{J}_j|\frac{|\epsilon_0|^{m_{\ell,3}+3\beta-\alpha k_{\ell,3}+\alpha(k_{\ell,3}+k_{0,1}-2k_{0,2}+j)}}{\Gamma\left(\frac{k_{\ell,3}+k_{0,1}-2k_{0,2}}{\kappa_1}\right)}\frac{\hat{C}_3A_3^j}{C_{\tilde{P}}(r_{\tilde{Q},\tilde{R}_D})^{\frac{1}{\delta_D\kappa_1}}}\varpi\le \frac{\varpi}{4}.
\end{multline}

Observe that the convergence of the series in $j$ appearing in the previous expression converge provided that $|\epsilon_0|$ is small enough, according to the fact that $J_1$ and $\tilde{J}_1$ are convergent series in a neighborhood of the origin, which yields $|J_j|\le C_J(A_J)^j,$ and $|\tilde{J}_j|\le C_J(A_J)^j$, for some $C_J,A_J>0$.

After the choice in (\ref{e685}), one can apply (\ref{e597}), (\ref{e626}), (\ref{e627}), (\ref{e628}), (\ref{e629}), (\ref{e630}) and (\ref{e631}), together with (\ref{e633}), (\ref{e634}) and (\ref{e677}) to deduce that
\begin{equation}\label{e704} 
\left\|\mathcal{H}^1_\epsilon(\omega_1)\right\|_{(\nu,\beta,\mu,\chi_1,\alpha,\kappa_1,\epsilon)}\le \frac{\varpi}{4}.
\end{equation}

We now give upper bounds associated to $\mathcal{H}_\epsilon^2$.

We define 
\begin{equation}\label{e714}
h_1(\tau,m)=\tau^{\kappa_1-1}\int_{0}^{\tau^{\kappa_1}}\int_{-\infty}^{\infty}\omega_1((\tau^{\kappa_1}-s')^{1/\kappa_1},m-m_1)\omega_1((s')^{1/\kappa_1},m_1)\frac{1}{(\tau^{\kappa_1}-s')s'}ds'dm_1.
\end{equation}

Observe that 
$$\omega_1(\tau,m)\star_{\kappa_1}^E\omega_1(\tau,m)=\tau h_1(\tau,m).$$

In view of Proposition~\ref{prop3}, there exists $C_4>0$, depending on $\mu$ and $\kappa_1$, such that 
\begin{equation}\label{e721}
\left\|h_1(\tau,\epsilon)\right\|_{(\nu,\beta,\mu,\chi_1,\alpha,\kappa_1,\epsilon)}\le\frac{C_4}{|\epsilon|^{\chi_1+\alpha}}\left\|\omega_1\right\|^2_{(\nu,\beta,\mu,\chi_1,\alpha,\kappa_1,\epsilon)}.
\end{equation}

We apply Proposition~\ref{prop1}. 2) to get the existence of $C'_2>0$, depending on $\nu,\kappa_1,d_{\ell},\delta_\ell,k_{0,1},\delta_{D}$, $\tilde{R}_\ell(X)$, for $1\le \ell\le D$, such that
\begin{multline}\label{e726}
\left\|\epsilon^{-\chi_1(d_\ell-k_{0,1}-\delta_\ell)}\frac{\tilde{R}_{\ell}(im)}{\tilde{P}_{m}(\tau)}\left[\tau^{d_{\ell,q_1,q_2}}\star_{\kappa_1}(\tau^{\kappa_1 q_2}\omega_1)\right]\right\|_{(\nu,\beta,\mu,\chi_1,\alpha,\kappa_1,\epsilon)}\\
\le \frac{C'_3}{C_{\tilde{P}}(r_{\tilde{Q},\tilde{R}_D})^{\frac{1}{\delta_D\kappa_1}}}|\epsilon|^{(\chi_1+\alpha)\kappa_1\left(\frac{d_{\ell,q_1,q_2}}{\kappa_1}+q_2-\delta_D+\frac{1}{\kappa_1}\right)-\chi_1(d_{\ell}-k_{0,1}-\delta_\ell)}\left\|\omega_1\right\|_{(\nu,\beta,\mu,\chi_1,\alpha,\kappa_1,\epsilon)},
\end{multline}
for every $q_1\ge0$, $q_2\ge 1$ such that $q_1+q_2=\delta_\ell$, and also
\begin{multline}\label{e727}
\left\|\epsilon^{-\chi_1(d_\ell-k_{0,1}-\delta_\ell)}\frac{\tilde{R}_{\ell}(im)}{\tilde{P}_{m}(\tau)}\left[\tau^{d_{\ell,q_1,q_2}+\kappa_1(q_2-p)}\star_{\kappa_1}(\tau^{\kappa_1 p}\omega_1)\right]\right\|_{(\nu,\beta,\mu,\chi_1,\alpha,\kappa_1,\epsilon)}\\
\le \frac{C'_3}{C_{\tilde{P}}(r_{\tilde{Q},\tilde{R}_D})^{\frac{1}{\delta_D\kappa_1}}}|\epsilon|^{(\chi_1+\alpha)\kappa_1\left(\frac{d_{\ell,q_1,q_2}}{\kappa_1}+q_2-\delta_D+\frac{1}{\kappa_1}\right)-\chi_1(d_{\ell}-k_{0,1}-\delta_\ell)}\left\|\omega_1\right\|_{(\nu,\beta,\mu,\chi_1,\alpha,\kappa_1,\epsilon)},
\end{multline}
for every $1\le p\le q_2-1$.

We apply (\ref{e721}), and Proposition~\ref{prop1}.2) to guarantee the existence of $C'_3>0$, depending on $\nu,\kappa_1,\gamma_1,\delta_{D},k_{0,1}$, and $k_{\ell,2}$ for $0\le \ell\le M_2$ such that

\begin{multline}\label{e728}
\left\|\epsilon^{-\chi_1(k_{\ell,2}+\gamma_1-k_{0,1})}\frac{\tilde{Q}(im)}{\tilde{P}_m(\tau)}\left[\tau^{k_{\ell,2}+\gamma_1-k_{0,1}}\star_{\kappa_1}(\tau h_1(\tau,m))\right]\right\|_{(\nu,\beta,\mu,\chi_1,\alpha,\kappa_1,\epsilon)}\\
=\left\|\epsilon^{-\chi_1(k_{\ell,2}+\gamma_1-k_{0,1})}\frac{\tilde{Q}(im)}{\tilde{P}_m(\tau)}\tau^{\kappa_1}\int_{0}^{\tau^{\kappa_1}}(\tau^{\kappa_1}-s)^{\frac{k_{\ell,2}+\gamma_1-k_{0,1}}{\kappa_1}-1}s^{\frac{1}{\kappa_1}-1} h(s^{1/\kappa_1},m)ds\right\|_{(\nu,\beta,\mu,\chi_1,\alpha,\kappa_1,\epsilon)}\\
\le \frac{C'_3}{C_{\tilde{P}}(r_{\tilde{Q},\tilde{R}_D})^{\frac{1}{\delta_D\kappa_1}}}|\epsilon|^{(\chi_1+\alpha)\kappa_1\left(\frac{k_{\ell,2}+\gamma_1-k_{0,1}}{\kappa_1}+\frac{1}{\kappa_1}\right)-\chi_1(k_{\ell,2}+\gamma_1-k_{0,1})-(\chi_1+\alpha)\kappa_1(\delta_D-\frac{1}{\kappa_1})}\left\|h_1(\tau,\epsilon)\right\|_{(\nu,\beta,\mu,\chi_1,\alpha,\kappa_1,\epsilon)}\\
\le \frac{C_4C'_3}{C_{\tilde{P}}(r_{\tilde{Q},\tilde{R}_D})^{\frac{1}{\delta_D\kappa_1}}}|\epsilon|^{(\chi_1+\alpha)(k_{\ell,2}+\gamma_1-k_{0,1}-\kappa_1\delta_D+1)-\chi_1(k_{\ell,2}+\gamma_1-k_{0,1})}\left\|\omega_1(\tau,\epsilon)\right\|^2_{(\nu,\beta,\mu,\chi_1,\alpha,\kappa_1,\epsilon)}.
\end{multline} 

The same arguments as above follow to get 

\begin{multline}\label{e729}
\left\|\epsilon^{-\chi_1(k_{\ell,3}+\gamma_1-k_{0,2})}\frac{\tilde{Q}(im)}{\tilde{P}_m(\tau)}\left[\tau^{k_{\ell,3}+\gamma_1-k_{0,2}}\star_{\kappa_1}(\tau h_1(\tau,m))\right]\right\|_{(\nu,\beta,\mu,\chi_1,\alpha,\kappa_1,\epsilon)}\\
\le \frac{C_4C'_3}{C_{\tilde{P}}(r_{\tilde{Q},\tilde{R}_D})^{\frac{1}{\delta_D\kappa_1}}}|\epsilon|^{(\chi_1+\alpha)(k_{\ell,3}+\gamma_1-k_{0,2}-\kappa_1\delta_D+1)-\chi_1(k_{\ell,3}+\gamma_1-k_{0,2})}\left\|\omega_1(\tau,\epsilon)\right\|^2_{(\nu,\beta,\mu,\chi_1,\alpha,\kappa_1,\epsilon)},
\end{multline} 

and

\begin{multline}\label{e730}
\left\|\epsilon^{-\chi_1(k_{\ell,3}+\gamma_1-k_{0,2}+j)}\frac{\tilde{Q}(im)}{\tilde{P}_m(\tau)}\left[\tau^{k_{\ell,3}+\gamma_1-k_{0,2}+j}\star_{\kappa_1}(\tau h_1(\tau,m))\right]\right\|_{(\nu,\beta,\mu,\chi_1,\alpha,\kappa_1,\epsilon)}\\
\le \frac{C_4C'_3(j)}{C_{\tilde{P}}(r_{\tilde{Q},\tilde{R}_D})^{\frac{1}{\delta_D\kappa_1}}}|\epsilon|^{(\chi_1+\alpha)(k_{\ell,3}+\gamma_1-k_{0,2}+j-\kappa_1\delta_D+1)-\chi_1(k_{\ell,3}+\gamma_1-k_{0,2}+j)}\left\|\omega_1(\tau,\epsilon)\right\|^2_{(\nu,\beta,\mu,\chi_1,\alpha,\kappa_1,\epsilon)},
\end{multline} 
for every $j\ge1$, where 
\begin{equation}\label{e731}
C'_3(j)\le \hat{C}_3A_3^j\Gamma\left(\frac{k_{\ell,3}+\gamma_1-k_{0,2}}{\kappa_1}\right) \quad j\ge1.
\end{equation}
The proof of such dependence on $j$ is proved in an analogous way as for that of (\ref{e633}).

One can choose $r_{\tilde{Q},\tilde{R}_D}>0$ and $\varpi>0$ such that the following condition holds:

\begin{multline}\label{e766}
\sum_{\ell=1}^{D-1}|\epsilon_0|^{\Delta_{\ell}+\alpha(\delta_{\ell}-d_{\ell})+\beta}\left[\prod_{d=0}^{\delta_\ell-1}|\gamma_1-d|\frac{C_3}{C_{\tilde{P}}(r_{\tilde{Q},\tilde{R}_D})^{\frac{1}{\delta_D\kappa_1}}\Gamma\left(\frac{d_{\ell,\delta_\ell,0}}{\kappa_1}\right)}|\epsilon_0|^{\alpha(d_\ell-k_{0,1}-\delta_\ell)}\varpi\right.\\
+\sum_{q_1+q_2=\delta_\ell,q_2\ge1}\frac{\delta_\ell!}{q_1!q_2!}\prod_{d=0}^{q_1-1}|\gamma_1-d|\left(\frac{C'_3\kappa_1^{q_2}}{C_{\tilde{P}}(r_{\tilde{Q},\tilde{R}_D})^{\frac{1}{\delta_D\kappa_1}}\Gamma\left(\frac{d_{\ell,q_1,q_2}}{\kappa_1}\right)}\right.\hfill\\
\hfill\times |\epsilon_0|^{(\chi_1+\alpha)\kappa_1\left(\frac{d_{\ell,q_1,q_2}}{\kappa_1}+q_2-\delta_D+\frac{1}{\kappa_1}\right)-\chi_1(d_\ell-k_{0,1}-\delta_\ell)}\varpi\\
+\sum_{1\le p\le q_2-1}|A_{q_2,p}|\frac{C'_3\kappa_1^p}{C_{\tilde{P}}(r_{\tilde{Q},\tilde{R}_D})^{\frac{1}{\delta_D\kappa_1}}\Gamma\left(\frac{d_{\ell,q_1,q_2}}{\kappa_1}+q_2-p\right)}\hfill\\
\hfill\left.\left. \times |\epsilon_0|^{(\chi_1+\alpha)\kappa_1\left(\frac{d_{\ell,q_1,q_2}}{\kappa_1}+q_2-\delta_D+\frac{1}{\kappa_1}\right)-\chi_1(d_\ell-k_{0,1}-\delta_\ell)}\varpi   \right)    \right]\\
+\sum_{\ell=0}^{s_2}\frac{|a_{\ell,2}|}{\Gamma\left(\frac{k_{\ell,2}+\gamma_1-k_{0,1}}{\kappa_1}\right)}\frac{C_4C'_3}{C_{\tilde{P}}(r_{\tilde{Q},\tilde{R}_D})^{\frac{1}{\delta_D\kappa_1}}}|\epsilon_0|^{(\chi_1+\alpha)(k_{\ell,2}+\gamma_1-k_{0,1}-\kappa_1\delta_D+1)-\chi_1(k_{\ell,2}+\gamma_1-k_{0,1})}\varpi^2\hfill\\
\hfill+\sum_{\ell=s_2+1}^{M_2}\frac{|a_{\ell,2}|}{\Gamma\left(\frac{k_{\ell,2}+\gamma_1-k_{0,1}}{\kappa_1}\right)}\frac{C_4C'_3}{C_{\tilde{P}}(r_{\tilde{Q},\tilde{R}_D})^{\frac{1}{\delta_D\kappa_1}}}|\epsilon_0|^{m_{\ell,2}+2\beta-\alpha k_{\ell,2}+(\chi_1+\alpha)(k_{\ell,2}+\gamma_1-k_{0,1}-\kappa_1\delta_D+1)-\chi_1(k_{\ell,2}+\gamma_1-k_{0,1})}\varpi^2\\
+\sum_{\ell=0}^{s_3}\frac{3|a_{\ell,3}a_{0,1}|}{|a_{0,2}|}\frac{C_4C'_3}{\Gamma\left(\frac{k_{\ell,3}+\gamma_1-k_{0,2}}{\kappa_1}\right)}\frac{|\epsilon_0|^{(\chi_1+\alpha)(k_{\ell,3}+\gamma_1-k_{0,2}-\kappa_1\delta_D+1)-\chi_1(k_{\ell,3}+\gamma_1-k_{0,2})}}{C_{\tilde{P}}(r_{\tilde{Q},\tilde{R}_D})^{\frac{1}{\delta_D\kappa_1}}}\varpi^2\hfill\\
\hfill+\sum_{\ell=s_3+1}^{M_3}\frac{3|a_{\ell,3}a_{0,1}|}{|a_{0,2}|}\frac{C_4C'_3}{\Gamma\left(\frac{k_{\ell,3}+\gamma_1-k_{0,2}}{\kappa_1}\right)}\frac{|\epsilon_0|^{m_{\ell,3}+3\beta-\alpha k_{\ell,3}+(\chi_1+\alpha)(k_{\ell,3}+\gamma_1-k_{0,2}-\kappa_1\delta_D+1)-\chi_1(k_{\ell,3}+\gamma_1-k_{0,2})}}{C_{\tilde{P}}(r_{\tilde{Q},\tilde{R}_D})^{\frac{1}{\delta_D\kappa_1}}}\varpi^2\\
+\sum_{\ell=0}^{s_3}\frac{3|a_{\ell,3}a_{0,1}|}{|a_{0,2}|}\sum_{j\ge1}|J_j|\frac{|\epsilon_0|^{(\chi_1+\alpha)(k_{\ell,3}+\gamma_1-k_{0,2}+j-\kappa_1\delta_D+1)-\chi_1(k_{\ell,3}+\gamma_1-k_{0,2}+j)}}{\Gamma\left(\frac{k_{\ell,3}+\gamma_1-k_{0,2}}{\kappa_1}\right)}\frac{C_4\hat{C}_3A_3^j}{C_{\tilde{P}}(r_{\tilde{Q},\tilde{R}_D})^{\frac{1}{\delta_D\kappa_1}}}\varpi^2\\
+\sum_{\ell=0}^{s_3}\frac{3|a_{\ell,3}a_{0,1}|}{|a_{0,2}|}\sum_{j\ge1}|J_j|\frac{|\epsilon_0|^{m_{\ell,3}+3\beta-\alpha k_{\ell,3}+(\chi_1+\alpha)(k_{\ell,3}+\gamma_1-k_{0,2}+j-\kappa_1\delta_D+1)-\chi_1(k_{\ell,3}+\gamma_1-k_{0,2}+j)}}{\Gamma\left(\frac{k_{\ell,3}+\gamma_1-k_{0,2}}{\kappa_1}\right)}\frac{C_4\hat{C}_3A_3^j}{C_{\tilde{P}}(r_{\tilde{Q},\tilde{R}_D})^{\frac{1}{\delta_D\kappa_1}}}\varpi^2\\
\le\frac{\varpi}{4}\hfill
\end{multline}

The choice in (\ref{e766}) allows to guarantee from (\ref{e726}), (\ref{e727}), (\ref{e728}), (\ref{e729}), (\ref{e730}), together with (\ref{e731}), that
\begin{equation}\label{e784}
\left\|\mathcal{H}_\epsilon^2(\omega_1)\right\|_{(\nu,\beta,\mu,\chi_1,\alpha,\kappa_1,\epsilon)}\le \frac{\varpi}{4}.
\end{equation}

We now give upper estimates for $\mathcal{H}_{\epsilon}^3(\omega_1(\tau,m))$.

Proposition~\ref{prop1}.1) yields 
\begin{multline}\label{e793}
\left\|\epsilon^{-\chi_1(d_{D}-k_{0,1}-\delta_D)}\frac{\tilde{R}_{D}(im)}{\tilde{P}_m(\tau)}\left[\tau^{d_{D,\delta_D,0}}\star_{\kappa_1}\omega_1\right]\right\|_{(\nu,\beta,\mu,\chi_1,\alpha,\kappa_1,\epsilon)}\\
\le\frac{C_3}{C_{\tilde{P}}(r_{\tilde{Q},\tilde{R}_D})^{\frac{1}{\delta_D\kappa_1}}}|\epsilon_0|^{(\chi_1+\alpha)d_{d_{D},\delta_D,0}-\chi_1(d_D-k_{0,1}-\delta_D)}\left\|\omega_1\right\|_{(\nu,\beta,\mu,\chi_1,\alpha,\kappa_1,\epsilon)}\\
=\frac{C_3}{C_{\tilde{P}}(r_{\tilde{Q},\tilde{R}_D})^{\frac{1}{\delta_D\kappa_1}}}|\epsilon_0|^{\alpha(d_{D}-k_{0,1}-\delta_D)}\left\|\omega_1\right\|_{(\nu,\beta,\mu,\chi_1,\alpha,\kappa_1,\epsilon)},
\end{multline}
for some $C_3>0$, depending on $\nu,\kappa_1, k_{0,1},\delta_{D},d_{D}$.

We also apply Proposition~\ref{prop1}.2) to guarantee the existence of $C'_3>0$, depending on $\nu,\kappa_1, k_{0,1},\delta_{D},d_{D}$ with
\begin{multline}\label{e794}
\left\|\epsilon^{-\chi_1(d_{D}-k_{0,1}-\delta_D)}\frac{\tilde{R}_{D}(im)}{\tilde{P}_m(\tau)}\left[\tau^{d_{D,q_1,q_2}}\star_{\kappa_1}(\tau^{\kappa_1 q_2}\omega_1)\right]\right\|_{(\nu,\beta,\mu,\chi_1,\alpha,\kappa_1,\epsilon)}\\
\le\frac{C'_3}{C_{\tilde{P}}(r_{\tilde{Q},\tilde{R}_D})^{\frac{1}{\delta_D\kappa_1}}}|\epsilon_0|^{(\chi_1+\alpha)\kappa_1\left(\frac{d_{d_{D},q_1,q_2}}{\kappa_1}+q_2-\delta_D\right)-\chi_1(d_D-k_{0,1}-\delta_D)}\left\|\omega_1\right\|_{(\nu,\beta,\mu,\chi_1,\alpha,\kappa_1,\epsilon)},
\end{multline}

for every $q_1\ge 1$ and $q_2\ge 1$ with $q_1+q_2=\delta_D$. In addition to that, it holds

\begin{equation}\label{e795}
\left\|\frac{\tilde{R}_{D}(im)}{\tilde{P}_m(\tau)}\left[\tau^{\kappa_1(\delta_D-p)}\star_{\kappa_1}(\tau^{\kappa_1 p}\omega_1)\right]\right\|_{(\nu,\beta,\mu,\chi_1,\alpha,\kappa_1,\epsilon)}\\
\le\frac{C'_3}{C_{\tilde{P}}(r_{\tilde{Q},\tilde{R}_D})^{\frac{1}{\delta_D\kappa_1}}}|\epsilon_0|^{\chi_1+\alpha}\left\|\omega_1\right\|_{(\nu,\beta,\mu,\chi_1,\alpha,\kappa_1,\epsilon)},
\end{equation}
for all $1\le p\le \delta_D-1$.

We choose $r_{\tilde{Q},\tilde{R}_D}$ and $\varpi$ such that

\begin{multline}\label{e816}
|\epsilon_0|^{\Delta_D+\alpha(\delta_D-d_D)+\beta}\left[\prod_{d=0}^{\delta_D-1}|\gamma_1-d|\frac{C_3}{C_{\tilde{P}}(r_{\tilde{Q},\tilde{R}_D})^{\frac{1}{\delta_D\kappa_1}}\Gamma\left(\frac{d_{D,\delta_D,0}}{\kappa_1}\right)}|\epsilon_0|^{\alpha(d_{D}-k_{0,1}-\delta_D)}\varpi\right.\\
+\sum_{q_1+q_2=\delta_D,q_1\ge1,q_2\ge1}\frac{\delta_D!}{q_1!q_2!}\prod_{d=0}^{q_1-1}|\gamma_1-d|\left(\frac{C'_3\kappa_1^{q_2}}{C_{\tilde{P}}(r_{\tilde{Q},\tilde{R}_D})^{\frac{1}{\delta_D\kappa_1}}\Gamma\left(\frac{d_{D,q_1,q_2}}{\kappa_1}\right)}\right.\\
\times |\epsilon_0|^{(\chi_1+\alpha)\kappa_1\left(\frac{d_{d_{D},q_1,q_2}}{\kappa_1}+q_2-\delta_D+\frac{1}{\kappa_1}\right)-\chi_1(d_D-k_{0,1}-\delta_D)}\varpi+\sum_{1\le p\le q_2-1}|A_{q_2,p}|\frac{C'_3\kappa_1^p}{C_{\tilde{P}}(r_{\tilde{Q},\tilde{R}_D})^{\frac{1}{\delta_D\kappa_1}}\Gamma\left(\frac{d_{D,q_1,q_2}}{\kappa_1}+q_2-p\right)}\\
\left.\left.\times|\epsilon_0|^{(\chi_1+\alpha)\kappa_1\left(\frac{d_{d_{D},q_1,q_2}}{\kappa_1}+q_2-\delta_D+\frac{1}{\kappa_1}\right)-\chi_1(d_D-k_{0,1}-\delta_D)}\varpi\right)\right]\\
+\sum_{1\le p\le \delta_D-1}|A_{\delta_D,p}|\frac{C'_3\kappa_1^p}{C_{\tilde{P}}(r_{\tilde{Q},\tilde{R}_D})^{\frac{1}{\delta_D\kappa_1}}\Gamma\left(\delta_D-p\right)}|\epsilon_0|^{\chi_1+\alpha}\varpi\le\frac{\varpi}{4}.
\end{multline}

The choice in (\ref{e816}) allows to guarantee from (\ref{e793}), (\ref{e794}) and (\ref{e795}) that
\begin{equation}\label{e823}
\left\|\mathcal{H}_\epsilon^3(\omega_1)\right\|_{(\nu,\beta,\mu,\chi_1,\alpha,\kappa_1,\epsilon)}\le \frac{\varpi}{4}.
\end{equation}

We finally give upper bounds for the elements involved in $\mathcal{H}_\epsilon^4(\omega_1)$.

Let
$$h_2(\tau,m)=\tau^{\kappa_1-1}\int_{0}^{\tau^{\kappa_1}}\int_{-\infty}^{\infty}\omega_1((\tau^{\kappa_1}-s')^{1/\kappa_1},m-m_1)(\omega_1\star_{\kappa_1}^{E}\omega_1)((s')^{1/\kappa_1},m_1)\frac{1}{(\tau^{\kappa_1}-s')s'}ds'dm_1.$$

Regarding Proposition~\ref{prop3}, one has
\begin{equation}\label{e840}
\left\|h_2(\tau,m)\right\|_{(\nu,\beta,\mu,\chi_1,\alpha,\kappa_1,\epsilon)}\le\frac{C_4}{|\epsilon|^{\chi_1+\alpha}}\left\|\omega_1\right\|_{(\nu,\beta,\mu,\chi_1,\alpha,\kappa_1,\epsilon)}\left\| \omega_1\star_{\kappa_1}^{E}\omega_1 \right\|_{(\nu,\beta,\mu,\chi_1,\alpha,\kappa_1,\epsilon)}.
\end{equation}

Moreover, in view of Corollary~\ref{coro3}, we get
\begin{equation}\label{e852}
\left\| \omega_1\star_{\kappa_1}^{E}\omega_1 \right\|_{(\nu,\beta,\mu,\chi_1,\alpha,\kappa_1,\epsilon)}\le C_4\left\| \omega_1 \right\|^2_{(\nu,\beta,\mu,\chi_1,\alpha,\kappa_1,\epsilon)}
\end{equation}

Following the same argument as in (\ref{e728}), in view of (\ref{e840}), (\ref{e852}), and from Proposition~\ref{prop1}.2, we have that

\begin{multline}\label{e860}
\left\|\epsilon^{-\chi_1(k_{\ell,3}+2\gamma_1-k_{0,1})}\frac{\tilde{Q}(im)}{\tilde{P}_m(\tau)}\left[\tau^{k_{\ell,3}+2\gamma_1-k_{0,1}}\star_{\kappa_1}(\tau h_2(\tau,m))\right]\right\|_{(\nu,\beta,\mu,\chi_1,\alpha,\kappa_1,\epsilon)}\\
\le \frac{C'_3}{C_{\tilde{P}}(r_{\tilde{Q},\tilde{R}_D})^{\frac{1}{\delta_D\kappa_1}}}|\epsilon_0|^{(\chi_1+\alpha)\kappa_1\left(\frac{k_{\ell,3}+2\gamma_1-k_{0,1}}{\kappa_1}+\frac{1}{\kappa_1}\right)-\chi_1(k_{\ell,3}+2\gamma_1-k_{0,1})-(\chi_1+\alpha)\kappa_1(\delta_D-\frac{1}{\kappa_1})}\left\|h_2(\tau,m)\right\|_{(\nu,\beta,\mu,\chi_1,\alpha,\kappa_1,\epsilon)}\\
\le \frac{(C_4)^2C'_3}{C_{\tilde{P}}(r_{\tilde{Q},\tilde{R}_D})^{\frac{1}{\delta_D\kappa_1}}}|\epsilon_0|^{(\chi_1+\alpha)(k_{\ell,3}+2\gamma_1-k_{0,1}-\kappa_1\delta_D+1)-\chi_1(k_{\ell,3}+2\gamma_1-k_{0,1})}\left\|\omega_1\right\|^3_{(\nu,\beta,\mu,\chi_1,\alpha,\kappa_1,\epsilon)}
\end{multline}

We recall that Lemma~\ref{lema887} holds, and hypothesis (\ref{e887}). We choose $r_{\tilde{Q},\tilde{R}_D}$ and $\varpi$ such that

\begin{multline}\label{e868}
\sum_{\ell=0}^{s_3}\frac{|a_{\ell,3}|}{\Gamma\left(\frac{k_{\ell,3}+2\gamma_1-k_{0,1}}{\kappa_1}\right)}\frac{(C_4)^2C'_3}{C_{\tilde{P}}(r_{\tilde{Q},\tilde{R}_D})^{\frac{1}{\delta_D\kappa_1}}}|\epsilon_0|^{(\chi_1+\alpha)(k_{\ell,3}+2\gamma_1-k_{0,1}-\kappa_1\delta_D+1)-\chi_1(k_{\ell,3}+2\gamma_1-k_{0,1})}\varpi^3\\
+\sum_{\ell=s_3+1}^{M_3}\frac{|a_{\ell,3}|}{\Gamma\left(\frac{k_{\ell,3}+2\gamma_1-k_{0,1}}{\kappa_1}\right)}\frac{(C_4)^2C'_3}{C_{\tilde{P}}(r_{\tilde{Q},\tilde{R}_D})^{\frac{1}{\delta_D\kappa_1}}}|\epsilon_0|^{m_{\ell,3}+3\beta-\alpha k_{\ell,3}+(\chi_1+\alpha)(k_{\ell,3}+2\gamma_1-k_{0,1}-\kappa_1\delta_D+1)-\chi_1(k_{\ell,3}+2\gamma_1-k_{0,1})}\varpi^3\\
\le\frac{\varpi}{4}
\end{multline}

The choice in (\ref{e868}) allows to guarantee from (\ref{e860}) that
\begin{equation}\label{e874}
\left\|\mathcal{H}_\epsilon^4(\omega_1)\right\|_{(\nu,\beta,\mu,\chi_1,\alpha,\kappa_1,\epsilon)}\le \frac{\varpi}{4}.
\end{equation}

Observe that (\ref{e887}) and (\ref{e218}) imply 
$$(\chi_1 + \alpha)( k_{\ell,3}+2\gamma_1-k_{0,1}-\kappa_1\delta_D+1) - \chi_1(k_{\ell,3}+2\gamma_1-k_{0,1}) \geq 0,$$
for every $\ell\in\{0,\ldots,M_3\}$.

In view of (\ref{e704}), (\ref{e784}), (\ref{e823}) and (\ref{e868}) we conclude the first part of the proof of Lemma~\ref{lema568}, namely, the existence of $\varpi>0$ such that $\mathcal{H}_{\epsilon}$ sends $\bar{B}(0,\varpi)\subseteq F^{d}_{(\nu,\beta,\mu,\chi_1,\alpha,\kappa_1,\epsilon)}$ into itself.

We proceed to give proof for (\ref{e442}). For $\epsilon\in D(0,\epsilon_0)\setminus\{0\}$ fixed above, we take $\omega_1,\omega_2\in \bar{B}(0,\varpi)\subseteq F^{d}_{(\nu,\beta,\mu,\chi_1,\alpha,\kappa_1,\epsilon)}$.

We start with $\mathcal{H}_\epsilon^1$. Analogous arguments as in the first part of the proof leading to the upper bounds for $\mathcal{H}_\epsilon^1$, we get that

\begin{multline}\label{e626b}
\left\|\epsilon^{-\chi_1(k_{\ell,1}-k_{0,1})}\frac{\tilde{Q}(im)}{\tilde{P}_m(\tau)}(\tau^{k_{\ell,1}-k_{0,1}}\star_{\kappa_1}(\omega_1-\omega_2))\right\|_{(\nu,\beta,\mu,\chi_1,\alpha,\kappa_1,\epsilon)}\\
\le \frac{C_3}{C_{\tilde{P}}(r_{\tilde{Q},\tilde{R}_D})^{\frac{1}{\delta_D \kappa_1}}}|\epsilon|^{\alpha(k_{\ell,1}-k_{0,1})}\left\|\omega_1-\omega_2\right\|_{(\nu,\beta,\mu,\chi_1,\alpha,\kappa_1,\epsilon)},
\end{multline} 

\begin{multline}\label{e627b}
\left\|\epsilon^{-\chi_1(k_{\ell,2}-k_{0,2})}\frac{\tilde{Q}(im)}{\tilde{P}_m(\tau)}(\tau^{k_{\ell,2}-k_{0,2}}\star_{\kappa_1}(\omega_1-\omega_2))\right\|_{(\nu,\beta,\mu,\chi_1,\alpha,\kappa_1,\epsilon)}\\
\le \frac{C_3}{C_{\tilde{P}}(r_{\tilde{Q},\tilde{R}_D})^{\frac{1}{\delta_D \kappa_1}}}|\epsilon|^{\alpha(k_{\ell,2}-k_{0,2})}\left\|\omega_1-\omega_2\right\|_{(\nu,\beta,\mu,\chi_1,\alpha,\kappa_1,\epsilon)},
\end{multline}

\begin{multline}\label{e628b}
\left\|\epsilon^{-\chi_1(k_{\ell,3}+k_{0,1}-2k_{0,2})}\frac{\tilde{Q}(im)}{\tilde{P}_m(\tau)}(\tau^{k_{\ell,3}+k_{0,1}-2k_{0,2}}\star_{\kappa_1}(\omega_1-\omega_2))\right\|_{(\nu,\beta,\mu,\chi_1,\alpha,\kappa_1,\epsilon)}\\
\le \frac{C_3}{C_{\tilde{P}}(r_{\tilde{Q},\tilde{R}_D})^{\frac{1}{\delta_D \kappa_1}}}|\epsilon|^{\alpha(k_{\ell,3}+k_{0,1}-2k_{0,2})}\left\|\omega_1-\omega_2\right\|_{(\nu,\beta,\mu,\chi_1,\alpha,\kappa_1,\epsilon)},
\end{multline}

and for every $j\ge 1$,
 
\begin{multline}\label{e629b}
\left\|\epsilon^{-\chi_1(k_{\ell,2}-k_{0,2}+j)}\frac{\tilde{Q}(im)}{\tilde{P}_m(\tau)}(\tau^{k_{\ell,2}-k_{0,2}+j}\star_{\kappa_1}(\omega_1-\omega_2))\right\|_{(\nu,\beta,\mu,\chi_1,\alpha,\kappa_1,\epsilon)}\\
\le \frac{\hat{C}_3A_3^j\Gamma\left(\frac{k_{\ell,2}-k_{0,2}+j}{\kappa_1}\right)}{C_{\tilde{P}}(r_{\tilde{Q},\tilde{R}_D})^{\frac{1}{\delta_D \kappa_1}}}|\epsilon|^{\alpha(k_{\ell,2}-k_{0,2}+j)}\left\|\omega_1-\omega_2\right\|_{(\nu,\beta,\mu,\chi_1,\alpha,\kappa_1,\epsilon)},
\end{multline}

and

\begin{multline}\label{e630b}
\left\|\epsilon^{-\chi_1(k_{\ell,3}+k_{0,1}-2k_{0,2}+j)}\frac{\tilde{Q}(im)}{\tilde{P}_m(\tau)}(\tau^{k_{\ell,3}+k_{0,1}-2k_{0,2}+j}\star_{\kappa_1}(\omega_1-\omega_2))\right\|_{(\nu,\beta,\mu,\chi_1,\alpha,\kappa_1,\epsilon)}\\
\le \frac{\hat{C}_3A_3^j\Gamma\left(\frac{k_{\ell,3}+k_{0,1}-2k_{0,2}+j}{\kappa_1}\right)}{C_{\tilde{P}}(r_{\tilde{Q},\tilde{R}_D})^{\frac{1}{\delta_D \kappa_1}}}|\epsilon|^{\alpha(k_{\ell,3}+k_{0,1}-2k_{0,2}+j)}\left\|\omega_1-\omega_2\right\|_{(\nu,\beta,\mu,\chi_1,\alpha,\kappa_1,\epsilon)}.
\end{multline}

We also have 

\begin{multline}\label{e631b}
\left\|\epsilon^{-\chi_1(k_{\ell,3}+k_{0,1}-2k_{0,2})}\frac{\tilde{Q}(im)}{\tilde{P}_{m}(\tau)}\left[\tau^{k_{\ell,3}+k_{0,1}-2k_{0,2}+j}\star_{\kappa_1}\mathcal{B}_{\kappa_1}J_1(\tau,\epsilon)\star_{\kappa_1}\mathcal{B}_{\kappa_1}J_1(\tau,\epsilon)\star_{\kappa_1}(\omega_1-\omega_2)\right]\right\|_{(\nu,\beta,\mu,\chi_1,\alpha,\kappa_1,\epsilon)}\hfill\\
\le \frac{\hat{C}_3A_3^j\Gamma\left(\frac{k_{\ell,3}+k_{0,1}-2k_{0,2}+j}{\kappa_1}\right)}{C_{\tilde{P}}(r_{\tilde{Q},\tilde{R}_D})^{\frac{1}{\delta_D \kappa_1}}}|\epsilon|^{\alpha(k_{\ell,3}+k_{0,1}-2k_{0,2}+j)}\left\|\omega_1-\omega_2\right\|_{(\nu,\beta,\mu,\chi_1,\alpha,\kappa_1,\epsilon)}.\hfill
\end{multline}

We choose large enough $r_{\tilde{Q},\tilde{R}_D}>0$ and $\varpi>0$ such that

\begin{multline}\label{e685b}
\sum_{\ell=1}^{s_1}\frac{|a_{\ell,1}|}{\Gamma\left(\frac{k_{\ell,1}-k_{0,1}}{\kappa_1}\right)}|\epsilon_0|^{\alpha(k_{\ell,1}-k_{0,1})}\frac{C_3}{C_{\tilde{P}}(r_{\tilde{Q},\tilde{R}_D})^{\frac{1}{\delta_D\kappa_1}}} \hfill\\
\hfill+\sum_{\ell=s_1+1}^{M_1}\frac{|a_{\ell,1}|}{\Gamma\left(\frac{k_{\ell,1}-k_{0,1}}{\kappa_1}\right)}|\epsilon_0|^{m_{\ell,1}+\beta-\alpha k_{\ell,1}+\alpha(k_{\ell,1}-k_{0,1})}\frac{C_3}{C_{\tilde{P}}(r_{\tilde{Q},\tilde{R}_D})^{\frac{1}{\delta_D\kappa_1}}} \\
+\sum_{\ell=1}^{s_2}\frac{2|a_{\ell,2}a_{0,1}|}{|a_{0,2}|}\frac{|\epsilon_0|^{\alpha(k_{\ell,2}-k_{0,2})}}{\Gamma\left(\frac{k_{\ell,2}-k_{0,2}}{\kappa_1}\right)}\frac{C_3}{C_{\tilde{P}}(r_{\tilde{Q},\tilde{R}_D})^{\frac{1}{\delta_D\kappa_1}}} \hfill\\
\hfill+\sum_{\ell=s_2+1}^{M_2}\frac{2|a_{\ell,2}a_{0,1}|}{|a_{0,2}|}\frac{|\epsilon_0|^{m_{\ell,2}+2\beta-\alpha k_{\ell,2}+\alpha(k_{\ell,2}-k_{0,2})}}{\Gamma\left(\frac{k_{\ell,2}-k_{0,2}}{\kappa_1}\right)}\frac{C_3}{C_{\tilde{P}}(r_{\tilde{Q},\tilde{R}_D})^{\frac{1}{\delta_D\kappa_1}}} \\
+\sum_{\ell=0}^{s_2}\frac{2|a_{\ell,2}a_{0,1}|}{|a_{0,2}|}\sum_{j\ge1}|J_j|\frac{|\epsilon_0|^{\alpha(k_{\ell,2}-k_{0,2}+j)}}{\Gamma\left(\frac{k_{\ell,2}-k_{0,2}}{\kappa_1}\right)}\frac{\hat{C}_3A_3^j}{C_{\tilde{P}}(r_{\tilde{Q},\tilde{R}_D})^{\frac{1}{\delta_D\kappa_1}}} \hfill\\
\hfill+\sum_{\ell=s_2+1}^{M_2}\frac{2|a_{\ell,2}a_{0,1}|}{|a_{0,2}|}\sum_{j\ge1}|J_j|\frac{|\epsilon_0|^{m_{\ell,2}+2\beta-\alpha k_{\ell,2}+\alpha(k_{\ell,2}-k_{0,2}+j)}}{\Gamma\left(\frac{k_{\ell,2}-k_{0,2}}{\kappa_1}\right)}\frac{\hat{C}_3A_3^j}{C_{\tilde{P}}(r_{\tilde{Q},\tilde{R}_D})^{\frac{1}{\delta_D\kappa_1}}} \\
+\sum_{\ell=0}^{s_3}\frac{3|a_{0,1}|^2|a_{\ell,3}|}{|a_{0,2}|^2}\frac{|\epsilon_0|^{\alpha(k_{\ell,3}+k_{0,1}-2k_{0,2})}}{\Gamma\left(\frac{k_{\ell,3}+k_{0,1}-2k_{0,2}}{\kappa_1}\right)}\frac{C_3}{C_{\tilde{P}}(r_{\tilde{Q},\tilde{R}_D})^{\frac{1}{\delta_D\kappa_1}}} \hfill\\
\hfill+\sum_{\ell=0}^{s_3}\frac{6|a_{0,1}|^2|a_{\ell,3}|}{|a_{0,2}|^2}\sum_{j\ge1}|J_j|\frac{|\epsilon_0|^{\alpha(k_{\ell,3}+k_{0,1}-2k_{0,2}+j)}}{\Gamma\left(\frac{k_{\ell,3}+k_{0,1}-2k_{0,2}}{\kappa_1}\right)}\frac{\hat{C}_3A_3^j}{C_{\tilde{P}}(r_{\tilde{Q},\tilde{R}_D})^{\frac{1}{\delta_D\kappa_1}}} \\
+\sum_{\ell=0}^{s_3}\frac{3|a_{0,1}|^2|a_{\ell,3}|}{|a_{0,2}|^2}\sum_{j\ge1}|\tilde{J}_j|\frac{|\epsilon_0|^{\alpha(k_{\ell,3}+k_{0,1}-2k_{0,2}+j)}}{\Gamma\left(\frac{k_{\ell,3}+k_{0,1}-2k_{0,2}}{\kappa_1}\right)}\frac{\hat{C}_3A_3^j}{C_{\tilde{P}}(r_{\tilde{Q},\tilde{R}_D})^{\frac{1}{\delta_D\kappa_1}}} \hfill\\
\hfill+\sum_{\ell=s_3+1}^{M_3}\frac{3|a_{0,1}|^2|a_{\ell,3}|}{|a_{0,2}|^2}\frac{|\epsilon_0|^{m_{\ell,3}+3\beta-\alpha k_{\ell,3}+\alpha(k_{\ell,3}+k_{0,1}-2k_{0,2})}}{\Gamma\left(\frac{k_{\ell,3}+k_{0,1}-2k_{0,2}}{\kappa_1}\right)}\frac{C_3}{C_{\tilde{P}}(r_{\tilde{Q},\tilde{R}_D})^{\frac{1}{\delta_D\kappa_1}}} \\
+\sum_{\ell=s_3+1}^{M_3}\frac{6|a_{0,1}|^2|a_{\ell,3}|}{|a_{0,2}|^2}\sum_{j\ge1}|J_j|\frac{|\epsilon_0|^{m_{\ell,3}+3\beta-\alpha k_{\ell,3}+\alpha(k_{\ell,3}+k_{0,1}-2k_{0,2}+j)}}{\Gamma\left(\frac{k_{\ell,3}+k_{0,1}-2k_{0,2}}{\kappa_1}\right)}\frac{\hat{C}_3A_3^j}{C_{\tilde{P}}(r_{\tilde{Q},\tilde{R}_D})^{\frac{1}{\delta_D\kappa_1}}} \hfill\\
\hfill+\sum_{\ell=s_3+1}^{M_3}\frac{3|a_{0,1}|^2|a_{\ell,3}|}{|a_{0,2}|^2}\sum_{j\ge1}|\tilde{J}_j|\frac{|\epsilon_0|^{m_{\ell,3}+3\beta-\alpha k_{\ell,3}+\alpha(k_{\ell,3}+k_{0,1}-2k_{0,2}+j)}}{\Gamma\left(\frac{k_{\ell,3}+k_{0,1}-2k_{0,2}}{\kappa_1}\right)}\frac{\hat{C}_3A_3^j}{C_{\tilde{P}}(r_{\tilde{Q},\tilde{R}_D})^{\frac{1}{\delta_D\kappa_1}}} \le \frac{1}{8}.
\end{multline}

As a result, we can affirm that
\begin{equation}\label{e704b} 
\left\|\mathcal{H}^1_\epsilon(\omega_1)-\mathcal{H}^1_\epsilon(\omega_1)\right\|_{(\nu,\beta,\mu,\chi_1,\alpha,\kappa_1,\epsilon)}\le \frac{1}{8}\left\|\omega_1-\omega_2\right\|_{(\nu,\beta,\mu,\chi_1,\alpha,\kappa_1,\epsilon)}.
\end{equation}

We proceed with the upper bounds associated to $\mathcal{H}_{\epsilon}^2$. For this purpose, we observe that
\begin{multline}\label{e947}
\omega_1((\tau^{\kappa_1}-s')^{1/\kappa_1},m-m_1)\omega_1((s')^{1/\kappa_1},m_1)-\omega_2((\tau^{\kappa_1}-s')^{1/\kappa_1},m-m_1)\omega_2((s')^{1/\kappa_1},m_1)\\
=\left(\omega_1((\tau^{\kappa_1}-s')^{1/\kappa_1},m-m_1)-\omega_2((\tau^{\kappa_1}-s')^{1/\kappa_1},m-m_1)\right)\omega_1((s')^{1/\kappa_1},m_1)\\
+\omega_2((\tau^{\kappa_1}-s')^{1/\kappa_1},m-m_1)\left(\omega_1((s')^{1/\kappa_1},m_1)-\omega_2((s')^{1/\kappa_1},m_1)\right).
\end{multline}

For $j=1,2$, we define
$$h_{1j}(\tau,m)=\tau^{\kappa_1-1}\int_{0}^{\tau^{\kappa_1}}\int_{-\infty}^{\infty}\omega_j((\tau^{\kappa_1}-s')^{1/\kappa_1},m-m_1)\omega_j((s')^{1/\kappa_1},m_1)\frac{1}{(\tau^{\kappa_1}-s')s'}ds'dm_1.$$

The expression in (\ref{e947}) and Proposition~\ref{prop3} yield
\begin{multline}\label{e957}
\left\|h_{11}(\tau,m)-h_{12}(\tau,m)\right\|_{(\nu,\beta,\mu,\chi_1,\alpha,\kappa_1,\epsilon)}\le \frac{C_4}{|\epsilon|^{\chi_1+\alpha}}(\left\|\omega_1\right\|_{(\nu,\beta,\mu,\chi_1,\alpha,\kappa_1,\epsilon)}+\left\|\omega_2\right\|_{(\nu,\beta,\mu,\chi_1,\alpha,\kappa_1,\epsilon)})\\
\times\left\|\omega_1-\omega_2\right\|_{(\nu,\beta,\mu,\chi_1,\alpha,\kappa_1,\epsilon)}
\end{multline}

The estimates in (\ref{e726}), (\ref{e727}), (\ref{e728}), (\ref{e729}) and (\ref{e730}) together with (\ref{e957}) provide the following bounds.

\begin{multline}\label{e726b}
\left\|\epsilon^{-\chi_1(d_\ell-k_{0,1}-\delta_\ell)}\frac{\tilde{R}_{\ell}(im)}{\tilde{P}_{m}(\tau)}\left[\tau^{d_{\ell,q_1,q_2}}\star_{\kappa_1}(\tau^{\kappa_1 q_2}(\omega_1-\omega_2))\right]\right\|_{(\nu,\beta,\mu,\chi_1,\alpha,\kappa_1,\epsilon)}\\
\le \frac{C'_3}{C_{\tilde{P}}(r_{\tilde{Q},\tilde{R}_D})^{\frac{1}{\delta_D\kappa_1}}}|\epsilon|^{(\chi_1+\alpha)\kappa_1\left(\frac{d_{\ell,q_1,q_2}}{\kappa_1}+q_2-\delta_D+\frac{1}{\kappa_1}\right)-\chi_1(d_{\ell}-k_{0,1}-\delta_\ell)}\left\|\omega_1-\omega_2\right\|_{(\nu,\beta,\mu,\chi_1,\alpha,\kappa_1,\epsilon)},
\end{multline}
for every $q_1\ge0$, $q_2\ge 1$ such that $q_1+q_2=\delta_\ell$, and also
\begin{multline}\label{e727b}
\left\|\epsilon^{-\chi_1(d_\ell-k_{0,1}-\delta_\ell)}\frac{\tilde{R}_{\ell}(im)}{\tilde{P}_{m}(\tau)}\left[\tau^{d_{\ell,q_1,q_2}+\kappa_1(q_2-p)}\star_{\kappa_1}(\tau^{\kappa_1 p}(\omega_1-\omega_2))\right]\right\|_{(\nu,\beta,\mu,\chi_1,\alpha,\kappa_1,\epsilon)}\\
\le \frac{C'_3}{C_{\tilde{P}}(r_{\tilde{Q},\tilde{R}_D})^{\frac{1}{\delta_D\kappa_1}}}|\epsilon|^{(\chi_1+\alpha)\kappa_1\left(\frac{d_{\ell,q_1,q_2}}{\kappa_1}+q_2-\delta_D+\frac{1}{\kappa_1}\right)-\chi_1(d_{\ell}-k_{0,1}-\delta_\ell)}\left\|\omega_1-\omega_2\right\|_{(\nu,\beta,\mu,\chi_1,\alpha,\kappa_1,\epsilon)},
\end{multline}
for every $1\le p\le q_2-1$, and

\begin{multline}\label{e728b}
\left\|\epsilon^{-\chi_1(k_{\ell,2}+\gamma_1-k_{0,1})}\frac{\tilde{Q}(im)}{\tilde{P}_m(\tau)}\left[\tau^{k_{\ell,2}+\gamma_1-k_{0,1}}\star_{\kappa_1}(\tau[ h_{11}(\tau,m)-h_{12}(\tau,m)])\right]\right\|_{(\nu,\beta,\mu,\chi_1,\alpha,\kappa_1,\epsilon)}\\
\le \frac{C'_3}{C_{\tilde{P}}(r_{\tilde{Q},\tilde{R}_D})^{\frac{1}{\delta_D\kappa_1}}}|\epsilon|^{(\chi_1+\alpha)\kappa_1\left(\frac{k_{\ell,2}+\gamma_1-k_{0,1}}{\kappa_1}+\frac{1}{\kappa_1}\right)-\chi_1(k_{\ell,2}+\gamma_1-k_{0,1})-(\chi_1+\alpha)\kappa_1(\delta_D-\frac{1}{\kappa_1})}\left\|h_{11}-h_{12}\right\|_{(\nu,\beta,\mu,\chi_1,\alpha,\kappa_1,\epsilon)}\\
\le \frac{C_4C'_3}{C_{\tilde{P}}(r_{\tilde{Q},\tilde{R}_D})^{\frac{1}{\delta_D\kappa_1}}}|\epsilon|^{(\chi_1+\alpha)(k_{\ell,2}+\gamma_1-k_{0,1}-\kappa_1\delta_D+1)-\chi_1(k_{\ell,2}+\gamma_1-k_{0,1})}\\
\times(\left\|\omega_1\right\|_{(\nu,\beta,\mu,\chi_1,\alpha,\kappa_1,\epsilon)}+\left\|\omega_2\right\|_{(\nu,\beta,\mu,\chi_1,\alpha,\kappa_1,\epsilon)})\left\|\omega_1-\omega_2\right\|_{(\nu,\beta,\mu,\chi_1,\alpha,\kappa_1,\epsilon)},
\end{multline} 

\begin{multline}\label{e729b}
\left\|\epsilon^{-\chi_1(k_{\ell,3}+\gamma_1-k_{0,2})}\frac{\tilde{Q}(im)}{\tilde{P}_m(\tau)}\left[\tau^{k_{\ell,3}+\gamma_1-k_{0,2}}\star_{\kappa_1}(\tau[ h_{11}(\tau,m)-h_{12}(\tau,m)])\right]\right\|_{(\nu,\beta,\mu,\chi_1,\alpha,\kappa_1,\epsilon)}\\
\le \frac{C_4C'_3}{C_{\tilde{P}}(r_{\tilde{Q},\tilde{R}_D})^{\frac{1}{\delta_D\kappa_1}}}|\epsilon|^{(\chi_1+\alpha)(k_{\ell,3}+\gamma_1-k_{0,2}-\kappa_1\delta_D+1)-\chi_1(k_{\ell,3}+\gamma_1-k_{0,2})}\\
\times(\left\|\omega_1\right\|_{(\nu,\beta,\mu,\chi_1,\alpha,\kappa_1,\epsilon)}+\left\|\omega_2\right\|_{(\nu,\beta,\mu,\chi_1,\alpha,\kappa_1,\epsilon)})\left\|\omega_1-\omega_2\right\|_{(\nu,\beta,\mu,\chi_1,\alpha,\kappa_1,\epsilon)},
\end{multline} 

, and

\begin{multline}\label{e730b}
\left\|\epsilon^{-\chi_1(k_{\ell,3}+\gamma_1-k_{0,2}+j)}\frac{\tilde{Q}(im)}{\tilde{P}_m(\tau)}\left[\tau^{k_{\ell,3}+\gamma_1-k_{0,2}+j}\star_{\kappa_1}(\tau[ h_{11}(\tau,m)-h_{12}(\tau,m)])\right]\right\|_{(\nu,\beta,\mu,\chi_1,\alpha,\kappa_1,\epsilon)}\\
\le \frac{C_4\hat{C}_3A_3^j\Gamma\left(\frac{k_{\ell,3}+\gamma_1-k_{0,2}}{\kappa_1}\right)}{C_{\tilde{P}}(r_{\tilde{Q},\tilde{R}_D})^{\frac{1}{\delta_D\kappa_1}}}|\epsilon|^{(\chi_1+\alpha)(k_{\ell,3}+\gamma_1-k_{0,2}+j-\kappa_1\delta_D+1)-\chi_1(k_{\ell,3}+\gamma_1-k_{0,2}+j)}\\
\times(\left\|\omega_1\right\|_{(\nu,\beta,\mu,\chi_1,\alpha,\kappa_1,\epsilon)}+\left\|\omega_2\right\|_{(\nu,\beta,\mu,\chi_1,\alpha,\kappa_1,\epsilon)})\left\|\omega_1-\omega_2\right\|_{(\nu,\beta,\mu,\chi_1,\alpha,\kappa_1,\epsilon)},
\end{multline} 
for every $j\ge1$.

One can choose $r_{\tilde{Q},\tilde{R}_D}>0$ and $\varpi>0$ such that the following condition holds:

\begin{multline}\label{e766b}
\sum_{\ell=1}^{D-1}|\epsilon_0|^{\Delta_{\ell}+\alpha(\delta_{\ell}-d_{\ell})+\beta}\left[\prod_{d=0}^{\delta_\ell-1}|\gamma_1-d|\frac{C_3}{C_{\tilde{P}}(r_{\tilde{Q},\tilde{R}_D})^{\frac{1}{\delta_D\kappa_1}}\Gamma\left(\frac{d_{\ell,\delta_\ell,0}}{\kappa_1}\right)}|\epsilon_0|^{\alpha(d_\ell-k_{0,1}-\delta_\ell)}\right.\\
+\sum_{q_1+q_2=\delta_\ell,q_2\ge1}\frac{\delta_\ell!}{q_1!q_2!}\prod_{d=0}^{q_1-1}|\gamma_1-d|\left(\frac{C'_3\kappa_1^{q_2}}{C_{\tilde{P}}(r_{\tilde{Q},\tilde{R}_D})^{\frac{1}{\delta_D\kappa_1}}\Gamma\left(\frac{d_{\ell,q_1,q_2}}{\kappa_1}\right)}\right.\hfill\\
\hfill\times |\epsilon_0|^{(\chi_1+\alpha)\kappa_1\left(\frac{d_{\ell,q_1,q_2}}{\kappa_1}+q_2-\delta_D+\frac{1}{\kappa_1}\right)-\chi_1(d_\ell-k_{0,1}-\delta_\ell)}\\
+\sum_{1\le p\le q_2-1}|A_{q_2,p}|\frac{C'_3\kappa_1^p}{C_{\tilde{P}}(r_{\tilde{Q},\tilde{R}_D})^{\frac{1}{\delta_D\kappa_1}}\Gamma\left(\frac{d_{\ell,q_1,q_2}}{\kappa_1}+q_2-p\right)}\hfill\\
\hfill\left.\left. \times |\epsilon_0|^{(\chi_1+\alpha)\kappa_1\left(\frac{d_{\ell,q_1,q_2}}{\kappa_1}+q_2-\delta_D+\frac{1}{\kappa_1}\right)-\chi_1(d_\ell-k_{0,1}-\delta_\ell)}   \right)    \right]\\
+\sum_{\ell=0}^{s_2}\frac{|a_{\ell,2}|}{\Gamma\left(\frac{k_{\ell,2}+\gamma_1-k_{0,1}}{\kappa_1}\right)}\frac{C_4C'_3}{C_{\tilde{P}}(r_{\tilde{Q},\tilde{R}_D})^{\frac{1}{\delta_D\kappa_1}}}|\epsilon_0|^{(\chi_1+\alpha)(k_{\ell,2}+\gamma_1-k_{0,1}-\kappa_1\delta_D+1)-\chi_1(k_{\ell,2}+\gamma_1-k_{0,1})}2\varpi\hfill\\
\hfill+\sum_{\ell=s_2+1}^{M_2}\frac{|a_{\ell,2}|}{\Gamma\left(\frac{k_{\ell,2}+\gamma_1-k_{0,1}}{\kappa_1}\right)}\frac{C_4C'_3}{C_{\tilde{P}}(r_{\tilde{Q},\tilde{R}_D})^{\frac{1}{\delta_D\kappa_1}}}|\epsilon_0|^{m_{\ell,2}+2\beta-\alpha k_{\ell,2}+(\chi_1+\alpha)(k_{\ell,2}+\gamma_1-k_{0,1}-\kappa_1\delta_D+1)-\chi_1(k_{\ell,2}+\gamma_1-k_{0,1})}2\varpi\\
+\sum_{\ell=0}^{s_3}\frac{3|a_{\ell,3}a_{0,1}|}{|a_{0,2}|}\frac{C_4C'_3}{\Gamma\left(\frac{k_{\ell,3}+\gamma_1-k_{0,2}}{\kappa_1}\right)}\frac{|\epsilon_0|^{(\chi_1+\alpha)(k_{\ell,3}+\gamma_1-k_{0,2}-\kappa_1\delta_D+1)-\chi_1(k_{\ell,3}+\gamma_1-k_{0,2})}}{C_{\tilde{P}}(r_{\tilde{Q},\tilde{R}_D})^{\frac{1}{\delta_D\kappa_1}}}2\varpi\hfill\\
\hfill+\sum_{\ell=s_3+1}^{M_3}\frac{3|a_{\ell,3}a_{0,1}|}{|a_{0,2}|}\frac{C_4C'_3}{\Gamma\left(\frac{k_{\ell,3}+\gamma_1-k_{0,2}}{\kappa_1}\right)}\frac{|\epsilon_0|^{m_{\ell,3}+3\beta-\alpha k_{\ell,3}+(\chi_1+\alpha)(k_{\ell,3}+\gamma_1-k_{0,2}-\kappa_1\delta_D+1)-\chi_1(k_{\ell,3}+\gamma_1-k_{0,2})}}{C_{\tilde{P}}(r_{\tilde{Q},\tilde{R}_D})^{\frac{1}{\delta_D\kappa_1}}}2\varpi\\
+\sum_{\ell=0}^{s_3}\frac{3|a_{\ell,3}a_{0,1}|}{|a_{0,2}|}\sum_{j\ge1}|J_j|\frac{|\epsilon_0|^{(\chi_1+\alpha)(k_{\ell,3}+\gamma_1-k_{0,2}+j-\kappa_1\delta_D+1)-\chi_1(k_{\ell,3}+\gamma_1-k_{0,2}+j)}}{\Gamma\left(\frac{k_{\ell,3}+\gamma_1-k_{0,2}}{\kappa_1}\right)}\frac{C_4\hat{C}_3A_3^j}{C_{\tilde{P}}(r_{\tilde{Q},\tilde{R}_D})^{\frac{1}{\delta_D\kappa_1}}}2\varpi\\
+\sum_{\ell=0}^{s_3}\frac{3|a_{\ell,3}a_{0,1}|}{|a_{0,2}|}\sum_{j\ge1}|J_j|\frac{|\epsilon_0|^{m_{\ell,3}+3\beta-\alpha k_{\ell,3}+(\chi_1+\alpha)(k_{\ell,3}+\gamma_1-k_{0,2}+j-\kappa_1\delta_D+1)-\chi_1(k_{\ell,3}+\gamma_1-k_{0,2}+j)}}{\Gamma\left(\frac{k_{\ell,3}+\gamma_1-k_{0,2}}{\kappa_1}\right)}\frac{C_4\hat{C}_3A_3^j}{C_{\tilde{P}}(r_{\tilde{Q},\tilde{R}_D})^{\frac{1}{\delta_D\kappa_1}}}2\varpi\\
\le\frac{1}{8}.\hfill.
\end{multline}

We obtain
\begin{equation}\label{e705b} 
\left\|\mathcal{H}^2_\epsilon(\omega_1)-\mathcal{H}^2_\epsilon(\omega_1)\right\|_{(\nu,\beta,\mu,\chi_1,\alpha,\kappa_1,\epsilon)}\le \frac{1}{8}\left\|\omega_1-\omega_2\right\|_{(\nu,\beta,\mu,\chi_1,\alpha,\kappa_1,\epsilon)}.
\end{equation}

We now study the term $\mathcal{H}^3$. Analogous estimates as those stated in the first statement of this part of the proof lead to

\begin{multline}\label{e793b}
\left\|\epsilon^{-\chi_1(d_{D}-k_{0,1}-\delta_D)}\frac{\tilde{R}_{D}(im)}{\tilde{P}_m(\tau)}\left[\tau^{d_{D,\delta_D,0}}\star_{\kappa_1}(\omega_1-\omega_2)\right]\right\|_{(\nu,\beta,\mu,\chi_1,\alpha,\kappa_1,\epsilon)}\\
\le\frac{C_3}{C_{\tilde{P}}(r_{\tilde{Q},\tilde{R}_D})^{\frac{1}{\delta_D\kappa_1}}}|\epsilon_0|^{(\chi_1+\alpha)d_{d_{D},\delta_D,0}-\chi_1(d_D-k_{0,1}-\delta_D)}\left\|\omega_1-\omega_2\right\|_{(\nu,\beta,\mu,\chi_1,\alpha,\kappa_1,\epsilon)}\\
=\frac{C_3}{C_{\tilde{P}}(r_{\tilde{Q},\tilde{R}_D})^{\frac{1}{\delta_D\kappa_1}}}|\epsilon_0|^{\alpha(d_{D}-k_{0,1}-\delta_D)}\left\|(\omega_1-\omega_2)\right\|_{(\nu,\beta,\mu,\chi_1,\alpha,\kappa_1,\epsilon)},
\end{multline}

\begin{multline}\label{e794b}
\left\|\epsilon^{-\chi_1(d_{D}-k_{0,1}-\delta_D)}\frac{\tilde{R}_{D}(im)}{\tilde{P}_m(\tau)}\left[\tau^{d_{D,q_1,q_2}}\star_{\kappa_1}[\tau^{\kappa_1 q_2}(\omega_1-\omega_2)]\right]\right\|_{(\nu,\beta,\mu,\chi_1,\alpha,\kappa_1,\epsilon)}\\
\le\frac{C'_3}{C_{\tilde{P}}(r_{\tilde{Q},\tilde{R}_D})^{\frac{1}{\delta_D\kappa_1}}}|\epsilon_0|^{(\chi_1+\alpha)\kappa_1\left(\frac{d_{d_{D},q_1,q_2}}{\kappa_1}+q_2-\delta_D\right)-\chi_1(d_D-k_{0,1}-\delta_D)}\left\|\omega_1-\omega_2\right\|_{(\nu,\beta,\mu,\chi_1,\alpha,\kappa_1,\epsilon)},
\end{multline}

for every $q_1\ge 1$ and $q_2\ge 1$ with $q_1+q_2=\delta_D$, and

\begin{multline}\label{e795b}
\left\|\frac{\tilde{R}_{D}(im)}{\tilde{P}_m(\tau)}\left[\tau^{\kappa_1(\delta_D-p)}\star_{\kappa_1}(\tau^{\kappa_1 p}(\omega_1-\omega_2))\right]\right\|_{(\nu,\beta,\mu,\chi_1,\alpha,\kappa_1,\epsilon)}\\
\le\frac{C'_3}{C_{\tilde{P}}(r_{\tilde{Q},\tilde{R}_D})^{\frac{1}{\delta_D\kappa_1}}}|\epsilon_0|^{\chi_1+\alpha}\left\|\omega_1-\omega_2\right\|_{(\nu,\beta,\mu,\chi_1,\alpha,\kappa_1,\epsilon)},
\end{multline}
for all $1\le p\le \delta_D-1$.

We choose $r_{\tilde{Q},\tilde{R}_D}$ and $\varpi$ such that

\begin{multline}\label{e816b}
|\epsilon_0|^{\Delta_D+\alpha(\delta_D-d_D)+\beta}\left[\prod_{d=0}^{\delta_D-1}|\gamma_1-d|\frac{C_3}{C_{\tilde{P}}(r_{\tilde{Q},\tilde{R}_D})^{\frac{1}{\delta_D\kappa_1}}\Gamma\left(\frac{d_{D,\delta_D,0}}{\kappa_1}\right)}|\epsilon_0|^{\alpha(d_{D}-k_{0,1}-\delta_D)}\right.\\
+\sum_{q_1+q_2=\delta_D,q_1\ge1,q_2\ge1}\frac{\delta_D!}{q_1!q_2!}\prod_{d=0}^{q_1-1}|\gamma_1-d|\left(\frac{C'_3\kappa_1^{q_2}}{C_{\tilde{P}}(r_{\tilde{Q},\tilde{R}_D})^{\frac{1}{\delta_D\kappa_1}}\Gamma\left(\frac{d_{D,q_1,q_2}}{\kappa_1}\right)}\right.\\
\times |\epsilon_0|^{(\chi_1+\alpha)\kappa_1\left(\frac{d_{d_{D},q_1,q_2}}{\kappa_1}+q_2-\delta_D+\frac{1}{\kappa_1}\right)-\chi_1(d_D-k_{0,1}-\delta_D)}+\sum_{1\le p\le q_2-1}|A_{q_2,p}|\frac{C'_3\kappa_1^p}{C_{\tilde{P}}(r_{\tilde{Q},\tilde{R}_D})^{\frac{1}{\delta_D\kappa_1}}\Gamma\left(\frac{d_{D,q_1,q_2}}{\kappa_1}+q_2-p\right)}\\
\left.\left.\times|\epsilon_0|^{(\chi_1+\alpha)\kappa_1\left(\frac{d_{d_{D},q_1,q_2}}{\kappa_1}+q_2-\delta_D+\frac{1}{\kappa_1}\right)-\chi_1(d_D-k_{0,1}-\delta_D)}\right)\right]\\
+\sum_{1\le p\le \delta_D-1}|A_{\delta_D,p}|\frac{C'_3\kappa_1^p}{C_{\tilde{P}}(r_{\tilde{Q},\tilde{R}_D})^{\frac{1}{\delta_D\kappa_1}}\Gamma\left(\delta_D-p\right)}|\epsilon_0|^{\chi_1+\alpha}\le\frac{1}{8}.
\end{multline}

Then, we get 
\begin{equation}\label{e706b} 
\left\|\mathcal{H}^3_\epsilon(\omega_1)-\mathcal{H}^3_\epsilon(\omega_1)\right\|_{(\nu,\beta,\mu,\chi_1,\alpha,\kappa_1,\epsilon)}\le \frac{1}{8}\left\|\omega_1-\omega_2\right\|_{(\nu,\beta,\mu,\chi_1,\alpha,\kappa_1,\epsilon)}.
\end{equation}

We conclude with the estimation associated to the last term, $\mathcal{H}^4_\epsilon$.

We have

\begin{multline}\label{e947b}
\omega_1((\tau^{\kappa_1}-s')^{1/\kappa_1},m-m_1)[\omega_1\star_{\kappa_1}^E\omega_1]((s')^{1/\kappa_1},m_1)\\-\omega_2((\tau^{\kappa_1}-s')^{1/\kappa_1},m-m_1)[\omega_2\star_{\kappa_1}^E\omega_2]((s')^{1/\kappa_1},m_1)\\
=\left(\omega_1((\tau^{\kappa_1}-s')^{1/\kappa_1},m-m_1)-\omega_2((\tau^{\kappa_1}-s')^{1/\kappa_1},m-m_1)\right)[\omega_1\star_{\kappa_1}^E\omega_1]((s')^{1/\kappa_1},m_1)\\
+\omega_2((\tau^{\kappa_1}-s')^{1/\kappa_1},m-m_1)\left([\omega_1\star_{\kappa_1}^E\omega_1]((s')^{1/\kappa_1},m_1)-[\omega_2\star_{\kappa_1}^E\omega_2]((s')^{1/\kappa_1},m_1)\right).
\end{multline}

For $j=1,2$, we define
$$h_{2j}(\tau,m)=\tau^{\kappa_1-1}\int_{0}^{\tau^{\kappa_1}}\int_{-\infty}^{\infty}\omega_j((\tau^{\kappa_1}-s')^{1/\kappa_1},m-m_1)[\omega_j\star_{\kappa_1}^{E}\omega_j]((s')^{1/\kappa_1},m_1)\frac{1}{(\tau^{\kappa_1}-s')s'}ds'dm_1.$$

In view of (\ref{e947}) and Corollary~\ref{coro3}, it is straight to check that
\begin{multline}\label{e1070}
\left\| [\omega_1\star^{E}_{\kappa_1}\omega_1]-[\omega_2\star^{E}_{\kappa_1}\omega_2]\right\|_{(\nu,\beta,\mu,\chi_1,\alpha,\kappa_1,\epsilon)}\\
\le C_4 \left\|\omega_1-\omega_2\right\|_{(\nu,\beta,\mu,\chi_1,\alpha,\kappa_1,\epsilon)}\left(\left\|\omega_1\right\|_{(\nu,\beta,\mu,\chi_1,\alpha,\kappa_1,\epsilon)}+\left\|\omega_2\right\|_{(\nu,\beta,\mu,\chi_1,\alpha,\kappa_1,\epsilon)}\right)
\end{multline}

Regarding Proposition~\ref{prop3}, (\ref{e852}), (\ref{e1070}) and by (\ref{e947b}) we have
\begin{multline}\label{e957b}
\left\|h_{21}(\tau,m)-h_{22}(\tau,m)\right\|_{(\nu,\beta,\mu,\chi_1,\alpha,\kappa_1,\epsilon)}\le \frac{C_4}{|\epsilon|^{\chi_1+\alpha}}\\
\times\left[ \left\|\omega_1-\omega_2\right\|_{(\nu,\beta,\mu,\chi_1,\alpha,\kappa_1,\epsilon)}\left\|\omega_1\star^{E}_{\kappa_1}\omega_1\right\|_{(\nu,\beta,\mu,\chi_1,\alpha,\kappa_1,\epsilon)}\right.\\
\hfill\left.+\left\|\omega_2\right\|_{(\nu,\beta,\mu,\chi_1,\alpha,\kappa_1,\epsilon)}
\left(\left\| [\omega_1\star^{E}_{\kappa_1}\omega_1]-[\omega_2\star^{E}_{\kappa_1}\omega_2]\right\|_{(\nu,\beta,\mu,\chi_1,\alpha,\kappa_1,\epsilon)}\right)\right]\\
\le\frac{C_4}{|\epsilon|^{\chi_1+\alpha}}\left[ \left\|\omega_1-\omega_2\right\|_{(\nu,\beta,\mu,\chi_1,\alpha,\kappa_1,\epsilon)}C_4\left\|\omega_1\right\|^2_{(\nu,\beta,\mu,\chi_1,\alpha,\kappa_1,\epsilon)}\hfill   \right.\\
\left.+\left\|\omega_2\right\|_{(\nu,\beta,\mu,\chi_1,\alpha,\kappa_1,\epsilon)}
C_4 \left\|\omega_1-\omega_2\right\|_{(\nu,\beta,\mu,\chi_1,\alpha,\kappa_1,\epsilon)}\left(\left\|\omega_1\right\|_{(\nu,\beta,\mu,\chi_1,\alpha,\kappa_1,\epsilon)}+\left\|\omega_2\right\|_{(\nu,\beta,\mu,\chi_1,\alpha,\kappa_1,\epsilon)}\right)\right]\\
\hfill\le\frac{3C_4^2\varpi^2}{|\epsilon|^{\chi_1+\alpha}} \left\|\omega_1-\omega_2\right\|_{(\nu,\beta,\mu,\chi_1,\alpha,\kappa_1,\epsilon)}
\end{multline}

In view of (\ref{e957b}), and analogous arguments as for the corresponding part of the first statement we have

\begin{multline}\label{e860b}
\left\|\epsilon^{-\chi_1(k_{\ell,3}+2\gamma_1-k_{0,1})}\frac{\tilde{Q}(im)}{\tilde{P}_m(\tau)}\left[\tau^{k_{\ell,3}+2\gamma_1-k_{0,1}}\star_{\kappa_1}[\tau (h_{21}(\tau,m)-h_{22}(\tau,m)]\right]\right\|_{(\nu,\beta,\mu,\chi_1,\alpha,\kappa_1,\epsilon)}\\
\le \frac{C'_3}{C_{\tilde{P}}(r_{\tilde{Q},\tilde{R}_D})^{\frac{1}{\delta_D\kappa_1}}}|\epsilon_0|^{(\chi_1+\alpha)\kappa_1\left(\frac{k_{\ell,3}+2\gamma_1-k_{0,1}}{\kappa_1}+\frac{1}{\kappa_1}\right)-\chi_1(k_{\ell,3}+2\gamma_1-k_{0,1})-(\chi_1+\alpha)\kappa_1(\delta_D-\frac{1}{\kappa_1})}\left\|h_{21}-h_{22}\right\|_{(\nu,\beta,\mu,\chi_1,\alpha,\kappa_1,\epsilon)}\\
\le\frac{3(C_4)^2C'_3\varpi^2}{C_{\tilde{P}}(r_{\tilde{Q},\tilde{R}_D})^{\frac{1}{\delta_D\kappa_1}}}|\epsilon_0|^{(\chi_1+\alpha)(k_{\ell,3}+2\gamma_1-k_{0,1}-\kappa_1\delta_D+1)-\chi_1(k_{\ell,3}+2\gamma_1-k_{0,1})} \left\|\omega_1-\omega_2\right\|_{(\nu,\beta,\mu,\chi_1,\alpha,\kappa_1,\epsilon)}
\end{multline}

We choose $r_{\tilde{Q},\tilde{R}_D}$ and $\varpi$ such that

\begin{multline}\label{e868b}
\sum_{\ell=0}^{s_3}\frac{|a_{\ell,3}|}{\Gamma\left(\frac{k_{\ell,3}+2\gamma_1-k_{0,1}}{\kappa_1}\right)}\frac{(C_4)^2C'_3\varpi^2}{C_{\tilde{P}}(r_{\tilde{Q},\tilde{R}_D})^{\frac{1}{\delta_D\kappa_1}}}|\epsilon_0|^{(\chi_1+\alpha)(k_{\ell,3}+2\gamma_1-k_{0,1}-\kappa_1\delta_D+1)-\chi_1(k_{\ell,3}+2\gamma_1-k_{0,1})}\\
+\sum_{\ell=s_3+1}^{M_3}\frac{|a_{\ell,3}|}{\Gamma\left(\frac{k_{\ell,3}+2\gamma_1-k_{0,1}}{\kappa_1}\right)}\frac{(C_4)^2C'_3\varpi^2}{C_{\tilde{P}}(r_{\tilde{Q},\tilde{R}_D})^{\frac{1}{\delta_D\kappa_1}}}|\epsilon_0|^{m_{\ell,3}+3\beta-\alpha k_{\ell,3}+(\chi_1+\alpha)(k_{\ell,3}+2\gamma_1-k_{0,1}-\kappa_1\delta_D+1)-\chi_1(k_{\ell,3}+2\gamma_1-k_{0,1})}\\
\le\frac{1}{8}
\end{multline}

The choice in (\ref{e868b}) allows to guarantee from (\ref{e860b}) that
\begin{equation}\label{e874b}
\left\|\mathcal{H}_\epsilon^4(\omega_1)-\mathcal{H}_\epsilon^4(\omega_2)\right\|_{(\nu,\beta,\mu,\chi_1,\alpha,\kappa_1,\epsilon)}\le \frac{1}{8}\left\|\omega_1-\omega_2\right\|_{(\nu,\beta,\mu,\chi_1,\alpha,\kappa_1,\epsilon)}.
\end{equation}

Finally, from (\ref{e704b}), (\ref{e705b}), (\ref{e706b}) and (\ref{e874b}), we conclude that

\begin{equation}\label{e1114}
\left\|\mathcal{H}_\epsilon(\omega_1)-\mathcal{H}_\epsilon(\omega_2)\right\|_{(\nu,\beta,\mu,\chi_1,\alpha,\kappa_1,\epsilon)}\le \frac{1}{2}\left\|\omega_1-\omega_2\right\|_{(\nu,\beta,\mu,\chi_1,\alpha,\kappa_1,\epsilon)}.
\end{equation}

\end{proof}

\section{Proof of Lemma~\ref{lema568b}}\label{seccionaux2}
\begin{proof}
The proof is analogous to that of Lemma~\ref{lema568}, adapted to the elements involved within the second auxiliary equation.

We first study the terms in $\tilde{\mathcal{H}}^1_{\epsilon}$.

By Lemma~\ref{lema1}, we have 

\begin{multline}\label{e597b}
\left\|\tilde{B}_{j}(m)\epsilon^{-\chi_2(b_j-k_{0,2}+k_{0,3}-\gamma_2)}\frac{\tau^{b_j-k_{0,2}+k_{0,3}-\gamma_2}}{\tilde{P}_{2,m}(\tau)}\right\|_{(\nu,\beta,\mu,\chi_2,\alpha,\kappa_2,\epsilon)}\hfill\\
\hfill\le\frac{C_2}{C_{\tilde{P}}(r_{\tilde{Q},\tilde{R}_D})^{\frac{1}{\delta_D\kappa_2}}}\frac{\left\|\tilde{B}_j(m)\right\|_{(\beta,\mu)}}{\inf_{m\in\R}|\tilde{R}_{D}(im)|}|\epsilon|^{(b_j-k_{0,2}+k_{0,3}-\gamma_2)\alpha},
\end{multline}
for some $C_2>0$, depending on $\kappa_2,\gamma_2,k_{0,2},k_{0,3}$ and $b_j$, $0\le j\le Q$.

We also make use of the first part of Proposition~\ref{prop1}. There exists a constant $C_3>0$ depending on $\nu,\kappa_2, k_{\ell,1}$ for $0\le \ell\le M_1$ , $k_{\ell,2}$ for $0\le \ell\le M_2$, $k_{\ell,3}$ for $0\le \ell\le M_3$, $\tilde{Q}(X)$ and $\tilde{R}_{D}(X)$ such that
\begin{multline}\label{e626c}
\left\|\epsilon^{-\chi_2(k_{\ell,1}-2k_{0,2}+k_{0,3})}\frac{\tilde{Q}(im)}{\tilde{P}_{2,m}(\tau)}(\tau^{k_{\ell,1}-2k_{0,2}+k_{0,3}}\star_{\kappa_2}\omega_2)\right\|_{(\nu,\beta,\mu,\chi_2,\alpha,\kappa_2,\epsilon)}\\
\le \frac{C_3}{C_{\tilde{P}}(r_{\tilde{Q},\tilde{R}_D})^{\frac{1}{\delta_D \kappa_2}}}|\epsilon|^{\alpha(k_{\ell,1}-2k_{0,2}+k_{0,3})}\left\|\omega_2\right\|_{(\nu,\beta,\mu,\chi_2,\alpha,\kappa_2,\epsilon)},
\end{multline} 

\begin{multline}\label{e627c}
\left\|\epsilon^{-\chi_2(k_{\ell,2}-k_{0,2})}\frac{\tilde{Q}(im)}{\tilde{P}_{2,m}(\tau)}(\tau^{k_{\ell,2}-k_{0,2}}\star_{\kappa_2}\omega_2)\right\|_{(\nu,\beta,\mu,\chi_2,\alpha,\kappa_2,\epsilon)}\\
\le \frac{C_3}{C_{\tilde{P}}(r_{\tilde{Q},\tilde{R}_D})^{\frac{1}{\delta_D \kappa_2}}}|\epsilon|^{\alpha(k_{\ell,2}-k_{0,2})}\left\|\omega_2\right\|_{(\nu,\beta,\mu,\chi_2,\alpha,\kappa_2,\epsilon)},
\end{multline}

\begin{multline}\label{e628c}
\left\|\epsilon^{-\chi_2(k_{\ell,3}-k_{0,3})}\frac{\tilde{Q}(im)}{\tilde{P}_{2,m}(\tau)}(\tau^{k_{\ell,3}-k_{0,3}}\star_{\kappa_2}\omega_2)\right\|_{(\nu,\beta,\mu,\chi_2,\alpha,\kappa_2,\epsilon)}\\
\le \frac{C_3}{C_{\tilde{P}}(r_{\tilde{Q},\tilde{R}_D})^{\frac{1}{\delta_D \kappa_2}}}|\epsilon|^{\alpha(k_{\ell,3}-k_{0,3})}\left\|\omega_2\right\|_{(\nu,\beta,\mu,\chi_2,\alpha,\kappa_2,\epsilon)}.
\end{multline}

Let $j\ge 1$. A constant $C_3(j)>0$, depending on $\nu,\kappa_2$, $k_{\ell,2}$ for $0\le \ell\le M_2$, $\tilde{Q}(X)$, $\tilde{R}_{D}(X)$, exists such that

\begin{multline}\label{e629c}
\left\|\epsilon^{-\chi_2(k_{\ell,2}-k_{0,2}+j)}\frac{\tilde{Q}(im)}{\tilde{P}_{2,m}(\tau)}(\tau^{k_{\ell,2}-k_{0,2}+j}\star_{\kappa_2}\omega_2)\right\|_{(\nu,\beta,\mu,\chi_2,\alpha,\kappa_2,\epsilon)}\\
\le \frac{C_{3.1}(j)}{C_{\tilde{P}}(r_{\tilde{Q},\tilde{R}_D})^{\frac{1}{\delta_D \kappa_2}}}|\epsilon|^{\alpha(k_{\ell,2}-k_{0,2}+j)}\left\|\omega_2\right\|_{(\nu,\beta,\mu,\chi_2,\alpha,\kappa_2,\epsilon)},
\end{multline}

and

\begin{multline}\label{e630c}
\left\|\epsilon^{-\chi_2(k_{\ell,3}-k_{0,3}+j)}\frac{\tilde{Q}(im)}{\tilde{P}_{2,m}(\tau)}(\tau^{k_{\ell,3}-k_{0,3}+j}\star_{\kappa_2}\omega_2)\right\|_{(\nu,\beta,\mu,\chi_2,\alpha,\kappa_2,\epsilon)}\\
\le \frac{C_{3.2}(j)}{C_{\tilde{P}}(r_{\tilde{Q},\tilde{R}_D})^{\frac{1}{\delta_D \kappa_2}}}|\epsilon|^{\alpha(k_{\ell,3}-k_{0,3}+j)}\left\|\omega_2\right\|_{(\nu,\beta,\mu,\chi_2,\alpha,\kappa_2,\epsilon)}.
\end{multline}

The constants $C_{3.1}(j),C_{3.2}(j)>0$ are such that

\begin{equation}\label{e633c}
C_{3.1}(j)\le\hat{C}_3A_3^j\Gamma\left(\frac{k_{\ell,2}-k_{0,2}+j}{\kappa_2}\right),\qquad j\ge 1,
\end{equation}

\begin{equation}\label{e634c}
C_{3.2}(j)\le\hat{C}_3A_3^j\Gamma\left(\frac{k_{\ell,3}-k_{0,3}+j}{\kappa_2}\right),\qquad j\ge 1.
\end{equation}
The proof of both estimates is analogous to that of (\ref{e633}), so we omit them.

In view of Lemma~\ref{lema533} and again Proposition~\ref{prop1}, we have

\begin{multline}\label{e631c}
\left\|\epsilon^{-\chi_2(k_{\ell,3}-k_{0,3})}\frac{\tilde{Q}(im)}{\tilde{P}_{2,m}(\tau)}\left[\tau^{k_{\ell,3}-k_{0,3}+j}\star_{\kappa_2}\mathcal{B}_{\kappa_2}J_2(\tau,\epsilon)\star_{\kappa_2}\mathcal{B}_{\kappa_2}J_2(\tau,\epsilon)\star_{\kappa_2}\omega_2\right]\right\|_{(\nu,\beta,\mu,\chi_2,\alpha,\kappa_2,\epsilon)}\hfill\\
=\left\|\epsilon^{-\chi_2(k_{\ell,3}-k_{0,3})}\frac{\tilde{Q}(im)}{\tilde{P}_{2,m}(\tau)}\left[\tau^{k_{\ell,3}-k_{0,3}+j}\star_{\kappa_2}\mathcal{B}_{\kappa_2}\tilde{J}_2(\tau,\epsilon)\star_{\kappa_2}\omega_2\right]\right\|_{(\nu,\beta,\mu,\chi_2,\alpha,\kappa_2,\epsilon)}\hfill\\
\le \frac{C_{3.3}(j)}{C_{\tilde{P}}(r_{\tilde{Q},\tilde{R}_D})^{\frac{1}{\delta_D \kappa_2}}}|\epsilon|^{\alpha(k_{\ell,3}-k_{0,3}+j)}\left\|\omega_2\right\|_{(\nu,\beta,\mu,\chi_2,\alpha,\kappa_2,\epsilon)},\hfill
\end{multline}
where
\begin{equation}\label{e677b}
C_{3.3}(j)\le\hat{C}_3A_3^j\Gamma\left(\frac{k_{\ell,3}-k_{0,3}+j}{\kappa_2}\right),\qquad j\ge 1.
\end{equation}
This last estimates for $C_{3.3}$ are obtained in the same manner as those in (\ref{e633}).

We choose large enough $r_{\tilde{Q},\tilde{R}_D}>0$ and $\varpi>0$ such that

\begin{multline}
\sum_{j=0}^{Q}|\epsilon_0|^{n_j-\alpha b_j+\alpha(b_j-k_{0,2}+k_{0,3}-\gamma_2)}\frac{C_2}{C_{\tilde{P}}(r_{\tilde{Q},\tilde{R}_D})^{\frac{1}{\delta_D\kappa_2}}}\frac{\left\|\tilde{B}_j(m)\right\|_{(\beta,\mu)}}{\inf_{m\in\R}|\tilde{R}_{D}(im)|}\\
+\sum_{\ell=0}^{s_1}\frac{|a_{\ell,1}|}{\Gamma\left(\frac{k_{\ell,1}-2k_{0,2}+k_{0,3}}{\kappa_2}\right)}|\epsilon_0|^{\alpha(k_{\ell,1}-2k_{0,2}+k_{0,3})}\frac{C_3}{C_{\tilde{P}}(r_{\tilde{Q},\tilde{R}_D})^{\frac{1}{\delta_D\kappa_2}}}\varpi\hfill\\
\hfill+\sum_{\ell=s_1+1}^{M_1}\frac{|a_{\ell,1}|}{\Gamma\left(\frac{k_{\ell,1}-k_{0,2}+k_{0,3}}{\kappa_2}\right)}|\epsilon_0|^{m_{\ell,1}+\beta-\alpha k_{\ell,1}+\alpha(k_{\ell,1}-2k_{0,1}+k_{0,3})}\frac{C_3}{C_{\tilde{P}}(r_{\tilde{Q},\tilde{R}_D})^{\frac{1}{\delta_D\kappa_2}}}\varpi\\
+\sum_{\ell=1}^{s_2}\frac{2|a_{\ell,2}a_{0,2}|}{|a_{0,3}|}\frac{|\epsilon_0|^{\alpha(k_{\ell,2}-k_{0,2})}}{\Gamma\left(\frac{k_{\ell,2}-k_{0,2}}{\kappa_2}\right)}\frac{C_3}{C_{\tilde{P}}(r_{\tilde{Q},\tilde{R}_D})^{\frac{1}{\delta_D\kappa_2}}}\varpi\hfill\\
\hfill+\sum_{\ell=s_2+1}^{M_2}\frac{2|a_{\ell,2}a_{0,2}|}{|a_{0,3}|}\frac{|\epsilon_0|^{m_{\ell,2}+2\beta-\alpha k_{\ell,2}+\alpha(k_{\ell,2}-k_{0,2})}}{\Gamma\left(\frac{k_{\ell,2}-k_{0,2}}{\kappa_2}\right)}\frac{C_3}{C_{\tilde{P}}(r_{\tilde{Q},\tilde{R}_D})^{\frac{1}{\delta_D\kappa_2}}}\varpi\\
+\sum_{\ell=0}^{s_2}\frac{2|a_{\ell,2}a_{0,2}|}{|a_{0,3}|}\sum_{j\ge1}|J_{2,j}|\frac{|\epsilon_0|^{\alpha(k_{\ell,2}-k_{0,2}+j)}}{\Gamma\left(\frac{k_{\ell,2}-k_{0,2}}{\kappa_2}\right)}\frac{\hat{C}_3A_3^j}{C_{\tilde{P}}(r_{\tilde{Q},\tilde{R}_D})^{\frac{1}{\delta_D\kappa_2}}}\varpi\hfill\\
\hfill+\sum_{\ell=s_2+1}^{M_2}\frac{2|a_{\ell,2}a_{0,2}|}{|a_{0,3}|}\sum_{j\ge1}|J_{2,j}|\frac{|\epsilon_0|^{m_{\ell,2}+2\beta-\alpha k_{\ell,2}+\alpha(k_{\ell,2}-k_{0,2}+j)}}{\Gamma\left(\frac{k_{\ell,2}-k_{0,2}}{\kappa_2}\right)}\frac{\hat{C}_3A_3^j}{C_{\tilde{P}}(r_{\tilde{Q},\tilde{R}_D})^{\frac{1}{\delta_D\kappa_2}}}\varpi\\
+\sum_{\ell=1}^{s_3}\frac{3|a_{0,2}|^2|a_{\ell,3}|}{|a_{0,3}|^2}\frac{|\epsilon_0|^{\alpha(k_{\ell,3}-k_{0,3})}}{\Gamma\left(\frac{k_{\ell,3}-k_{0,3}}{\kappa_2}\right)}\frac{C_3}{C_{\tilde{P}}(r_{\tilde{Q},\tilde{R}_D})^{\frac{1}{\delta_D\kappa_2}}}\varpi\hfill\\
\hfill+\sum_{\ell=0}^{s_3}\frac{6|a_{0,2}|^2|a_{\ell,3}|}{|a_{0,3}|^2}\sum_{j\ge1}|J_{2,j}|\frac{|\epsilon_0|^{\alpha(k_{\ell,3}-k_{0,3}+j)}}{\Gamma\left(\frac{k_{\ell,3}-k_{0,3}}{\kappa_2}\right)}\frac{\hat{C}_3A_3^j}{C_{\tilde{P}}(r_{\tilde{Q},\tilde{R}_D})^{\frac{1}{\delta_D\kappa_2}}}\varpi\\
+\sum_{\ell=0}^{s_3}\frac{3|a_{0,2}|^2|a_{\ell,3}|}{|a_{0,3}|^2}\sum_{j\ge1}|\tilde{J}_{2,j}|\frac{|\epsilon_0|^{\alpha(k_{\ell,3}-k_{0,3}+j)}}{\Gamma\left(\frac{k_{\ell,3}-k_{0,3}}{\kappa_2}\right)}\frac{\hat{C}_3A_3^j}{C_{\tilde{P}}(r_{\tilde{Q},\tilde{R}_D})^{\frac{1}{\delta_D\kappa_2}}}\varpi\hfill\\
\hfill+\sum_{\ell=s_3+1}^{M_3}\frac{3|a_{0,2}|^2|a_{\ell,3}|}{|a_{0,3}|^2}\frac{|\epsilon_0|^{m_{\ell,3}+3\beta-\alpha k_{\ell,3}+\alpha(k_{\ell,3}-k_{0,3})}}{\Gamma\left(\frac{k_{\ell,3}-k_{0,3}}{\kappa_2}\right)}\frac{C_3}{C_{\tilde{P}}(r_{\tilde{Q},\tilde{R}_D})^{\frac{1}{\delta_D\kappa_2}}}\varpi\\
+\sum_{\ell=s_3+1}^{M_3}\frac{6|a_{0,2}|^2|a_{\ell,3}|}{|a_{0,3}|^2}\sum_{j\ge1}|J_{2,j}|\frac{|\epsilon_0|^{\alpha(k_{\ell,3}-k_{0,3}+j)}}{\Gamma\left(\frac{k_{\ell,3}-k_{0,3}}{\kappa_2}\right)}\frac{\hat{C}_3A_3^j}{C_{\tilde{P}}(r_{\tilde{Q},\tilde{R}_D})^{\frac{1}{\delta_D\kappa_2}}}\varpi\hfill\\
\hfill+\sum_{\ell=s_3+1}^{M_3}\frac{3|a_{0,2}|^2|a_{\ell,3}|}{|a_{0,3}|^2}\sum_{j\ge1}|\tilde{J}_{2,j}|\frac{|\epsilon_0|^{\alpha(k_{\ell,3}-k_{0,3}+j)}}{\Gamma\left(\frac{k_{\ell,3}-k_{0,3}}{\kappa_2}\right)}\frac{\hat{C}_3A_3^j}{C_{\tilde{P}}(r_{\tilde{Q},\tilde{R}_D})^{\frac{1}{\delta_D\kappa_2}}}\varpi\le \frac{\varpi}{4}.
\end{multline}

This choice allows us to deduce that

\begin{equation}\label{e704c} 
\left\|\tilde{\mathcal{H}}^1_\epsilon(\omega_1)\right\|_{(\nu,\beta,\mu,\chi_2,\alpha,\kappa_2,\epsilon)}\le \frac{\varpi}{4}.
\end{equation}

We now give upper bounds associated to $\tilde{\mathcal{H}}_\epsilon^2$.

We define 
\begin{equation}\label{e714b}
\tilde{h}_1(\tau,m)=\tau^{\kappa_2-1}\int_{0}^{\tau^{\kappa_2}}\int_{-\infty}^{\infty}\omega_2((\tau^{\kappa_1}-s')^{1/\kappa_2},m-m_1)\omega_2((s')^{1/\kappa_2},m_1)\frac{1}{(\tau^{\kappa_2}-s')s'}ds'dm_1.
\end{equation}

Observe that 
$$\omega_2(\tau,m)\star_{\kappa_2}^E\omega_2(\tau,m)=\tau \tilde{h}_1(\tau,m).$$

In view of Proposition~\ref{prop3}, there exists $C_4>0$, depending on $\mu$ and $\kappa_2$, such that 
\begin{equation}\label{e721b}
\left\|\tilde{h}_1(\tau,\epsilon)\right\|_{(\nu,\beta,\mu,\chi_2,\alpha,\kappa_2,\epsilon)}\le\frac{C_4}{|\epsilon|^{\chi_2+\alpha}}\left\|\omega_2\right\|^2_{(\nu,\beta,\mu,\chi_2,\alpha,\kappa_2,\epsilon)}.
\end{equation}

We proceed as in the previous proof on the upper bounds for $\mathcal{H}^{2}_{\epsilon}$ to get that

\begin{multline}\label{e726c}
\left\|\epsilon^{-\chi_2(d_\ell-k_{0,2}+k_{0,3}-\delta_\ell)}\frac{\tilde{R}_{\ell}(im)}{\tilde{P}_{2,m}(\tau)}\left[\tau^{\tilde{d}_{\ell,q_1,q_2}}\star_{\kappa_2}(\tau^{\kappa_2 q_2}\omega_2)\right]\right\|_{(\nu,\beta,\mu,\chi_2,\alpha,\kappa_2,\epsilon)}\\
\le \frac{C'_3}{C_{\tilde{P}}(r_{\tilde{Q},\tilde{R}_D})^{\frac{1}{\delta_D\kappa_2}}}|\epsilon|^{(\chi_2+\alpha)\kappa_2\left(\frac{\tilde{d}_{\ell,q_1,q_2}}{\kappa_2}+q_2-\delta_D+\frac{1}{\kappa_2}\right)-\chi_2(\tilde{d}_{\ell}-k_{0,2}+k_{0,3}-\delta_\ell)}\left\|\omega_2\right\|_{(\nu,\beta,\mu,\chi_2,\alpha,\kappa_2,\epsilon)},
\end{multline}
for every $q_1\ge0$, $q_2\ge 1$ such that $q_1+q_2=\delta_\ell$, and also

\begin{multline}\label{e727c}
\left\|\epsilon^{-\chi_1(d_\ell-k_{0,2}+k_{0,3}-\delta_\ell)}\frac{\tilde{R}_{\ell}(im)}{\tilde{P}_{2,m}(\tau)}\left[\tau^{\tilde{d}_{\ell,q_1,q_2}+\kappa_2(q_2-p)}\star_{\kappa_2}(\tau^{\kappa_2 p}\omega_2)\right]\right\|_{(\nu,\beta,\mu,\chi_2,\alpha,\kappa_2,\epsilon)}\\
\le \frac{C'_3}{C_{\tilde{P}}(r_{\tilde{Q},\tilde{R}_D})^{\frac{1}{\delta_D\kappa_2}}}|\epsilon|^{(\chi_2+\alpha)\kappa_2\left(\frac{\tilde{d}_{\ell,q_1,q_2}}{\kappa_2}+q_2-\delta_D+\frac{1}{\kappa_2}\right)-\chi_2(d_{\ell}-k_{0,2}+k_{0,3}-\delta_\ell)}\left\|\omega_2\right\|_{(\nu,\beta,\mu,\chi_2,\alpha,\kappa_2,\epsilon)},
\end{multline}
for every $1\le p\le q_2-1$. Also,

\begin{multline}\label{e728c}
\left\|\epsilon^{-\chi_2(k_{\ell,2}+\gamma_2-2k_{0,2}+k_{0,3})}\frac{\tilde{Q}(im)}{\tilde{P}_{2,m}(\tau)}\left[\tau^{k_{\ell,2}+\gamma_2-2k_{0,2}+k_{0,3}}\star_{\kappa_2}(\tau \tilde{h}_1(\tau,m))\right]\right\|_{(\nu,\beta,\mu,\chi_2,\alpha,\kappa_2,\epsilon)}\\
\le \frac{C_4C'_3}{C_{\tilde{P}}(r_{\tilde{Q},\tilde{R}_D})^{\frac{1}{\delta_D\kappa_2}}}|\epsilon|^{(\chi_2+\alpha)(k_{\ell,2}+\gamma_2-2k_{0,2}+k_{0,3}-\kappa_2\delta_D+1)-\chi_2(k_{\ell,2}+\gamma_2-2k_{0,2}+k_{0,3})}\left\|\omega_2(\tau,\epsilon)\right\|^2_{(\nu,\beta,\mu,\chi_2,\alpha,\kappa_2,\epsilon)}.
\end{multline} 

The same arguments as above follow to get 

\begin{multline}\label{e729c}
\left\|\epsilon^{-\chi_2k_{\ell,3}}\frac{\tilde{Q}(im)}{\tilde{P}_{2,m}(\tau)}\left[\tau^{k_{\ell,3}}\star_{\kappa_2}(\tau \tilde{h}_1(\tau,m))\right]\right\|_{(\nu,\beta,\mu,\chi_2,\alpha,\kappa_2,\epsilon)}\\
\le \frac{C_4C'_3}{C_{\tilde{P}}(r_{\tilde{Q},\tilde{R}_D})^{\frac{1}{\delta_D\kappa_2}}}|\epsilon|^{(\chi_2+\alpha)(k_{\ell,3}-\kappa_2\delta_D+1)-\chi_2k_{\ell,3}}\left\|\omega_2(\tau,\epsilon)\right\|^2_{(\nu,\beta,\mu,\chi_2,\alpha,\kappa_2,\epsilon)},
\end{multline} 

and

\begin{multline}\label{e730c}
\left\|\epsilon^{-\chi_2(k_{\ell,3}+j)}\frac{\tilde{Q}(im)}{\tilde{P}_{2,m}(\tau)}\left[\tau^{k_{\ell,3}+j}\star_{\kappa_2}(\tau \tilde{h}_1(\tau,m))\right]\right\|_{(\nu,\beta,\mu,\chi_2,\alpha,\kappa_2,\epsilon)}\\
\le \frac{C_4C'_3(j)}{C_{\tilde{P}}(r_{\tilde{Q},\tilde{R}_D})^{\frac{1}{\delta_D\kappa_2}}}|\epsilon|^{(\chi_2+\alpha)(k_{\ell,3}+j-\kappa_2\delta_D+1)-\chi_2(k_{\ell,3}+j)}\left\|\omega_2(\tau,\epsilon)\right\|^2_{(\nu,\beta,\mu,\chi_2,\alpha,\kappa_2,\epsilon)},
\end{multline} 
for every $j\ge1$, where 
\begin{equation}\label{e731b}
C'_3(j)\le \hat{C}_3A_3^j\Gamma\left(\frac{k_{\ell,3}}{\kappa_2}\right) \quad j\ge1.
\end{equation}
The proof of such dependence on $j$ is proved in an analogous way as for that of (\ref{e633}).

One can choose $r_{\tilde{Q},\tilde{R}_D}>0$ and $\varpi>0$ such that the following condition holds:

\begin{multline}\label{e766c}
\sum_{\ell=1}^{D-1}|\epsilon_0|^{\Delta_{\ell}+\alpha(\delta_{\ell}-d_{\ell})+\beta}\left[\prod_{d=0}^{\delta_\ell-1}|\gamma_2-d|\frac{C_3}{C_{\tilde{P}}(r_{\tilde{Q},\tilde{R}_D})^{\frac{1}{\delta_D\kappa_2}}\Gamma\left(\frac{\tilde{d}_{\ell,\delta_\ell,0}}{\kappa_2}\right)}|\epsilon_0|^{\alpha(d_\ell-k_{0,2}+k_{0,3}-\delta_\ell)}\varpi\right.\\
+\sum_{q_1+q_2=\delta_\ell,q_2\ge1}\frac{\delta_\ell!}{q_1!q_2!}\prod_{d=0}^{q_1-1}|\gamma_2-d|\left(\frac{C'_3\kappa_2^{q_2}}{C_{\tilde{P}}(r_{\tilde{Q},\tilde{R}_D})^{\frac{1}{\delta_D\kappa_2}}\Gamma\left(\frac{\tilde{d}_{\ell,q_1,q_2}}{\kappa_2}\right)}\right.\hfill\\
\hfill\times |\epsilon_0|^{(\chi_2+\alpha)\kappa_2\left(\frac{\tilde{d}_{\ell,q_1,q_2}}{\kappa_2}+q_2-\delta_D+\frac{1}{\kappa_2}\right)-\chi_2(d_\ell-k_{0,2}+k_{0,3}-\delta_\ell)}\varpi\\
+\sum_{1\le p\le q_2-1}|\tilde{A}_{q_2,p}|\frac{C'_3\kappa_2^p}{C_{\tilde{P}}(r_{\tilde{Q},\tilde{R}_D})^{\frac{1}{\delta_D\kappa_2}}\Gamma\left(\frac{\tilde{d}_{\ell,q_1,q_2}}{\kappa_2}+q_2-p\right)}\hfill\\
\hfill\left.\left. \times |\epsilon_0|^{(\chi_2+\alpha)\kappa_2\left(\frac{\tilde{d}_{\ell,q_1,q_2}}{\kappa_2}+q_2-\delta_D+\frac{1}{\kappa_2}\right)-\chi_2(d_\ell-k_{0,2}+k_{0,3}-\delta_\ell)}\varpi   \right)    \right]\\
+\sum_{\ell=0}^{s_2}\frac{|a_{\ell,2}|}{\Gamma\left(\frac{k_{\ell,2}+\gamma_2-2k_{0,2}+k_{0,3}}{\kappa_2}\right)}\frac{C_4C'_3}{C_{\tilde{P}}(r_{\tilde{Q},\tilde{R}_D})^{\frac{1}{\delta_D\kappa_2}}}|\epsilon_0|^{(\chi_2+\alpha)(k_{\ell,2}+\gamma_2-2k_{0,2}+k_{0,3}-\kappa_2\delta_D+1)-\chi_2(k_{\ell,2}+\gamma_2-2k_{0,2}+k_{0,3})}\varpi^2\hfill\\
\hfill+\sum_{\ell=s_2+1}^{M_2}\frac{|a_{\ell,2}|C_4C'_3}{\Gamma\left(\frac{k_{\ell,2}+\gamma_2-2k_{0,2}+k_{0,3}}{\kappa_2}\right)}\frac{|\epsilon_0|^{m_{\ell,2}+2\beta-\alpha k_{\ell,2}+(\chi_2+\alpha)(k_{\ell,2}+\gamma_2-2k_{0,2}+k_{0,3}-\kappa_2\delta_D+1)-\chi_2(k_{\ell,2}+\gamma_2-2k_{0,2}+k_{0,3})}}{C_{\tilde{P}}(r_{\tilde{Q},\tilde{R}_D})^{\frac{1}{\delta_D\kappa_2}}}\varpi^2\\
+\sum_{\ell=0}^{s_3}|a_{\ell,3}|\frac{C_4C'_3}{\Gamma\left(\frac{k_{\ell,3}}{\kappa_2}\right)}\frac{|\epsilon_0|^{(\chi_2+\alpha)(k_{\ell,3}-\kappa_2\delta_D+1)-\chi_2k_{\ell,3}}}{C_{\tilde{P}}(r_{\tilde{Q},\tilde{R}_D})^{\frac{1}{\delta_D\kappa_2}}}\varpi^2\hfill\\
\hfill+\sum_{\ell=s_3+1}^{M_3}|a_{\ell,3}|\frac{C_4C'_3}{\Gamma\left(\frac{k_{\ell,3}}{\kappa_2}\right)}\frac{|\epsilon_0|^{m_{\ell,3}+3\beta-\alpha k_{\ell,3}+(\chi_2+\alpha)(k_{\ell,3}-\kappa_2\delta_D+1)-\chi_2k_{\ell,3}}}{C_{\tilde{P}}(r_{\tilde{Q},\tilde{R}_D})^{\frac{1}{\delta_D\kappa_2}}}\varpi^2\\
+\sum_{\ell=0}^{s_3}|a_{\ell,3}|\sum_{j\ge1}|J_{2,j}|\frac{|\epsilon_0|^{(\chi_2+\alpha)(k_{\ell,3}+j-\kappa_2\delta_D+1)-\chi_2(k_{\ell,3}+j)}}{\Gamma\left(\frac{k_{\ell,3}}{\kappa_2}\right)}\frac{C_4\hat{C}_3A_3^j}{C_{\tilde{P}}(r_{\tilde{Q},\tilde{R}_D})^{\frac{1}{\delta_D\kappa_2}}}\varpi^2\\
+\sum_{\ell=s_3+1}^{M_3}|a_{\ell,3}|\sum_{j\ge1}|J_{2,j}|\frac{|\epsilon_0|^{m_{\ell,3}+3\beta-\alpha k_{\ell,3}+(\chi_2+\alpha)(k_{\ell,3}+j-\kappa_2\delta_D+1)-\chi_2(k_{\ell,3}+j)}}{\Gamma\left(\frac{k_{\ell,3}}{\kappa_2}\right)}\frac{C_4\hat{C}_3A_3^j}{C_{\tilde{P}}(r_{\tilde{Q},\tilde{R}_D})^{\frac{1}{\delta_D\kappa_2}}}\varpi^2\\
\le\frac{\varpi}{4}\hfill
\end{multline}

We have that

\begin{equation}\label{e784c}
\left\|\tilde{\mathcal{H}}_{\epsilon}^{3}(\omega_2)\right\|_{(\nu,\beta,\mu,\chi_2,\alpha,\kappa_2,\epsilon)}\le\frac{\varpi}{4}.
\end{equation}

We now give upper estimates for $\tilde{\mathcal{H}}_{\epsilon}^3(\omega_2(\tau,m))$.

It holds that
 
\begin{multline}\label{e793c}
\left\|\epsilon^{-\chi_2(d_{D}-2k_{0,2}+k_{0,3}-\delta_D)}\frac{\tilde{R}_{D}(im)}{\tilde{P}_{2,m}(\tau)}\left[\tau^{\tilde{d}_{D,\delta_D,0}}\star_{\kappa_2}\omega_2\right]\right\|_{(\nu,\beta,\mu,\chi_2,\alpha,\kappa_2,\epsilon)}\\
\le\frac{C_3}{C_{\tilde{P}}(r_{\tilde{Q},\tilde{R}_D})^{\frac{1}{\delta_D\kappa_2}}}|\epsilon_0|^{(\chi_2+\alpha)\tilde{d}_{d_{D},\delta_D,0}-\chi_2(d_D-2k_{0,2}+k_{0,3}-\delta_D)}\left\|\omega_2\right\|_{(\nu,\beta,\mu,\chi_2,\alpha,\kappa_2,\epsilon)}\\
=\frac{C_3}{C_{\tilde{P}}(r_{\tilde{Q},\tilde{R}_D})^{\frac{1}{\delta_D\kappa_2}}}|\epsilon_0|^{\alpha(d_{D}-2k_{0,2}+k_{0,3}-\delta_D)}\left\|\omega_2\right\|_{(\nu,\beta,\mu,\chi_2,\alpha,\kappa_2,\epsilon)},
\end{multline}

\begin{multline}\label{e794c}
\left\|\epsilon^{-\chi_2(d_{D}-2k_{0,2}+k_{0,3}-\delta_D)}\frac{\tilde{R}_{D}(im)}{\tilde{P}_{2,m}(\tau)}\left[\tau^{\tilde{d}_{D,q_1,q_2}}\star_{\kappa_2}(\tau^{\kappa_2 q_2}\omega_2)\right]\right\|_{(\nu,\beta,\mu,\chi_2,\alpha,\kappa_2,\epsilon)}\\
\le\frac{C'_3}{C_{\tilde{P}}(r_{\tilde{Q},\tilde{R}_D})^{\frac{1}{\delta_D\kappa_2}}}|\epsilon_0|^{(\chi_2+\alpha)\kappa_2\left(\frac{\tilde{d}_{d_{D},q_1,q_2}}{\kappa_2}+q_2-\delta_D\right)-\chi_2(d_D-2k_{0,2}+k_{0,3}-\delta_D)}\left\|\omega_2\right\|_{(\nu,\beta,\mu,\chi_2,\alpha,\kappa_2,\epsilon)},
\end{multline}

for every $q_1\ge 1$ and $q_2\ge 1$ with $q_1+q_2=\delta_D$. In addition to that, it holds

\begin{equation}\label{e795c}
\left\|\frac{\tilde{R}_{D}(im)}{\tilde{P}_{2,m}(\tau)}\left[\tau^{\kappa_2(\delta_D-p)}\star_{\kappa_2}(\tau^{\kappa_2 p}\omega_2)\right]\right\|_{(\nu,\beta,\mu,\chi_2,\alpha,\kappa_2,\epsilon)}\\
\le\frac{C'_3}{C_{\tilde{P}}(r_{\tilde{Q},\tilde{R}_D})^{\frac{1}{\delta_D\kappa_2}}}|\epsilon_0|^{\chi_2+\alpha}\left\|\omega_2\right\|_{(\nu,\beta,\mu,\chi_2,\alpha,\kappa_2,\epsilon)},
\end{equation}
for all $1\le p\le \delta_D-1$.

We choose $r_{\tilde{Q},\tilde{R}_D}$ and $\varpi$ such that

\begin{multline}\label{e816c}
|\epsilon_0|^{\Delta_D+\alpha(\delta_D-d_D)+\beta}\left[\prod_{d=0}^{\delta_D-1}|\gamma_2-d|\frac{C_3}{C_{\tilde{P}}(r_{\tilde{Q},\tilde{R}_D})^{\frac{1}{\delta_D\kappa_2}}\Gamma\left(\frac{\tilde{d}_{D,\delta_D,0}}{\kappa_2}\right)}|\epsilon_0|^{\alpha(d_{D}-2k_{0,2}+k_{0,3}-\delta_D)}\varpi\right.\\
+\sum_{q_1+q_2=\delta_D,q_1\ge1,q_2\ge1}\frac{\delta_D!}{q_1!q_2!}\prod_{d=0}^{q_1-1}|\gamma_2-d|\left(\frac{C'_3\kappa_2^{q_2}}{C_{\tilde{P}}(r_{\tilde{Q},\tilde{R}_D})^{\frac{1}{\delta_D\kappa_2}}\Gamma\left(\frac{\tilde{d}_{D,q_1,q_2}}{\kappa_2}\right)}\right.\\
\hfill\times |\epsilon_0|^{(\chi_2+\alpha)\kappa_2\left(\frac{\tilde{d}_{d_{D},q_1,q_2}}{\kappa_2}+q_2-\delta_D+\frac{1}{\kappa_2}\right)-\chi_2(d_D-2k_{0,2}+k_{0,3}-\delta_D)}\varpi\\
+\sum_{1\le p\le q_2-1}|\tilde{A}_{q_2,p}|\frac{C'_3\kappa_2^p}{C_{\tilde{P}}(r_{\tilde{Q},\tilde{R}_D})^{\frac{1}{\delta_D\kappa_2}}\Gamma\left(\frac{\tilde{d}_{D,q_1,q_2}}{\kappa_2}+q_2-p\right)}\\
\left.\left.\times|\epsilon_0|^{(\chi_2+\alpha)\kappa_2\left(\frac{\tilde{d}_{d_{D},q_1,q_2}}{\kappa_2}+q_2-\delta_D+\frac{1}{\kappa_2}\right)-\chi_2(d_D-2k_{0,2}+k_{0,3}-\delta_D)}\varpi\right)\right]\\
+\sum_{1\le p\le \delta_D-1}|\tilde{A}_{\delta_D,p}|\frac{C'_3\kappa_2^p}{C_{\tilde{P}}(r_{\tilde{Q},\tilde{R}_D})^{\frac{1}{\delta_D\kappa_2}}\Gamma\left(\delta_D-p\right)}|\epsilon_0|^{\chi_2+\alpha}\varpi\le\frac{\varpi}{4}.
\end{multline}

This choice guarantees that
\begin{equation}\label{e823c}
\left\|\tilde{\mathcal{H}}_\epsilon^3(\omega_2)\right\|_{(\nu,\beta,\mu,\chi_2,\alpha,\kappa_2,\epsilon)}\le \frac{\varpi}{4}.
\end{equation}

We finally give upper bounds for the elements involved in $\tilde{\mathcal{H}}_\epsilon^4(\omega_2)$.

Let $\tilde{h}_1$ be defined in (\ref{e714b}), and let
$$\tilde{h}_2(\tau,m)=\tau^{\kappa_2-1}\int_{0}^{\tau^{\kappa_2}}\int_{-\infty}^{\infty}\omega_2((\tau^{\kappa_2}-s')^{1/\kappa_2},m-m_1)(\omega_2\star_{\kappa_2}^{E}\omega_2)((s')^{1/\kappa_2},m_1)\frac{1}{(\tau^{\kappa_2}-s')s'}ds'dm_1.$$

Regarding Proposition~\ref{prop3}, one has
\begin{equation}\label{e840b}
\left\|\tilde{h}_2(\tau,m)\right\|_{(\nu,\beta,\mu,\chi_2,\alpha,\kappa_2,\epsilon)}\le\frac{C_4}{|\epsilon|^{\chi_2+\alpha}}\left\|\omega_2\right\|_{(\nu,\beta,\mu,\chi_2,\alpha,\kappa_2,\epsilon)}\left\| \omega_2\star_{\kappa_2}^{E}\omega_2 \right\|_{(\nu,\beta,\mu,\chi_2,\alpha,\kappa_2,\epsilon)}.
\end{equation}

Moreover, in view of Corollary~\ref{coro3}, we get
\begin{equation}\label{e852b}
\left\| \omega_2\star_{\kappa_2}^{E}\omega_2 \right\|_{(\nu,\beta,\mu,\chi_2,\alpha,\kappa_2,\epsilon)}\le C_4\left\| \omega_2 \right\|^2_{(\nu,\beta,\mu,\chi_2,\alpha,\kappa_2,\epsilon)}
\end{equation}

Analogous arguments as in the proof of upper bounds for $\mathcal{H}_\epsilon^4$ we have that

\begin{multline}\label{e860c}
\left\|\epsilon^{-\chi_2(k_{\ell,3}+2\gamma_2-2k_{0,2}+k_{0,3})}\frac{\tilde{Q}(im)}{\tilde{P}_{2,m}(\tau)}\left[\tau^{k_{\ell,3}+2\gamma_2-2k_{0,2}+k_{0,3}}\star_{\kappa_2}(\tau \tilde{h}_2(\tau,m))\right]\right\|_{(\nu,\beta,\mu,\chi_2,\alpha,\kappa_2,\epsilon)}\\
\le \frac{C'_3}{C_{\tilde{P}}(r_{\tilde{Q},\tilde{R}_D})^{\frac{1}{\delta_D\kappa_2}}}|\epsilon_0|^{(\chi_2+\alpha)\kappa_2\left(\frac{k_{\ell,3}+2\gamma_2-2k_{0,2}+k_{0,3}}{\kappa_2}+\frac{1}{\kappa_2}\right)-\chi_2(k_{\ell,3}+2\gamma_2-2k_{0,2}+k_{0,3})-(\chi_2+\alpha)\kappa_2(\delta_D-\frac{1}{\kappa_2})}\left\|\tilde{h}_2(\tau,m)\right\|_{(\nu,\beta,\mu,\chi_2,\alpha,\kappa_2,\epsilon)}\\
\le \frac{(C_4)^2C'_3}{C_{\tilde{P}}(r_{\tilde{Q},\tilde{R}_D})^{\frac{1}{\delta_D\kappa_2}}}|\epsilon_0|^{(\chi_2+\alpha)(k_{\ell,3}+2\gamma_2-2k_{0,2}+k_{0,3}-\kappa_2\delta_D+1)-\chi_2(k_{\ell,3}+2\gamma_2-2k_{0,2}+k_{0,3})}\left\|\omega_2\right\|^3_{(\nu,\beta,\mu,\chi_2,\alpha,\kappa_2,\epsilon)}
\end{multline}

We choose $r_{\tilde{Q},\tilde{R}_D}$ and $\varpi$ such that

\begin{multline}\label{e868c}
\sum_{\ell=0}^{s_3}\frac{|a_{\ell,3}|}{\Gamma\left(\frac{k_{\ell,3}+2\gamma_2-2k_{0,2}+k_{0,3}}{\kappa_2}\right)}\frac{(C_4)^2C'_3}{C_{\tilde{P}}(r_{\tilde{Q},\tilde{R}_D})^{\frac{1}{\delta_D\kappa_2}}}|\epsilon_0|^{(\chi_2+\alpha)(k_{\ell,3}+2\gamma_2-2k_{0,2}+k_{0,2}-\kappa_2\delta_D+1)-\chi_2(k_{\ell,3}+2\gamma_2-2k_{0,2}+k_{0,3})}\varpi^3\\
+\sum_{\ell=s_3+1}^{M_3}\frac{|a_{\ell,3}|}{\Gamma\left(\frac{k_{\ell,3}+2\gamma_2-2k_{0,2}+k_{0,3}}{\kappa_2}\right)}\frac{(C_4)^2C'_3}{C_{\tilde{P}}(r_{\tilde{Q},\tilde{R}_D})^{\frac{1}{\delta_D\kappa_2}}}\\
\times|\epsilon_0|^{m_{\ell,3}+3\beta-\alpha k_{\ell,3}+(\chi_2+\alpha)(k_{\ell,3}+2\gamma_2-2k_{0,2}+k_{0,3}-\kappa_2\delta_D+1)-\chi_2(k_{\ell,3}+2\gamma_2-2k_{0,2}+k_{0,3})}\varpi^3\\
\le\frac{\varpi}{4}
\end{multline}

This choice allows to guarantee that
\begin{equation}\label{e874c}
\left\|\tilde{\mathcal{H}}_\epsilon^4(\omega_2)\right\|_{(\nu,\beta,\mu,\chi_2,\alpha,\kappa_2,\epsilon)}\le \frac{\varpi}{4}.
\end{equation}

In view of (\ref{e704c}), (\ref{e784c}), (\ref{e823c}) and (\ref{e868c}) we conclude the first part of the proof of Lemma~\ref{lema568b}, namely, the existence of $\varpi>0$ such that $\tilde{\mathcal{H}}_{\epsilon}$ sends $\bar{B}(0,\varpi)\subseteq F^{d}_{(\nu,\beta,\mu,\chi_2,\alpha,\kappa_2,\epsilon)}$ into itself.

In order to prove the second part of Lemma~\ref{lema568b}, we split the proof into four parts, which correspond to the terms associated to $\tilde{\mathcal{H}}_{\epsilon}$. Let $\omega_1,\omega_2\in\bar{B}(0,\varpi)\subseteq F^{d}_{(\nu,\beta,\mu,\chi_2,\alpha,\kappa_2,\epsilon)}$.

We state upper bounds concerning the term $\tilde{\mathcal{H}}^1_{\epsilon}$. We have

\begin{multline}\label{e626d}
\left\|\epsilon^{-\chi_2(k_{\ell,1}-2k_{0,2}+k_{0,3})}\frac{\tilde{Q}(im)}{\tilde{P}_{2,m}(\tau)}(\tau^{k_{\ell,1}-2k_{0,2}+k_{0,3}}\star_{\kappa_2}(\omega_1-\omega_2))\right\|_{(\nu,\beta,\mu,\chi_2,\alpha,\kappa_2,\epsilon)}\\
\le \frac{C_3}{C_{\tilde{P}}(r_{\tilde{Q},\tilde{R}_D})^{\frac{1}{\delta_D \kappa_2}}}|\epsilon|^{\alpha(k_{\ell,1}-2k_{0,2}+k_{0,3})}\left\|\omega_1-\omega_2\right\|_{(\nu,\beta,\mu,\chi_2,\alpha,\kappa_2,\epsilon)},
\end{multline} 

\begin{multline}\label{e627d}
\left\|\epsilon^{-\chi_2(k_{\ell,2}-k_{0,2})}\frac{\tilde{Q}(im)}{\tilde{P}_{2,m}(\tau)}(\tau^{k_{\ell,2}-k_{0,2}}\star_{\kappa_2}(\omega_1-\omega_2))\right\|_{(\nu,\beta,\mu,\chi_2,\alpha,\kappa_2,\epsilon)}\\
\le \frac{C_3}{C_{\tilde{P}}(r_{\tilde{Q},\tilde{R}_D})^{\frac{1}{\delta_D \kappa_2}}}|\epsilon|^{\alpha(k_{\ell,2}-k_{0,2})}\left\|\omega_1-\omega_2\right\|_{(\nu,\beta,\mu,\chi_2,\alpha,\kappa_2,\epsilon)},
\end{multline}

\begin{multline}\label{e628d}
\left\|\epsilon^{-\chi_2(k_{\ell,3}-k_{0,3})}\frac{\tilde{Q}(im)}{\tilde{P}_{2,m}(\tau)}(\tau^{k_{\ell,3}-k_{0,3}}\star_{\kappa_2}(\omega_1-\omega_2))\right\|_{(\nu,\beta,\mu,\chi_2,\alpha,\kappa_2,\epsilon)}\\
\le \frac{C_3}{C_{\tilde{P}}(r_{\tilde{Q},\tilde{R}_D})^{\frac{1}{\delta_D \kappa_2}}}|\epsilon|^{\alpha(k_{\ell,3}-k_{0,3})}\left\|\omega_1-\omega_2\right\|_{(\nu,\beta,\mu,\chi_2,\alpha,\kappa_2,\epsilon)}.
\end{multline}

Let $j\ge 1$. A constant $C_3(j)>0$, depending on $\nu,\kappa_2$, $k_{\ell,2}$ for $0\le \ell\le M_2$, $\tilde{Q}(X)$, $\tilde{R}_{D}(X)$, exists such that

\begin{multline}\label{e629d}
\left\|\epsilon^{-\chi_2(k_{\ell,2}-k_{0,2}+j)}\frac{\tilde{Q}(im)}{\tilde{P}_{2,m}(\tau)}(\tau^{k_{\ell,2}-k_{0,2}+j}\star_{\kappa_2}(\omega_1-\omega_2
))\right\|_{(\nu,\beta,\mu,\chi_2,\alpha,\kappa_2,\epsilon)}\\
\le \frac{C_{3.1}(j)}{C_{\tilde{P}}(r_{\tilde{Q},\tilde{R}_D})^{\frac{1}{\delta_D \kappa_2}}}|\epsilon|^{\alpha(k_{\ell,2}-k_{0,2}+j)}\left\|\omega_1-\omega_2
\right\|_{(\nu,\beta,\mu,\chi_2,\alpha,\kappa_2,\epsilon)},
\end{multline}

and

\begin{multline}\label{e630d}
\left\|\epsilon^{-\chi_2(k_{\ell,3}-k_{0,3}+j)}\frac{\tilde{Q}(im)}{\tilde{P}_{2,m}(\tau)}(\tau^{k_{\ell,3}-k_{0,3}+j}\star_{\kappa_2}(\omega_1-\omega_2
))\right\|_{(\nu,\beta,\mu,\chi_2,\alpha,\kappa_2,\epsilon)}\\
\le \frac{C_{3.2}(j)}{C_{\tilde{P}}(r_{\tilde{Q},\tilde{R}_D})^{\frac{1}{\delta_D \kappa_2}}}|\epsilon|^{\alpha(k_{\ell,3}-k_{0,3}+j)}\left\|\omega_1-\omega_2
\right\|_{(\nu,\beta,\mu,\chi_2,\alpha,\kappa_2,\epsilon)}.
\end{multline}

The constants $C_{3.1}(j),C_{3.2}(j)>0$ are as in (\ref{e633c}) and (\ref{e634c}), respectively.

We also have

\begin{multline}\label{e631d}
\left\|\epsilon^{-\chi_2(k_{\ell,3}-k_{0,3})}\frac{\tilde{Q}(im)}{\tilde{P}_{2,m}(\tau)}\left[\tau^{k_{\ell,3}-k_{0,3}+j}\star_{\kappa_2}\mathcal{B}_{\kappa_2}J_2(\tau,\epsilon)\star_{\kappa_2}\mathcal{B}_{\kappa_2}J_2(\tau,\epsilon)\star_{\kappa_2}\omega_1-\omega_2\right]\right\|_{(\nu,\beta,\mu,\chi_2,\alpha,\kappa_2,\epsilon)}\hfill\\
\le \frac{C_{3.3}(j)}{C_{\tilde{P}}(r_{\tilde{Q},\tilde{R}_D})^{\frac{1}{\delta_D \kappa_2}}}|\epsilon|^{\alpha(k_{\ell,3}-k_{0,3}+j)}\left\|\omega_1-\omega_2\right\|_{(\nu,\beta,\mu,\chi_2,\alpha,\kappa_2,\epsilon)},\hfill
\end{multline}
where $C_{3.3}(j)$ is as in (\ref{e677b}).

We choose large enough $r_{\tilde{Q},\tilde{R}_D}>0$ and $\varpi>0$ such that

\begin{multline}
\sum_{\ell=0}^{s_1}\frac{|a_{\ell,1}|}{\Gamma\left(\frac{k_{\ell,1}-2k_{0,2}+k_{0,3}}{\kappa_2}\right)}|\epsilon_0|^{\alpha(k_{\ell,1}-2k_{0,2}+k_{0,3})}\frac{C_3}{C_{\tilde{P}}(r_{\tilde{Q},\tilde{R}_D})^{\frac{1}{\delta_D\kappa_2}}}\hfill\\
\hfill+\sum_{\ell=s_1+1}^{M_1}\frac{|a_{\ell,1}|}{\Gamma\left(\frac{k_{\ell,1}-k_{0,2}+k_{0,3}}{\kappa_2}\right)}|\epsilon_0|^{m_{\ell,1}+\beta-\alpha k_{\ell,1}+\alpha(k_{\ell,1}-2k_{0,1}+k_{0,3})}\frac{C_3}{C_{\tilde{P}}(r_{\tilde{Q},\tilde{R}_D})^{\frac{1}{\delta_D\kappa_2}}}\\
+\sum_{\ell=1}^{s_2}\frac{2|a_{\ell,2}a_{0,2}|}{|a_{0,3}|}\frac{|\epsilon_0|^{\alpha(k_{\ell,2}-k_{0,2})}}{\Gamma\left(\frac{k_{\ell,2}-k_{0,2}}{\kappa_2}\right)}\frac{C_3}{C_{\tilde{P}}(r_{\tilde{Q},\tilde{R}_D})^{\frac{1}{\delta_D\kappa_2}}}\hfill\\
\hfill+\sum_{\ell=s_2+1}^{M_2}\frac{2|a_{\ell,2}a_{0,2}|}{|a_{0,3}|}\frac{|\epsilon_0|^{m_{\ell,2}+2\beta-\alpha k_{\ell,2}+\alpha(k_{\ell,2}-k_{0,2})}}{\Gamma\left(\frac{k_{\ell,2}-k_{0,2}}{\kappa_2}\right)}\frac{C_3}{C_{\tilde{P}}(r_{\tilde{Q},\tilde{R}_D})^{\frac{1}{\delta_D\kappa_2}}}\\
+\sum_{\ell=0}^{s_2}\frac{2|a_{\ell,2}a_{0,2}|}{|a_{0,3}|}\sum_{j\ge1}|J_{2,j}|\frac{|\epsilon_0|^{\alpha(k_{\ell,2}-k_{0,2}+j)}}{\Gamma\left(\frac{k_{\ell,2}-k_{0,2}}{\kappa_2}\right)}\frac{\hat{C}_3A_3^j}{C_{\tilde{P}}(r_{\tilde{Q},\tilde{R}_D})^{\frac{1}{\delta_D\kappa_2}}}\hfill\\
\hfill+\sum_{\ell=s_2+1}^{M_2}\frac{2|a_{\ell,2}a_{0,2}|}{|a_{0,3}|}\sum_{j\ge1}|J_{2,j}|\frac{|\epsilon_0|^{m_{\ell,2}+2\beta-\alpha k_{\ell,2}+\alpha(k_{\ell,2}-k_{0,2}+j)}}{\Gamma\left(\frac{k_{\ell,2}-k_{0,2}}{\kappa_2}\right)}\frac{\hat{C}_3A_3^j}{C_{\tilde{P}}(r_{\tilde{Q},\tilde{R}_D})^{\frac{1}{\delta_D\kappa_2}}}\\
+\sum_{\ell=1}^{s_3}\frac{3|a_{0,2}|^2|a_{\ell,3}|}{|a_{0,3}|^2}\frac{|\epsilon_0|^{\alpha(k_{\ell,3}-k_{0,3})}}{\Gamma\left(\frac{k_{\ell,3}-k_{0,3}}{\kappa_2}\right)}\frac{C_3}{C_{\tilde{P}}(r_{\tilde{Q},\tilde{R}_D})^{\frac{1}{\delta_D\kappa_2}}}\hfill\\
\hfill+\sum_{\ell=0}^{s_3}\frac{6|a_{0,2}|^2|a_{\ell,3}|}{|a_{0,3}|^2}\sum_{j\ge1}|J_{2,j}|\frac{|\epsilon_0|^{\alpha(k_{\ell,3}-k_{0,3}+j)}}{\Gamma\left(\frac{k_{\ell,3}-k_{0,3}}{\kappa_2}\right)}\frac{\hat{C}_3A_3^j}{C_{\tilde{P}}(r_{\tilde{Q},\tilde{R}_D})^{\frac{1}{\delta_D\kappa_2}}}\\
+\sum_{\ell=0}^{s_3}\frac{3|a_{0,2}|^2|a_{\ell,3}|}{|a_{0,3}|^2}\sum_{j\ge1}|\tilde{J}_{2,j}|\frac{|\epsilon_0|^{\alpha(k_{\ell,3}-k_{0,3}+j)}}{\Gamma\left(\frac{k_{\ell,3}-k_{0,3}}{\kappa_2}\right)}\frac{\hat{C}_3A_3^j}{C_{\tilde{P}}(r_{\tilde{Q},\tilde{R}_D})^{\frac{1}{\delta_D\kappa_2}}}\hfill\\
\hfill+\sum_{\ell=s_3+1}^{M_3}\frac{3|a_{0,2}|^2|a_{\ell,3}|}{|a_{0,3}|^2}\frac{|\epsilon_0|^{m_{\ell,3}+3\beta-\alpha k_{\ell,3}+\alpha(k_{\ell,3}-k_{0,3})}}{\Gamma\left(\frac{k_{\ell,3}-k_{0,3}}{\kappa_2}\right)}\frac{C_3}{C_{\tilde{P}}(r_{\tilde{Q},\tilde{R}_D})^{\frac{1}{\delta_D\kappa_2}}}\\
+\sum_{\ell=s_3+1}^{M_3}\frac{6|a_{0,2}|^2|a_{\ell,3}|}{|a_{0,3}|^2}\sum_{j\ge1}|J_{2,j}|\frac{|\epsilon_0|^{\alpha(k_{\ell,3}-k_{0,3}+j)}}{\Gamma\left(\frac{k_{\ell,3}-k_{0,3}}{\kappa_2}\right)}\frac{\hat{C}_3A_3^j}{C_{\tilde{P}}(r_{\tilde{Q},\tilde{R}_D})^{\frac{1}{\delta_D\kappa_2}}}\hfill\\
\hfill+\sum_{\ell=s_3+1}^{M_3}\frac{3|a_{0,2}|^2|a_{\ell,3}|}{|a_{0,3}|^2}\sum_{j\ge1}|\tilde{J}_{2,j}|\frac{|\epsilon_0|^{\alpha(k_{\ell,3}-k_{0,3}+j)}}{\Gamma\left(\frac{k_{\ell,3}-k_{0,3}}{\kappa_2}\right)}\frac{\hat{C}_3A_3^j}{C_{\tilde{P}}(r_{\tilde{Q},\tilde{R}_D})^{\frac{1}{\delta_D\kappa_2}}}\le \frac{1}{8}.
\end{multline}

This choice allows us to deduce that

\begin{equation}\label{e704d} 
\left\|\tilde{\mathcal{H}}^1_\epsilon(\omega_1)-\tilde{\mathcal{H}}^1_\epsilon(\omega_2)\right\|_{(\nu,\beta,\mu,\chi_2,\alpha,\kappa_2,\epsilon)}\le \frac{1}{8}\left\|\omega_1-\omega_2\right\|_{(\nu,\beta,\mu,\chi_2,\alpha,\kappa_2,\epsilon)}.
\end{equation}

We now give upper bounds associated to $\tilde{\mathcal{H}}_\epsilon^2$.

We have
\begin{multline}\label{e947c}
\omega_1((\tau^{\kappa_2}-s')^{1/\kappa_2},m-m_1)\omega_1((s')^{1/\kappa_2},m_1)-\omega_2((\tau^{\kappa_2}-s')^{1/\kappa_2},m-m_1)\omega_2((s')^{1/\kappa_2},m_1)\\
=\left(\omega_1((\tau^{\kappa_2}-s')^{1/\kappa_2},m-m_1)-\omega_2((\tau^{\kappa_2}-s')^{1/\kappa_2},m-m_1)\right)\omega_1((s')^{1/\kappa_2},m_1)\\
+\omega_2((\tau^{\kappa_2}-s')^{1/\kappa_2},m-m_1)\left(\omega_1((s')^{1/\kappa_2},m_1)-\omega_2((s')^{1/\kappa_2},m_1)\right).
\end{multline}

For $j=1,2$, we define
$$\tilde{h}_{1j}(\tau,m)=\tau^{\kappa_2-1}\int_{0}^{\tau^{\kappa_2}}\int_{-\infty}^{\infty}\omega_j((\tau^{\kappa_2}-s')^{1/\kappa_2},m-m_1)\omega_j((s')^{1/\kappa_2},m_1)\frac{1}{(\tau^{\kappa_2}-s')s'}ds'dm_1.$$

We get
\begin{multline}\label{e957d}
\left\|\tilde{h}_{11}(\tau,m)-\tilde{h}_{12}(\tau,m)\right\|_{(\nu,\beta,\mu,\chi_2,\alpha,\kappa_2,\epsilon)}\le \frac{C_4}{|\epsilon|^{\chi_2+\alpha}}(\left\|\omega_1\right\|_{(\nu,\beta,\mu,\chi_2,\alpha,\kappa_2,\epsilon)}+\left\|\omega_2\right\|_{(\nu,\beta,\mu,\chi_2,\alpha,\kappa_2,\epsilon)})\\
\times\left\|\omega_1-\omega_2\right\|_{(\nu,\beta,\mu,\chi_2,\alpha,\kappa_2,\epsilon)},
\end{multline}

which leads to

\begin{multline}\label{e726d}
\left\|\epsilon^{-\chi_2(d_\ell-k_{0,2}+k_{0,3}-\delta_\ell)}\frac{\tilde{R}_{\ell}(im)}{\tilde{P}_{2,m}(\tau)}\left[\tau^{\tilde{d}_{\ell,q_1,q_2}}\star_{\kappa_2}(\tau^{\kappa_2 q_2}(\omega_1-\omega_2))\right]\right\|_{(\nu,\beta,\mu,\chi_2,\alpha,\kappa_2,\epsilon)}\\
\le \frac{C'_3}{C_{\tilde{P}}(r_{\tilde{Q},\tilde{R}_D})^{\frac{1}{\delta_D\kappa_2}}}|\epsilon|^{(\chi_2+\alpha)\kappa_2\left(\frac{\tilde{d}_{\ell,q_1,q_2}}{\kappa_2}+q_2-\delta_D+\frac{1}{\kappa_2}\right)-\chi_2(\tilde{d}_{\ell}-k_{0,2}+k_{0,3}-\delta_\ell)}\left\|\omega_1-\omega_2\right\|_{(\nu,\beta,\mu,\chi_2,\alpha,\kappa_2,\epsilon)},
\end{multline}
for every $q_1\ge0$, $q_2\ge 1$ such that $q_1+q_2=\delta_\ell$, and also

\begin{multline}\label{e727d}
\left\|\epsilon^{-\chi_1(d_\ell-k_{0,2}+k_{0,3}-\delta_\ell)}\frac{\tilde{R}_{\ell}(im)}{\tilde{P}_{2,m}(\tau)}\left[\tau^{\tilde{d}_{\ell,q_1,q_2}+\kappa_2(q_2-p)}\star_{\kappa_2}(\tau^{\kappa_2 p}\omega_1-\omega_2)\right]\right\|_{(\nu,\beta,\mu,\chi_2,\alpha,\kappa_2,\epsilon)}\\
\le \frac{C'_3}{C_{\tilde{P}}(r_{\tilde{Q},\tilde{R}_D})^{\frac{1}{\delta_D\kappa_2}}}|\epsilon|^{(\chi_2+\alpha)\kappa_2\left(\frac{\tilde{d}_{\ell,q_1,q_2}}{\kappa_2}+q_2-\delta_D+\frac{1}{\kappa_2}\right)-\chi_2(d_{\ell}-k_{0,2}+k_{0,3}-\delta_\ell)}\left\|\omega_1-\omega_2\right\|_{(\nu,\beta,\mu,\chi_2,\alpha,\kappa_2,\epsilon)},
\end{multline}
for every $1\le p\le q_2-1$. Also,

\begin{multline}\label{e728d}
\left\|\epsilon^{-\chi_2(k_{\ell,2}+\gamma_2-2k_{0,2}+k_{0,3})}\frac{\tilde{Q}(im)}{\tilde{P}_{2,m}(\tau)}\left[\tau^{k_{\ell,2}+\gamma_2-2k_{0,2}+k_{0,3}}\star_{\kappa_2}(\tau [\tilde{h}_{11}(\tau,m)-\tilde{h}_{12}(\tau,m)])\right]\right\|_{(\nu,\beta,\mu,\chi_2,\alpha,\kappa_2,\epsilon)}\\
\le \frac{C_4C'_3}{C_{\tilde{P}}(r_{\tilde{Q},\tilde{R}_D})^{\frac{1}{\delta_D\kappa_2}}}|\epsilon|^{(\chi_2+\alpha)(k_{\ell,2}+\gamma_2-2k_{0,2}+k_{0,3}-\kappa_2\delta_D+1)-\chi_2(k_{\ell,2}+\gamma_2-2k_{0,2}+k_{0,3})}\\
\times(\left\|\omega_1\right\|_{(\nu,\beta,\mu,\chi_2,\alpha,\kappa_2,\epsilon)}+\left\|\omega_2\right\|_{(\nu,\beta,\mu,\chi_2,\alpha,\kappa_2,\epsilon)})\left\|\omega_1-\omega_2\right\|_{(\nu,\beta,\mu,\chi_2,\alpha,\kappa_2,\epsilon)}.
\end{multline} 

The same arguments as above follow to get 

\begin{multline}\label{e729d}
\left\|\epsilon^{-\chi_2k_{\ell,3}}\frac{\tilde{Q}(im)}{\tilde{P}_{2,m}(\tau)}\left[\tau^{k_{\ell,3}}\star_{\kappa_2}(\tau [\tilde{h}_{11}(\tau,m)-\tilde{h}_{12}(\tau,m)])\right]\right\|_{(\nu,\beta,\mu,\chi_2,\alpha,\kappa_2,\epsilon)}\\
\le \frac{C_4C'_3}{C_{\tilde{P}}(r_{\tilde{Q},\tilde{R}_D})^{\frac{1}{\delta_D\kappa_2}}}|\epsilon|^{(\chi_2+\alpha)(k_{\ell,3}-\kappa_2\delta_D+1)-\chi_2k_{\ell,3}}\\
\times(\left\|\omega_1\right\|_{(\nu,\beta,\mu,\chi_2,\alpha,\kappa_2,\epsilon)}+\left\|\omega_2\right\|_{(\nu,\beta,\mu,\chi_2,\alpha,\kappa_2,\epsilon)})\left\|\omega_1-\omega_2\right\|_{(\nu,\beta,\mu,\chi_2,\alpha,\kappa_2,\epsilon)},
\end{multline} 

and

\begin{multline}\label{e730d}
\left\|\epsilon^{-\chi_2(k_{\ell,3}+j)}\frac{\tilde{Q}(im)}{\tilde{P}_{2,m}(\tau)}\left[\tau^{k_{\ell,3}+j}\star_{\kappa_2}(\tau [\tilde{h}_{11}(\tau,m)-\tilde{h}_{12}(\tau,m)])\right]\right\|_{(\nu,\beta,\mu,\chi_2,\alpha,\kappa_2,\epsilon)}\\
\le \frac{C_4C'_3(j)}{C_{\tilde{P}}(r_{\tilde{Q},\tilde{R}_D})^{\frac{1}{\delta_D\kappa_2}}}|\epsilon|^{(\chi_2+\alpha)(k_{\ell,3}+j-\kappa_2\delta_D+1)-\chi_2(k_{\ell,3}+j)}\\
\times(\left\|\omega_1\right\|_{(\nu,\beta,\mu,\chi_2,\alpha,\kappa_2,\epsilon)}+\left\|\omega_2\right\|_{(\nu,\beta,\mu,\chi_2,\alpha,\kappa_2,\epsilon)})\left\|\omega_1-\omega_2\right\|_{(\nu,\beta,\mu,\chi_2,\alpha,\kappa_2,\epsilon)},
\end{multline} 
for every $j\ge1$, where $C'_3(j)$ is determined in (\ref{e731}).

One can choose $r_{\tilde{Q},\tilde{R}_D}>0$ and $\varpi>0$ such that the following condition holds:

\begin{multline}\label{e766d}
\sum_{\ell=1}^{D-1}|\epsilon_0|^{\Delta_{\ell}+\alpha(\delta_{\ell}-d_{\ell})+\beta}\left[\prod_{d=0}^{\delta_\ell-1}|\gamma_2-d|\frac{C_3}{C_{\tilde{P}}(r_{\tilde{Q},\tilde{R}_D})^{\frac{1}{\delta_D\kappa_2}}\Gamma\left(\frac{\tilde{d}_{\ell,\delta_\ell,0}}{\kappa_2}\right)}|\epsilon_0|^{\alpha(d_\ell-k_{0,2}+k_{0,3}-\delta_\ell)}\right.\\
+\sum_{q_1+q_2=\delta_\ell,q_2\ge1}\frac{\delta_\ell!}{q_1!q_2!}\prod_{d=0}^{q_1-1}|\gamma_2-d|\left(\frac{C'_3\kappa_2^{q_2}}{C_{\tilde{P}}(r_{\tilde{Q},\tilde{R}_D})^{\frac{1}{\delta_D\kappa_2}}\Gamma\left(\frac{\tilde{d}_{\ell,q_1,q_2}}{\kappa_2}\right)}\right.\hfill\\
\hfill\times |\epsilon_0|^{(\chi_2+\alpha)\kappa_2\left(\frac{\tilde{d}_{\ell,q_1,q_2}}{\kappa_2}+q_2-\delta_D+\frac{1}{\kappa_2}\right)-\chi_2(d_\ell-k_{0,2}+k_{0,3}-\delta_\ell)}\\
+\sum_{1\le p\le q_2-1}|\tilde{A}_{q_2,p}|\frac{C'_3\kappa_2^p}{C_{\tilde{P}}(r_{\tilde{Q},\tilde{R}_D})^{\frac{1}{\delta_D\kappa_2}}\Gamma\left(\frac{\tilde{d}_{\ell,q_1,q_2}}{\kappa_2}+q_2-p\right)}\hfill\\
\hfill\left.\left. \times |\epsilon_0|^{(\chi_2+\alpha)\kappa_2\left(\frac{\tilde{d}_{\ell,q_1,q_2}}{\kappa_2}+q_2-\delta_D+\frac{1}{\kappa_2}\right)-\chi_2(d_\ell-k_{0,2}+k_{0,3}-\delta_\ell)}   \right)    \right]\\
+\sum_{\ell=0}^{s_2}\frac{|a_{\ell,2}|}{\Gamma\left(\frac{k_{\ell,2}+\gamma_2-2k_{0,2}+k_{0,3}}{\kappa_2}\right)}\frac{C_4C'_3}{C_{\tilde{P}}(r_{\tilde{Q},\tilde{R}_D})^{\frac{1}{\delta_D\kappa_2}}}|\epsilon_0|^{(\chi_2+\alpha)(k_{\ell,2}+\gamma_2-2k_{0,2}+k_{0,3}-\kappa_2\delta_D+1)-\chi_2(k_{\ell,2}+\gamma_2-2k_{0,2}+k_{0,3})}2\varpi \hfill\\
\hfill+\sum_{\ell=s_2+1}^{M_2}\frac{|a_{\ell,2}|C_4C'_3}{\Gamma\left(\frac{k_{\ell,2}+\gamma_2-2k_{0,2}+k_{0,3}}{\kappa_2}\right)}\frac{|\epsilon_0|^{m_{\ell,2}+2\beta-\alpha k_{\ell,2}+(\chi_2+\alpha)(k_{\ell,2}+\gamma_2-2k_{0,2}+k_{0,3}-\kappa_2\delta_D+1)-\chi_2(k_{\ell,2}+\gamma_2-2k_{0,2}+k_{0,3})}}{C_{\tilde{P}}(r_{\tilde{Q},\tilde{R}_D})^{\frac{1}{\delta_D\kappa_2}}}2\varpi\\
+\sum_{\ell=0}^{s_3}|a_{\ell,3}|\frac{C_4C'_3}{\Gamma\left(\frac{k_{\ell,3}}{\kappa_2}\right)}\frac{|\epsilon_0|^{(\chi_2+\alpha)(k_{\ell,3}-\kappa_2\delta_D+1)-\chi_2k_{\ell,3}}}{C_{\tilde{P}}(r_{\tilde{Q},\tilde{R}_D})^{\frac{1}{\delta_D\kappa_2}}}2\varpi\hfill\\
\hfill+\sum_{\ell=s_3+1}^{M_3}|a_{\ell,3}|\frac{C_4C'_3}{\Gamma\left(\frac{k_{\ell,3}}{\kappa_2}\right)}\frac{|\epsilon_0|^{m_{\ell,3}+3\beta-\alpha k_{\ell,3}+(\chi_2+\alpha)(k_{\ell,3}-\kappa_2\delta_D+1)-\chi_2k_{\ell,3}}}{C_{\tilde{P}}(r_{\tilde{Q},\tilde{R}_D})^{\frac{1}{\delta_D\kappa_2}}}2\varpi\\
+\sum_{\ell=0}^{s_3}|a_{\ell,3}|\sum_{j\ge1}|J_{2,j}|\frac{|\epsilon_0|^{(\chi_2+\alpha)(k_{\ell,3}+j-\kappa_2\delta_D+1)-\chi_2(k_{\ell,3}+j)}}{\Gamma\left(\frac{k_{\ell,3}}{\kappa_2}\right)}\frac{C_4\hat{C}_3A_3^j}{C_{\tilde{P}}(r_{\tilde{Q},\tilde{R}_D})^{\frac{1}{\delta_D\kappa_2}}}2\varpi\\
+\sum_{\ell=s_3+1}^{M_3}|a_{\ell,3}|\sum_{j\ge1}|J_{2,j}|\frac{|\epsilon_0|^{m_{\ell,3}+3\beta-\alpha k_{\ell,3}+(\chi_2+\alpha)(k_{\ell,3}+j-\kappa_2\delta_D+1)-\chi_2(k_{\ell,3}+j)}}{\Gamma\left(\frac{k_{\ell,3}}{\kappa_2}\right)}\frac{C_4\hat{C}_3A_3^j}{C_{\tilde{P}}(r_{\tilde{Q},\tilde{R}_D})^{\frac{1}{\delta_D\kappa_2}}}2\varpi\\
\le\frac{1}{8}.\hfill
\end{multline}

We get
 
\begin{equation}\label{e705d} 
\left\|\tilde{\mathcal{H}}^2_\epsilon(\omega_1)-\tilde{\mathcal{H}}^2_\epsilon(\omega_1)\right\|_{(\nu,\beta,\mu,\chi_2,\alpha,\kappa_2,\epsilon)}\le \frac{1}{8}\left\|\omega_1-\omega_2\right\|_{(\nu,\beta,\mu,\chi_2,\alpha,\kappa_2,\epsilon)}.
\end{equation}

We now give upper estimates for the difference associated to $\tilde{\mathcal{H}}_{\epsilon}^3$.

It holds that
 
\begin{multline}\label{e793d}
\left\|\epsilon^{-\chi_2(d_{D}-2k_{0,2}+k_{0,3}-\delta_D)}\frac{\tilde{R}_{D}(im)}{\tilde{P}_{2,m}(\tau)}\left[\tau^{\tilde{d}_{D,\delta_D,0}}\star_{\kappa_2}(\omega_1-\omega_2)\right]\right\|_{(\nu,\beta,\mu,\chi_2,\alpha,\kappa_2,\epsilon)}\\
\le\frac{C_3}{C_{\tilde{P}}(r_{\tilde{Q},\tilde{R}_D})^{\frac{1}{\delta_D\kappa_2}}}|\epsilon_0|^{\alpha(d_{D}-2k_{0,2}+k_{0,3}-\delta_D)}\left\|\omega_1-\omega_2\right\|_{(\nu,\beta,\mu,\chi_2,\alpha,\kappa_2,\epsilon)},
\end{multline}

\begin{multline}\label{e794d}
\left\|\epsilon^{-\chi_2(d_{D}-2k_{0,2}+k_{0,3}-\delta_D)}\frac{\tilde{R}_{D}(im)}{\tilde{P}_{2,m}(\tau)}\left[\tau^{\tilde{d}_{D,q_1,q_2}}\star_{\kappa_2}(\tau^{\kappa_2 q_2}(\omega_1-\omega_2))\right]\right\|_{(\nu,\beta,\mu,\chi_2,\alpha,\kappa_2,\epsilon)}\\
\le\frac{C'_3}{C_{\tilde{P}}(r_{\tilde{Q},\tilde{R}_D})^{\frac{1}{\delta_D\kappa_2}}}|\epsilon_0|^{(\chi_2+\alpha)\kappa_2\left(\frac{\tilde{d}_{d_{D},q_1,q_2}}{\kappa_2}+q_2-\delta_D\right)-\chi_2(d_D-2k_{0,2}+k_{0,3}-\delta_D)}\left\|\omega_1-\omega_2\right\|_{(\nu,\beta,\mu,\chi_2,\alpha,\kappa_2,\epsilon)},
\end{multline}

for every $q_1\ge 1$ and $q_2\ge 1$ with $q_1+q_2=\delta_D$. In addition to that, it holds

\begin{multline}\label{e795d}
\left\|\frac{\tilde{R}_{D}(im)}{\tilde{P}_{2,m}(\tau)}\left[\tau^{\kappa_2(\delta_D-p)}\star_{\kappa_2}(\tau^{\kappa_2 p}(\omega_1-\omega_2))\right]\right\|_{(\nu,\beta,\mu,\chi_2,\alpha,\kappa_2,\epsilon)}\hfill\\
\hfill\le\frac{C'_3}{C_{\tilde{P}}(r_{\tilde{Q},\tilde{R}_D})^{\frac{1}{\delta_D\kappa_2}}}|\epsilon_0|^{\chi_2+\alpha}\left\|\omega_1-\omega_2\right\|_{(\nu,\beta,\mu,\chi_2,\alpha,\kappa_2,\epsilon)},
\end{multline}
for all $1\le p\le \delta_D-1$.

We choose $r_{\tilde{Q},\tilde{R}_D}$ and $\varpi$ such that

\begin{multline}\label{e816d}
|\epsilon_0|^{\Delta_D+\alpha(\delta_D-d_D)+\beta}\left[\prod_{d=0}^{\delta_D-1}|\gamma_2-d|\frac{C_3}{C_{\tilde{P}}(r_{\tilde{Q},\tilde{R}_D})^{\frac{1}{\delta_D\kappa_2}}\Gamma\left(\frac{\tilde{d}_{D,\delta_D,0}}{\kappa_2}\right)}|\epsilon_0|^{\alpha(d_{D}-2k_{0,2}+k_{0,3}-\delta_D)}\right.\\
+\sum_{q_1+q_2=\delta_D,q_1\ge1,q_2\ge1}\frac{\delta_D!}{q_1!q_2!}\prod_{d=0}^{q_1-1}|\gamma_2-d|\left(\frac{C'_3\kappa_2^{q_2}}{C_{\tilde{P}}(r_{\tilde{Q},\tilde{R}_D})^{\frac{1}{\delta_D\kappa_2}}\Gamma\left(\frac{\tilde{d}_{D,q_1,q_2}}{\kappa_2}\right)}\right.\\
\hfill\times |\epsilon_0|^{(\chi_2+\alpha)\kappa_2\left(\frac{\tilde{d}_{d_{D},q_1,q_2}}{\kappa_2}+q_2-\delta_D+\frac{1}{\kappa_2}\right)-\chi_2(d_D-2k_{0,2}+k_{0,3}-\delta_D)}\\
+\sum_{1\le p\le q_2-1}|\tilde{A}_{q_2,p}|\frac{C'_3\kappa_2^p}{C_{\tilde{P}}(r_{\tilde{Q},\tilde{R}_D})^{\frac{1}{\delta_D\kappa_2}}\Gamma\left(\frac{\tilde{d}_{D,q_1,q_2}}{\kappa_2}+q_2-p\right)}\\
\left.\left.\times|\epsilon_0|^{(\chi_2+\alpha)\kappa_2\left(\frac{\tilde{d}_{d_{D},q_1,q_2}}{\kappa_2}+q_2-\delta_D+\frac{1}{\kappa_2}\right)-\chi_2(d_D-2k_{0,2}+k_{0,3}-\delta_D)}\right)\right]\\
+\sum_{1\le p\le \delta_D-1}|\tilde{A}_{\delta_D,p}|\frac{C'_3\kappa_2^p}{C_{\tilde{P}}(r_{\tilde{Q},\tilde{R}_D})^{\frac{1}{\delta_D\kappa_2}}\Gamma\left(\delta_D-p\right)}|\epsilon_0|^{\chi_2+\alpha}\le\frac{1}{8}.
\end{multline}

This choice guarantees that
\begin{equation}\label{e823d}
\left\|\tilde{\mathcal{H}}_\epsilon^3(\omega_1)-\tilde{\mathcal{H}}_\epsilon^3(\omega_2)\right\|_{(\nu,\beta,\mu,\chi_2,\alpha,\kappa_2,\epsilon)}\le \frac{1}{8}\left\|\omega_1-\omega_2\right\|_{(\nu,\beta,\mu,\chi_2,\alpha,\kappa_2,\epsilon)}.
\end{equation}

We finally give upper bounds for the elements involved in $\tilde{\mathcal{H}}_\epsilon^4$.

We have

\begin{multline}\label{e947d}
\omega_1((\tau^{\kappa_2}-s')^{1/\kappa_2},m-m_1)[\omega_1\star_{\kappa_2}^E\omega_1]((s')^{1/\kappa_2},m_1)\\-\omega_2((\tau^{\kappa_2}-s')^{1/\kappa_2},m-m_1)[\omega_2\star_{\kappa_2}^E\omega_2]((s')^{1/\kappa_2},m_1)\\
=\left(\omega_1((\tau^{\kappa_2}-s')^{1/\kappa_2},m-m_1)-\omega_2((\tau^{\kappa_2}-s')^{1/\kappa_2},m-m_1)\right)[\omega_1\star_{\kappa_2}^E\omega_1]((s')^{1/\kappa_2},m_1)\\
+\omega_2((\tau^{\kappa_2}-s')^{1/\kappa_2},m-m_1)\left([\omega_1\star_{\kappa_2}^E\omega_1]((s')^{1/\kappa_2},m_1)-[\omega_2\star_{\kappa_2}^E\omega_2]((s')^{1/\kappa_2},m_1)\right).
\end{multline}

For $j=1,2$, we define
$$\tilde{h}_{2j}(\tau,m)=\tau^{\kappa_2-1}\int_{0}^{\tau^{\kappa_2}}\int_{-\infty}^{\infty}\omega_j((\tau^{\kappa_2}-s')^{1/\kappa_2},m-m_1)[\omega_j\star_{\kappa_2}^{E}\omega_j]((s')^{1/\kappa_2},m_1)\frac{1}{(\tau^{\kappa_2}-s')s'}ds'dm_1.$$

It is straight to check that
\begin{multline}\label{e1070d}
\left\| [\omega_1\star^{E}_{\kappa_2}\omega_1]-[\omega_2\star^{E}_{\kappa_2}\omega_2]\right\|_{(\nu,\beta,\mu,\chi_2,\alpha,\kappa_2,\epsilon)}\\
\le C_4 \left\|\omega_1-\omega_2\right\|_{(\nu,\beta,\mu,\chi_2,\alpha,\kappa_2,\epsilon)}\left(\left\|\omega_1\right\|_{(\nu,\beta,\mu,\chi_2,\alpha,\kappa_2,\epsilon)}+\left\|\omega_2\right\|_{(\nu,\beta,\mu,\chi_2,\alpha,\kappa_2,\epsilon)}\right)
\end{multline}

Followig the same steps as in the proof of (\ref{e957b}), we derive

$$\left\|\tilde{h}_{21}(\tau,m)-\tilde{h}_{22}(\tau,m)\right\|_{(\nu,\beta,\mu,\chi_2,\alpha,\kappa_2,\epsilon)}\le\frac{3C_4^2\varpi^2}{|\epsilon|^{\chi_2+\alpha}} \left\|\omega_1-\omega_2\right\|_{(\nu,\beta,\mu,\chi_2,\alpha,\kappa_2,\epsilon)}.$$

It also holds that

$$\left\|\tilde{h}_2(\tau,m)\right\|_{(\nu,\beta,\mu,\chi_2,\alpha,\kappa_2,\epsilon)}\le\frac{C_4}{|\epsilon|^{\chi_2+\alpha}}\left\|\omega_2\right\|_{(\nu,\beta,\mu,\chi_2,\alpha,\kappa_2,\epsilon)}\left\| \omega_2\star_{\kappa_2}^{E}\omega_2 \right\|_{(\nu,\beta,\mu,\chi_2,\alpha,\kappa_2,\epsilon)}.$$

Both, together with (\ref{e852b}), yield

\begin{multline}\label{e860d}
\left\|\epsilon^{-\chi_2(k_{\ell,3}+2\gamma_2-2k_{0,2}+k_{0,3})}\frac{\tilde{Q}(im)}{\tilde{P}_{2,m}(\tau)}\left[\tau^{k_{\ell,3}+2\gamma_2-2k_{0,2}+k_{0,3}}\star_{\kappa_2}[\tau (\tilde{h}_{21}(\tau,m)-\tilde{h}_{22}(\tau,m))]\right]\right\|_{(\nu,\beta,\mu,\chi_2,\alpha,\kappa_2,\epsilon)}\\
\le \frac{3\varpi^2(C_4)^2C'_3}{C_{\tilde{P}}(r_{\tilde{Q},\tilde{R}_D})^{\frac{1}{\delta_D\kappa_2}}}|\epsilon_0|^{(\chi_2+\alpha)(k_{\ell,3}+2\gamma_2-2k_{0,2}+k_{0,3}-\kappa_2\delta_D+1)-\chi_2(k_{\ell,3}+2\gamma_2-2k_{0,2}+k_{0,3})}\left\|\omega_1-\omega_2\right\|_{(\nu,\beta,\mu,\chi_2,\alpha,\kappa_2,\epsilon)}.
\end{multline}

We choose $r_{\tilde{Q},\tilde{R}_D}$ and $\varpi$ such that

\begin{multline}\label{e868d}
\sum_{\ell=0}^{s_3}\frac{|a_{\ell,3}|}{\Gamma\left(\frac{k_{\ell,3}+2\gamma_2-2k_{0,2}+k_{0,3}}{\kappa_2}\right)}\frac{(C_4)^2C'_3\varpi^2}{C_{\tilde{P}}(r_{\tilde{Q},\tilde{R}_D})^{\frac{1}{\delta_D\kappa_2}}}|\epsilon_0|^{(\chi_2+\alpha)(k_{\ell,3}+2\gamma_2-2k_{0,2}+k_{0,2}-\kappa_2\delta_D+1)-\chi_2(k_{\ell,3}+2\gamma_2-2k_{0,2}+k_{0,3})}\\
+\sum_{\ell=s_3+1}^{M_3}\frac{|a_{\ell,3}|}{\Gamma\left(\frac{k_{\ell,3}+2\gamma_2-2k_{0,2}+k_{0,3}}{\kappa_2}\right)}\frac{(C_4)^2C'_3\varpi^2}{C_{\tilde{P}}(r_{\tilde{Q},\tilde{R}_D})^{\frac{1}{\delta_D\kappa_2}}}\\
\times |\epsilon_0|^{m_{\ell,3}+3\beta-\alpha k_{\ell,3}+(\chi_2+\alpha)(k_{\ell,3}+2\gamma_2-2k_{0,2}+k_{0,3}-\kappa_2\delta_D+1)-\chi_2(k_{\ell,3}+2\gamma_2-2k_{0,2}+k_{0,3})}\\
\le\frac{1}{8}.
\end{multline}

This choice allows to guarantee that
\begin{equation}\label{e874d}
\left\|\tilde{\mathcal{H}}_\epsilon^4(\omega_1)-\tilde{\mathcal{H}}_\epsilon^4(\omega_2)\right\|_{(\nu,\beta,\mu,\chi_2,\alpha,\kappa_2,\epsilon)}\le \frac{1}{8}\left\|\omega_1-\omega_2\right\|_{(\nu,\beta,\mu,\chi_2,\alpha,\kappa_2,\epsilon)}.
\end{equation}

In view of (\ref{e704d}), (\ref{e705d}), (\ref{e823d}) and (\ref{e874d}) we conclude that
\begin{equation}\label{e1889}
\left\|\tilde{\mathcal{H}}_\epsilon(\omega_1)-\tilde{\mathcal{H}}_\epsilon(\omega_2)\right\|_{(\nu,\beta,\mu,\chi_2,\alpha,\kappa_2,\epsilon)}\le \frac{1}{2}\left\|\omega_1-\omega_2\right\|_{(\nu,\beta,\mu,\chi_2,\alpha,\kappa_2,\epsilon)},
\end{equation}

which completes the proof.
\end{proof}

\section{Proof of Theorem~\ref{teo2461}}

\begin{proof}
We proceed with the construction of the two families of actual solutions of the main problem through the steps taken in Section~\ref{seccion4}. We start with the first family of solutions.

Let $( \mathcal{E}_{p} )_{0 \leq p \leq \varsigma_1 - 1}$ be a good covering in $\mathbb{C}^{\ast}$ associated to the Gevrey order $(\chi_1+\alpha)\kappa_1$, and let
$\{ (S_{\mathfrak{d}_{p},\theta_1,\epsilon_{0}r_{\mathcal{T}}})_{0 \leq p \leq \varsigma_1-1},\mathcal{T}_1 \}$ be a
family of sectors associated to this good covering. From Proposition~\ref{prop1518}, we see that for each direction $\mathfrak{d}_{p}$, one can get a solution $\omega_{\kappa_1}^{\mathfrak{d}_{p}}(\tau,m,\epsilon)$
of the convolution equation (\ref{e315}) that belongs to the space
$F_{(\nu,\beta,\mu,\chi_1,\alpha,\kappa_1,\epsilon)}^{\mathfrak{d}_{p}}$ and thus satisfies the next bounds
\begin{equation}
|\omega_{\kappa_1}^{\mathfrak{d}_{p}}(\tau,m,\epsilon)| \leq \varpi (1 + |m|)^{-\mu} e^{-\beta |m|}
\frac{|\frac{\tau}{\epsilon^{\chi_1 + \alpha}}|}{1 + |\frac{\tau}{\epsilon^{\chi_1 + \alpha}}|^{2 \kappa_1}}
\exp( \nu |\frac{\tau}{\epsilon^{\chi_1 + \alpha}}|^{\kappa_1} ) \label{omega_kappa_dp_in_F1}
\end{equation}
for all $\tau \in \bar{D}(0,\rho) \cup S_{\mathfrak{d}_{p}}$, all $m \in \mathbb{R}$, all
$\epsilon \in D(0,\epsilon_{0}) \setminus \{ 0 \}$, for some well chosen $\varpi>0$. Besides, these functions $\omega_{\kappa_1}^{\mathfrak{d}_{p}}(\tau,m,\epsilon)$ are analytic continuations w.r.t $\tau$ of a common convergent series 
$$ \omega_{\kappa_1}(\tau,m,\epsilon) = \sum_{n \geq 1}
\frac{\omega_{n,1}(m,\epsilon)}{\Gamma(\frac{n}{\kappa_1})} \tau^{n} $$
with coefficients in the Banach space $E_{(\beta,\mu)}$
solution of (\ref{e315}) for all $\tau \in D(0,\rho)$. In particular, we see that the formal power
series
$$ \Omega_{\kappa_1}(\mathbb{T},m,\epsilon) = \sum_{n \geq 1} \omega_{n,1}(m,\epsilon) \mathbb{T}^{n} $$
is $m_{\kappa_1}-$summable in direction $\mathfrak{d}_{p}$
as a series with coefficients in the Banach space $E_{(\beta,\mu)}$ for all $\epsilon \in
D(0,\epsilon_{0}) \setminus \{ 0 \}$, in the sense of Definition~\ref{defi398}. We denote
\begin{equation}
\Omega_{\kappa_1}^{\mathfrak{d}_{p}}(\mathbb{T},m,\epsilon) = \kappa_1 \int_{L_{\gamma}}
\omega_{\kappa_1}^{\mathfrak{d}_{p}}(u,m,\epsilon)
\exp( -(\frac{u}{\mathbb{T}})^{\kappa_1} ) \frac{du}{u}
\end{equation}
its $m_{\kappa_1}-$sum along direction $\mathfrak{d}_{p}$, where
$L_{\gamma} = \mathbb{R}_{+}e^{i\gamma} \subset S_{\mathfrak{d}_{p}}$, which defines an 
$E_{(\beta,\mu)}-$valued analytic function
with respect to $\mathbb{T}$ on a sector
$$ S_{\mathfrak{d}_{p},\theta_1,h'|\epsilon|^{\chi_1 + \alpha}} =
\{ \mathbb{T} \in \mathbb{C}^{\ast} : |\mathbb{T}| < h'|\epsilon|^{\chi_1+\alpha} \ \ , \ \
|\mathfrak{d}_{p} - \mathrm{arg}(\mathbb{T})| < \theta/2 \} $$
for $\frac{\pi}{\kappa_1} < \theta_1 < \frac{\pi}{\kappa_1} + \mathrm{Ap}(S_{\mathfrak{d}_{p}})$ (where
$\mathrm{Ap}(S_{\mathfrak{d}_{p}})$ denotes the aperture of the sector $S_{\mathfrak{d}_{p}}$) and some
$h'>0$ (independent of $\epsilon$), for all
$\epsilon \in D(0,\epsilon_{0}) \setminus \{ 0 \}$.

Bearing in mind the identities of Proposition~\ref{prop462} and using
the properties for the $m_{\kappa_1}-$sum with respect to derivatives and products (within the Banach algebra $\mathbb{E}=E_{(\beta,\mu)}$ equipped with the convolution product $\star$ as described in Proposition~\ref{prop10}), we check that the functions $\Omega_{\kappa_1}^{\mathfrak{d}_{p}}(\mathbb{T},m,\epsilon)$ solves the problem
\begin{equation}
\mathcal{L}_{\mathbb{T},m,\epsilon}^1(\Omega_{\kappa_1}^{\mathfrak{d}_{p}}(\mathbb{T},m,\epsilon)) =
\mathcal{R}^1_{\mathbb{T},m,\epsilon}(\Omega_{\kappa_1}^{\mathfrak{d}_{p}}(\mathbb{T},m,\epsilon)),
\label{PDE_Omega_kappa_dp1}
\end{equation}

where
\begin{multline}
\tilde{Q}(im)\Omega_{\kappa_1}^{\mathfrak{d}_{p}}(\mathbb{T},m,\epsilon)\left[-a_{0,1}+\sum_{\ell=1}^{s_1}a_{\ell,1}\epsilon^{-\chi_1(k_{\ell,1}-k_{0,1})}\mathbb{T}^{k_{\ell,1}-k_{0,1}}+\sum_{\ell=s_1+1}^{M_1}a_{\ell,1}\epsilon^{m_{\ell,1}+\beta-\alpha k_{\ell,1}-\chi_1(k_{\ell,1}-k_{0,1})}\mathbb{T}^{k_{\ell,1}-k_{0,1}}\right.\\
\qquad+\left(\sum_{\ell=1}^{s_2}a_{\ell,2}\epsilon^{-\chi_1(k_{\ell,2}-k_{0,2})}\mathbb{T}^{k_{\ell,2}-k_{0,2}}+\sum_{\ell=s_2+1}^{M_2}a_{\ell,2}\epsilon^{m_{\ell,2}+2\beta-\alpha k_{\ell,2}-\chi_1(k_{\ell,2}-k_{0,2})}\mathbb{T}^{k_{\ell,2}-k_{0,2}}\right)\left(\frac{-2a_{0,1}}{a_{0,2}}\right)\\
+\left(\sum_{\ell=0}^{s_2}a_{\ell,2}\epsilon^{-\chi_1(k_{\ell,2}-k_{0,2})}\mathbb{T}^{k_{\ell,2}-k_{0,2}}+\sum_{\ell=s_2+1}^{M_2}a_{\ell,2}\epsilon^{m_{\ell,2}+2\beta-\alpha k_{\ell,2}-\chi_1(k_{\ell,2}-k_{0,2})}\mathbb{T}^{k_{\ell,2}-k_{0,2}}\right)\left(\frac{-2a_{0,1}}{a_{0,2}}\mathcal{J}_1(\epsilon^{-\chi_1}\mathbb{T})\right)\\
+\left(\sum_{\ell=0}^{s_3}a_{\ell,3}\epsilon^{-\chi_1(k_{\ell,3}-k_{0,1})}\mathbb{T}^{k_{\ell,3}-k_{0,1}}+\sum_{\ell=s_3+1}^{M_3}a_{\ell,3}\epsilon^{m_{\ell,3}+3\beta-\alpha k_{\ell,3}-\chi_1(k_{\ell,3}-k_{0,1})}\mathbb{T}^{k_{\ell,3}-k_{0,1}}\right)\\
\left.\hfill\times3\left(\frac{a_{01}}{a_{02}}\epsilon^{-\chi_1(k_{0,1}-k_{0,2})}\mathbb{T}^{k_{0,1}-k_{0,2}}(1+\mathcal{J}_1(\epsilon^{-\chi_1}\mathbb{T}))\right)^2\right]\\
+\tilde{Q}(im)\left(\int_{-\infty}^{+\infty}\Omega_{\kappa_1}^{\mathfrak{d}_{p}}(\mathbb{T},m-m_1,\epsilon)\Omega_{\kappa_1}^{\mathfrak{d}_{p}}(\mathbb{T},m_1,\epsilon)dm_1\right)\\
\times\left[\left(\sum_{\ell=0}^{s_2}a_{\ell,2}\epsilon^{-\chi_1k_{\ell,2}}\mathbb{T}^{k_{\ell,2}}+\sum_{\ell=s_2+1}^{M_2}a_{\ell,2}\epsilon^{m_{\ell,2}+2\beta-\alpha k_{\ell,2}-\chi_1k_{\ell,2}}\mathbb{T}^{k_{\ell,2}}\right)\epsilon^{-\chi_1(\gamma_1-k_{0,1})}\mathbb{T}^{\gamma_1-k_{0,1}}\right.\\
+\left(\sum_{\ell=0}^{s_3}a_{\ell,3}\epsilon^{-\chi_1k_{\ell,3}}\mathbb{T}^{k_{\ell,3}}+\sum_{\ell=s_3+1}^{M_3}a_{\ell,3}\epsilon^{m_{\ell,3}+3\beta-\alpha k_{\ell,3}-\chi_1k_{\ell,3}}\mathbb{T}^{k_{\ell,3}}\right)\hfill\\
\left.\hfill\times\left(\frac{-3a_{01}}{a_{02}}\epsilon^{-\chi_1(k_{0,1}-k_{0,2})}\mathbb{T}^{k_{0,1}-k_{0,2}}(1+\mathcal{J}_1(\epsilon^{-\chi_1}\mathbb{T}))\right)\epsilon^{-\chi_1(\gamma_1-k_{0,1})}\mathbb{T}^{\gamma_1-k_{0,1}}\right]\\
+\tilde{Q}(im)\left(\int_{-\infty}^{+\infty}\int_{-\infty}^{+\infty}\Omega_{\kappa_1}^{\mathfrak{d}_{p}}(\mathbb{T},m-m_1,\epsilon)\Omega_{\kappa_1}^{\mathfrak{d}_{p}}(\mathbb{T},m_1-m_2,\epsilon)\Omega_{\kappa_1}^{\mathfrak{d}_{p}}(\mathbb{T},m_2,\epsilon)dm_2dm_1\right)\\
\times\left[\left(\sum_{\ell=0}^{s_3}a_{\ell,3}\epsilon^{-\chi_1k_{\ell,3}}\mathbb{T}^{k_{\ell,3}}+\sum_{\ell=s_3+1}^{M_3}a_{\ell,3}\epsilon^{m_{\ell,3}+3\beta-\alpha k_{\ell,3}-\chi_1k_{\ell,3}}\mathbb{T}^{k_{\ell,3}}\right)\epsilon^{-\chi_1(2\gamma_1-k_{0,1})}\mathbb{T}^{2\gamma_1-k_{0,1}}\right]\label{e236cc}
\end{multline}
and
\begin{multline}\label{e236cd}
\mathcal{R}_{\mathbb{T},m,\epsilon}(\Omega_{\kappa_1}^{\mathfrak{d}_{p}}(\mathbb{T},m,\epsilon))=
\sum_{j=0}^{Q} \tilde{B}_{j}(m) \epsilon^{n_{j} - \alpha b_{j} - \chi_1( b_{j} - k_{0,1} - \gamma_1)}
\mathbb{T}^{b_{j} - k_{0,1} - \gamma_1}\\
+ \sum_{\ell=1}^{D-1} \epsilon^{\Delta_{\ell} + \alpha(\delta_{\ell} - d_{\ell}) + \beta}
\times \sum_{q_{1}+q_{2} = \delta_{\ell}} \frac{\delta_{\ell}!}{q_{1}!q_{2}!}
\Pi_{d=0}^{q_{1}-1}(\gamma_1 - d)\epsilon^{-\chi_1(d_{\ell} - k_{0,1} - q_{1}-q_{2})}
\tilde{R}_{\ell}(im)\\
\times \mathbb{T}^{d_{\ell,q_{1},q_{2}}} \{ (\mathbb{T}^{\kappa_1 + 1}\partial_{\mathbb{T}})^{q_2}
+ \sum_{1 \leq p \leq q_{2}-1} A_{q_{2},p}
\mathbb{T}^{\kappa_1(q_{2}-p)}(\mathbb{T}^{\kappa_1 + 1}\partial_{\mathbb{T}})^{p} \}
\Omega_{\kappa_1}^{\mathfrak{d}_{p}}(\mathbb{T},m,\epsilon)\\
+ \epsilon^{\Delta_{D} + \alpha(\delta_{D} - d_{D}) + \beta}
\times \sum_{q_{1}+q_{2} = \delta_{D},q_{1} \geq 1} \frac{\delta_{D}!}{q_{1}!q_{2}!}
\Pi_{d=0}^{q_{1}-1}(\gamma_1 - d)\epsilon^{-\chi_1(d_{D} - k_{0,1} - q_{1} - q_{2})}
\tilde{R}_{D}(im)\\
\times \mathbb{T}^{d_{D,q_{1},q_{2}}} \{ (\mathbb{T}^{\kappa_1 + 1}\partial_{\mathbb{T}})^{q_2}
+ \sum_{1 \leq p \leq q_{2}-1} A_{q_{2},p}
\mathbb{T}^{\kappa_1(q_{2}-p)}(\mathbb{T}^{\kappa_1 + 1}\partial_{\mathbb{T}})^{p} \}
\Omega_{\kappa_1}^{\mathfrak{d}_{p}}(\mathbb{T},m,\epsilon)\\
+ \tilde{R}_{D}(im) \{ (\mathbb{T}^{\kappa_1 + 1}\partial_{\mathbb{T}})^{\delta_D} +
\sum_{1 \leq p \leq \delta_{D}-1} A_{\delta_{D},p} \mathbb{T}^{\kappa_1(\delta_{D}-p)}
(\mathbb{T}^{\kappa_1+1}\partial_{\mathbb{T}})^{p} \}\Omega_{\kappa_1}^{\mathfrak{d}_{p}}(\mathbb{T},m,\epsilon).
\end{multline}

We consider the function
\begin{equation}
\mathbb{V}_1^{\mathfrak{d}_{p}}(\mathbb{T},z,\epsilon) =
\mathcal{F}^{-1}( m \mapsto  \Omega_{\kappa_1}^{\mathfrak{d}_{p}}(\mathbb{T},m,\epsilon) )(z),
\end{equation}
which defines a bounded holomorphic function w.r.t $\mathbb{T}$ on
$S_{\mathfrak{d}_{p},\theta_1,h'|\epsilon|^{\chi_1 + \alpha}}$, w.r.t $z$ on $H_{\beta'}$ for any
$0 < \beta' < \beta$ and for all $\epsilon$ on $D(0,\epsilon_{0}) \setminus \{ 0 \}$. Using the
properties of the Fourier inverse transform described in Proposition~\ref{prop479} and the expression in (\ref{e301}), we derive from (\ref{PDE_Omega_kappa_dp1}) the next equation satisfied by
$\mathbb{V}_1^{\mathfrak{d}_{p}}(\mathbb{T},z,\epsilon)$:
\begin{multline}
\tilde{Q}(\partial_z)\mathbb{V}_1^{\mathfrak{d}_{p}}(\mathbb{T},z,\epsilon)\left[-a_{0,1}+\sum_{\ell=1}^{s_1}a_{\ell,1}\epsilon^{-\chi_1(k_{\ell,1}-k_{0,1})}\mathbb{T}^{k_{\ell,1}-k_{0,1}}+\sum_{\ell=s_1+1}^{M_1}a_{\ell,1}\epsilon^{m_{\ell,1}+\beta-\alpha k_{\ell,1}-\chi_1(k_{\ell,1}-k_{0,1})}\mathbb{T}^{k_{\ell,1}-k_{0,1}}\right.\\
\qquad+\left(\sum_{\ell=1}^{s_2}a_{\ell,2}\epsilon^{-\chi_1(k_{\ell,2}-k_{0,2})}\mathbb{T}^{k_{\ell,2}-k_{0,2}}+\sum_{\ell=s_2+1}^{M_2}a_{\ell,2}\epsilon^{m_{\ell,2}+2\beta-\alpha k_{\ell,2}-\chi_1(k_{\ell,2}-k_{0,2})}\mathbb{T}^{k_{\ell,2}-k_{0,2}}\right)\left(\frac{-2a_{0,1}}{a_{0,2}}\right)\\
+\left(\sum_{\ell=0}^{s_2}a_{\ell,2}\epsilon^{-\chi_1(k_{\ell,2}-k_{0,2})}\mathbb{T}^{k_{\ell,2}-k_{0,2}}+\sum_{\ell=s_2+1}^{M_2}a_{\ell,2}\epsilon^{m_{\ell,2}+2\beta-\alpha k_{\ell,2}-\chi_1(k_{\ell,2}-k_{0,2})}\mathbb{T}^{k_{\ell,2}-k_{0,2}}\right)\left(\frac{-2a_{0,1}}{a_{0,2}}\mathcal{J}_1(\epsilon^{-\chi_1}\mathbb{T})\right)\\
+\left(\sum_{\ell=0}^{s_3}a_{\ell,3}\epsilon^{-\chi_1(k_{\ell,3}-k_{0,1})}\mathbb{T}^{k_{\ell,3}-k_{0,1}}+\sum_{\ell=s_3+1}^{M_3}a_{\ell,3}\epsilon^{m_{\ell,3}+3\beta-\alpha k_{\ell,3}-\chi_1(k_{\ell,3}-k_{0,1})}\mathbb{T}^{k_{\ell,3}-k_{0,1}}\right)\\
\left.\hfill\times3\left(\frac{a_{01}}{a_{02}}\epsilon^{-\chi_1(k_{0,1}-k_{0,2})}\mathbb{T}^{k_{0,1}-k_{0,2}}(1+\mathcal{J}_1(\epsilon^{-\chi_1}\mathbb{T}))\right)^2\right]\\
+\tilde{Q}(\partial_z)(\mathbb{V}_1^{\mathfrak{d}_{p}}(\mathbb{T},z,\epsilon))^2
\times\left[\left(\sum_{\ell=0}^{s_2}a_{\ell,2}\epsilon^{-\chi_1k_{\ell,2}}\mathbb{T}^{k_{\ell,2}}+\sum_{\ell=s_2+1}^{M_2}a_{\ell,2}\epsilon^{m_{\ell,2}+2\beta-\alpha k_{\ell,2}-\chi_1k_{\ell,2}}\mathbb{T}^{k_{\ell,2}}\right)\epsilon^{-\chi_1(\gamma_1-k_{0,1})}\mathbb{T}^{\gamma_1-k_{0,1}}\right.\\
+\left(\sum_{\ell=0}^{s_3}a_{\ell,3}\epsilon^{-\chi_1k_{\ell,3}}\mathbb{T}^{k_{\ell,3}}+\sum_{\ell=s_3+1}^{M_3}a_{\ell,3}\epsilon^{m_{\ell,3}+3\beta-\alpha k_{\ell,3}-\chi_1k_{\ell,3}}\mathbb{T}^{k_{\ell,3}}\right)\hfill\\
\left.\hfill\times\left(\frac{-3a_{01}}{a_{02}}\epsilon^{-\chi_1(k_{0,1}-k_{0,2})}\mathbb{T}^{k_{0,1}-k_{0,2}}(1+\mathcal{J}_1(\epsilon^{-\chi_1}\mathbb{T}))\right)\epsilon^{-\chi_1(\gamma_1-k_{0,1})}\mathbb{T}^{\gamma_1-k_{0,1}}\right]\\
+\tilde{Q}(\partial_z)(\mathbb{V}_1^{\mathfrak{d}_{p}}(\mathbb{T},z,\epsilon))^3\left[\left(\sum_{\ell=0}^{s_3}a_{\ell,3}\epsilon^{-\chi_1k_{\ell,3}}\mathbb{T}^{k_{\ell,3}}+\sum_{\ell=s_3+1}^{M_3}a_{\ell,3}\epsilon^{m_{\ell,3}+3\beta-\alpha k_{\ell,3}-\chi_1k_{\ell,3}}\mathbb{T}^{k_{\ell,3}}\right)\epsilon^{-\chi_1(2\gamma_1-k_{0,1})}\mathbb{T}^{2\gamma_1-k_{0,1}}\right]\\
= \sum_{j=0}^{Q} \tilde{b}_{j}(z) \epsilon^{n_{j} - \alpha b_{j} - \chi_1( b_{j} - k_{0,1} - \gamma_1)}
\mathbb{T}^{b_{j} - k_{0,1} - \gamma_1}+ \sum_{\ell=1}^{D-1} \epsilon^{\Delta_{\ell} + \alpha(\delta_{\ell} - d_{\ell}) + \beta}\\
\times \sum_{q_{1}+q_{2} = \delta_{\ell}} \frac{\delta_{\ell}!}{q_{1}!q_{2}!}
\Pi_{d=0}^{q_{1}-1}(\gamma_1 - d)\epsilon^{-\chi_1(d_{\ell} - k_{0,1} - q_{1})}
\mathbb{T}^{d_{\ell} - k_{0,1} - q_{1}}\tilde{R}_{\ell}(\partial_{z}) \epsilon^{\chi_1 q_{2}}
\partial_{\mathbb{T}}^{q_2}\mathbb{V}_1^{\mathfrak{d}_{p}}(\mathbb{T},z,\epsilon)\\
+ \epsilon^{\Delta_{D} + \alpha(\delta_{D} - d_{D}) + \beta}
\sum_{q_{1}+q_{2} = \delta_{D},q_{1} \geq 1} \frac{\delta_{D}!}{q_{1}!q_{2}!}
\Pi_{d=0}^{q_{1}-1}(\gamma_1 - d)\epsilon^{-\chi_1(d_{D} - k_{0,1} - q_{1})}
\mathbb{T}^{d_{D} - k_{0,1} - q_{1}}\\
\times \tilde{R}_{D}(\partial_{z}) \epsilon^{\chi_1 q_{2}}
\partial_{\mathbb{T}}^{q_2}\mathbb{V}_1^{\mathfrak{d}_{p}}(\mathbb{T},z,\epsilon)
+ \mathbb{T}^{d_{D}-k_{0,1}}\tilde{R}_{D}(\partial_{z})\partial_{\mathbb{T}}^{\delta_{D}}
\mathbb{V}_1^{\mathfrak{d}_{p}}(\mathbb{T},z,\epsilon).\label{e2626}
\end{multline}
The function $V_1^{\mathfrak{d}_{p}}(T,z,\epsilon) = \mathbb{V}_1^{\mathfrak{d}_{p}}(\epsilon^{\chi_1}T,z,\epsilon)$ defines a bounded holomorphic function w.r.t $T$ such that
$T \in \epsilon^{-\chi_1}S_{\mathfrak{d}_{p},\theta_1,h'|\epsilon|^{\chi_1 + \alpha}}$ and
w.r.t $z$ on $H_{\beta'}$ for any $0 < \beta' < \beta$, for all $\epsilon \in D(0,\epsilon_{0})
\setminus \{ 0 \}$. It holds that $V_1^{\mathfrak{d}_{p}}(T,z,\epsilon)$ solves the equation
\begin{multline}
\tilde{Q}(\partial_z)V_1^{\mathfrak{d}_{p}}(T,z,\epsilon)\left[-a_{0,1}T^{k_{0,1}+\gamma_1}+\sum_{\ell=1}^{s_1}a_{\ell,1}T^{k_{\ell,1}+\gamma_1}+\sum_{\ell=s_1+1}^{M_1}a_{\ell,1}\epsilon^{m_{\ell,1}+\beta-\alpha k_{\ell,1}}T^{k_{\ell,1}+\gamma_1}\right.\\
\qquad+\left(\sum_{\ell=1}^{s_2}a_{\ell,2}T^{k_{\ell,2}-k_{0,2}+k_{0,1}+\gamma_1}+\sum_{\ell=s_2+1}^{M_2}a_{\ell,2}\epsilon^{m_{\ell,2}+2\beta-\alpha k_{\ell,2}}T^{k_{\ell,2}-k_{0,2}+k_{0,1}+\gamma_1}\right)\left(\frac{-2a_{0,1}}{a_{0,2}}\right)\\
+\left(\sum_{\ell=0}^{s_2}a_{\ell,2}T^{k_{\ell,2}-k_{0,2}+k_{0,1}+\gamma}+\sum_{\ell=s_2+1}^{M_2}a_{\ell,2}\epsilon^{m_{\ell,2}+2\beta-\alpha k_{\ell,2}}T^{k_{\ell,2}-k_{0,2}}+k_{0,1}+\gamma_1\right)\left(\frac{-2a_{0,1}}{a_{0,2}}\mathcal{J}_1(T)\right)\\
+\left(\sum_{\ell=0}^{s_3}a_{\ell,3}T^{k_{\ell,3}+\gamma_1}+\sum_{\ell=s_3+1}^{M_3}a_{\ell,3}\epsilon^{m_{\ell,3}+3\beta-\alpha k_{\ell,3}}T^{k_{\ell,3}+\gamma_1}\right)\\
\left.\hfill\times3\left(\frac{a_{01}}{a_{02}}T^{2k_{0,1}-k_{0,2}+\gamma_1}(1+\mathcal{J}_1(T))\right)^2\right]\\
+\tilde{Q}(\partial_z)(V_1^{\mathfrak{d}_{p}}(T,z,\epsilon))^2
\times\left[\left(\sum_{\ell=0}^{s_2}a_{\ell,2}T^{k_{\ell,2}+k_{0,1}+\gamma_1}+\sum_{\ell=s_2+1}^{M_2}a_{\ell,2}\epsilon^{m_{\ell,2}+2\beta-\alpha k_{\ell,2}}T^{k_{\ell,2}+k_{0,1}+\gamma_1}\right)T^{2\gamma_1}\right.\\
+\left(\sum_{\ell=0}^{s_3}a_{\ell,3}T^{k_{\ell,3}+k_{0,1}+\gamma_1}+\sum_{\ell=s_3+1}^{M_3}a_{\ell,3}\epsilon^{m_{\ell,3}+3\beta-\alpha k_{\ell,3}}T^{k_{\ell,3}+k_{0,1}+\gamma_1}\right)\hfill\\
\left.\hfill\times\left(\frac{-3a_{01}}{a_{02}}T^{2k_{0,1}-k_{0,2}+\gamma_1}(1+\mathcal{J}_1(T))\right)T^{2\gamma_1}\right]\\
+\tilde{Q}(\partial_z)(V_1^{\mathfrak{d}_{p}}(T,z,\epsilon))^3\left[\left(\sum_{\ell=0}^{s_3}a_{\ell,3}T^{k_{\ell,3}+k_{0,1}+\gamma_1}+\sum_{\ell=s_3+1}^{M_3}a_{\ell,3}\epsilon^{m_{\ell,3}+3\beta-\alpha k_{\ell,3}}T^{k_{\ell,3}+k_{0,1}+\gamma_1}\right)T^{3\gamma_1}\right]\\
= \sum_{j=0}^{Q} \tilde{b}_{j}(z) \epsilon^{n_{j} - \alpha b_{j}}
T^{b_{j}}+ \sum_{\ell=1}^{D} \epsilon^{\Delta_{\ell} + \alpha(\delta_{\ell} - d_{\ell}) + \beta}\\
\times \sum_{q_{1}+q_{2} = \delta_{\ell}} \frac{\delta_{\ell}!}{q_{1}!q_{2}!}
\Pi_{d=0}^{q_{1}-1}(\gamma_1 - d)
T^{d_{\ell} +\gamma_1 - q_{1}}\tilde{R}_{\ell}(\partial_{z})
\partial_{T}^{q_2}V_1^{\mathfrak{d}_{p}}(T,z,\epsilon).\label{e2627}
\end{multline}

We now consider the function
\begin{equation}\label{e2665}
U_1^{\mathfrak{d}_p}(T,z,\epsilon)=-\frac{a_{0,1}}{a_{0,2}}T^{k_{0,1}-k_{0,2}}-\frac{a_{0,1}}{a_{0,2}}T^{k_{0,1}-k_{0,2}}\mathcal{J}_1(T)+T^{\gamma_1}V_1^{\mathfrak{d}_{p}}(T,z,\epsilon).
\end{equation}

which defines a bounded holomorphic function w.r.t $\mathbb{T}$ on
$\epsilon^{-\chi_1}S_{\mathfrak{d}_{p},\theta_1,h'|\epsilon|^{\chi_1 + \alpha}}$, w.r.t $z$ on $H_{\beta'}$ for any
$0 < \beta' < \beta$ and for all $\epsilon$ on $D(0,\epsilon_{0}) \setminus \{ 0 \}$. Notice that this function might exhibit a pole at $T=0$ in the case that $k_{0,1}<k_{0,2}$. Regarding (\ref{e2627}), we derive that $U^{\mathfrak{d}_{p}}(T,z,\epsilon)$ is a solution of the PDE
\begin{multline}
\tilde{Q}( \partial_z)\left(\left(\sum_{\ell=0}^{M_1}a_{\ell,1}\epsilon^{m_{\ell,1}+\beta-\alpha k_{\ell,1}}T^{k_{\ell,1}}\right)U_1^{\mathfrak{d}_p}(T,z,\epsilon)+\left(\sum_{\ell=0}^{M_2}a_{\ell,2}\epsilon^{m_{\ell,2}+2\beta-\alpha k_{\ell,2}}T^{k_{\ell,2}}\right)(U_1^{\mathfrak{d}_p}(T,z,\epsilon))^2\right.\\
\left.+\left(\sum_{\ell=0}^{M_3}a_{\ell,3}\epsilon^{m_{\ell,3}+3\beta-\alpha k_{\ell,3}}T^{k_{\ell,3}}\right)(U_1^{\mathfrak{d}_p}(T,z,\epsilon))^3\right)\\
=\sum_{j=0}^{Q}\tilde{b}_{j}(z)\epsilon^{n_j-\alpha b_j}T^{b_j}+F_1(T,\epsilon)+\sum_{\ell=1}^{D}\epsilon^{\Delta_{\ell}+\alpha(\delta_\ell-d_\ell)+\beta}T^{d_{\ell}}\tilde{R}_{\ell}(\partial_{z})\partial_{T}^{\delta_{\ell}}U_1^{\mathfrak{d}_p}(T,z,\epsilon),\label{e2628}
\end{multline}
where $F_1(T,\epsilon)$ contributes to the forcing term and is given by

\begin{multline}
F_1(T,\epsilon)=-\tilde{Q}(0)\left(\frac{a_{0,1}}{a_{0,2}}T^{k_{0,1}-k_{0,2}}+\frac{a_{0,1}}{a_{0,2}}T^{k_{0,1}-k_{0,2}}\mathcal{J}_1(T)\right)\\
\times\left[-a_{0,1}T^{k_{0,1}}+\sum_{\ell=1}^{s_1}a_{\ell,1}T^{k_{\ell,1}}+\sum_{\ell=s_1+1}^{M_1}a_{\ell,1}\epsilon^{m_{\ell,1}+\beta-\alpha k_{\ell,1}}T^{k_{\ell,1}}\right.\\
\qquad+\left(\sum_{\ell=1}^{s_2}a_{\ell,2}T^{k_{\ell,2}-k_{0,2}+k_{0,1}}+\sum_{\ell=s_2+1}^{M_2}a_{\ell,2}\epsilon^{m_{\ell,2}+2\beta-\alpha k_{\ell,2}}T^{k_{\ell,2}-k_{0,2}+k_{0,1}}\right)\left(\frac{-2a_{0,1}}{a_{0,2}}\right)\\
+\left(\sum_{\ell=0}^{s_2}a_{\ell,2}T^{k_{\ell,2}-k_{0,2}+k_{0,1}}+\sum_{\ell=s_2+1}^{M_2}a_{\ell,2}\epsilon^{m_{\ell,2}+2\beta-\alpha k_{\ell,2}}T^{k_{\ell,2}-k_{0,2}}+k_{0,1}\right)\left(\frac{-2a_{0,1}}{a_{0,2}}\mathcal{J}_1(T)\right)\\
\left.+3\left(\sum_{\ell=0}^{s_3}a_{\ell,3}T^{k_{\ell,3}}+\sum_{\ell=s_3+1}^{M_3}a_{\ell,3}\epsilon^{m_{\ell,3}+3\beta-\alpha k_{\ell,3}}T^{k_{\ell,3}}\right)s\left(\frac{a_{01}}{a_{02}}T^{2k_{0,1}-k_{0,2}}(1+\mathcal{J}_1(T))\right)^2\right]\\
-\tilde{Q}(0)\left(\frac{a_{0,1}}{a_{0,2}}T^{k_{0,1}-k_{0,2}}+\frac{a_{0,1}}{a_{0,2}}T^{k_{0,1}-k_{0,2}}\mathcal{J}_1(T)\right)^2\left[\left(\sum_{\ell=0}^{s_2}a_{\ell,2}T^{k_{\ell,2}+k_{0,1}}+\sum_{\ell=s_2+1}^{M_2}a_{\ell,2}\epsilon^{m_{\ell,2}+2\beta-\alpha k_{\ell,2}}T^{k_{\ell,2}+k_{0,1}}\right)\right.\\
\left.+\left(\sum_{\ell=0}^{s_3}a_{\ell,3}T^{k_{\ell,3}+k_{0,1}}+\sum_{\ell=s_3+1}^{M_3}a_{\ell,3}\epsilon^{m_{\ell,3}+3\beta-\alpha k_{\ell,3}}T^{k_{\ell,3}+k_{0,1}}\right)\left(\frac{-3a_{01}}{a_{02}}T^{2k_{0,1}-k_{0,2}}(1+\mathcal{J}_1(T))\right)\right]\\
-\tilde{Q}(0)\left(\frac{a_{0,1}}{a_{0,2}}T^{k_{0,1}-k_{0,2}}+\frac{a_{0,1}}{a_{0,2}}T^{k_{0,1}-k_{0,2}}\mathcal{J}_1(T)\right)^3\left[\left(\sum_{\ell=0}^{s_3}a_{\ell,3}T^{k_{\ell,3}+k_{0,1}}+\sum_{\ell=s_3+1}^{M_3}a_{\ell,3}\epsilon^{m_{\ell,3}+3\beta-\alpha k_{\ell,3}}T^{k_{\ell,3}+k_{0,1}}\right)\right]\\
+ \sum_{\ell=1}^{D} \epsilon^{\Delta_{\ell} + \alpha(\delta_{\ell} - d_{\ell}) + \beta}T^{d_{\ell}}\tilde{R}_{\ell}(0)
\partial_{T}^{\delta_\ell}\left(\frac{a_{0,1}}{a_{0,2}}T^{k_{0,1}-k_{0,2}}+\frac{a_{0,1}}{a_{0,2}}T^{k_{0,1}-k_{0,2}}\mathcal{J}_1(T)\right).\label{e2629}
\end{multline}

Taking into account that $U_{01}(T)$ is a solution of (\ref{e162}), we derive the following expression of $F_1(T,\epsilon)$:

\begin{multline}
F_1(T,\epsilon)=\tilde{Q}(0)U_{01}(T)\left[\sum_{\ell=s_1+1}^{M_1}a_{\ell,1}\epsilon^{m_{\ell,1}+\beta-\alpha k_{\ell,1}}T^{k_{\ell,1}}+\left(\sum_{\ell=s_2+1}^{M_2}\frac{-2a_{0,1}a_{\ell,2}}{a_{0,2}}\epsilon^{m_{\ell,2}+2\beta-\alpha k_{\ell,2}}T^{k_{\ell,2}-k_{0,2}+k_{0,1}}\right)\right.\\
+\left(\sum_{\ell=s_2+1}^{M_2}a_{\ell,2}\epsilon^{m_{\ell,2}+2\beta-\alpha k_{\ell,2}}T^{k_{\ell,2}-k_{0,2}}+k_{0,1}\right)\left(\frac{-2a_{0,1}}{a_{0,2}}\mathcal{J}_1(T)\right)\\
\left.+3\left(\sum_{\ell=s_3+1}^{M_3}a_{\ell,3}\epsilon^{m_{\ell,3}+3\beta-\alpha k_{\ell,3}}T^{k_{\ell,3}}\right)\left(\frac{a_{01}}{a_{02}}T^{2k_{0,1}-k_{0,2}}(1+\mathcal{J}_1(T))\right)^2\right]\\
-\tilde{Q}(0)U_{01}(T)^2\left[\left(\sum_{\ell=0}^{s_2}a_{\ell,2}T^{k_{\ell,2}+k_{0,1}}+\sum_{\ell=s_2+1}^{M_2}a_{\ell,2}\epsilon^{m_{\ell,2}+2\beta-\alpha k_{\ell,2}}T^{k_{\ell,2}+k_{0,1}}\right)\right.\\
\left.+\left(\sum_{\ell=0}^{s_3}a_{\ell,3}T^{k_{\ell,3}+k_{0,1}}+\sum_{\ell=s_3+1}^{M_3}a_{\ell,3}\epsilon^{m_{\ell,3}+3\beta-\alpha k_{\ell,3}}T^{k_{\ell,3}+k_{0,1}}\right)\left(\frac{-3a_{01}}{a_{02}}T^{2k_{0,1}-k_{0,2}}(1+\mathcal{J}_1(T))\right)\right]\\
-\tilde{Q}(0)U_{01}(T)^3\left[\left(\sum_{\ell=0}^{s_3}a_{\ell,3}T^{k_{\ell,3}+k_{0,1}}+\sum_{\ell=s_3+1}^{M_3}a_{\ell,3}\epsilon^{m_{\ell,3}+3\beta-\alpha k_{\ell,3}}T^{k_{\ell,3}+k_{0,1}}\right)\right]\\
+ \sum_{\ell=1}^{D} \epsilon^{\Delta_{\ell} + \alpha(\delta_{\ell} - d_{\ell}) + \beta}T^{d_{\ell}}\tilde{R}_{\ell}(0)
\partial_{T}^{\delta_\ell}\left(\frac{a_{0,1}}{a_{0,2}}T^{k_{0,1}-k_{0,2}}+\frac{a_{0,1}}{a_{0,2}}T^{k_{0,1}-k_{0,2}}\mathcal{J}_1(T)\right).\label{e26290}
\end{multline}

The function $F_1(T,\epsilon)$ turns out to be bounded holomorphic with respect to $\epsilon$ and it is analytic with respect to $T$ in some neighborhood of the origin in the case that
\begin{multline}\label{e2706}
k_{\ell_1,1}+k_{0,1}-k_{0,2}\ge 0,\quad k_{\ell_2,2}+2(k_{0,1}-2k_{0,2})\ge0\\
k_{\ell_3,3}+5k_{0,1}-3k_{0,2}\ge0,\quad 2( k_{0,1}-k_{0,2})+k_{\ell_2,2}+k_{0,1}\ge0,\quad k_{\ell_3,3}+4k_{0,1}-3k_{0,2}\ge0,\\
d_\ell+k_{0,1}-k_{0,2}-\delta_{\ell}\ge0
\end{multline}
holds for every $s_1+1\le\ell_1\le M_1$, $s_2+1\le\ell_2\le M_2$, $s_3+1\le\ell_2\le M_3$ and $1\le \ell\le D$. In view of the assumptions made on these parameters, and (\ref{e92}), the next conditions are sufficient so that (\ref{e2706})  hold:
\begin{equation}\label{e2712}
2k_{0,1}-k_{0,2}\ge0\qquad d_\ell+k_{0,1}-k_{0,2}-\delta_{\ell}\ge0.
\end{equation}
 
We conclude by writing 
\begin{equation}\label{e2713}
u_1^{\mathfrak{d}_{p}}(t,z,\epsilon) = \epsilon^{\beta}U_1^{\mathfrak{d}_{p}}(\epsilon^{\alpha}t,z,\epsilon)
= \epsilon^{\beta} \left( U_{01}(\epsilon^{\alpha}t) + (\epsilon^{\alpha}t)^{\gamma_1}
\mathbb{V}_1^{\mathfrak{d}_{p}}(\epsilon^{\chi_1 + \alpha}t,z,\epsilon) \right),
\end{equation}

which defines a holomorphic function w.r.t $t$ on $\mathcal{T}_1$, w.r.t $z \in H_{\beta'}$ for any
$0 < \beta' < \beta$, w.r.t $\epsilon \in \mathcal{E}_{p}$, where $\mathcal{T}_1$ and $\mathcal{E}_{p}$ are
sectors described in Definition~\ref{defi2414}. As a result, $u_1^{\mathfrak{d}_{p}}(t,z,\epsilon)$ admits the decomposition
(\ref{decomp_singular_holom_udp1}) with $v_1^{\mathfrak{d}_{p}}(t,z,\epsilon) =
\mathbb{V}_1^{\mathfrak{d}_{p}}(\epsilon^{\chi_1 + \alpha}t,z,\epsilon)$ which determines a bounded holomorphic
function on $\mathcal{T}_1 \times H_{\beta'} \times \mathcal{E}_{p}$ for any given $0 < \beta' < \beta$ with the
property $v_1^{\mathfrak{d}_{p}}(0,z,\epsilon) \equiv 0$ for all $(z,\epsilon) \in H_{\beta'} \times \mathcal{E}_{p}$.
Again, the function $u_1^{\mathfrak{d}_{p}}(t,z,\epsilon)$ may be meromorphic in both $t$ and $\epsilon$ in the vicinity of the origin. From (\ref{e2628}) and (\ref{e2629}) we deduce that $u_1^{\mathfrak{d}_{p}}(t,z,\epsilon)$ solves the next main problem
\begin{multline*}
\tilde{Q}( \partial_z)\left(\left(\sum_{\ell=0}^{M_1}a_{\ell,1}\epsilon^{m_{\ell,1}}t^{k_{\ell,1}}\right)u_1^{\mathfrak{d}_p}(t,z,\epsilon)+\left(\sum_{\ell=0}^{M_2}a_{\ell,2}\epsilon^{m_{\ell,2}}t^{k_{\ell,2}}\right)(u_1^{\mathfrak{d}_p}(t,z,\epsilon))^2\right.\\
\left.+\left(\sum_{\ell=0}^{M_3}a_{\ell,3}\epsilon^{m_{\ell,3}}t^{k_{\ell,3}}\right)(u_1^{\mathfrak{d}_p}(t,z,\epsilon))^3\right)\\
=\sum_{j=0}^{Q}\tilde{b}_{j}(z)\epsilon^{n_j}t^{b_j}+F_1(\epsilon^\alpha t,\epsilon)+\sum_{\ell=1}^{D}\epsilon^{\Delta_{\ell}}t^{d_{\ell}}\tilde{R}_{\ell}(\partial_{z})u_1^{\mathfrak{d}_p}(t,z,\epsilon),
\end{multline*}
with additional forcing term $F_1(\epsilon^{\alpha}t,\epsilon)$. We apply the operator $\partial_{z}^{\upsilon}$ on the left and right handside of this last equation, to get that $u_1^{\mathfrak{d}_{p}}(t,z,\epsilon)$ is an actual solution of the main problem (\ref{e85}).

We proceed with the proof of (\ref{exp_small_difference_v_dp1}). The steps followed are analogous to those taken in the proof of Theorem 1 in \cite{lama1}. We give the details for the sake of completeness. Let $p \in \{ 0,\ldots,\varsigma_1 - 1 \}$. The function
$v_1^{\mathfrak{d}_{p}}(t,z,\epsilon)$ can be written as a $m_{\kappa_1}-$Laplace and Fourier transform
\begin{equation}
 v_1^{\mathfrak{d}_{p}}(t,z,\epsilon) = \frac{\kappa_1}{(2\pi)^{1/2}}
 \int_{-\infty}^{+\infty} \int_{L_{\gamma_p}} \omega_{\kappa_1}^{\mathfrak{d}_p}(u,m,\epsilon)
 \exp( -(\frac{u}{\epsilon^{\chi_1 + \alpha}t})^{\kappa_1} ) e^{izm} \frac{du}{u} dm 
\end{equation}
where $L_{\gamma_p} = \mathbb{R}_{+}e^{i \gamma_{p}} \subset S_{\mathfrak{d}_p}$. Using the fact that the function $u \mapsto \omega_{\kappa_1}(u,m,\epsilon) \exp( -(\frac{u}{\epsilon^{\chi_1 + \alpha} t})^{\kappa_1} )/u$
is holomorphic on $D(0,\rho)$ for all
$(m,\epsilon) \in \mathbb{R} \times (D(0,\epsilon_{0}) \setminus \{ 0 \})$, its integral along the union of a segment starting from
0 to $(\rho/2)e^{i\gamma_{p+1}}$, an arc of circle with radius $\rho/2$ which connects
$(\rho/2)e^{i\gamma_{p+1}}$ and $(\rho/2)e^{i\gamma_{p}}$ and a segment starting from
$(\rho/2)e^{i\gamma_{p}}$ to 0, is vanishing. Therefore, we can write the difference
$v_1^{\mathfrak{d}_{p+1}} - v_1^{\mathfrak{d}_{p}}$ as a sum of three integrals,
\begin{multline}
v_1^{\mathfrak{d}_{p+1}}(t,z,\epsilon) - v_1^{\mathfrak{d}_{p}}(t,z,\epsilon) =
\frac{\kappa_1}{(2\pi)^{1/2}}\int_{-\infty}^{+\infty}
\int_{L_{\rho/2,\gamma_{p+1}}}
\omega_{\kappa_1}^{\mathfrak{d}_{p+1}}(u,m,\epsilon) e^{-(\frac{u}{\epsilon^{\chi_1 + \alpha} t})^{\kappa_1}}
e^{izm} \frac{du}{u} dm\\ -
\frac{\kappa_1}{(2\pi)^{1/2}}\int_{-\infty}^{+\infty}
\int_{L_{\rho/2,\gamma_{p}}}
\omega_{\kappa_1}^{\mathfrak{d}_p}(u,m,\epsilon) e^{-(\frac{u}{\epsilon^{\chi_1 + \alpha} t})^{\kappa_1}}
e^{izm} \frac{du}{u} dm\\
+ \frac{\kappa_1}{(2\pi)^{1/2}}\int_{-\infty}^{+\infty}
\int_{C_{\rho/2,\gamma_{p},\gamma_{p+1}}}
\omega_{\kappa_1}(u,m,\epsilon) e^{-(\frac{u}{\epsilon^{\chi_1 + \alpha} t})^{\kappa_1}}
e^{izm} \frac{du}{u} dm \label{difference_v_dp_decomposition}
\end{multline}
where $L_{\rho/2,\gamma_{p+1}} = [\rho/2,+\infty)e^{i\gamma_{p+1}}$,
$L_{\rho/2,\gamma_{p}} = [\rho/2,+\infty)e^{i\gamma_{p}}$ and
$C_{\rho/2,\gamma_{p},\gamma_{p+1}}$ is an arc of circle with radius connecting
$(\rho/2)e^{i\gamma_{p}}$ and $(\rho/2)e^{i\gamma_{p+1}}$ with a well chosen orientation.\medskip

We give estimates for the quantity
$$ I_{1} = \left| \frac{\kappa_1}{(2\pi)^{1/2}}\int_{-\infty}^{+\infty}
\int_{L_{\rho/2,\gamma_{p+1}}}
\omega_{\kappa_1}^{\mathfrak{d}_{p+1}}(u,m,\epsilon) e^{-(\frac{u}{\epsilon^{\chi_1 + \alpha} t})^{\kappa_1}}
e^{izm} \frac{du}{u} dm \right|.
$$
By construction, the direction $\gamma_{p+1}$ (which depends on $\epsilon^{\chi_1 + \alpha} t$) is chosen in such
a way that
$\cos( \kappa_1( \gamma_{p+1} - \mathrm{arg}(\epsilon^{\chi_1 + \alpha} t) )) \geq \delta_{1}$, for all
$\epsilon \in \mathcal{E}_{p} \cap \mathcal{E}_{p+1}$, all $t \in \mathcal{T}_1$, for some fixed $\delta_{1} > 0$.
From the estimates (\ref{omega_kappa_dp_in_F1}), we get that
\begin{multline}
I_{1} \leq \frac{\kappa_1}{(2\pi)^{1/2}} \int_{-\infty}^{+\infty} \int_{\rho/2}^{+\infty}
\varpi (1+|m|)^{-\mu} e^{-\beta|m|}
\frac{ \frac{r}{|\epsilon|^{\chi_1 + \alpha}}}{1 + (\frac{r}{|\epsilon|^{\chi_1 + \alpha}})^{2\kappa_1} } \\
\times \exp( \nu (\frac{r}{|\epsilon|^{\chi_1 + \alpha}})^{\kappa_1} )
\exp(-\frac{\cos(\kappa_1(\gamma_{p+1} -
\mathrm{arg}(\epsilon^{\chi_1 + \alpha} t)))}{|\epsilon^{\chi_1 + \alpha} t|^{\kappa_1}}
r^{\kappa_1}) e^{-m\mathrm{Im}(z)} \frac{dr}{r} dm\\
\leq \frac{\kappa_1 \varpi}{(2\pi)^{1/2}} \int_{-\infty}^{+\infty} e^{-(\beta - \beta')|m|} dm
\int_{\rho/2}^{+\infty}\frac{1}{|\epsilon|^{\chi_1 + \alpha}}
\exp( -(\frac{\delta_{1}}{|t|^{\kappa_1}} - \nu)(\frac{r}{|\epsilon|^{\chi_1 + \alpha}})^{\kappa_1} ) dr\\
\leq  \frac{2\kappa_1 \varpi}{(2\pi)^{1/2}} \int_{0}^{+\infty} e^{-(\beta - \beta')m} dm
\int_{\rho/2}^{+\infty} \frac{|\epsilon|^{(\chi_1 + \alpha)(\kappa_1-1)}}{(\frac{\delta_1}{|t|^{\kappa_1}} - \nu)\kappa_1
(\frac{\rho}{2})^{\kappa_1-1}}
\times \frac{ (\frac{\delta_1}{|t|^{\kappa_1}} - \nu)\kappa_1 r^{\kappa_1-1} }{|\epsilon|^{(\chi_1 + \alpha)\kappa_1}}
\exp( -(\frac{\delta_{1}}{|t|^{\kappa_1}} - \nu)(\frac{r}{|\epsilon|^{\chi_1 + \alpha}})^{\kappa_1} ) dr\\
\leq
\frac{2 \kappa_1 \varpi}{(2\pi)^{1/2}} \frac{|\epsilon|^{(\chi_1+\alpha)(\kappa_1-1)}}{(\beta - \beta')
(\frac{\delta_{1}}{|t|^{\kappa_1}} - \nu) \kappa_1 (\frac{\rho}{2})^{\kappa_1-1}}
\exp( -(\frac{\delta_1}{|t|^{\kappa_1}} - \nu) \frac{(\rho/2)^{\kappa_1}}{|\epsilon|^{(\chi_1+\alpha)\kappa_1}} )\\
\leq \frac{2 \kappa_1 \varpi}{(2\pi)^{1/2}} \frac{|\epsilon|^{(\chi_1+\alpha)(\kappa_1-1)}}{(\beta - \beta')
\delta_{2}\kappa_1(\frac{\rho}{2})^{\kappa_1-1}} \exp( -\delta_{2}
\frac{(\rho/2)^{\kappa_1}}{|\epsilon|^{(\chi_1+\alpha)\kappa_1}} ) \label{I_1_exp_small_order_chi_alpha_kappa}
\end{multline}
for all $t \in \mathcal{T}_1$ and $|\mathrm{Im}(z)| \leq \beta'$ with
$|t| < (\frac{\delta_{1}}{\delta_{2} + \nu})^{1/\kappa_1}$, for some $\delta_{2}>0$, for all
$\epsilon \in \mathcal{E}_{p} \cap \mathcal{E}_{p+1}$.\medskip

In the same way, we also give estimates for the integral
$$ I_{2} = \left| \frac{\kappa_1}{(2\pi)^{1/2}}\int_{-\infty}^{+\infty}
\int_{L_{\rho/2,\gamma_{p}}}
\omega_{\kappa_1}^{\mathfrak{d}_{p}}(u,m,\epsilon) e^{-(\frac{u}{\epsilon^{\chi_1 + \alpha} t})^{\kappa_1}}
e^{izm} \frac{du}{u} dm \right|.
$$
Namely, the direction $\gamma_{p}$ (which depends on $\epsilon^{\chi_1 + \alpha} t$) is chosen in such a way that
$\cos( \kappa_1( \gamma_{p} - \mathrm{arg}(\epsilon^{\chi_1 + \alpha} t) )) \geq \delta_{1}$, for all
$\epsilon \in \mathcal{E}_{p} \cap \mathcal{E}_{p+1}$, all $t \in \mathcal{T}_1$, for some fixed $\delta_{1} > 0$.
Again from the estimates (\ref{omega_kappa_dp_in_F1}) and following the same steps as in
(\ref{I_1_exp_small_order_chi_alpha_kappa}), we deduce that
\begin{equation}
I_{2} \leq \frac{2 \kappa_1 \varpi}{(2\pi)^{1/2}} \frac{|\epsilon|^{(\chi_1+\alpha)(\kappa_1-1)}}{(\beta - \beta')
\delta_{2}\kappa_1(\frac{\rho}{2})^{\kappa_1-1}}
\exp( -\delta_{2} \frac{(\rho/2)^{\kappa_1}}{|\epsilon|^{(\chi_1+\alpha)\kappa_1}} )
\label{I_2_exp_small_order_chi_alpha_kappa}
\end{equation}
for all $t \in \mathcal{T}$ and $|\mathrm{Im}(z)| \leq \beta'$ with
$|t| < (\frac{\delta_{1}}{\delta_{2} + \nu})^{1/\kappa_1}$, for some $\delta_{2}>0$, for all
$\epsilon \in \mathcal{E}_{p} \cap \mathcal{E}_{p+1}$.\medskip

Finally, we give upper bound estimates for the integral
$$
I_{3} = \left| \frac{\kappa_1}{(2\pi)^{1/2}}\int_{-\infty}^{+\infty}
\int_{C_{\rho/2,\gamma_{p},\gamma_{p+1}}}
\omega_{\kappa_1}(u,m,\epsilon) e^{-(\frac{u}{\epsilon^{\chi_1 + \alpha} t})^{\kappa_1}} e^{izm} \frac{du}{u} dm \right|.
$$
By construction, the arc of circle $C_{\rho/2,\gamma_{p},\gamma_{p+1}}$ is chosen in such a way that
$\cos(\kappa_1(\theta - \mathrm{arg}(\epsilon^{\chi_1 + \alpha} t))) \geq \delta_{1}$, for all
$\theta \in [\gamma_{p},\gamma_{p+1}]$ (if
$\gamma_{p} < \gamma_{p+1}$), $\theta \in [\gamma_{p+1},\gamma_{p}]$ (if
$\gamma_{p+1} < \gamma_{p}$), for all $t \in \mathcal{T}_1$, all $\epsilon \in \mathcal{E}_{p} \cap \mathcal{E}_{p+1}$, for some
fixed $\delta_{1}>0$. Bearing in mind (\ref{omega_kappa_dp_in_F1}) and (\ref{e1017}), we get that
\begin{multline}
I_{3} \leq \frac{\kappa_1}{(2\pi)^{1/2}} \int_{-\infty}^{+\infty}  \left| \int_{\gamma_{p}}^{\gamma_{p+1}} \right.
\varpi (1+|m|)^{-\mu} e^{-\beta|m|}
\frac{ \frac{\rho/2}{|\epsilon|^{\chi_1 + \alpha}}}{1 + (\frac{\rho/2}{|\epsilon|^{\chi_1 + \alpha}})^{2\kappa_1} } \\
\times \exp( \nu (\frac{\rho/2}{|\epsilon|^{\chi_1 + \alpha}})^{\kappa_1} )
\exp( -\frac{\cos(\kappa_1(\theta - \mathrm{arg}(\epsilon^{\chi_1+\alpha} t)))}{|\epsilon^{\chi_1 + \alpha} t|^{\kappa_1}}
(\frac{\rho}{2})^{\kappa_1})
\left. e^{-m\mathrm{Im}(z)} d\theta \right| dm\\
\leq \frac{\kappa_1 \varpi }{(2\pi)^{1/2}} \int_{-\infty}^{+\infty}
e^{-(\beta - \beta')|m|} dm \times
|\gamma_{p} - \gamma_{p+1}| \frac{\rho/2}{|\epsilon|^{\chi_1 + \alpha}}
\exp( -\frac{( \frac{\delta_1}{|t|^{\kappa_1}} - \nu)}{2} (\frac{\rho/2}{|\epsilon|^{\chi_1 + \alpha}})^{\kappa_1}) \\
 \times \exp( -\frac{( \frac{\delta_1}{|t|^{\kappa_1}} - \nu)}{2}
 (\frac{\rho/2}{|\epsilon|^{\chi_1+\alpha}})^{\kappa_1})\\
\leq \frac{ 2 \kappa_1 \varpi |\gamma_{p} - \gamma_{p+1}|}{(2\pi)^{1/2}(\beta - \beta')}
\sup_{x \geq 0} x^{1/\kappa_1}e^{-(\frac{\delta_1}{|t|^{\kappa_1}} - \nu)x} \times
\exp( -\frac{( \frac{\delta_1}{|t|^{\kappa_1}} - \nu)}{2} (\frac{\rho/2}{|\epsilon|^{\chi_1 + \alpha}})^{\kappa_1})\\
\leq \frac{2 \kappa_1
\varpi |\gamma_{p} - \gamma_{p+1}|}{(2\pi)^{1/2}(\beta - \beta')} (\frac{1/\kappa_1}{\delta_2})^{1/\kappa_1}
e^{-1/\kappa_1} \exp( -\frac{\delta_{2}}{2} (\frac{\rho/2}{|\epsilon|^{\chi_1 + \alpha}})^{\kappa_1})
\label{I_3_exp_small_order_chi_alpha_kappa}
\end{multline}
for all $t \in \mathcal{T}_1$ and $|\mathrm{Im}(z)| \leq \beta'$ with
$|t| < (\frac{\delta_{1}}{\delta_{2} + \nu})^{1/\kappa_1}$, for some $\delta_{2}>0$, for all
$\epsilon \in \mathcal{E}_{p} \cap \mathcal{E}_{p+1}$.\medskip

Finally, gathering the three above inequalities (\ref{I_1_exp_small_order_chi_alpha_kappa}),
(\ref{I_2_exp_small_order_chi_alpha_kappa}) and (\ref{I_3_exp_small_order_chi_alpha_kappa}), we deduce
from the decomposition (\ref{difference_v_dp_decomposition}) that
\begin{multline*}
|v_1^{\mathfrak{d}_{p+1}}(t,z,\epsilon) - v_1^{\mathfrak{d}_{p}}(t,z,\epsilon)| \leq
 \frac{4 \kappa_1 \varpi}{(2\pi)^{1/2}} \frac{|\epsilon|^{(\chi_1 + \alpha)(\kappa_1-1)}}{(\beta - \beta')
\delta_{2}\kappa_1(\frac{\rho}{2})^{\kappa_1-1}}
\exp( -\delta_{2} \frac{(\rho/2)^{\kappa_1}}{|\epsilon|^{(\chi_1 + \alpha)\kappa_1}})\\
+  \frac{2 \kappa_1 \varpi |\gamma_{p} - \gamma_{p+1}|}{(2\pi)^{1/2}(\beta - \beta')}
(\frac{1/\kappa_1}{\delta_2})^{1/\kappa_1}
e^{-1/\kappa_1} \exp( -\frac{\delta_{2}}{2} (\frac{\rho/2}{|\epsilon|^{\chi_1 + \alpha}})^{\kappa_1})
\end{multline*}
for all $t \in \mathcal{T}_1$ and $|\mathrm{Im}(z)| \leq \beta'$ with
$|t| < (\frac{\delta_{1}}{\delta_{2} + \nu})^{1/k}$, for some $\delta_{2}>0$, for all
$\epsilon \in \mathcal{E}_{p} \cap \mathcal{E}_{p+1}$. Therefore, the inequality
(\ref{exp_small_difference_v_dp1}) holds.

For the proof of (\ref{decomp_singular_holom_udp2}) and (\ref{exp_small_difference_v_dp2}) we can follow the same arguments as in the first part of the proof. We only give some details on the procedure which differ from the previous ones.

The construction leads us to $V_2^{\tilde{\mathfrak{d}}_p}(T,z,\epsilon)=\mathbb{V}_2^{\tilde{\mathfrak{d}}_p}(\epsilon^{\chi_2}T,z,\epsilon)$, defining a bounded holomorphic function with respect to $T$ on those $T$ such that such that $T \in \epsilon^{-\chi_2}S_{\tilde{\mathfrak{d}}_{p},\theta_2,h'|\epsilon|^{\chi_2 + \alpha}}$ and
w.r.t $z$ on $H_{\beta'}$ for any $0 < \beta' < \beta$, for all $\epsilon \in D(0,\epsilon_{0})
\setminus \{ 0 \}$. It holds that $V_2^{\tilde{\mathfrak{d}}_{p}}(T,z,\epsilon)$ solves the equation
\begin{multline}
\tilde{Q}(\partial_z)V_2^{\tilde{\mathfrak{d}}_{p}}(T,z,\epsilon)\left[\frac{a_{0,2}^2}{a_{0,3}}T^{2k_{0,2}-k_{0,3}+\gamma_2}+\sum_{\ell=0}^{s_1}a_{\ell,1}T^{k_{\ell,1}+\gamma_2}+\sum_{\ell=s_1+1}^{M_1}a_{\ell,1}\epsilon^{m_{\ell,1}+\beta-\alpha k_{\ell,1}}T^{k_{\ell,1}+\gamma_2}\right.\\
+\sum_{\ell=1}^{s_2}\frac{-2a_{0,2}a_{\ell,2}}{a_{0,3}}T^{k_{\ell,2}+k_{0,2}-k_{0,3}+\gamma_2}+\sum_{\ell=s_2+1}^{M_2}\frac{-2a_{0,2}a_{\ell,2}}{a_{0,3}}\epsilon^{m_{\ell,2}+2\beta-\alpha k_{\ell,2}}T^{k_{\ell,2}+k_{0,2}-k_{0,3}+\gamma_2}\hfill\\
+\sum_{\ell=0}^{s_2}\frac{-2a_{0,2}a_{\ell,2}}{a_{0,3}}T^{k_{\ell,2}+k_{0,2}-k_{0,3}+\gamma_1}\mathcal{J}_2(T)+\sum_{\ell=s_2+1}^{M_2}\frac{-2a_{0,2}a_{\ell,2}}{a_{0,3}}\epsilon^{m_{\ell,2}+2\beta-\alpha k_{\ell,2}}T^{k_{\ell,2}+k_{0,2}-k_{0,3}+\gamma_1}\mathcal{J}_2(T)\\
\hfill+\sum_{\ell=1}^{s_3}\frac{3a_{0,2}^2a_{\ell,3}}{a_{0,3}^2}T^{k_{\ell,3}+2k_{0,2}-2k_{0,3}+\gamma_2}+\sum_{\ell=s_3+1}^{M_3}\frac{3a_{0,2}^2a_{\ell,3}}{a_{0,3}^2}\epsilon^{m_{\ell,3}+3\beta-\alpha k_{\ell,3}}T^{k_{\ell,3}+2k_{0,2}-2k_{0,3}+\gamma_2}\\
\left.+\left(\sum_{\ell=0}^{s_3}\frac{3a_{0,2}^2a_{\ell,3}}{a_{0,3}^2}T^{k_{\ell,3}+2k_{0,2}-2k_{0,3}+\gamma_2}+\sum_{\ell=s_3+1}^{M_3}\frac{3a_{0,2}^2a_{\ell,3}}{a_{0,3}^2}\epsilon^{m_{\ell,3}+3\beta-\alpha k_{\ell,3}}T^{k_{\ell,3}+2k_{0,2}-2k_{0,3}+\gamma_2}\right)\left(2\mathcal{J}_2(T)+\mathcal{J}_2^2(T)\right)\right]\\
+\tilde{Q}(\partial_{z})(V_2^{\tilde{\mathfrak{d}}_{p}}(T,z,\epsilon))^2\left[\left(\sum_{\ell=0}^{s_2}a_{\ell,2}T^{k_{\ell,2}+k_{0,2}+\gamma_2}+\sum_{\ell=s_2+1}^{M_2}a_{\ell,2}\epsilon^{m_{\ell,2}+2\beta-\alpha k_{\ell,2}}T^{k_{\ell,2}+k_{0,2}+\gamma_2}\right)T^{\gamma_2-k_{0,2}}\right.\\
\hfill\left.+\left(\sum_{\ell=0}^{s_3}a_{\ell,3}T^{k_{\ell,3}+2k_{0,2}-k_{0,3}+\gamma_2}+\sum_{\ell=s_3+1}^{M_3}a_{\ell,3}\epsilon^{m_{\ell,3}+3\beta-\alpha k_{\ell,3}}T^{k_{\ell,3}+2k_{0,2}-k_{0,3}+\gamma_2}\right)\left(1+\mathcal{J}_2(T)\right)\right]\\
+\tilde{Q}(\partial_{z})(V_2^{\tilde{\mathfrak{d}}_{p}}(T,z,\epsilon)
)^3\left[\left(\sum_{\ell=0}^{s_3}a_{\ell,3}T^{k_{\ell,3}+3\gamma_2}+\sum_{\ell=s_3+1}^{M_3}a_{\ell,3}\epsilon^{m_{\ell,3}+3\beta-\alpha k_{\ell,3}}T^{k_{\ell,3}+3\gamma_2}\right)\right]\\
=\sum_{j=0}^{Q}\tilde{b}_{j}(z)\epsilon^{n_j-\alpha b_j}T^{b_j}\\
+\sum_{\ell=1}^{D}\epsilon^{\Delta_{\ell}+\alpha(\delta_{\ell}-d_{\ell})+\beta}\left(\sum_{q_1+q_2=\delta_\ell}\frac{\delta_\ell!}{q_1!q_2!}\prod_{d=0}^{q_1-1}(\gamma_2-d)T^{d_{\ell}-q_1+\gamma_2}\tilde{R}_{\ell}(\partial_z)\partial_{T}^{q_2}V_2^{\tilde{\mathfrak{d}}_{p}}(T,z,\epsilon)\right).\label{e235bc}
\end{multline}

We define the function $U_2^{\tilde{\mathfrak{d}}_{p}}(T,z,\epsilon)$ taking into account (\ref{e216b}), which solves the equation

\begin{multline}
\tilde{Q}( \partial_z)\left(\left(\sum_{\ell=0}^{M_1}a_{\ell,1}\epsilon^{m_{\ell,1}+\beta-\alpha k_{\ell,1}}T^{k_{\ell,1}}\right)U_2^{\tilde{\mathfrak{d}}_p}(T,z,\epsilon)+\left(\sum_{\ell=0}^{M_2}a_{\ell,2}\epsilon^{m_{\ell,2}+2\beta-\alpha k_{\ell,2}}T^{k_{\ell,2}}\right)(U_2^{\tilde{\mathfrak{d}}_p}(T,z,\epsilon))^2\right.\\
\left.+\left(\sum_{\ell=0}^{M_3}a_{\ell,3}\epsilon^{m_{\ell,3}+3\beta-\alpha k_{\ell,3}}T^{k_{\ell,3}}\right)(U_2^{\tilde{\mathfrak{d}}_p}(T,z,\epsilon))^3\right)\\
=\sum_{j=0}^{Q}\tilde{b}_{j}(z)\epsilon^{n_j-\alpha b_j}T^{b_j}+F_2(T,\epsilon)+\sum_{\ell=1}^{D}\epsilon^{\Delta_{\ell}+\alpha(\delta_\ell-d_\ell)+\beta}T^{d_{\ell}}\tilde{R}_{\ell}(\partial_{z})\partial_{T}^{\delta_{\ell}}U_1^{\tilde{\mathfrak{d}}_p}(T,z,\epsilon),\label{e2628b}
\end{multline}
with 

\begin{multline}
F_{2}(T,\epsilon)=-\tilde{Q}(0)\left(\frac{a_{0,2}}{a_{0,3}}T^{k_{0,2}-k_{0,3}}+\frac{a_{0,2}}{a_{0,3}}T^{k_{0,2}-k_{0,3}}\mathcal{J}_2(T)\right)\\
\times\left[\frac{a_{0,2}^2}{a_{0,3}}T^{2k_{0,2}-k_{0,3}}+\sum_{\ell=0}^{s_1}a_{\ell,1}T^{k_{\ell,1}}+\sum_{\ell=s_1+1}^{M_1}a_{\ell,1}\epsilon^{m_{\ell,1}+\beta-\alpha k_{\ell,1}}T^{k_{\ell,1}}\right.\\
+\sum_{\ell=1}^{s_2}\frac{-2a_{0,2}a_{\ell,2}}{a_{0,3}}T^{k_{\ell,2}+k_{0,2}-k_{0,3}}+\sum_{\ell=s_2+1}^{M_2}\frac{-2a_{0,2}a_{\ell,2}}{a_{0,3}}\epsilon^{m_{\ell,2}+2\beta-\alpha k_{\ell,2}}T^{k_{\ell,2}+k_{0,2}-k_{0,3}}\hfill\\
+\sum_{\ell=0}^{s_2}\frac{-2a_{0,2}a_{\ell,2}}{a_{0,3}}T^{k_{\ell,2}+k_{0,2}-k_{0,3}}\mathcal{J}_2(T)+\sum_{\ell=s_2+1}^{M_2}\frac{-2a_{0,2}a_{\ell,2}}{a_{0,3}}\epsilon^{m_{\ell,2}+2\beta-\alpha k_{\ell,2}}T^{k_{\ell,2}+k_{0,2}-k_{0,3}}\mathcal{J}_2(T)\\
\hfill+\sum_{\ell=1}^{s_3}\frac{3a_{0,2}^2a_{\ell,3}}{a_{0,3}^2}T^{k_{\ell,3}+2k_{0,2}-2k_{0,3}}+\sum_{\ell=s_3+1}^{M_3}\frac{3a_{0,2}^2a_{\ell,3}}{a_{0,3}^2}\epsilon^{m_{\ell,3}+3\beta-\alpha k_{\ell,3}}T^{k_{\ell,3}+2k_{0,2}-2k_{0,3}}\\
\left.+\left(\sum_{\ell=0}^{s_3}\frac{3a_{0,2}^2a_{\ell,3}}{a_{0,3}^2}T^{k_{\ell,3}+2k_{0,2}-2k_{0,3}}+\sum_{\ell=s_3+1}^{M_3}\frac{3a_{0,2}^2a_{\ell,3}}{a_{0,3}^2}\epsilon^{m_{\ell,3}+3\beta-\alpha k_{\ell,3}}T^{k_{\ell,3}+2k_{0,2}-2k_{0,3}}\right)\left(2\mathcal{J}_2(T)+\mathcal{J}_2^2(T)\right)\right]\\
-\tilde{Q}(0)\left(\frac{a_{0,2}}{a_{0,3}}T^{k_{0,2}-k_{0,3}}+\frac{a_{0,2}}{a_{0,3}}T^{k_{0,2}-k_{0,3}}\mathcal{J}_2(T)\right)^2\\
\left[\left(\sum_{\ell=0}^{s_2}a_{\ell,2}T^{k_{\ell,2}}+\sum_{\ell=s_2+1}^{M_2}a_{\ell,2}\epsilon^{m_{\ell,2}+2\beta-\alpha k_{\ell,2}}T^{k_{\ell,2}}\right)\right.\\
\hfill\left.+\left(\sum_{\ell=0}^{s_3}a_{\ell,3}T^{k_{\ell,3}+2k_{0,2}-k_{0,3}}+\sum_{\ell=s_3+1}^{M_3}a_{\ell,3}\epsilon^{m_{\ell,3}+3\beta-\alpha k_{\ell,3}}T^{k_{\ell,3}+2k_{0,2}-k_{0,3}}\right)\left(1+\mathcal{J}_2(T)\right)\right]\\
-\tilde{Q}(0)\left(\frac{a_{0,2}}{a_{0,3}}T^{k_{0,2}-k_{0,3}}+\frac{a_{0,2}}{a_{0,3}}T^{k_{0,2}-k_{0,3}}\mathcal{J}_2(T)\right)^3\\
\times\left[\left(\sum_{\ell=0}^{s_3}a_{\ell,3}T^{k_{\ell,3}}+\sum_{\ell=s_3+1}^{M_3}a_{\ell,3}\epsilon^{m_{\ell,3}+3\beta-\alpha k_{\ell,3}}T^{k_{\ell,3}}\right)\right]\\
+\sum_{\ell=1}^{D}\epsilon^{\Delta_{\ell}+\alpha(\delta_{\ell}-d_{\ell})+\beta}T^{d_\ell}\tilde{R}_{\ell}(0)\partial_{T}^{\delta_\ell}\left(\frac{a_{0,2}}{a_{0,3}}T^{k_{0,2}-k_{0,3}}+\frac{a_{0,2}}{a_{0,3}}T^{k_{0,2}-k_{0,3}}\mathcal{J}_2(T)\right),\label{e235bd}
\end{multline}

which can be rewritten in the following form:

\begin{multline}
F_{2}(T,\epsilon)=-\tilde{Q}(0)U_{02}(T)\left[\sum_{\ell=s_1+1}^{M_1}a_{\ell,1}\epsilon^{m_{\ell,1}+\beta-\alpha k_{\ell,1}}T^{k_{\ell,1}}+\sum_{\ell=s_2+1}^{M_2}\frac{-2a_{0,2}a_{\ell,2}}{a_{0,3}}\epsilon^{m_{\ell,2}+2\beta-\alpha k_{\ell,2}}T^{k_{\ell,2}+k_{0,2}-k_{0,3}}\right.\hfill\\
+\sum_{\ell=s_2+1}^{M_2}\frac{-2a_{0,2}a_{\ell,2}}{a_{0,3}}\epsilon^{m_{\ell,2}+2\beta-\alpha k_{\ell,2}}T^{k_{\ell,2}+k_{0,2}-k_{0,3}}\mathcal{J}_2(T)+\sum_{\ell=s_3+1}^{M_3}\frac{3a_{0,2}^2a_{\ell,3}}{a_{0,3}^2}\epsilon^{m_{\ell,3}+3\beta-\alpha k_{\ell,3}}T^{k_{\ell,3}+2k_{0,2}-2k_{0,3}}\\
\left.+\left(\sum_{\ell=s_3+1}^{M_3}\frac{3a_{0,2}^2a_{\ell,3}}{a_{0,3}^2}\epsilon^{m_{\ell,3}+3\beta-\alpha k_{\ell,3}}T^{k_{\ell,3}+2k_{0,2}-2k_{0,3}}\right)\left(2\mathcal{J}_2(T)+\mathcal{J}_2^2(T)\right)\right]\\
-\tilde{Q}(0)(U_{02}(T))^2\left[\left(\sum_{\ell=s_2+1}^{M_2}a_{\ell,2}\epsilon^{m_{\ell,2}+2\beta-\alpha k_{\ell,2}}T^{k_{\ell,2}}\right)\right.\hfill\\
\hfill\left.+\sum_{\ell=s_3+1}^{M_3}a_{\ell,3}\epsilon^{m_{\ell,3}+3\beta-\alpha k_{\ell,3}}T^{k_{\ell,3}+2k_{0,2}-k_{0,3}}\left(1+\mathcal{J}_2(T)\right)\right]\\
-\tilde{Q}(0)(U_{02}(T))^3\left[\sum_{\ell=s_3+1}^{M_3}a_{\ell,3}\epsilon^{m_{\ell,3}+3\beta-\alpha k_{\ell,3}}T^{k_{\ell,3}}\right]\hfill\\
+\sum_{\ell=1}^{D}\epsilon^{\Delta_{\ell}+\alpha(\delta_{\ell}-d_{\ell})+\beta}T^{d_\ell}\tilde{R}_{\ell}(0)\partial_{T}^{\delta_\ell}\left(\frac{a_{0,2}}{a_{0,3}}T^{k_{0,2}-k_{0,3}}+\frac{a_{0,2}}{a_{0,3}}T^{k_{0,2}-k_{0,3}}\mathcal{J}_2(T)\right),\label{e235be}
\end{multline}

The function $F_{2}(T,\epsilon)$ is holomorphic with respect to $\epsilon$ and is analytic with respect to $T$ in some neighborhood of the origin if it holds that
\begin{equation}\label{e2712b}
3k_{0,2}-2k_{0,3}\ge 0,\quad d_\ell+k_{0,2}-k_{0,3}-\delta_{\ell}\ge0,
\end{equation}
for every $1\le \ell\le D$. 

In view of the condition (\ref{e2712b}) and (\ref{e92}), we can affirm that (\ref{e2712b}) is more restrictive than (\ref{e2712}).

We put 
\begin{equation}\label{e2713b}
u_2^{\tilde{\mathfrak{d}}_{p}}(t,z,\epsilon) = \epsilon^{\beta}U_2^{\tilde{\mathfrak{d}}_{p}}(\epsilon^{\alpha}t,z,\epsilon)
= \epsilon^{\beta} \left( U_{02}(\epsilon^{\alpha}t) + (\epsilon^{\alpha}t)^{\gamma_2}
\mathbb{V}_2^{\tilde{\mathfrak{d}}_{p}}(\epsilon^{\chi_2 + \alpha}t,z,\epsilon) \right),
\end{equation}

which defines a holomorphic function w.r.t $t$ on $\mathcal{T}_2$, w.r.t $z \in H_{\beta'}$ for any
$0 < \beta' < \beta$, w.r.t $\epsilon \in \tilde{\mathcal{E}}_{p}$, where $\mathcal{T}_2$ and $\tilde{\mathcal{E}}_{p}$ are
sectors described in Definition~\ref{defi2414}. As a result, $u_2^{\tilde{\mathfrak{d}}_{p}}(t,z,\epsilon)$ admits the decomposition
(\ref{decomp_singular_holom_udp2}) with $v_2^{\tilde{\mathfrak{d}}_{p}}(t,z,\epsilon) =
\mathbb{V}_2^{\tilde{\mathfrak{d}}_{p}}(\epsilon^{\chi_2 + \alpha}t,z,\epsilon)$ which determines a bounded holomorphic
function on $\mathcal{T}_2 \times H_{\beta'} \times \tilde{\mathcal{E}}_{p}$ for any given $0 < \beta' < \beta$ with the
property $v_2^{\tilde{\mathfrak{d}}_{p}}(0,z,\epsilon) \equiv 0$ for all $(z,\epsilon) \in H_{\beta'} \times \tilde{\mathcal{E}}_{p}$.
Again, the function $u_2^{\tilde{\mathfrak{d}}_{p}}(t,z,\epsilon)$ may be meromorphic in both $t$ and $\epsilon$ in the vicinity of the origin. From (\ref{e2628b}) and (\ref{e235bd}) we deduce that $u_2^{\tilde{\mathfrak{d}}_{p}}(t,z,\epsilon)$ solves the main problem
\begin{multline*}
\tilde{Q}( \partial_z)\left(\left(\sum_{\ell=0}^{M_1}a_{\ell,1}\epsilon^{m_{\ell,1}}t^{k_{\ell,1}}\right)u_2^{\tilde{\mathfrak{d}}_p}(t,z,\epsilon)+\left(\sum_{\ell=0}^{M_2}a_{\ell,2}\epsilon^{m_{\ell,2}}t^{k_{\ell,2}}\right)(u_2^{\tilde{\mathfrak{d}}_p}(t,z,\epsilon))^2\right.\\
\left.+\left(\sum_{\ell=0}^{M_3}a_{\ell,3}\epsilon^{m_{\ell,3}}t^{k_{\ell,3}}\right)(u_2^{\tilde{\mathfrak{d}}_p}(t,z,\epsilon))^3\right)\\
=\sum_{j=0}^{Q}\tilde{b}_{j}(z)\epsilon^{n_j}t^{b_j}+F_2(\epsilon^\alpha t,\epsilon)+\sum_{\ell=1}^{D}\epsilon^{\Delta_{\ell}}t^{d_{\ell}}\tilde{R}_{\ell}(\partial_{z})u_1^{\mathfrak{d}_p}(t,z,\epsilon),
\end{multline*}
with additional forcing term $F_2(\epsilon^{\alpha}t,\epsilon)$. We apply the operator $\partial_{z}^{\upsilon}$ on the left and right handside of this last equation, to get that $u_2^{\tilde{\mathfrak{d}}_{p}}(t,z,\epsilon)$ is an actual solution of the main problem (\ref{e85}).

The proof of (\ref{exp_small_difference_v_dp2}) coincides with that of (\ref{exp_small_difference_v_dp1}) step by step.

The proof is completed.
\end{proof}

\end{document}